\documentclass[11pt,a4paper,oldfontcommands]{article}
\usepackage[utf8]{inputenc}
\usepackage[T1]{fontenc}
\usepackage{microtype}
\usepackage[dvips]{graphicx}
\usepackage{xcolor}
\usepackage{times}

\usepackage[english]{babel}
\usepackage{amsmath,mathtools}
\usepackage{amsfonts}
\usepackage{mathrsfs}
\usepackage{amsthm}
\usepackage{subfigure}
\usepackage{envmath}
\usepackage{amssymb}
\usepackage{dsfont}
\usepackage{bigints}
\usepackage{textgreek}
\usepackage{authblk}
\usepackage{titlesec}
\usepackage{enumitem}
\usepackage{url}
\usepackage{tkz-fct}
\usepackage{tikz}
\usetikzlibrary{patterns}
\usepackage{verbatim}

\usepackage[
breaklinks=true,
colorlinks=false,
bookmarks=true,bookmarksopenlevel=2]{hyperref}

\usepackage{geometry}
\geometry{total={210mm,297mm},
left=15mm,right=15mm,%
bindingoffset=0mm, top=25mm,bottom=25mm}

\newtheorem{theorem}{Theorem}[section]
\newtheorem{prop}[theorem]{Proposition}
\newtheorem{lem}[theorem]{Lemma}
\newtheorem{cor}[theorem]{Corollary}

\theoremstyle{remark}
\newtheorem{rem}[theorem]{Remark}

\numberwithin{equation}{section}

\renewcommand{\Re}{\operatorname{Re}}

\renewcommand{\Im}{\operatorname{Im}}

\newcommand{\Tr}{\operatorname{Tr}}

\newcommand*\diff{\mathop{}\!\mathrm{d}}


\DeclareMathOperator\supp{supp}

\DeclarePairedDelimiter\abs{\lvert}{\rvert}%
\DeclarePairedDelimiter\norm{\lVert}{\rVert}%

\makeatletter
\let\oldabs\abs
\def\abs{\@ifstar{\oldabs}{\oldabs*}}
\let\oldnorm\norm
\def\norm{\@ifstar{\oldnorm}{\oldnorm*}}
\makeatother

\newcommand*{\myemail}[1]{%
    \normalsize\href{mailto:#1}{#1}\par
    }

\allowdisplaybreaks




\titleformat{\section}[block]{\centering \scshape \large}{\thesection.}{0.3\baselineskip}{}
\titlespacing{\section}{0pt}{*5}{*2}


\titleformat{\subsection}[block]{\bfseries}{\thesubsection.}{.5em}{}
\titlespacing{\subsection}{0pt}{*2.5}{*1}


\titleformat{\subsubsection}[runin]{\itshape}{\normalfont \thesubsubsection.}{.5em}{}[.]
\titlespacing{\subsubsection}{0pt}{*2.5}{0.5em}




\titleformat{\section}[block]{\centering \scshape \large}{\thesection.}{0.3\baselineskip}{}
\titlespacing{\section}{0pt}{*5}{*2}


\titleformat{\subsection}[block]{\bfseries}{\thesubsection.}{.5em}{}
\titlespacing{\subsection}{0pt}{*2.5}{*1}


\titleformat{\subsubsection}[runin]{\itshape}{\normalfont \thesubsubsection.}{.5em}{}[.]
\titlespacing{\subsubsection}{0pt}{*2.5}{0.5em}


%

\title{Existence of multi-solitons for the focusing Logarithmic Non-Linear Schrödinger Equation}
\date{\vspace{-1cm}}


\author[]{Guillaume Ferriere}

\affil[]{IMAG, Univ Montpellier, CNRS, Montpellier, France \\ \myemail{guillaume.ferriere@umontpellier.fr}}

\begin{document}

\maketitle

\begin{abstract}
    We consider the logarithmic Schrödinger equation (logNLS) in the focusing regime. For this equation, Gaussian initial data remains Gaussian. In particular, the Gausson - a time-independent Gaussian function - is an orbitally stable solution. In this paper, we construct \emph{multi-solitons} (or \emph{multi-Gaussons}) for logNLS, with estimates in $H^1 \cap \mathcal{F}(H^1)$. We also construct solutions to logNLS behaving (in $L^2$) like a sum of $N$ Gaussian solutions with different speeds (which we call \emph{multi-gaussian}). In both cases, the convergence (as $t \rightarrow \infty$) is faster than exponential. We also prove a rigidity result on these multi-gaussians and multi-solitons, showing that they are the only ones with such a convergence.
\end{abstract}

\section{Introduction}

\subsection{Setting}

We are interested in the \textit{Logarithmic Non-Linear Schrödinger Equation}
\begin{equation}
    i \, \partial_t u + \frac{1}{2} \Delta u + \lambda u \ln{\abs{u}^2} = 0,
    \label{foc_log_nls}
\end{equation}
with $x \in \mathbb{R}^d$, $d \geq 1$, $\lambda \in \mathbb{R} \setminus \{ 0 \}$. This equation was introduced as a model of nonlinear wave mechanics and in nonlinear optics (\cite{nonlin_wave_mec}, see also \cite{inco_white_light_log, log_nls_nuclear_physics, quantal_damped_motion, solitons_log_med, log_nls_magma_transp}). The case $\lambda < 0$ (whose study of the Cauchy problem goes back to \cite{cazenave-haraux, Guerrero_Lopez_Nieto_H1_solv_lognls}) was recently studied by R. Carles and I. Gallagher who made explicit an unusually faster dispersion with a universal behaviour of the modulus of the solution (see \cite{carlesgallagher}). The knowledge of this behaviour was very recently improved with a convergence rate but also extended through the semiclassical limit in \cite{Ferriere__Wass_semiclass_defoc_NLS}. On the other hand, the case $\lambda > 0$ seems to be the more interesting from a physical point of view and has been studied formally and rigorously (see for instance \cite{Dav_Mont_Squa_lognls, quantal_damped_motion}). In particular, the existence and uniqueness of solutions to the Cauchy problem have been solved in \cite{cazenave-haraux}.
Moreover, it has been proved to be the non dispersive case and also that the so called \emph{Gausson}
\begin{equation}
    G^d (x) \coloneqq \exp \Bigl( \frac{d}{2} - \lambda \abs{x}^2 \Bigr), \qquad x \in \mathbb{R}^d, \label{expr_gausson}
\end{equation}
and its derivates through the invariants of the equation (translation in space, Galilean invariance, multiplication by a complex constant) are explicit solutions to \eqref{foc_log_nls} and bound states for the energy functional. Several results address the orbital stability of the Gausson as well as the existence of other stationary solutions and Gaussian solutions to \eqref{foc_log_nls}; see \textit{e.g.} \cite{nonlin_wave_mec, Cazenave_log_nls, Dav_Mont_Squa_lognls, Ardila__Orbital_stability_Gausson}.
In this article, we address the question of the existence of multi-solitons (\textit{i.e.} multi-Gaussons), but also the existence of multi-gaussians.

\begin{rem}[Effect of scaling factors] \label{rem:scaling}
    As noticed in \cite{carlesgallagher}, unlike what happens in the case of an homogeneous nonlinearity (classically of the form $\abs{u}^p u$), replacing $u$ with $\kappa u$ ($\kappa > 0$) in \eqref{foc_log_nls} has only little effect, since we have
    \begin{equation*}
        i \, \partial_t (\kappa u) + \frac{1}{2} \Delta (\kappa u) + \lambda (\kappa u) \ln{\lvert \kappa u \rvert^2} - 2 \lambda (\ln{\kappa}) \kappa u = 0.
    \end{equation*}
    The scaling factor thus corresponds to a purely time-dependent gauge transform:
    \begin{equation*}
        \kappa u(t,x) \, e^{-2it \lambda \ln{\kappa}}
    \end{equation*}
    solves \eqref{foc_log_nls}. In particular, the $L^2$-norm of the initial datum does not influence the dynamics of the solution.
\end{rem}

\subsection{The Logarithmic Non-Linear Schrödinger Equation}

The Logarithmic Non-Linear Schrödinger Equation was introduced by I. Bia{\l}ynicki-Birula and J. Mycielski (\cite{nonlin_wave_mec}) who proved that it is the only nonlinear Schrödinger theory in which \emph{the separability of noninteracting systems} hold: for noninteracting subsystems, no correlations are introduced by the nonlinear term. Therefore, for any initial data of the form $u_\textnormal{in} = u_\textnormal{in}^1 \otimes u_\textnormal{in}^2$, \textit{i.e.}
\begin{equation*}
    u_\textnormal{in} (x) = u_\textnormal{in}^1 (x_1) \, u_\textnormal{in}^2 (x_2), \qquad \forall x_1 \in \mathbb{R}^{d_1}, \forall x_2 \in \mathbb{R}^{d_2}, x = (x_1, x_2),
\end{equation*}
the solution $u$ to \eqref{foc_log_nls} (in dimension $d = d_1 + d_2$) with initial data ${{u}_|}_{t=0} = u_{\textnormal{in}}$ is
\begin{equation*}
    u (t) = u^1 (t) \otimes u^2 (t)
\end{equation*}
where $u^j$ is the solution to \eqref{foc_log_nls} in dimension $d_j$ with initial data $u_\textnormal{in}^j$ ($j = 1, 2$).

They also emphasized that the case $\lambda > 0$ is probably the most physically relevant.
For this case, the Cauchy problem has already been studied in \cite{cazenave-haraux} (see also \cite{Cazenave_semlin_lognls}). We define the energy space
\begin{equation*}
    W (\mathbb{R}^d) \coloneqq \{ v \in H^1 (\mathbb{R}^d), \abs{v}^2 \ln{\abs{v}^2} \in L^1 (\mathbb{R}^d) \},
\end{equation*}
which is a reflexive Banach space when endowed with a Luxembourg type norm (see \cite{Cazenave_log_nls}).
We can also define the mass, the angular momentum and the energy for all $v \in W (\mathbb{R}^d)$:
\begin{gather*}
    M (v) \coloneqq \norm{v}_{L^2}^2, \qquad
    \mathcal{J} (v) \coloneqq \Im \int_{\mathbb{R}^d} \overline{v} \, \nabla v \diff x, \qquad
    E(v) \coloneqq \frac{1}{2} \norm{\nabla v}_{L^2}^2 - \lambda \int_{\mathbb{R}^d} \abs{v}^2 (\ln{\abs{v}^2} - 1) \diff x.
\end{gather*}

\begin{theorem}[{\cite[Théorème~2.1]{cazenave-haraux}}, see also {\cite[Theorem~9.3.4]{Cazenave_semlin_lognls}}] \label{th:Cauchy_problem}
    For $\lambda > 0$, for any initial data $u_\textnormal{in} \in W (\mathbb{R}^d)$, there exists a unique, global solution $u \in \mathcal{C}_b (\mathbb{R}, W(\mathbb{R}^d))$. Moreover the mass $M(u(t))$, the angular momentum $\mathcal{J} (u(t))$ and the energy $E(u(t))$ are independent of time.
\end{theorem}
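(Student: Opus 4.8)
\subsection*{Proof proposal}

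The plan is to treat uniqueness and existence separately, the central difficulty in both being that the nonlinearity $f(u) = \lambda u \ln \abs{u}^2$ fails to be locally Lipschitz --- indeed it is not even continuous in a naive sense --- near $u = 0$, so the standard Banach fixed-point scheme for semilinear Schrödinger equations does not apply directly. First I would dispose of uniqueness by an $L^2$ energy estimate exploiting a special algebraic cancellation in the logarithmic nonlinearity; then I would build a solution by regularizing $f$, solving the regularized equations by the usual contraction, and passing to the limit, using throughout the logarithmic Sobolev inequality to close the a priori bounds and to propagate them globally in time.

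For uniqueness, the key is the pointwise inequality
\[
    \abs{ \Im \bigl[ \bigl( f(z_1) - f(z_2) \bigr) \, \overline{z_1 - z_2} \bigr] } \le C \, \abs{z_1 - z_2}^2 , \qquad z_1, z_2 \in \mathbb{C},
\]
which follows from a direct computation reducing the left-hand side to $\abs{ \Im(z_1 \overline{z_2}) } \cdot \abs{ \ln \abs{z_1}^2 - \ln \abs{z_2}^2 }$ and estimating this elementary expression. Given two solutions $u_1, u_2 \in \mathcal{C}(\mathbb{R}, W)$ with the same datum, the difference $w = u_1 - u_2$ satisfies
\[
    \frac{\diff}{\diff t} \norm{w}_{L^2}^2 = -2 \, \Im \int_{\mathbb{R}^d} \overline{w} \, \bigl( f(u_1) - f(u_2) \bigr) \diff x \le C \, \norm{w}_{L^2}^2 ,
\]
the Laplacian term dropping out because $\Re \, i \int_{\mathbb{R}^d} \overline{w} \, \Delta w \diff x = 0$. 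Gronwall's lemma then forces $w \equiv 0$. The logarithmic nonlinearity is exactly at the threshold where this works: one obtains a genuine Lipschitz-type $L^2$ bound for the difference, not merely an Osgood condition.

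For existence I would replace $f$ by globally Lipschitz approximations $f_\varepsilon$ (obtained by cutting off the logarithm so that $f_\varepsilon(z) \to f(z)$ with $\abs{f_\varepsilon(z)} \lesssim \abs{z} \, (1 + \abs{\ln \abs{z}^2})$ uniformly in $\varepsilon$) and solve $i \partial_t u_\varepsilon + \tfrac12 \Delta u_\varepsilon + f_\varepsilon(u_\varepsilon) = 0$ with datum $u_\textnormal{in}$ by contraction in $\mathcal{C}([-T,T], H^1)$. The regularized flow conserves a regularized mass and energy, and combining these with the logarithmic Sobolev inequality --- which bounds $\int \abs{v}^2 \ln \abs{v}^2$ from above by $\tfrac{\alpha^2}{\pi} \norm{\nabla v}_{L^2}^2$ plus lower-order terms, for any $\alpha > 0$, and so tames the sign-indefinite logarithmic term of the focusing energy --- yields bounds on $\norm{u_\varepsilon(t)}_{H^1}$ and on the entropy that are uniform in both $\varepsilon$ and $t \in \mathbb{R}$. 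This simultaneously prevents blow-up (hence gives global existence of each $u_\varepsilon$) and provides uniform control in the energy space $W$.

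It remains to pass to the limit $\varepsilon \to 0$. From the uniform bounds I would extract, up to a subsequence, a weak-$*$ limit $u_\varepsilon \rightharpoonup u$ in $L^\infty_{\mathrm{loc}}(\mathbb{R}, W)$; local strong compactness in space (Rellich, via an Aubin--Lions argument using the equation to control $\partial_t u_\varepsilon$) upgrades this to almost-everywhere convergence, which together with the uniform integrability of $\abs{u_\varepsilon}^2 \abs{\ln \abs{u_\varepsilon}^2}$ lets one pass to the limit in the continuous (though non-Lipschitz) nonlinear term and identify $u$ as a solution. Conservation of $M$, $\mathcal{J}$ and $E$ then follows by passing to the limit in the regularized conservation laws, and weak continuity in $W$ combined with conservation of the quantities controlling the $W$-norm (together with weak lower semicontinuity) upgrades to strong continuity, giving $u \in \mathcal{C}_b(\mathbb{R}, W)$. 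I expect the main obstacle to be precisely this passage to the limit in the logarithmic term near the zero set of $u$: unlike for power nonlinearities there is no Lipschitz bound to invoke, so everything hinges on the uniform entropy control supplied by the logarithmic Sobolev inequality, which is what renders the singular nonlinearity tractable.
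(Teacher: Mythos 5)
Your proposal is correct, and it is essentially the proof of the cited source: the paper itself does not prove Theorem \ref{th:Cauchy_problem} but quotes it from \cite{cazenave-haraux} (see also \cite{Cazenave_semlin_lognls}), whose argument is exactly the one you outline --- uniqueness from the pointwise inequality $\abs{\Im [ (f(z_1)-f(z_2)) \, \overline{(z_1-z_2)} ]} \leq C \abs{z_1-z_2}^2$ (recorded in the paper as Lemma \ref{lem_log_inequality}) combined with a Gronwall argument in $L^2$ (which is precisely the paper's Lemma \ref{lem:L2_energy_est}), and existence by Lipschitz regularization of the logarithm, uniform $H^1$ and entropy bounds obtained from the conserved quantities together with the logarithmic Sobolev inequality, and a compactness passage to the limit in the nonlinear term. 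Nothing to correct.
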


It is also worth noticing that there is an energy estimate at the level $L^2$:

\begin{lem}[{\cite[Lemme~2.2.1]{cazenave-haraux}}] \label{lem:L2_energy_est}
    For $\lambda > 0$, for any solutions $u$ and $v$ to \eqref{foc_log_nls} given by Theorem \ref{th:Cauchy_problem} with initial data $u_\textnormal{in}, v_\textnormal{in} \in W (\mathbb{R}^d)$ respectively, there holds for all $t \in \mathbb{R}$
    \begin{equation*}
        \norm{u(t) - v(t)}_{L^2} \leq e^{2 \lambda \abs{t}} \, \norm{u_\textnormal{in} - v_\textnormal{in}}_{L^2}.
    \end{equation*}
\end{lem}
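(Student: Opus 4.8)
The plan is to control the time derivative of $\norm{u(t) - v(t)}_{L^2}^2$ directly from the equation and to close with Grönwall's lemma. Writing $w \coloneqq u - v$ and $f(z) \coloneqq z \ln \abs{z}^2$, subtracting the two copies of \eqref{foc_log_nls} gives $i \partial_t w + \frac12 \Delta w + \lambda (f(u) - f(v)) = 0$. I would then compute $\frac{\diff}{\diff t} \norm{w}_{L^2}^2 = 2 \Re \int_{\mathbb{R}^d} \overline{w} \, \partial_t w \diff x$. The Laplacian term contributes nothing: integrating by parts, $\Re \int_{\mathbb{R}^d} \overline{w} \, \tfrac{i}{2} \Delta w \diff x = \tfrac12 \Re \bigl( - i \norm{\nabla w}_{L^2}^2 \bigr) = 0$ by skew-adjointness of $i \Delta$. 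This leaves the identity
\[
    \frac{\diff}{\diff t} \norm{w(t)}_{L^2}^2 = - 2 \lambda \, \Im \int_{\mathbb{R}^d} \overline{w} \, \bigl( f(u) - f(v) \bigr) \diff x .
\]

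The crux is the pointwise inequality $\abs{\Im \bigl[ (\overline{a} - \overline{b})(f(a) - f(b)) \bigr]} \leq 2 \abs{a - b}^2$ for all $a, b \in \mathbb{C}$. Expanding, the diagonal terms $\abs{a}^2 \ln \abs{a}^2$ and $\abs{b}^2 \ln \abs{b}^2$ are real and drop out, while the cross terms collapse to $\Im(a \overline{b}) \ln \frac{\abs{b}^2}{\abs{a}^2}$ (which vanishes when $a$ or $b$ equals $0$, using $f(0) = 0$). Writing $a = p e^{i\alpha}$, $b = q e^{i\beta}$ and $\theta \coloneqq \alpha - \beta$, the claim becomes $pq \abs{\sin\theta} \, \abs{\ln(q^2/p^2)} \leq 2 ( p^2 + q^2 - 2 pq \cos\theta )$. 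I would prove it using the identity $p^2 + q^2 - 2 pq \cos\theta = (p-q)^2 + 4 pq \sin^2 \frac{\theta}{2}$, the bound $\abs{\sin\theta} \leq 2 \abs{\sin \frac{\theta}{2}}$, a Young inequality to split $2 pq \abs{\sin \frac{\theta}{2}} \, \abs{\ln(q/p)} \leq 4 pq \sin^2 \frac{\theta}{2} + \frac14 pq \ln^2(q/p)$, and finally the elementary estimate $\frac14 r \ln^2 r \leq (1-r)^2$ (with $r = q/p$) to absorb the logarithmic factor against $(p-q)^2$.

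Combining the two steps yields $\abs{\frac{\diff}{\diff t} \norm{w(t)}_{L^2}^2} \leq 4 \lambda \norm{w(t)}_{L^2}^2$, and Grönwall's lemma gives $\norm{w(t)}_{L^2}^2 \leq e^{4 \lambda \abs{t}} \norm{w_\textnormal{in}}_{L^2}^2$; taking square roots is the claim, the symmetry in $\pm t$ coming from integrating the differential inequality in both time directions. The main obstacle I anticipate is not the algebra but the justification of the differentiation under the integral sign: since $f$ is not Lipschitz near the origin (the logarithm is singular), the identity $\frac{\diff}{\diff t} \norm{w}_{L^2}^2 = 2 \Re \int \overline{w} \, \partial_t w \diff x$ must be legitimized for solutions only known to lie in $\mathcal{C}_b(\mathbb{R}, W(\mathbb{R}^d))$. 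I would handle this through the regularized nonlinearity already used to build the Cauchy theory in \cite{cazenave-haraux} (smoothing $\ln \abs{z}^2$ near $z = 0$), for which the formal computation is rigorous, derive the estimate uniformly in the regularization parameter, and pass to the limit; the pointwise inequality is decisive here precisely because it is insensitive to the singularity.
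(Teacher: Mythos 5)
Your proposal is correct and follows essentially the same route as the paper: differentiate $\norm{u-v}_{L^2}^2$ in time, kill the Laplacian term by skew-adjointness, bound the nonlinear term via the pointwise inequality $\abs{\Im \bigl[ (\overline{z_2}-\overline{z_1})(z_2 \ln\abs{z_2}^2 - z_1 \ln\abs{z_1}^2) \bigr]} \leq 2\abs{z_2-z_1}^2$, and close with Gr\"onwall in both time directions (plus regularization to justify the formal computation) --- exactly the scheme of Lemma \ref{lem_log_inequality} and the computation leading to \eqref{eq:energy_est_like}, which the paper takes from \cite{cazenave-haraux} rather than reproving. The only difference is that you supply your own (correct) elementary proof of the pointwise logarithmic inequality, via polar coordinates, Young's inequality and the estimate $\tfrac14 r \ln^2 r \leq (1-r)^2$, where the paper simply cites it as \cite[Lemma~1.1.1]{cazenave-haraux}.
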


Another surprising feature of \eqref{foc_log_nls} is that any Gaussian data remains Gaussian (\cite{nonlin_wave_mec}).

\begin{prop} \label{prop:expression_general_gaussian}
Any Gaussian initial data
\begin{equation*}
    \exp \Bigl[ \frac{d}{2} - x^\top A^\textnormal{in} x \Bigr], \label{gauss_in_data}
\end{equation*}
with $A^\textnormal{in} \in S_d (\mathbb{C})^{\Re +} \coloneqq \{ M \in M_d (\mathbb{C}), M^\top = M, \, \Re A^\textnormal{in} \in S_d (\mathbb{R})^{++} \}$ (where $^\top$ designates the transposition), gives rise to a Gaussian solution $G^{A^\textnormal{in}}$ to \eqref{foc_log_nls} of the form
\begin{equation}
    B^{A^\textnormal{in}} (t,x) \coloneqq \biggl( \frac{\det{\Re A(t)}}{\det{\Re A^\textnormal{in}}} \biggr)^\frac{1}{4} \exp \Bigl[ \frac{d}{2} - i \, \Phi (t) - \frac{1}{2} x^\top A (t) x \Bigr], \label{eq:gen_gauss_sol}
\end{equation}
where $A$ and $\phi$ satisfy
\begin{gather}
    \frac{\diff A}{\diff t} = - i A(t)^2 + 2 i \lambda \Re A(t), \qquad \qquad A(0) = A^\textnormal{in}, \label{eq:matrix_ODE} \\
    \Phi (t) \coloneqq \frac{1}{2} \int_0^t \Tr ( \Re A (s) ) \diff s - \frac{\lambda}{2} \int_0^t \ln \biggl( \frac{\det \Re A(s)}{\det \Re A^\textnormal{in}} \biggr) \diff s - d \lambda t. \notag
\end{gather}
Moreover, if $\lambda > 0$, $$0 < \inf_t \sigma ( \Re A (t) ) \leq \sup_t \sigma ( \Re A (t) ) < + \infty.$$
\end{prop}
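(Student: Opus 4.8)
The plan is to look for the solution directly in Gaussian form and reduce the PDE to a system of ODEs. I would write the ansatz $u(t,x) = \exp\bigl[a(t) - \tfrac12 x^\top A(t) x\bigr]$ with $a(t)\in\mathbb{C}$ and $A(t)\in S_d(\mathbb{C})$ symmetric, and compute $\partial_t u = u\,(\dot a - \tfrac12 x^\top \dot A x)$, $\nabla u = -u\,A(t)x$ and $\Delta u = u\,(x^\top A(t)^2 x - \Tr A(t))$, together with $\ln\abs{u}^2 = 2\Re a - x^\top \Re A(t)\, x$. Substituting into \eqref{foc_log_nls} and dividing by $u$ produces a polynomial identity in $x$ that must hold for all $x$; separating the quadratic form from the $x$-independent part gives two equations. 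Since all the matrices involved are symmetric, the vanishing of the quadratic form forces $-\tfrac{i}{2}\dot A + \tfrac12 A^2 - \lambda\Re A = 0$, which is exactly the matrix Riccati equation \eqref{eq:matrix_ODE}, while the constant part reads $i\dot a - \tfrac12\Tr A + 2\lambda\Re a = 0$.

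For the amplitude and phase I would split $A = P + iQ$ with $P = \Re A$, $Q = \Im A$ real symmetric; taking real and imaginary parts of \eqref{eq:matrix_ODE} gives $\dot P = PQ + QP$ and $\dot Q = Q^2 - P^2 + 2\lambda P$. Jacobi's formula together with the cyclicity of the trace then yields $\tfrac{\diff}{\diff t}\ln\det P = \Tr(P^{-1}\dot P) = 2\Tr Q$. Taking the real part of the scalar equation $i\dot a = \tfrac12\Tr A - 2\lambda\Re a$ gives $\tfrac{\diff}{\diff t}\Re a = \tfrac12\Tr Q$, which integrates (with $\Re a(0)=\tfrac d2$) precisely to $\Re a = \tfrac14\ln\frac{\det P}{\det \Re A^{\textnormal{in}}} + \tfrac d2$, i.e. to the prefactor $(\det\Re A/\det\Re A^{\textnormal{in}})^{1/4}$; the imaginary part gives $\tfrac{\diff}{\diff t}\Im a = -\tfrac12\Tr P + 2\lambda\Re a$, which integrates to $\Im a = -\Phi$ with $\Phi$ as stated. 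Local well-posedness of \eqref{eq:matrix_ODE} follows from Cauchy--Lipschitz, and the resulting Gaussian is a $W$-valued solution on the maximal interval of existence, hence by the uniqueness in Theorem \ref{th:Cauchy_problem} it is \emph{the} solution.

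For the last assertion (with $\lambda > 0$) I would exploit the conservation of the energy. A direct Gaussian computation gives $E(u) = M(u)\bigl[\tfrac14\Tr P + \tfrac14\Tr(Q P^{-1} Q) - \tfrac\lambda2\ln\det P\bigr] + \textnormal{const}$, using the Gaussian moment formulas $\int\abs{u}^2\diff x = M(u)$ and $\int\abs{u}^2\, x^\top M x\diff x = \tfrac12 M(u)\,\Tr(M P^{-1})$. Since $M(u)$ and $E(u)$ are both conserved (Theorem \ref{th:Cauchy_problem}), the bracketed quantity is constant in $t$. The term $\Tr(Q P^{-1} Q)$ is nonnegative because $P^{-1}$ is positive definite, so writing $\sigma(\Re A) = \{\mu_1,\dots,\mu_d\}$ we obtain $\sum_j f(\mu_j) \le C$ with $f(\mu) = \tfrac14\mu - \tfrac\lambda2\ln\mu$. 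For $\lambda>0$ the function $f$ is bounded below and coercive on $(0,+\infty)$ (it tends to $+\infty$ both as $\mu\to 0^+$ and as $\mu\to+\infty$), so each $f(\mu_k) = \sum_j f(\mu_j) - \sum_{j\ne k} f(\mu_j) \le C - (d-1)\min f$ is bounded above by a fixed constant; coercivity then traps every eigenvalue in a fixed compact interval $[\mu_-,\mu_+]\subset(0,+\infty)$ uniformly in $t$, which is exactly the claim. The same bound controls $\Tr(Q^2)$ (since $\Tr(Q P^{-1} Q)\ge \mu_+^{-1}\Tr(Q^2)$), so $A = P+iQ$ neither blows up nor degenerates and the ODE solution is in fact global.

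I expect the main obstacle to be this last step: recognizing the conserved energy as a coercive Lyapunov-type functional of $\sigma(\Re A)$ and extracting from it two-sided, \emph{uniform-in-time} spectral bounds (which simultaneously deliver global existence of the Riccati flow). By contrast, the substitution into \eqref{foc_log_nls} and the identification of $\Phi$ are careful but routine bookkeeping.
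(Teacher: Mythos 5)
Your proof is correct, and there is nothing in the paper to compare it against: the paper states Proposition \ref{prop:expression_general_gaussian} without proof, quoting it from the literature (\cite{nonlin_wave_mec}, with the one-dimensional reduction attributed to \cite{carlesgallagher, Ferriere__superposition_logNLS}). Your first two paragraphs (Gaussian ansatz, separation of the quadratic and $x$-independent parts, the Riccati equation, Jacobi's formula giving $\frac{\diff}{\diff t}\ln\det P = 2\Tr Q$ and hence the prefactor $(\det \Re A(t)/\det \Re A^{\textnormal{in}})^{1/4}$ and the phase $\Phi$) reproduce the classical computation behind the cited results, and all the identities check out. The genuinely valuable part is your last step: the two-sided spectral bound is precisely what the paper needs for $\sigma_-$ to be well defined and positive in Theorem \ref{th:exist_multi_br}, and your argument --- restricting the conserved energy to the Gaussian family to get the conserved quantity $\frac{1}{4}\Tr P + \frac{1}{4}\Tr(QP^{-1}Q) - \frac{\lambda}{2}\ln\det P$, discarding the nonnegative term $\Tr(QP^{-1}Q)$, and exploiting coercivity of $\mu \mapsto \frac{1}{4}\mu - \frac{\lambda}{2}\ln\mu$ eigenvalue by eigenvalue --- is the natural matrix generalization of the Hamiltonian argument that yields periodicity of $r_\alpha$ in dimension $1$ (Proposition \ref{prop_existence_breathers_d=1}). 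It has the added merit of delivering global existence of the Riccati flow (via the resulting bound on $\Tr(Q^2)$), which the statement tacitly assumes since $\inf_t$ and $\sup_t$ range over all times; note that the identification with the global solution, needed to invoke conservation of $E$ and $M$ on the maximal ODE interval, rests on local-in-time uniqueness, which indeed follows from Lemma \ref{lem:L2_energy_est}, so your continuation argument closes. One last remark: your ansatz, with $a(0) = \frac{d}{2}$ and $A(0) = A^{\textnormal{in}}$, matches \eqref{eq:gen_gauss_sol} at $t=0$ rather than the displayed initial datum $\exp[\frac{d}{2} - x^\top A^{\textnormal{in}} x]$, which carries a spurious factor of $2$; this is a typo in the paper's statement (as confirmed by Remark \ref{rem:link_br_sol}, where the Gausson corresponds to $A^{\textnormal{in}} = 2\lambda I_d$), and your reading is the consistent one.
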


In parallel, we define their derivates through the invariants and the scaling effect:
\begin{equation}
    B^{A^\textnormal{in}}_{\omega, x_0, v, \theta} (t,x) \coloneqq \exp \left[ i \left( \theta + 2 \lambda \omega t - v \cdot x + \frac{\abs{v}^2}{2} t \right) + \omega \right] B^{A^\textnormal{in}} (t, x - x_0 - v t), \qquad t \in \mathbb{R}, \, x \in \mathbb{R}^d, \label{eq:exp_der_gen_gauss}
\end{equation}

For such data, the evolution of the solution is given by a single matrix ODE, which can even be simplified in dimension 1 (see \cite{carlesgallagher,Bao_Carles_al__error_est,Ferriere__superposition_logNLS}):

\begin{prop} \label{prop_existence_breathers_d=1}
    For any $\alpha \in \mathbb{C}^+ \coloneqq \{ z \in \mathbb{C}, \Re z > 0 \}$, consider the ordinary differential equation
    \begin{equation*}
        \Ddot{r}_\alpha = \frac{1}{r_\alpha^3} - \frac{2 \lambda}{r_\alpha}, \qquad r_\alpha (0) = \Re \alpha \eqqcolon \alpha_r, \qquad \dot{r}_\alpha (0) = \Im \alpha \eqqcolon \alpha_i.
    \end{equation*}
    It has a unique solution $r_{\alpha} \in \mathcal{C}^\infty (\mathbb{R})$ with values in $(0, \infty)$.
    Then, set
    \begin{equation}
        u^{\alpha} (t,x) \coloneqq \sqrt{\frac{\alpha_r}{r_\alpha (t)}} \exp \Bigl[ \frac{1}{2} - i \Phi (t) - \frac{x^2}{2 r_\alpha (t)^2} + i \frac{\dot{r}_\alpha (t)}{r_\alpha (t)} \frac{x^2}{2} \Bigr], \qquad t,x \in \mathbb{R}, \label{eq:exp_breather_1d}
    \end{equation}
    where
    \begin{equation*}
        \Phi (t) \coloneqq \frac{1}{2} \int_0^t \frac{1}{r_\alpha (s)^2} \diff s + \lambda \int_0^t \ln \frac{r_\alpha (s)}{\alpha_r} \diff s - \lambda t.
    \end{equation*}
    Then $u^{\alpha}$ is solution to \eqref{foc_log_nls} in dimension $d = 1$.
\end{prop}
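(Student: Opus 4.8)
The plan is to treat the proposition in two independent steps: first the scalar second-order ODE governing $r_\alpha$, and then the verification that $u^\alpha$ solves \eqref{foc_log_nls}. For the second step I would not substitute $u^\alpha$ into \eqref{foc_log_nls} by brute force, but instead reduce to the already-established Proposition \ref{prop:expression_general_gaussian} through the change of unknown $A(t) := r_\alpha(t)^{-2} - i\,\dot r_\alpha(t)/r_\alpha(t)$, which turns the second-order equation for $r_\alpha$ into the scalar ($d=1$) Riccati equation \eqref{eq:matrix_ODE}.

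For the ODE, the vector field $(r,\dot r)\mapsto(\dot r,\,r^{-3}-2\lambda r^{-1})$ is smooth (indeed real-analytic) on the open half-plane $\{r>0\}$, so the Cauchy--Lipschitz theorem yields a unique maximal $\mathcal{C}^\infty$ solution on an open interval around $0$, with values in $(0,\infty)$ as long as it exists. To promote this to a global solution staying in $(0,\infty)$ I would multiply the equation by $\dot r_\alpha$ and integrate, obtaining the conserved energy
\begin{equation*}
    \tfrac12 \dot r_\alpha(t)^2 + V(r_\alpha(t)) = \tfrac12 \alpha_i^2 + V(\alpha_r), \qquad V(r) := \tfrac{1}{2r^2} + 2\lambda \ln r .
\end{equation*}
Since $V(r)\to+\infty$ as $r\to0^+$ for every $\lambda$, the constraint $V(r_\alpha)\le \text{const}$ keeps $r_\alpha$ bounded away from $0$; when $\lambda>0$ one also has $V(r)\to+\infty$ as $r\to+\infty$, so $r_\alpha$ is trapped in a compact subinterval of $(0,\infty)$ and the solution is global. (For $\lambda<0$ the identity only forces $\dot r_\alpha = O(\sqrt{\ln r_\alpha})$ at large $r_\alpha$, whose associated time integral diverges, ruling out finite-time blow-up; either way the solution is global and positive.)

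For the PDE, I would set $A(t) := r_\alpha(t)^{-2} - i\,\dot r_\alpha(t)/r_\alpha(t)$, so that $\Re A(t) = r_\alpha(t)^{-2}>0$ and $A(0)=A^{\textnormal{in}}:=\alpha_r^{-2} - i\,\alpha_i/\alpha_r \in S_1(\mathbb{C})^{\Re+}$. A direct differentiation gives
\begin{equation*}
    \dot A = -2\dot r_\alpha r_\alpha^{-3} - i\bigl(\ddot r_\alpha r_\alpha^{-1} - \dot r_\alpha^2 r_\alpha^{-2}\bigr), \qquad -iA^2 + 2i\lambda \Re A = -i r_\alpha^{-4} - 2\dot r_\alpha r_\alpha^{-3} + i\dot r_\alpha^2 r_\alpha^{-2} + 2i\lambda r_\alpha^{-2}.
\end{equation*}
The real parts coincide automatically, while matching imaginary parts shows that the Riccati equation \eqref{eq:matrix_ODE} (in dimension $1$) holds if and only if $\ddot r_\alpha = r_\alpha^{-3} - 2\lambda r_\alpha^{-1}$, i.e. exactly the equation satisfied by $r_\alpha$. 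Thus $A$ solves \eqref{eq:matrix_ODE} with $A(0)=A^{\textnormal{in}}$.

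It then remains to check that $u^\alpha$ coincides with $B^{A^{\textnormal{in}}}$ from Proposition \ref{prop:expression_general_gaussian}, after which that proposition immediately yields that $u^\alpha$ solves \eqref{foc_log_nls}. In dimension $1$ one has $\det \Re A(t)=r_\alpha(t)^{-2}$, so $(\det\Re A(t)/\det\Re A^{\textnormal{in}})^{1/4} = (\alpha_r/r_\alpha(t))^{1/2}$, which is precisely the prefactor of $u^\alpha$; similarly, inserting $\Tr\Re A = r_\alpha^{-2}$ and $\ln(\det\Re A/\det\Re A^{\textnormal{in}}) = -2\ln(r_\alpha/\alpha_r)$ into the general phase reduces it to the $\Phi$ stated here, so the exponents agree as well. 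The main obstacle is the global-existence-and-positivity argument for the ODE, which is the only genuinely analytic point; the reduction itself is just algebraic bookkeeping, tedious but conceptually routine, and it conveniently bypasses a direct (and far messier) substitution of $u^\alpha$ into \eqref{foc_log_nls}.
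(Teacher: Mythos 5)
Your proposal is correct, but note that the paper itself contains no proof of Proposition \ref{prop_existence_breathers_d=1}: it is recalled from \cite{carlesgallagher,Bao_Carles_al__error_est,Ferriere__superposition_logNLS}, where the standard argument is a direct substitution of the ansatz \eqref{eq:exp_breather_1d} into \eqref{foc_log_nls}, reading off the equations for the modulus and the phase. Your route is genuinely different and fits this paper's architecture better: the energy $\tfrac12\dot r_\alpha^2 + \tfrac{1}{2r_\alpha^2} + 2\lambda\ln r_\alpha$ is indeed conserved, blows up as $r_\alpha\to 0^+$ for either sign of $\lambda$ (so $r_\alpha$ stays away from $0$), confines $r_\alpha$ to a compact sub-level set when $\lambda>0$, and for $\lambda<0$ your bound $|\dot r_\alpha|\le C\sqrt{\ln r_\alpha}$ together with the divergence of $\int^\infty \diff r/\sqrt{\ln r}$ rules out finite-time escape, giving global existence in both cases (the same bounds apply backward in time, since they depend only on the conserved energy). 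The algebra in the reduction is also right: with $A(t)=r_\alpha(t)^{-2}-i\dot r_\alpha(t)/r_\alpha(t)$, the real parts of \eqref{eq:matrix_ODE} match identically and the imaginary parts are equivalent to $\ddot r_\alpha=r_\alpha^{-3}-2\lambda r_\alpha^{-1}$, while in $d=1$ the amplitude $(\det\Re A(t)/\det\Re A^{\textnormal{in}})^{1/4}=\sqrt{\alpha_r/r_\alpha(t)}$ and the phase $\Phi$ of Proposition \ref{prop:expression_general_gaussian} reduce exactly to those in \eqref{eq:exp_breather_1d}. Two sentences would complete a final write-up: (i) \eqref{eq:matrix_ODE} is locally Lipschitz in $A$, so your explicit $A$ is \emph{the} solution with datum $A^{\textnormal{in}}$ invoked in Proposition \ref{prop:expression_general_gaussian}; (ii) that proposition is itself recalled without proof, so your argument transfers the PDE verification to it rather than eliminating it --- legitimate here, since the paper states it first and treats it as given, but worth acknowledging. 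What your approach buys is avoiding the messier direct substitution; what the direct computation in the cited references buys is independence from the general Gaussian framework.
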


Note that whichever the sign of $\lambda$, the energy $E$ has no definite sign. The distinction between focusing or defocusing nonlinearity is thus a priori ambiguous. However, in the previous case of Gaussian data in dimension 1, the behaviour of $r_\alpha$ (and then that of $u^\alpha$) has been proven to be sensibly different (\cite{nonlin_wave_mec, carlesgallagher, Ferriere__superposition_logNLS}).

\begin{prop}
    If $\lambda > 0$, then $r_\alpha$ is periodic. On the other hand, if $\lambda < 0$, then
    \begin{equation*}
        r_\alpha (t) \underset{t \rightarrow \infty}{\sim} 2t \sqrt{\abs{\lambda} \ln{t}}.
    \end{equation*}
\end{prop}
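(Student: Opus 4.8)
The plan is to recognize the ODE as a one-dimensional Newtonian system and to exploit its conserved energy. Multiplying $\ddot r_\alpha = r_\alpha^{-3} - 2\lambda r_\alpha^{-1}$ by $\dot r_\alpha$ and integrating, one finds that
\begin{equation*}
    \mathcal{E} \coloneqq \frac{1}{2} \dot r_\alpha(t)^2 + W(r_\alpha(t)), \qquad W(r) \coloneqq \frac{1}{2 r^2} + 2\lambda \ln r,
\end{equation*}
is independent of $t$, since $\ddot r_\alpha = - W'(r_\alpha)$. The whole analysis then reduces to studying the motion of a particle of energy $\mathcal{E}$ in the potential $W$ on $(0,\infty)$, whose shape is governed by the sign of $\lambda$ through $W'(r) = (2\lambda r^2 - 1)/r^3$.

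For $\lambda > 0$, I would first observe that $W$ is confining: $W(r) \to +\infty$ as $r \to 0^+$ (the $1/(2r^2)$ term dominates) and as $r \to +\infty$ (the $2\lambda \ln r$ term dominates), with a single critical point, a nondegenerate minimum, at $r_* = (2\lambda)^{-1/2}$ — precisely the value associated with the stationary Gausson. If $\mathcal{E} = W(r_*)$ then $r_\alpha \equiv r_*$; otherwise $\mathcal{E} > W(r_*)$ and the level set $\{\tfrac12 \dot r^2 + W(r) = \mathcal{E}\}$ is a closed curve in the $(r_\alpha, \dot r_\alpha)$ phase plane enclosing $(r_*,0)$, trapped between two turning points $r_- < r_+$ solving $W(r_\pm) = \mathcal{E}$. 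Since $W'(r_\pm) \neq 0$, the orbit is traversed periodically with finite period
\begin{equation*}
    T = 2 \int_{r_-}^{r_+} \frac{\diff r}{\sqrt{2(\mathcal{E} - W(r))}},
\end{equation*}
the endpoint singularities being of integrable square-root type. This yields periodicity of $r_\alpha$.

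For $\lambda < 0$ the picture is entirely different: $W'(r) = (2\lambda r^2 - 1)/r^3 < 0$ for every $r > 0$, so $W$ is strictly decreasing, from $+\infty$ at $0^+$ down to $-\infty$ as $r \to \infty$. There is no right turning point, and I would argue that after at most one bounce off the wall near $r = 0$ (occurring only if $\dot r_\alpha(0) < 0$) one has $\dot r_\alpha > 0$ for all large $t$ and $r_\alpha(t) \to +\infty$. For such $t$ the energy relation gives $\dot r_\alpha = \sqrt{2(\mathcal{E} - W(r_\alpha))}$, and since $\mathcal{E} - W(r) = 2\abs{\lambda}\ln r - \tfrac{1}{2r^2} + \mathcal{E} \sim 2\abs{\lambda}\ln r$, we get $\dot r_\alpha \sim 2\sqrt{\abs{\lambda}\ln r_\alpha}$. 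Inverting and integrating,
\begin{equation*}
    t = t_0 + \int_{r_\alpha(t_0)}^{r_\alpha(t)} \frac{\diff s}{\sqrt{2(\mathcal{E} - W(s))}} \underset{r_\alpha(t) \to \infty}{\sim} \frac{r_\alpha(t)}{2\sqrt{\abs{\lambda} \ln r_\alpha(t)}},
\end{equation*}
the last asymptotic following by comparing the integrand with the derivative of $R \mapsto R/(2\sqrt{\abs{\lambda}\ln R})$. A logarithmic bootstrap then closes the argument: from $r_\alpha \sim 2 t\sqrt{\abs{\lambda}\ln r_\alpha}$ one obtains $\ln r_\alpha - \tfrac12 \ln\ln r_\alpha \sim \ln t$, hence $\ln r_\alpha \sim \ln t$, and substituting back gives the claimed $r_\alpha(t) \sim 2 t \sqrt{\abs{\lambda}\ln t}$.

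The main obstacle is this asymptotic analysis of the time integral in the $\lambda < 0$ case: the heuristic $\dot r_\alpha \sim 2\sqrt{\abs{\lambda}\ln r_\alpha}$ must be turned into a rigorous statement on $r_\alpha(t)$, which requires controlling the subleading $\mathcal{E} - \tfrac{1}{2r^2}$ terms inside the slowly varying $\sqrt{\ln}$ factor and then resolving the implicit relation between $\ln r_\alpha$ and $\ln t$. By contrast, the periodicity in the $\lambda > 0$ case is comparatively routine once the confining shape of $W$ and the nondegeneracy of the turning points have been recorded.
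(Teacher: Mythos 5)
Your proof is correct. Note, however, that the paper itself gives no proof of this proposition: it is recalled as a known result, credited to \cite{nonlin_wave_mec, carlesgallagher, Ferriere__superposition_logNLS}. Your argument --- conservation of $\mathcal{E} = \tfrac12 \dot r_\alpha^2 + W(r_\alpha)$ with $W(r) = \tfrac{1}{2r^2} + 2\lambda \ln r$, phase-plane confinement between nondegenerate turning points for $\lambda>0$, and separation of variables, integral asymptotics and a logarithmic bootstrap for $\lambda<0$ --- is essentially the standard one carried out in those references, so there is nothing genuinely different to compare.
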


For $\lambda > 0$, such solutions $u^\alpha$ are almost periodic in time (up to a time-depending complex argument), which motivates to call them (and their derivates through the invariants and the scaling effect) \emph{breathers}. If those solutions are in dimension 1, they can be tensorized in order to find other solutions (also called \emph{breathers}) in higher dimension, even though they may be not periodic in general (see \cite{Ferriere__superposition_logNLS}).
However, in higher dimension $d \geq 2$, in the general case, the solutions \eqref{eq:gen_gauss_sol} are not periodic (and not "almost" periodic) and cannot be put under the form of a tensorization of breathers \eqref{eq:exp_breather_1d} in dimension 1 (already noticed in \cite{nonlin_wave_mec}). Therefore, all the functions \eqref{eq:exp_der_gen_gauss} will be called \emph{(general) Gaussian solutions} to \eqref{foc_log_nls} in the rest of the article, even though breathers are obviously a particular case of general Gaussian solutions.

Moreover, the ambiguity about the focusing or defocusing case has been removed in the general case by \cite{Cazenave_log_nls} (case $\lambda > 0$) and \cite{carlesgallagher} (case $\lambda < 0$). Indeed, in the latter, the authors show that all the solutions disperse in an unusually faster way (in the same way as for the Gaussian case) with a universal dynamic: after rescaling, the modulus of the solution converges to a universal Gaussian profile.
On the other hand, it has been proved that $\lambda > 0$ is the focusing case because there is no dispersion for large times thanks to the following result.
\begin{lem}[{\cite[Lemma~3.3]{Cazenave_log_nls}}]
    Let $\lambda > 0$.
    For any $k < \infty$ such that
    \begin{equation*}
        L_k \coloneqq \{ v \in W (\mathbb{R}^d), \norm{v}_{L^2} = 1, E(v) \leq k \} \neq \emptyset,
    \end{equation*}
    there holds
    \begin{equation*}
        \inf_{\substack{v \in L_k \\ 1 \leq p \leq \infty}} \norm{v}_{L^p} > 0.
    \end{equation*}
\end{lem}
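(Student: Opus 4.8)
The plan is to recast the two hypotheses defining $L_k$ as a \emph{two-sided bound} on the single scalar quantity $\int_{\mathbb{R}^d}\abs{v}^2\ln\abs{v}^2\diff x$, and then to show that such a two-sided bound by itself forces every $L^p$-norm, from $L^1$ up to $L^\infty$, to stay uniformly away from $0$. Since $\norm{v}_{L^2}=1$ for all $v\in L_k$, the measure $\diff\mu_v\coloneqq\abs{v}^2\diff x$ is a probability measure, and rewriting the energy using $\norm{v}_{L^2}^2=1$ turns $E(v)\le k$ into
\[
\frac12\norm{\nabla v}_{L^2}^2-\lambda\int_{\mathbb{R}^d}\abs{v}^2\ln\abs{v}^2\diff x\le k-\lambda.
\]
Dropping the nonnegative gradient term and dividing by $-\lambda<0$ immediately gives the lower bound $\int\abs{v}^2\ln\abs{v}^2\diff x\ge 1-k/\lambda\eqqcolon -K$, uniformly over $L_k$. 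This is the easy half.

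Next I would exploit that $\mu_v$ is a probability measure through Jensen's inequality: writing $\abs{v}^p=\exp\bigl(\tfrac{p-2}{2}\ln\abs{v}^2\bigr)\,\abs{v}^2$ and applying Jensen to the convex function $\exp$ gives, for every $p\in[1,\infty)$,
\[
\norm{v}_{L^p}^p=\int_{\mathbb{R}^d}\exp\Bigl(\tfrac{p-2}{2}\ln\abs{v}^2\Bigr)\diff\mu_v\ge\exp\Bigl(\tfrac{p-2}{2}\int_{\mathbb{R}^d}\abs{v}^2\ln\abs{v}^2\diff x\Bigr),
\]
hence $\norm{v}_{L^p}\ge\exp\bigl(\tfrac12(1-\tfrac2p)\int\abs{v}^2\ln\abs{v}^2\diff x\bigr)$, and the case $p=\infty$ follows by letting $p\to\infty$. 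The coefficient $\tfrac12(1-\tfrac2p)$ ranges in $[-\tfrac12,\tfrac12]$, so this one inequality reduces the whole claim to controlling $\int\abs{v}^2\ln\abs{v}^2\diff x$ from \emph{both} sides: the lower bound $-K$ (used for $p\ge2$) is already in hand, while an upper bound $\int\abs{v}^2\ln\abs{v}^2\diff x\le K'$ (used for $p\le2$) would yield $\inf\norm{v}_{L^p}\ge\exp(-\tfrac12\max(K,K',0))$.

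It therefore remains to produce that upper bound, which is the only nontrivial step. Here I would invoke the Euclidean logarithmic Sobolev inequality: for $\norm{v}_{L^2}=1$,
\[
\int_{\mathbb{R}^d}\abs{v}^2\ln\abs{v}^2\diff x\le\frac{d}{2}\ln\Bigl(\frac{2}{\pi d e}\,\norm{\nabla v}_{L^2}^2\Bigr).
\]
Substituting this into the energy inequality and setting $X\coloneqq\norm{\nabla v}_{L^2}^2$, $c\coloneqq 2/(\pi d e)$ gives $\tfrac12 X-\tfrac{\lambda d}{2}\ln(cX)\le k-\lambda$; since the left-hand side tends to $+\infty$ as $X\to\infty$, this confines $X$ to a bounded interval $[0,X_{\max}]$ with $X_{\max}$ depending only on $k,\lambda,d$. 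Feeding $X\le X_{\max}$ back into the logarithmic Sobolev inequality produces the required uniform bound $\int\abs{v}^2\ln\abs{v}^2\diff x\le\tfrac{d}{2}\ln(cX_{\max})\eqqcolon K'$.

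The main obstacle is precisely this last step. The energy bound $E(v)\le k$ alone does \emph{not} prevent $\int\abs{v}^2\ln\abs{v}^2\diff x$ from blowing up, since a very negative logarithmic term only helps satisfy an upper bound on $E$; one genuinely needs the logarithmic Sobolev inequality to convert a large value of $\int\abs{v}^2\ln\abs{v}^2\diff x$ into an exponentially larger gradient, which the finite energy then rules out. Everything else is elementary, and assembling the two sides yields $\inf_{v\in L_k,\,1\le p\le\infty}\norm{v}_{L^p}\ge\exp(-\tfrac12\max(K,K',0))>0$.
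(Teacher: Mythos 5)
Your proof is correct, but there is nothing in the paper to compare it against line by line: the paper does not prove this lemma, it quotes it from \cite[Lemma~3.3]{Cazenave_log_nls}, and the only trace of the cited argument is the remark that a lower bound of the form $\norm{u(t)}_{L^\infty} \geq \exp\bigl[\,\cdot - E(u)/(2\lambda M(u))\bigr]$ comes from ``the proof of the above result''. A bound of that form follows from exactly the two ingredients you use for $p\geq 2$: the entropy lower bound $\int \abs{v}^2 \ln \abs{v}^2 \diff x \geq 1 - k/\lambda$ (drop the gradient term in $E$), and the Jensen-type inequality $\int \abs{v}^2 \ln \abs{v}^2 \diff x \leq \ln \norm{v}_{L^\infty}^2$ for the probability density $\abs{v}^2$ --- your Jensen step with $\exp$ is the finite-$p$ version of this, so on $2 \leq p \leq \infty$ your argument is the classical one. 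Your genuinely additional content is the range $1 \leq p < 2$, where you correctly identify that an \emph{upper} bound on the entropy is the crux (the energy bound alone permits the entropy to be arbitrarily large positive), and your route via the sharp Euclidean logarithmic Sobolev inequality is sound: energy bound plus LSI confine $X = \norm{\nabla v}_{L^2}^2$ to a bounded set since $\tfrac12 X$ beats $\tfrac{\lambda d}{2}\ln(cX)$, and feeding $X \leq X_{\max}$ back into LSI bounds the entropy above; this double use of LSI is not circular. It is worth noting that the paper reaches the same kind of conclusion elsewhere (Lemma \ref{lem:W_bound}, the uniform $W$-bound) by a softer route that avoids the sharp LSI constant: bound the positive part of the entropy by $\int_{\abs{v}>1}\abs{v}^2 \ln \abs{v}^2 \diff x \leq C_d \int \abs{v}^{2+\frac{1}{2d}} \diff x$, control this via Gagliardo--Nirenberg by a sublinear power of $\norm{\nabla v}_{L^2}$, and conclude $\norm{\nabla v}_{L^2} \leq C(k,\lambda,d)$ from the energy inequality, hence the entropy upper bound. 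Either route buys the same two-sided bound; LSI is quantitatively sharper, Gagliardo--Nirenberg is more elementary and stays at the $H^1$ level.
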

This lemma, along with the conservation of the energy and the invariance through scaling factors (with Remark \ref{rem:scaling}), indicates that the solution to \eqref{foc_log_nls} is not dispersive, no matter how small the initial data are. For instance, its $L^\infty$ norm is bounded from below: to be more precise, there holds for all $t \in \mathbb{R}$ (see the proof of the above result)
\begin{equation*}
    \norm{u (t)}_{L^\infty} \geq \exp{\Bigl[ 1 - \frac{E (u(t))}{2 \lambda \, M (u(t))} \Bigr]} = \exp{\Bigl[ 1 - \frac{E (u_\textnormal{in})}{2 \lambda \, M (u_\textnormal{in})} \Bigr]}.
\end{equation*}
%

Actually, a specific Gaussian function \eqref{expr_gausson} called \emph{Gausson} and its derivates through the invariants of the equation and the scaling effect,
\begin{equation*}
    G^d_{\omega, x_0, v, \theta} (t,x) \coloneqq \exp \left[ i \left( \theta + 2 \lambda \omega t - v \cdot x + \frac{\abs{v}^2}{2} t \right) + \frac{d}{2} + \omega - \lambda \abs{x - x_0 - vt}^2 \right], \qquad t \in \mathbb{R}, \, x \in \mathbb{R}^d,
\end{equation*}
for any $\omega, \theta \in \mathbb{R}$, $x_0, v \in \mathbb{R}^d$, are known to be solutions to \eqref{foc_log_nls} for $\lambda > 0$, as proved in \cite{Dav_Mont_Squa_lognls} (and already noticed in \cite{nonlin_wave_mec}). It has also been proved that other radial stationary solutions to \eqref{foc_log_nls} exist (see \cite{Berestycki_Lions__excited_states_1d, Berestycki_Gallouet_Kavian__excited_state_2d, Dav_Mont_Squa_lognls}), but the Gausson is clearly special since it is the unique positive $\mathcal{C}^2$ stationary solution to \eqref{foc_log_nls} (also proved in \cite{Dav_Mont_Squa_lognls,Troy__Unique_pos_ground_state_logNLS}) and also since it is orbitally stable (\cite{Ardila__Orbital_stability_Gausson}, following the work of \cite{Cazenave_log_nls}).

\begin{theorem}[{\cite[Theorem~1.5]{Ardila__Orbital_stability_Gausson}}]
    Let $\omega \in \mathbb{R}$.
    For any $\varepsilon > 0$, there exists $\eta > 0$ such that for all $u_0 \in W (\mathbb{R}^d)$ satisfying
    \begin{equation*}
        \inf_{\theta, x_0} \norm{u_0 - e^{\omega + i \theta} G^d (. - x_0)}_{W ( \mathbb{R}^d )} < \eta,
    \end{equation*}
    the solution $u(t)$ of \eqref{foc_log_nls} with initial data $u_0$ satisfies
    \begin{equation*}
        \sup_t \inf_{\theta, x_0} \norm{u (t) - e^{\omega + i \theta} G^d (. - x_0)}_{W ( \mathbb{R}^d )} < \varepsilon.
    \end{equation*}
\end{theorem}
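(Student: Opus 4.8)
The statement is the orbital stability of the Gausson, and I would establish it by the variational (Cazenave--Lions) method, using the conserved mass $M$ and energy $E$ together with the sharp logarithmic Sobolev inequality. The first step is to characterise the Gausson variationally: for fixed $m>0$ I would consider
\begin{equation*}
J_m \coloneqq \inf\{ E(v) : v\in W(\mathbb{R}^d),\ M(v)=m \},
\end{equation*}
and show that the infimum is finite and attained \emph{exactly} on the orbit $\mathcal{O}_\omega \coloneqq \{ e^{\omega+i\theta}G^d(\cdot-x_0) : \theta\in\mathbb{R},\,x_0\in\mathbb{R}^d\}$, with $\omega$ fixed by $m=e^{2\omega}M(G^d)$. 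A direct differentiation of $E$ gives the stationary equation $\tfrac12\Delta G^d+\lambda G^d\ln\abs{G^d}^2=0$, so $e^\omega G^d$ is a critical point of $E$ under the constraint $M=m$ with Lagrange multiplier $-2\lambda\omega$; that it is the \emph{only} minimiser modulo phase and translation is precisely the equality case of the sharp logarithmic Sobolev inequality, whose extremisers are the (rescaled, translated, phase-rotated) Gaussians. Using $E(\kappa v)=\kappa^2 E(v)-2\lambda\kappa^2(\ln\kappa)M(v)$ one computes explicitly $J_m=\tfrac{m}{M(G^d)}E(G^d)-\lambda m\ln\!\big(m/M(G^d)\big)$.

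Next I would reduce the stability statement to a compactness property of minimising sequences by a contradiction argument. If the conclusion failed, there would exist $\varepsilon_0>0$, initial data $u_{0,n}$ with $\inf_{\theta,x_0}\norm{u_{0,n}-e^{\omega+i\theta}G^d(\cdot-x_0)}_{W}\to0$, and times $t_n$ such that $\inf_{\theta,x_0}\norm{u_n(t_n)-e^{\omega+i\theta}G^d(\cdot-x_0)}_{W}\geq\varepsilon_0$. Since $M$ and $E$ are continuous on $W$ and conserved along the flow (Theorem~\ref{th:Cauchy_problem}), the sequence $v_n\coloneqq u_n(t_n)$ satisfies $M(v_n)\to m$ and $E(v_n)\to J_m$, i.e. it is a minimising sequence for $J_m$. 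It then suffices to prove that any such minimising sequence is, up to translations, phases and extraction, convergent in $W$ to an element of $\mathcal{O}_\omega$: this contradicts the uniform lower bound $\varepsilon_0$ and closes the argument.

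The heart of the proof is therefore the compactness of minimising sequences, which I would obtain through the concentration--compactness principle applied to $(v_n)$. \emph{Vanishing} is ruled out immediately by \cite[Lemma~3.3]{Cazenave_log_nls} quoted above: a minimising sequence has bounded energy and normalised mass, hence its $L^p$ norms are uniformly bounded away from $0$, which is incompatible with spreading. \emph{Dichotomy} is excluded by the strict subadditivity $J_m<J_{m'}+J_{m-m'}$ for all $0<m'<m$; this follows at once from the explicit formula for $J_m$, since it reduces to the strict superadditivity of $x\mapsto x\ln x$ combined with $\lambda>0$. Compactness then provides, after suitable translations and extraction, a limit $v_\infty$ with $M(v_\infty)=m$ to which $(v_n)$ converges strongly enough (in particular in $L^2$ and locally in the stronger norms) that $E(v_\infty)=J_m$; hence $v_\infty\in\mathcal{O}_\omega$ by the characterisation above. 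The convergence of the masses and of the kinetic and entropy parts of the energy then upgrades this to strong convergence in $H^1$ and of the entropy $\int\abs{v_n}^2\ln\abs{v_n}^2$, and finally to convergence in the Luxembourg norm of $W$.

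The main obstacle is this last compactness step, and within it the control of the logarithmic term $\int\abs{v}^2\ln\abs{v}^2$, whose integrand changes sign and is not a priori uniformly integrable: handling it requires splitting into the regions $\{\abs{v}\le1\}$ and $\{\abs{v}>1\}$ and combining the $H^1$ bound with the uniform lower bounds on the $L^p$ norms, and then transferring the resulting $H^1$-plus-entropy convergence into genuine convergence in the Orlicz space $W$. The uniqueness of the logarithmic Sobolev extremisers and the strict subadditivity of $J_m$ are the two structural inputs that make the scheme work.
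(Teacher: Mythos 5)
This statement is not proved in the paper at all: it is quoted as background from \cite[Theorem~1.5]{Ardila__Orbital_stability_Gausson}, and the argument given there is precisely the one you outline --- a variational characterisation of the Gausson orbit as the set of minimisers of $E$ at fixed mass via the sharp logarithmic Sobolev inequality (whose extremisers are Gaussians), combined with a Cazenave--Lions concentration--compactness argument in which vanishing is excluded by the uniform $L^p$ lower bound and dichotomy by the strict subadditivity coming from the $m \ln m$ term in the explicit value of $J_m$. Your proposal therefore reconstructs essentially the same proof as the cited source, and its structure is sound.
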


\begin{rem} \label{rem:link_br_sol}
    Remark that
    \begin{equation*}
        G^d_{\omega, x_0, v, \theta} \equiv B^{2 \lambda I_d}_{\omega, x_0, v, \theta},
    \end{equation*}
    where $I_d$ is the identity matrix in dimension $d$. Indeed, $2 \lambda I_d$ is a constant matrix solution to \eqref{eq:matrix_ODE}.
\end{rem}

\begin{rem}
    In particular, we point out that solitary wave solutions for \eqref{foc_log_nls} (\textit{i.e.} Gaussons) exist for ALL frequencies, unlike NLS equations with polynomial-like nonlinearity for which the only possible frequencies are (at least) non-negative. This is a consequence of the logarithmic nonlinearity, which satisfies $$g(s) \coloneqq - \lambda \ln{s} \underset{y \rightarrow 0}{\longrightarrow} + \infty,$$ unlike polynomial-like nonlinearity.
\end{rem}

\subsection{Main results}

\subsubsection{Existence of multi-Gaussons}

It was observed and proved for the Korteweg-de Vries equation that, for a large class of initial data, all solutions are global and eventually decompose into a finite sum of solitons going to the right and a dispersive part going to the left \cite{Eckhaus_Schuur__Soliton_Resolution_KdV, Schuur__Asymptotic_analysis_soliton}. This type of behavior is thought to be generic for nonlinear dispersive PDEs and this leads to the \emph{(Soliton) Resolution Conjecture}, which (vaguely formulated) states that any global solution of a nonlinear dispersive PDE will eventually decompose at large time as a combination of non-scattering structures (\textit{e.g.} a sum of solitary waves) and a radiative term.

Until recently, such conjecture had only been established for some integrable models, \textit{e.g.} the Korteweg-de Vries equation. The breakthrough approach introduced by Duyckaerts, Kenig and Merle allowed to prove this conjecture for some non-integrable equations such as the energy-critical wave equation \cite{Duyckaerts_Kenig_Merle__Class_radial_foc_wave} or the equivariant wave maps to the sphere \cite{Cote__soliton_resolution_equi_wave_sphere}. It remains an open problem for most of the classical nonlinear dispersive equations.

The Soliton Resolution Conjecture motivates the study of multi-soliton solutions for nonlinear dispersive PDE, \textit{i.e.} solutions which behave at large time as a sum of solitons. Indeed, investigating the existence and properties of solutions of dispersive equations made of a combination of non-scattering structures is a first step toward a proof of a Decomposition Conjecture, and multi-solitons are one of the simplest examples of a combination of non-scattering structures.

Several methods are available to obtain multi-solitons. They have been first constructed for NLS in the one dimensional cubic focusing case by Zakharov and Shabat \cite{Zakharov_Shabat__multisoliton_NLS_IST} using the inverse scattering transform method (IST). The IST is a powerful tool to study nonlinear dispersive equations and to exhibit non-trivial nonlinear dynamics for these equations. However, the IST application is restricted to equations which are completely integrable, like for example the Korteweg-de Vries equation and the cubic nonlinear Schrödinger equation in dimension 1. Moreover, integrability probably does not hold for \eqref{foc_log_nls}.

Another method to construct multi-soliton solutions of non-integrable equations was introduced by Martel, Merle and Tsai \cite{Martel_Merle_Tsai__Stability_Multisolitons_gKdV} for generalized Korteweg-de Vries equations and later developed in the case of $L^2$-subcritical nonlinear Schrödinger equations \cite{Martel_Merle__Multi_solitary_waves_NLS, Martel_Merle_Tsai__Stability_multisoliton_NLS}. This method uses tools usually called \emph{energy techniques}, in the sense that it relies on the use of the second variation of the energy as a Lyapunov functional to control the difference of a solution $u$ with the soliton sum $R$. It was later fine-tuned to allow the treatment of $L^2$ supercritical equations \cite{Cote_Martel_Merle_Construction_multisoliton_gKdV_NLS} and of profiles made with excited states \cite{Cote_LeCoz_Multisolitons_NLS}.

In this article, we show that this method for the construction of multi-solitons can be extended to a focusing logarithmic nonlinearity, revealing the existence of \emph{multi-Gaussons} for \eqref{foc_log_nls}. We will denote by $\mathcal{F}$ the Fourier Transform so that
\begin{equation*}
    \mathcal{F} (H^1) (\mathbb{R}^d) = \{ v \in L^2 (\mathbb{R}^d), \norm{\abs{x} \, v}_{L^2} < \infty \}
\end{equation*}
is a Hilbert space with its usual scalar product.

\begin{theorem}[Existence of multi-Gaussons] \label{th:exist_multi_sol}
Consider $\lambda>0$, $N \in \mathbb{N}^*$, $d \in \mathbb{N}^*$ and take $(v_k)_{1 \leq k \leq N}$ and $(x_k)_{1 \leq k \leq N}$ two families in $\mathbb{R}^d$, $(\omega_k)_{1 \leq k \leq N}$ and $(\theta_k)_{1 \leq k \leq N}$ two families of real numbers. Define 
\begin{equation*}
    v_* \coloneqq \min_{j \neq k} \, \abs{v_{j} - v_k}, \qquad \qquad
    G_k \coloneqq G^d_{\omega_k, x_k, v_k, \theta_k}.
\end{equation*}

If $v_* > 0$,
then there exist a unique solution $u \in \mathcal{C}_b (\mathbb{R}, W (\mathbb{R}^d)) \cap L^\infty_\textnormal{loc} (\mathbb{R}, \mathcal{F}(H^1) (\mathbb{R}^d))$ to \eqref{foc_log_nls} and $T \in \mathbb{R}$ such that $\forall t \geq 0$,
\begin{equation}
    \norm*[\Big]{u(T + t) - \sum_{k=1}^N G_k (T + t)}_{H^1 \cap \mathcal{F} (H^1)} \leq e^{- \frac{\lambda (v_* t)^2}{4}}. \label{dec_est_th}
\end{equation}
In particular, there exists $C > 0$ (depending on $\lambda$, $v_*$ and $T$) such that
\begin{equation*}
    \norm{u(t) - \sum_{k=1}^N G_k (t)}_{H^1 \cap \mathcal{F} (H^1)} \leq C \, e^{- \frac{\lambda (v_* t)^2}{8}}, \qquad \forall t \geq 0.
\end{equation*}
\end{theorem}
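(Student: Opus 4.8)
The plan is to follow the compactness (energy) method of Martel--Merle--Tsai \cite{Martel_Merle_Tsai__Stability_Multisolitons_gKdV, Martel_Merle__Multi_solitary_waves_NLS}, exploiting two features specific to \eqref{foc_log_nls}: the Gaussons decay like Gaussians, so their mutual overlaps are \emph{super-exponentially} small once $v_* > 0$ forces the centres $x_k + v_k t$ to separate linearly; and the sharp $L^2$ stability of Lemma \ref{lem:L2_energy_est}. Write $R(t) \coloneqq \sum_{k=1}^N G_k(t)$. For an increasing sequence $T_n \to +\infty$, let $u_n$ be the solution of \eqref{foc_log_nls} given by Theorem \ref{th:Cauchy_problem} with final datum $u_n(T_n) = R(T_n)$; each $u_n$ is global. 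The aim is to prove estimates on $w_n \coloneqq u_n - R$ that are uniform in $n$ on a common interval $[T_0, T_n]$, to extract a weak limit, and to identify it with the sought solution.

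The heart of the matter is the $L^2$ estimate. Since every $G_k$ solves \eqref{foc_log_nls} exactly, the error $\mathcal{E}(t) \coloneqq i\,\partial_t R + \frac{1}{2}\Delta R + \lambda R \ln\abs{R}^2 = \lambda\bigl( R\ln\abs{R}^2 - \sum_k G_k \ln\abs{G_k}^2 \bigr)$ is concentrated, up to exponentially small tails, in the regions where two Gaussons overlap; the Gaussian separation beats the logarithmic factor, so $\norm{\mathcal{E}(t)}_{L^2}$ (as well as the weighted and gradient norms of $\mathcal{E}$) is bounded by $C e^{-c(v_* t)^2}$ for large $t$. Now $w_n$ solves $i\,\partial_t w_n + \frac{1}{2}\Delta w_n + \lambda(u_n\ln\abs{u_n}^2 - R\ln\abs{R}^2) = -\mathcal{E}$; pairing with $w_n$ and taking imaginary parts, the pointwise inequality $\abs{\Im[(z_1\ln\abs{z_1}^2 - z_2\ln\abs{z_2}^2)(\bar z_1 - \bar z_2)]} \le 2\abs{z_1 - z_2}^2$ underlying Lemma \ref{lem:L2_energy_est} yields the two-sided bound $\bigl\lvert \frac{d}{dt}\norm{w_n}_{L^2} \bigr\rvert \le 2\lambda\norm{w_n}_{L^2} + \norm{\mathcal{E}}_{L^2}$. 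Integrating backward from $w_n(T_n)=0$ gives $\norm{w_n(t)}_{L^2} \le \int_t^{T_n} e^{2\lambda(s-t)}\norm{\mathcal{E}(s)}_{L^2}\,\diff s$, and because $\norm{\mathcal{E}(s)}_{L^2}$ decays faster than any exponential the factor $e^{2\lambda s}$ is absorbed, producing $\norm{w_n(t)}_{L^2} \le e^{-\lambda(v_* t)^2/4}$ uniformly in $n$.

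Upgrading to $H^1 \cap \mathcal{F}(H^1)$ is where the genuine work lies. I would derive the analogous differential inequalities for $\norm{\abs{x} w_n}_{L^2}$ and $\norm{\nabla w_n}_{L^2}$. For the weight the pointwise logarithmic inequality survives multiplication by the real factor $\abs{x}^2$, so the weighted estimate closes against the $L^2$ and gradient quantities together with $\norm{\abs{x}\mathcal{E}}_{L^2}$. The real obstacle is the gradient estimate, since $\nabla(u\ln\abs{u}^2)$ contains the factor $\ln\abs{u}^2$, which is unbounded where $\abs{u}$ is small. The crucial saving is that the most singular contribution, of the form $\Im\int \ln\abs{u}^2 \, \nabla\bar w_n \cdot \nabla w_n$, vanishes pointwise because $\ln\abs{u}^2$ is real; the remaining terms involve differences $\ln\abs{u}^2 - \ln\abs{R}^2$ paired against $\nabla R$, which one controls using the explicit Gaussian bounds on $R$ and $\nabla R$ and the uniform spectral bounds of Proposition \ref{prop:expression_general_gaussian}, together with the already-established smallness of $w_n$. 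Closing this coupled system of differential inequalities, again by backward integration with the super-exponential error absorbing every exponential factor, yields \eqref{dec_est_th} for $w_n$ uniformly in $n$, after fixing $T_0 = T$ large enough.

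Finally, the uniform $H^1 \cap \mathcal{F}(H^1)$ bounds give weak-$\ast$ compactness of $(u_n)$ on each compact time interval; a diagonal extraction produces a solution $u$ of \eqref{foc_log_nls}, and lower semicontinuity of the norms transfers \eqref{dec_est_th} to $u$, whence also the stated corollary. Uniqueness is then immediate from Lemma \ref{lem:L2_energy_est} applied to the difference $w = u - \tilde u$ of two solutions both satisfying \eqref{dec_est_th}: for fixed $t$ and any $s \ge t$, $\norm{w(t)}_{L^2} \le e^{2\lambda(s-t)}\norm{w(s)}_{L^2} \le e^{2\lambda(s-t)}\bigl( \norm{u(s)-R(s)}_{L^2} + \norm{\tilde u(s)-R(s)}_{L^2} \bigr)$, which tends to $0$ as $s \to +\infty$ since the right-hand side decays faster than any exponential; hence $w \equiv 0$.
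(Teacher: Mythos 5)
Your overall architecture is the paper's: backward approximation $u_n(T_n)=R(T_n)$, uniform-in-$n$ estimates on $[T,T_n]$, compactness, and uniqueness via the $L^2$ stability estimate. Your $L^2$ step is exactly the paper's proof of Proposition \ref{prop:unif_est_L2} (Lemma \ref{lem_log_inequality} applied to $u_n$ versus $R$, the source term $R\ln\abs{R}^2-\sum_k G_k\ln\abs{G_k}^2$ estimated via the Gaussian-separation Lemma \ref{lem:diff_L_log_L_sum_gaussian}, then backward Gronwall with the Gaussian decay absorbing $e^{2\lambda s}$). Your weighted estimate is the paper's Section \ref{sec:FH1_est}, and your uniqueness argument is the paper's rigidity argument (Lemma \ref{lem:rigidity_all}). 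The problem is the step you yourself identify as "where the genuine work lies": the $\dot{H}^1$ estimate.

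Your plan for the gradient norm does not close, and this is not a technical detail but the central difficulty of the paper. After the correct observation that $\Im\int \ln\abs{u_n}^2\,\abs{\nabla w_n}^2\diff x=0$, the remaining terms are $\Im\int(\ln\abs{u_n}^2-\ln\abs{R}^2)\,\nabla R\cdot\overline{\nabla w_n}\diff x$ together with the terms produced when $\nabla$ hits the modulus, of the form $e^{i\arg u_n}\nabla\abs{u_n}-e^{i\arg R}\nabla\abs{R}$ paired against $\overline{\nabla w_n}$. Neither is controlled by $\norm{w_n}_{H^1}$ and Gaussian tails of $R$. The difference $\ln\abs{u_n}^2-\ln\abs{R}^2$ is not pointwise comparable to $\abs{w_n}$: it blows up on the set where $\abs{u_n}$ is small, and there is no a priori lower bound on $\abs{u_n}$ nor any control on where this set sits. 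Smallness of $\norm{w_n}_{H^1}$ does not prevent $\abs{u_n}$ from being, say, of size $e^{-1/\norm{w_n}_{L^2}}$ on regions where $\nabla R$ is of order one, in which case $\abs{\ln\abs{u_n}^2}$ is of size $\norm{w_n}_{L^2}^{-1}$ there and the Gronwall inequality degenerates. Likewise the phase factor $e^{i\arg u_n}-e^{i\arg R}$ is not small near the vacuum. The point is that the "Im-part miracle" of Lemma \ref{lem_log_inequality} exists only at the $L^2$ level; one derivative up there is no analogue, which is precisely what the paper means when it says the energy is not $\mathcal{C}^2$ and its second variation is ill-posed in $\mathcal{L}(L^2)$. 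The paper's actual proof of the $H^1$ bound is the whole of Section \ref{sec:H1_est}, built to avoid ever differentiating (in time or in space) a logarithm of the unknown solution: a bootstrap on localized action functionals $S^{\textnormal{loc}}$, whose slow variation (Proposition \ref{prop:slow_var_loc_func}) only requires time derivatives of localized masses and momenta — quantities containing no logarithm, by conservation of $E$ — and whose almost-coercivity (Proposition \ref{prop:coercivity}) rests on the one-sided expansion of the potential energy in Lemma \ref{lem:exp_u_ln_u}, combined with the already-known $L^2$ estimate. Your proposal needs this entire mechanism substituted for the direct gradient Gronwall; as written, the $H^1$ part (and hence the $\mathcal{F}(H^1)$ part, which takes the gradient bound as input) is a gap. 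A minor additional remark: weak-$\ast$ compactness alone does not let you pass to the limit in the nonlinearity; the paper instead uses the compact embedding $H^1\cap\mathcal{F}(H^1)\subset\subset L^2$ at time $T$ and then Lemma \ref{lem:L2_energy_est} (continuous dependence) to upgrade to strong $L^2$ convergence at every time before invoking lower semicontinuity.
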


Several features are new for this case and should be pointed out.

First, for NLS with a nonlinearity of the form $g(\abs{u}^2) \, u$, the nonlinearity should usually satisfy a flatness property at 0 for such a result (for example in \cite{Cote_LeCoz_Multisolitons_NLS}: $g(0)=0$ and $\lim_{s \rightarrow 0} s \, g'(s) = 0$). Here, the nonlinearity in \eqref{foc_log_nls} is not flat at all at 0: even more, it is not even defined at 0 since we gave here $g = - \lambda \, \ln$ and
\begin{equation*}
    g(s) \underset{s \rightarrow 0^+}{\longrightarrow} + \infty, \qquad
    s \, g'(s) = \lambda \quad \forall s > 0.
\end{equation*}

The convergence rate of the solution to the sum of solitons is also very interesting. The convergence rate for NLS with polynomial-like nonlinearity is exponential, whereas it is "Gaussian-like" here, which is much faster. Moreover, it does not depend on the frequencies of the solitons anymore (even though $T$ does), unlike in \cite{Cote_LeCoz_Multisolitons_NLS}.

Such features may be surprising at first sight. However, they can be explained by the decay at infinity of the Gaussons. Indeed, in the same way as for the convergence rate, the decay of the solitons at infinity is usually exponential, with a rate depending on its frequency, whereas the solitons for \eqref{foc_log_nls} are the Gaussons, in particular Gaussian functions, whose decay at infinity is much faster and independent of their frequencies (up to a multiplicative constant). Moreover, the nonlinearity is still smooth enough: it is smooth far from the vacuum and, near the vacuum, $u \ln{\abs{u}^2}$ remains almost lipschitz.

Remark also that the convergence is in $H^1 (\mathbb{R}^d) \cap \mathcal{F} (H^1 (\mathbb{R}^d))$. Hence the same convergence rate holds in the energy space $W (\mathbb{R}^d)$ since it is known that 
\begin{equation*} 
    H^1 (\mathbb{R}^d) \cap \mathcal{F} (H^1 (\mathbb{R}^d)) \subset W (\mathbb{R}^d).
\end{equation*}
We can compare this to the case of a subcritical nonlinearity, where the convergence is also proved in the energy space for such an equation, \textit{i.e.} "only" $H^1$.

\begin{rem} \label{rem:rigidity}
    Theorem \ref{th:exist_multi_sol} does not say that, as soon as we fixed all the parameters, the multi-Gausson is unique. However, there is a unique multi-Gausson which satisfies the convergence rate property \eqref{dec_est_th} (for those parameters). Indeed, any other multi-Gausson $v$ would satisfy for all $t \geq T_1$
    \begin{equation*}
        \norm{v(t) - \sum_{k=1}^N G_k (t)}_{L^2} \geq C_1 \, e^{- 2 \lambda t}.
    \end{equation*}
    for some constant $C_1 > 0$ and some time $T_1$ (see Lemma \ref{lem:rigidity_multi}). Therefore, it is a \emph{rigidity} property.
\end{rem}

\subsubsection{Existence of multi-breathers and multi-gaussians}

The Gaussons are not the only non-scattering structures that we are aware of for this equation: we have excited states, but we also have breathers and more generally gaussian solutions. Thus, the (Soliton) Resolution Conjecture also motivates the study of multi-gaussians. However, those breathers and gaussian solutions are \emph{not} bound state for the energy $E$, and then the same energy techniques cannot be applied. Nevertheless, the method used to find the $L^2$ estimate for the multi-Gaussons for \eqref{foc_log_nls} does not involve the energy: such a method can be tuned in order to fit with these multi-gaussians.

\begin{theorem}[Existence of multi-gaussians] \label{th:exist_multi_br}
Consider $\lambda>0$, $N \in \mathbb{N}^*$, $d \in \mathbb{N}^*$ and take $(v_k)_{1 \leq k \leq N}$ and $(x_k)_{1 \leq k \leq N}$ two families in $\mathbb{R}^d$, $(\omega_k)_{1 \leq k \leq N}$ and $(\theta_k)_{1 \leq k \leq N}$ two families of real numbers, and $(A_k^\textnormal{in})_{1 \leq k \leq N}$ a sequence of complex matrices in $S_d (\mathbb{C})^{\Re +}$. Define $A_k (t)$ the solution to \eqref{eq:matrix_ODE} with initial data $A_k^\textnormal{in}$ and
\begin{equation*}
    v_* \coloneqq \min_{j \neq k} \, \abs{v_{j} - v_k}, \qquad \qquad
    B_k \coloneqq B^{A_k^\textnormal{in}}_{\omega_k, x_k, v_k, \theta_k}, \qquad \qquad
    \sigma_- \coloneqq \frac{1}{2} \inf_{t,k} \sigma ( \Re A_k (t) ) > 0.
\end{equation*}

If $v_* > 0$,
then there exist a unique solution $u \in \mathcal{C}_b (\mathbb{R}, W (\mathbb{R}^d))$ to \eqref{foc_log_nls} and $T \in \mathbb{R}$ such that $\forall t \geq 0$,
\begin{equation}
    \norm*[\Big]{u(T + t) - \sum_{k=1}^N B_k (T + t)}_{L^2} \leq e^{- \frac{\sigma_- (v_* t)^2}{4}}. \label{eq:dec_est_th_br}
\end{equation}
In particular, there exists $C > 0$ (depending on $\lambda$, $v_*$ and $T$) such that
\begin{equation*}
    \norm{u(t) - \sum_{k=1}^N B_k (t)}_{L^2} \leq C \, e^{- \frac{\sigma_- (v_* t)^2}{8}}, \qquad \forall t \geq 0.
\end{equation*}
\end{theorem}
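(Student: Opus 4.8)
The plan is to reproduce the backward-in-time compactness construction of Martel--Merle, but to replace the energy/modulation machinery (unavailable here, since the general Gaussians $B_k$ are not critical points of $E$) by the purely $L^2$ a priori estimate of Lemma~\ref{lem:L2_energy_est}, which is the one ingredient that survives. Set $R:=\sum_{k=1}^N B_k$ and $f(z):=z\ln\abs{z}^2$. Because each $B_k$ solves \eqref{foc_log_nls}, a direct computation shows that $R$ solves the equation up to the source term
\begin{equation*}
    \mathcal{E} := i\,\partial_t R + \tfrac{1}{2}\Delta R + \lambda\,R\ln\abs{R}^2 = \lambda\Bigl( R\ln\abs{R}^2 - \sum_{k=1}^N B_k\ln\abs{B_k}^2 \Bigr).
\end{equation*}
Everything then hinges on showing that this interaction term is Gaussian-small, namely $\norm{\mathcal{E}(t)}_{L^2}\lesssim e^{-\sigma_-(v_* t)^2/4}$ up to polynomial factors in $t$.

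To obtain this estimate I would use that, by \eqref{eq:gen_gauss_sol} and the spectral lower bound in Proposition~\ref{prop:expression_general_gaussian}, each modulus obeys $\abs{B_k(t,x)}\lesssim e^{-\sigma_-\abs{x-x_k-v_k t}^2}$, while the centers $x_k+v_k t$ are pairwise at distance $\gtrsim v_* t$. Partitioning $\mathbb{R}^d$ into the cells $\Omega_k$ nearest to each center, on $\Omega_k$ one writes $R=B_k+\rho_k$ with $\rho_k=\sum_{j\neq k}B_j$, and for $x\in\Omega_k$ one has $\abs{x-x_j-v_j t}\gtrsim v_* t$ when $j\neq k$, so $\norm{\rho_k}_{L^2(\Omega_k)}\lesssim e^{-\sigma_-(v_* t)^2/4}$ up to polynomial corrections. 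The hard part of the whole proof is precisely to convert this into a bound on $f(R)-\sum_k f(B_k)$: the logarithmic nonlinearity is neither Lipschitz nor bounded, and is singular at the vacuum where the $B_k$ are small. I would control it with the almost-Lipschitz modulus of continuity of $f$, the elementary fact that $s\mapsto s\abs{\ln s}$ stays small near $0$, and the polynomial growth $\abs{\ln\abs{B_k}^2}\lesssim\abs{x-x_k-v_k t}^2$; all the logarithmic and polynomial corrections are beaten by the Gaussian factors, which is what pins down the exponent $\sigma_-/4$.

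With this in hand I would run the backward construction: for an increasing sequence $S_n\to+\infty$ let $u_n$ be the global solution of \eqref{foc_log_nls} given by Theorem~\ref{th:Cauchy_problem} with $u_n(S_n)=R(S_n)\in W(\mathbb{R}^d)$, and set $w_n=u_n-R$. Multiplying the equation for $w_n$ by $\overline{w_n}$, integrating and taking imaginary parts, and using the pointwise bound $\abs{\Im[(f(z_1)-f(z_2))\overline{(z_1-z_2)}]}\le 2\abs{z_1-z_2}^2$ that underlies Lemma~\ref{lem:L2_energy_est}, I get $\bigl|\tfrac{d}{dt}\norm{w_n}_{L^2}\bigr|\le 2\lambda\norm{w_n}_{L^2}+\norm{\mathcal{E}}_{L^2}$. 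Since $w_n(S_n)=0$, integrating backward yields the $n$-uniform bound
\begin{equation*}
    \norm{w_n(t)}_{L^2}\le \int_t^{\infty} e^{2\lambda(s-t)}\,\norm{\mathcal{E}(s)}_{L^2}\,\diff s ,
\end{equation*}
whose right-hand side is $\lesssim e^{-\sigma_-(v_* t)^2/4}$ (up to a polynomial factor) for $t$ large, the Gaussian decay of $\mathcal{E}$ dominating the factor $e^{2\lambda s}$; translating the time origin by a large $T$ absorbs the polynomial factor and produces the clean bound \eqref{eq:dec_est_th_br}. Fixing such a $T$, conservation of mass and energy along $u_n$ together with the logarithmic Sobolev inequality (as in Theorem~\ref{th:Cauchy_problem}) bounds $(u_n(T))_n$ uniformly in $W(\mathbb{R}^d)$, while $(u_n(T))_n$ is Cauchy in $L^2$: indeed Lemma~\ref{lem:L2_energy_est} between $T$ and $S_n$ gives, for $m>n$, $\norm{u_n(T)-u_m(T)}_{L^2}\le e^{2\lambda(S_n-T)}\norm{R(S_n)-u_m(S_n)}_{L^2}\lesssim e^{2\lambda(S_n-T)}e^{-\sigma_-(v_*S_n)^2/4}\to0$. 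Let $u$ solve \eqref{foc_log_nls} with datum the resulting limit (strong in $L^2$, weak in $W$); then $u\in\mathcal{C}_b(\mathbb{R},W)$ by Theorem~\ref{th:Cauchy_problem}, $u_n\to u$ in $\mathcal{C}([T,T'];L^2)$ by the continuous dependence of Lemma~\ref{lem:L2_energy_est}, and passing to the limit in the uniform bound gives exactly \eqref{eq:dec_est_th_br}.

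Finally, uniqueness among solutions with this convergence rate is a rigidity statement: if $\tilde u$ also satisfies \eqref{eq:dec_est_th_br} then $\norm{u(t)-\tilde u(t)}_{L^2}\le 2\,e^{-\sigma_-(v_* t)^2/4}$, whereas $u(t_0)\neq\tilde u(t_0)$ would force, via Lemma~\ref{lem:L2_energy_est} run forward from $t_0$, the merely exponential lower bound $\norm{u(t)-\tilde u(t)}_{L^2}\ge \norm{u(t_0)-\tilde u(t_0)}_{L^2}\,e^{-2\lambda(t-t_0)}$, incompatible with a Gaussian upper bound as $t\to+\infty$; hence $u=\tilde u$, as in Lemma~\ref{lem:rigidity_multi}. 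The main obstacle throughout is the single interaction estimate on $\norm{\mathcal{E}}_{L^2}$: taming the non-Lipschitz, vacuum-singular and unbounded logarithm on the overlap regions of widely separated Gaussians, and verifying that the precise constant is $\sigma_-/4$.
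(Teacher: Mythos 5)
Your proposal is correct, and its core is the paper's own argument almost line by line: the backward construction $u_n(S_n)=R(S_n)$, the differential inequality $\abs{\tfrac{\diff}{\diff t}\norm{w_n}_{L^2}}\le 2\lambda\norm{w_n}_{L^2}+\norm{\mathcal{E}}_{L^2}$ coming from the pointwise bound of Lemma~\ref{lem_log_inequality}, the Gaussian-in-time interaction estimate, the backward Gronwall computation in which the Gaussian tail beats $e^{2\lambda s}$, and the absorption of polynomial prefactors by translating the time origin --- this is exactly the proof of Proposition~\ref{prop:unif_est_br}. Note that the interaction estimate, which you correctly identify as the hard quantitative step and only sketch, is imported by the paper as a black box (Lemma~\ref{lem:diff_L_log_L_sum_gaussian}, quoted from \cite{Ferriere__superposition_logNLS}), with precisely the ingredients you list. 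Where you genuinely diverge is the compactness step (Proposition~\ref{prop:compactness_br}): the paper proves a uniform $W$-bound (Lemma~\ref{lem:W_bound}, via Gagliardo--Nirenberg) and then runs a virial/tightness argument in Section~\ref{sec:compactness_multi_br} to get $L^2$-precompactness of $(u_n(T))_n$ and extract a convergent subsequence, whereas you show $(u_n(T))_n$ is outright Cauchy in $L^2$: since $u_n(S_n)=R(S_n)$, the uniform estimate for $u_m$ ($m>n$) evaluated at $t=S_n-T$ gives $\norm{u_m(S_n)-u_n(S_n)}_{L^2}\le e^{-\sigma_-(v_*(S_n-T))^2/4}$, and propagating this back to time $T$ with Lemma~\ref{lem:L2_energy_est} costs only a factor $e^{2\lambda(S_n-T)}$, which the Gaussian crushes. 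This is a clean simplification: it eliminates the virial argument and yields convergence of the whole approximating sequence rather than a subsequence; you still need the uniform $W$-bound so that the limit lies in $W$ and Theorem~\ref{th:Cauchy_problem} applies, and your log-Sobolev route to it is as serviceable as the paper's Gagliardo--Nirenberg. What the paper's heavier route buys in exchange is robustness: the virial argument only requires the uniform estimate to tend to zero, while your Cauchy argument hinges on the convergence rate beating the $e^{2\lambda\abs{t}}$ Lipschitz constant of the flow --- true here because the rate is Gaussian, but false for the merely exponential rates of power-type NLS, which is why the virial step is the standard presentation. Your final rigidity/uniqueness paragraph coincides with the paper's Section~\ref{sec:rigidity} (Lemmas~\ref{lem:rigidity_multi} and~\ref{lem:rigidity_all}).
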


\begin{rem}
    Remark that the convergence is only in $L^2$ norm here, unlike the previous theorem for multi-Gaussons where the convergence is in $H^1 \cap \mathcal{F}(H^1)$. However, we do believe that a convergence in $H^1 \cap \mathcal{F}(H^1)$ should hold, but the energy techniques for such a proof do not hold, as already pointed out.
\end{rem}

\begin{rem} \label{rem:rigidity_br}
    Again, the "uniqueness" of the multi-gaussians is subjected to the convergence rate property \eqref{eq:dec_est_th_br}: any other solution $v$ would satisfy for all $t \geq T_1$
    \begin{equation*}
        \norm{v(t) - \sum_{k=1}^N B_k (t)}_{L^2} \geq C_1 e^{- 2 \lambda t}.
    \end{equation*}
    for some constant $C_1 > 0$ and some time $T_1$ (see Lemma \ref{lem:rigidity_multi}) in the same way.
\end{rem}

\subsection{Scheme of the proof and outline}

Our strategy for the proofs of Theorems \ref{th:exist_multi_sol} and \ref{th:exist_multi_br} is inspired from the works \cite{Martel_Merle__Multi_solitary_waves_NLS, Merle_Construction_blow_up_points, Cote_Martel_Merle_Construction_multisoliton_gKdV_NLS, Cote_LeCoz_Multisolitons_NLS}: we take a sequence of time $T_n \rightarrow + \infty$ and a set of final data $u_n (T_n) = B (T_n)$ where $B \coloneqq \sum_{k=1}^N B_k$ (in the case of multi-Gaussons, $A_k^\textnormal{in} = 2 \lambda I_d$ like already pointed out in Remark \ref{rem:link_br_sol}). Our goal is to prove that the solutions $u_n$ to \eqref{foc_log_nls} (which approximate a multi-soliton) enjoy uniform $H^1 (\mathbb{R}^d)$ and $\mathcal{F}(H^1 (\mathbb{R}^d))$ (in the case of multi-Gaussons) or only $L^2 (\mathbb{R}^d)$ (in the more general case) decay estimates on $[T, T_n]$ for some $T$ independent of $n$. Then, a compactness in $L^2$ is proved which shows that $(u_n)$ (up to a subsequence) converges to a multi-soliton solution to \eqref{foc_log_nls}.

For multi-Gaussons, like in \cite{Martel_Merle__Multi_solitary_waves_NLS, Cote_Martel_Merle_Construction_multisoliton_gKdV_NLS, Cote_LeCoz_Multisolitons_NLS}, the uniform backward $H^1 (\mathbb{R}^d)$-estimates rely on slow variation of localized conservation laws as well as an $H^1 (\mathbb{R}^d)$ coercivity of the action around $G$ up to an $L^2 (\mathbb{R}^d)$-norm and is proved through a bootstrap property. The main new difficulty for \eqref{foc_log_nls} compared to a subcritical polynomial-like nonlinearity is that the energy $E$ on which the action is constructed is not of class $\mathcal{C}^2$ because of the potential energy. However, a weaker Taylor expansion holds, and thus the coercivity (in $H^1 (\mathbb{R}^d)$ up to an $L^2 (\mathbb{R}^d)$ norm) can still be proved.

Another new feature of the proof is that the uniform $L^2 (\mathbb{R}^d)$-estimates can be found thanks to a more direct computation from \cite{Ferriere__superposition_logNLS}, inspired from \cite{cazenave-haraux, carlesgallagher}, 
performed in Section \ref{sec:L2_est} (both for the multi-gaussian and the multi-Gausson cases). Thus, for the Gaussons case, the bootstrap property is needed only for the homogeneous $\dot{H}^1 (\mathbb{R}^d)$-norm. After proving the uniform $H^1$-estimates property in Section \ref{sec:H1_est}, a similar improved computation as that for the uniform $L^2 (\mathbb{R}^d)$-estimate is performed for the uniform $\mathcal{F} (H^1 (\mathbb{R}^d))$ estimates in Section \ref{sec:FH1_est}, which also gives compactness in $L^2 (\mathbb{R}^d)$.

As for the multi-gaussian case, the compactness property is proved in Section \ref{sec:compactness_multi_br} thanks to a virial argument, similar to that in \cite{Martel_Merle__Multi_solitary_waves_NLS, Cote_Martel_Merle_Construction_multisoliton_gKdV_NLS, Cote_LeCoz_Multisolitons_NLS}. Eventually, the rigidity property for both cases is proved in Section \ref{sec:rigidity} thanks to a consequence of the $L^2$ energy estimate (Lemma \ref{lem:L2_energy_est}).

\subsection*{Notation}

From now on, $C$ will denote a positive constant which does not depend on anything and $C_0$ a positive constant which is independent of the time and of $n$ (but may depend on other parameters). They also may change from line to line.
Moreover, all the functional spaces in space are in $\mathbb{R}^d$, which will be implicit. For instance, we will denote by $L^2$, $H^1$, $\mathcal{F}(H^1)$ and $W$ instead of $L^2 (\mathbb{R}^d)$, $H^1 (\mathbb{R}^d)$, $\mathcal{F}(H^1 (\mathbb{R}^d))$ and $W (\mathbb{R}^d)$.
Furthermore, all the integrals will be in $\mathbb{R}^d$ and all the sums will be from $1$ to $N$, except if indicated.

\section{Uniform $L^2$-estimates} \label{sec:L2_est}

\subsection{Approximate solutions and convergence toward a multi-soliton}

As already said, the proof follow the general scheme laid down by Martel, Merle and Tsai \cite{Martel_Merle_Tsai__Stability_Multisolitons_gKdV} for the Korteweg-de Vries equation and adapted by Martel and Merle \cite{Martel_Merle__Multi_solitary_waves_NLS} in the case of nonlinear Schrödinger equation (we can also cite for instance the works \cite{Merle_Construction_blow_up_points, Cote_Martel_Merle_Construction_multisoliton_gKdV_NLS, Cote_LeCoz_Multisolitons_NLS}).
We choose an increasing sequence of times $(T_n)_{n\in\mathbb{N}}$ with $T_n \rightarrow \infty$ as $n \rightarrow \infty$ and we define solutions $u_n$ to \eqref{foc_log_nls} with \emph{final data} $u_n (T_n) = B (T_n)$ where $B \coloneqq \sum B_k$ with $B_k \coloneqq B^{A_k^\textnormal{in}}_{\omega_k, x_k, v_k, \theta_k}$. In the Gaussons case, we take $A_k^\textnormal{in} = 2 \lambda I_d$ so that $B^{A_k^\textnormal{in}}_{\omega_k, x_k, v_k, \theta_k} = G^d_{\omega_k, x_k, v_k, \theta_k}$ for all $k$ (see Remark \ref{rem:link_br_sol}), and we may also say $G_k$ instead of $B_k$ and $G$ instead of $B$. Thanks to Theorem \ref{th:Cauchy_problem}, we know that all $u_n$ are well-defined and global since $B (T_n)$ is obviously in $W$, so $u_n \in \mathcal{C}_b (\mathbb{R}, W)$.

Our goal is to prove that $u_n$ (called \emph{approximate multi-gaussian}, resp. \emph{approximate multi-soliton} in the Gausson case) converges (up to a subsequence) to $u$, a multi-gaussian (resp. multi-soliton) solution to \eqref{foc_log_nls} which satisfies the estimates of Theorem \ref{th:exist_multi_br} (resp. \ref{th:exist_multi_sol}). For this, we prove that the $u_n$s satisfy the same kind of decay estimate as in \eqref{eq:dec_est_th_br} (resp. \eqref{dec_est_th}) but only up to $T_n$.

\subsubsection{Multi-gaussian case}

As in \cite{Martel_Merle__Multi_solitary_waves_NLS, Cote_LeCoz_Multisolitons_NLS}, Theorem \ref{th:exist_multi_br} relies on two important propositions:

\begin{prop}[Uniform Estimates] \label{prop:unif_est_br}
There exists $T \in \mathbb{R}$ such that for all $n \in \mathbb{N}$ such that $T_n > T$ and for any $t \in [0, T_n - T]$,
\begin{equation}
    \norm{u_n (T + t) - B (T + t)}_{L^2} \leq e^{- \frac{\sigma_- (v_* t)^2}{4}}. \label{eq:L2_unif_est_br}
\end{equation}
\end{prop}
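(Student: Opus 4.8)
The plan is to adapt the $L^2$ energy estimate of Lemma \ref{lem:L2_energy_est} to the situation where $B = \sum_k B_k$ is only an \emph{approximate} solution of \eqref{foc_log_nls}. First I would set $w_n \coloneqq u_n - B$ and identify the residual created by $B$. Since each $B_k$ solves \eqref{foc_log_nls}, the sum satisfies
\begin{equation*}
    i\partial_t B + \tfrac12\Delta B + \lambda B\ln\abs{B}^2 = \lambda\Bigl(B\ln\abs{B}^2 - \sum_k B_k\ln\abs{B_k}^2\Bigr) \eqqcolon -E_B,
\end{equation*}
where $E_B = \lambda\sum_k B_k\ln\bigl(\abs{B_k}^2/\abs{B}^2\bigr)$. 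Subtracting the equation for $u_n$ gives
\begin{equation*}
    i\partial_t w_n + \tfrac12\Delta w_n + \lambda\bigl(u_n\ln\abs{u_n}^2 - B\ln\abs{B}^2\bigr) = E_B.
\end{equation*}

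I would then differentiate $\norm{w_n(t)}_{L^2}^2$ in time, exactly as in the proof of Lemma \ref{lem:L2_energy_est}. The Laplacian contribution is purely imaginary and drops out after integration by parts; the logarithmic term is controlled by the pointwise inequality $\abs{\Im[(\bar z_1 - \bar z_2)(z_1\ln\abs{z_1}^2 - z_2\ln\abs{z_2}^2)]} \le 2\abs{z_1 - z_2}^2$ underlying Lemma \ref{lem:L2_energy_est}, which produces a factor $4\lambda\norm{w_n}_{L^2}^2$; and the residual contributes at most $2\norm{w_n}_{L^2}\norm{E_B}_{L^2}$ by Cauchy--Schwarz. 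This yields $\bigl|\tfrac{\diff}{\diff t}\norm{w_n}_{L^2}\bigr| \le 2\lambda\norm{w_n}_{L^2} + \norm{E_B}_{L^2}$. Since the final data satisfy $w_n(T_n) = 0$, integrating this inequality backward from $T_n$ (Gronwall) gives
\begin{equation*}
    \norm{w_n(t)}_{L^2} \le \int_t^{T_n} e^{2\lambda(\tau - t)}\,\norm{E_B(\tau)}_{L^2}\diff\tau, \qquad t \le T_n.
\end{equation*}

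The heart of the matter is the $L^2$ bound on the interaction residual $E_B$. I would split $\mathbb{R}^d$ into regions dominated by each $B_k$, using that $\abs{B_k(t,x)}$ is a Gaussian centered at $x_k + v_k t$ whose quadratic form is bounded below by the eigenvalue control of Proposition \ref{prop:expression_general_gaussian} (recorded in $\sigma_-$), so $\abs{B_k(t,x)} \lesssim \exp(-\sigma_-\abs{x - (x_k + v_k t)}^2)$. Because the centers separate at speed at least $v_*$, on the overlap of any two profiles both arguments sit at distance $\gtrsim v_* t/2$ from their centers; writing $y = x - (x_j + v_j t)$ and $z = x - (x_k + v_k t)$ one has $\abs{y} + \abs{z} \ge \abs{y - z} \ge v_* t - C$ and $\abs{y}^2 + \abs{z}^2 \ge \tfrac12(\abs{y} + \abs{z})^2$, so each product $B_j B_k$ ($j \neq k$) and each wrong-region logarithmic factor is pointwise $\lesssim \exp\bigl(-\tfrac{\sigma_-}{2}(v_* t - C)^2\bigr)$. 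Integrating gives $\norm{E_B(\tau)}_{L^2} \le C_0\,e^{-\frac{\sigma_-}{2}(v_*\tau - C)^2}$ for $\tau$ large. Fed into the integral above, this super-exponential decay dominates the $e^{2\lambda(\tau - t)}$ growth once $\tau \ge t$ is large, so the whole expression is again $O\bigl(e^{-\frac{\sigma_-}{2}(v_* t)^2}\bigr)$; choosing $T$ large absorbs the constants $C_0$, $C$ and the exponential factor, and the remaining margin between $\tfrac{\sigma_-}{2}$ and $\tfrac{\sigma_-}{4}$ sharpens the exponent to the advertised $\tfrac{\sigma_-(v_* t)^2}{4}$ with prefactor $\le 1$.

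I expect the genuine difficulty to lie in the residual estimate, for two reasons. First, the logarithm is singular at the vacuum, so $\abs{E_B} \lesssim (\text{Gaussian})$ cannot follow from a naive Lipschitz bound on $z \mapsto z\ln\abs{z}^2$; one must instead exploit the almost-Lipschitz structure $s\,g'(s) = \lambda$ and treat separately the regions where some $B_k$ is large and where all profiles are small, controlling $\ln\bigl(\abs{B_k}^2/\abs{B}^2\bigr)$ on each. Second, tracking the numerical constant through the overlap estimate so that the final exponent is \emph{exactly} $\sigma_-(v_* t)^2/4$ — which relies on the factor $\tfrac12$ built into the definition of $\sigma_-$ together with the factor $\tfrac12$ from $\abs{y}^2 + \abs{z}^2 \ge \tfrac12(\abs{y} + \abs{z})^2$ — requires care, and this is precisely where the freedom in choosing $T$ large is spent, converting the clean but non-optimal constant into the stated one.
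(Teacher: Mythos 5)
Your architecture coincides with the paper's: set $w_n = u_n - B$, obtain $\abs{\frac{\diff}{\diff t}\norm{w_n}_{L^2}} \le 2\lambda\norm{w_n}_{L^2} + \norm{E_B}_{L^2}$ from Lemma \ref{lem_log_inequality} and Cauchy--Schwarz, integrate backward from $w_n(T_n) = 0$, and conclude from a super-exponential decay of the interaction residual. The gap is in your bound for that residual, and your final step leans on it. You claim $\norm{E_B(\tau)}_{L^2} \le C_0\, e^{-\frac{\sigma_-}{2}(v_*\tau - C)^2}$ via a product heuristic, applying $\abs{y}^2 + \abs{z}^2 \ge \frac12(\abs{y}+\abs{z})^2$ to $B_jB_k$. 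But $E_B$ contains no products of two Gaussians: because $z \mapsto z\ln\abs{z}^2$ is only almost Lipschitz, the residual near the $j$-th center is bounded by $\sum_{k\ne j}\abs{B_k}\times(\text{polynomial weights})$, i.e.\ it is \emph{linear} in the distant Gaussians, and on the region closest to the $j$-th center one only has $\abs{x - x_k - v_k\tau} \gtrsim \frac12 v_*\tau$ for $k \ne j$. Hence both the pointwise and the $L^2$ bounds carry the exponent $\frac{\sigma_-}{4}(v_*\tau)^2$, not $\frac{\sigma_-}{2}$; this exponent is sharp up to polynomial factors (test two Gaussons and compute the $L^2$ mass of the residual on the half-space nearest to one of them), and it is exactly the content of the paper's Lemma \ref{lem:diff_L_log_L_sum_gaussian}. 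Note also that your plan to control ``each wrong-region logarithmic factor'' cannot be run term by term in the decomposition $E_B = \lambda\sum_k B_k\ln(\abs{B_k}^2/\abs{B}^2)$: since the $B_k$ carry different phases, $B$ has interference zeros in the overlap region, where each individual term blows up and only the recombined sum $\lambda(\sum_k B_k\ln\abs{B_k}^2 - B\ln\abs{B}^2)$ stays finite. This is precisely why the almost-Lipschitz bound (Lemma \ref{lem:lip_z_log_z}) is applied to the difference $B\ln\abs{B}^2 - B_j\ln\abs{B_j}^2$, so that the logarithmic weight lands on the non-vanishing Gaussian $B_j$.

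Consequently the ``margin between $\frac{\sigma_-}{2}$ and $\frac{\sigma_-}{4}$'' that you spend at the end to reach prefactor $1$ does not exist. The conclusion is still reachable, and the paper shows how: with the $/4$ residual bound, complete the square in $\int_t^\infty \exp\bigl[2\lambda s - \frac{\sigma_-(v_*s)^2}{4}\bigr]\diff s$ and apply the Gaussian tail estimate (Lemma \ref{lem:int_error_gauss}); this returns $\frac{C_0}{t-t_1}\exp\bigl[-\frac{\sigma_-(v_*t)^2}{4} + 2\lambda t\bigr]$, whose $e^{2\lambda t}$ is cancelled by the $e^{-2\lambda t}$ already extracted from Gronwall, and whose prefactor $\frac{C_0}{t-t_1}$ --- not an exponent margin --- is what is made $\le 1$ by a final translation of $T$ (using $(t+s)^2 \ge t^2 + s^2$). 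With that substitution, your proof becomes the paper's.
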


\begin{prop}[Compactness] \label{prop:compactness_br}
There exists $u_\textnormal{in} \in W$ such that (up to a subsequence)
\begin{equation*}
    \lim_{n \rightarrow \infty} \norm{u_n (T) - u_\textnormal{in}}_{L^2} = 0.
\end{equation*}
\end{prop}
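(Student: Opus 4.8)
The plan is to extract from $(u_n(T))_n$ a strongly $L^2$-convergent subsequence and then to lift its limit to the energy space $W$. I would obtain the $L^2$ convergence in the most economical way by combining the uniform estimate of Proposition~\ref{prop:unif_est_br} with the $L^2$ continuity estimate of Lemma~\ref{lem:L2_energy_est}, showing directly that the whole sequence $(u_n(T))_n$ is Cauchy in $L^2$ (so that no extraction is even needed at this stage).

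Fix $n \leq m$, so that $T \leq T_n \leq T_m$, and compare $u_n$ and $u_m$ at the intermediate time $T_n$. By construction $u_n(T_n) = B(T_n)$, while Proposition~\ref{prop:unif_est_br} applied to $u_m$ at time $T_n = T + (T_n - T)$ --- admissible since $T_n - T \in [0, T_m - T]$ --- gives
\begin{equation*}
    \norm{u_n(T_n) - u_m(T_n)}_{L^2} = \norm{B(T_n) - u_m(T_n)}_{L^2} \leq e^{- \frac{\sigma_- (v_* (T_n - T))^2}{4}}.
\end{equation*}
Since both $u_n$ and $u_m$ solve the autonomous equation \eqref{foc_log_nls}, Lemma~\ref{lem:L2_energy_est} used backward on $[T, T_n]$ yields
\begin{equation*}
    \norm{u_n(T) - u_m(T)}_{L^2} \leq e^{2 \lambda (T_n - T)} \norm{u_n(T_n) - u_m(T_n)}_{L^2} \leq \exp\Bigl( 2 \lambda (T_n - T) - \tfrac{\sigma_- v_*^2}{4} (T_n - T)^2 \Bigr).
\end{equation*}
The right-hand side depends only on $n$ and tends to $0$ as $n \to \infty$ (the Gaussian term beats the exponential one), uniformly in $m \geq n$; hence $(u_n(T))_n$ is Cauchy in $L^2$ and converges to some $u_\textnormal{in} \in L^2$.

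It then remains to prove $u_\textnormal{in} \in W$, which I would do via uniform energy-space bounds. By Theorem~\ref{th:Cauchy_problem} the mass and energy are conserved along each $u_n$, so $M(u_n(T)) = M(B(T_n))$ and $E(u_n(T)) = E(B(T_n))$; because the profiles $B_k$ separate in space as $v_* > 0$, both quantities remain bounded uniformly in $n$. The mass bound together with the logarithmic Sobolev inequality controls the sign-indefinite potential part of $E$ by $\norm{\nabla u_n(T)}_{L^2}$ up to a logarithm, so the uniform energy bound bootstraps into $\sup_n \norm{u_n(T)}_{H^1} < \infty$ and a uniform bound on $\int \abs{u_n(T)}^2 \bigl| \ln \abs{u_n(T)}^2 \bigr|$, i.e. a uniform bound on $\norm{u_n(T)}_W$. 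Since $W$ is reflexive, a further subsequence converges weakly in $W$, and its weak limit must coincide with the strong $L^2$ limit $u_\textnormal{in}$, giving $u_\textnormal{in} \in W$. Alternatively, as in \cite{Martel_Merle__Multi_solitary_waves_NLS, Cote_LeCoz_Multisolitons_NLS}, a localized virial/second-moment argument furnishes a uniform bound on $\int \abs{x}^2 \abs{u_n(T)}^2$ and hence the tightness on which a direct $L^2$ compactness rests; I find the route above more direct given Lemma~\ref{lem:L2_energy_est}.

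The main obstacle is the membership $u_\textnormal{in} \in W$ rather than the $L^2$ convergence: the energy $E$ has no definite sign and its potential part is not dominated pointwise by the $H^1$ norm, so the uniform $H^1$ bound cannot be read directly off energy conservation and must be extracted through the logarithmic Sobolev inequality. The recurring technical nuisance is the non-smoothness of $s \mapsto s \ln s$ at the vacuum (it also surfaces in the virial identity, where one must handle the potential term near $\{u = 0\}$), but the almost-Lipschitz behaviour of $u \ln \abs{u}^2$ keeps every relevant integral convergent and under control.
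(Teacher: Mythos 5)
Your proposal is correct, and the way you obtain the $L^2$ limit is genuinely different from --- and more economical than --- the paper's argument. The paper proceeds in two steps: it first proves a uniform-in-$n$ bound of $u_n$ in $W$ (Lemma~\ref{lem:W_bound}: energy conservation plus a Gagliardo--Nirenberg bootstrap, essentially the computation you sketch with the logarithmic Sobolev inequality in its place), and then proves that $(u_n(T))_n$ is tight in $L^2$ by a virial argument: the Gaussian localization of $B$ at a later time $T+T_\delta$ is transferred back to time $T$ through a cut-off whose localized mass has derivative $O(1/\hat r_\delta)$ thanks to the uniform $H^1$ bound; boundedness in $H^1$ plus tightness then give $L^2$-compactness of a subsequence. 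You instead show that the full sequence is Cauchy: since $u_n(T_n)=B(T_n)$ exactly, Proposition~\ref{prop:unif_est_br} applied to $u_m$ (for $m \geq n$ and $n$ large enough that $T_n\geq T$) at time $T_n$ gives $\norm{u_m(T_n)-B(T_n)}_{L^2}\leq e^{-\sigma_-(v_*(T_n-T))^2/4}$, and the backward-in-time use of Lemma~\ref{lem:L2_energy_est} (legitimate since the equation is autonomous) costs only $e^{2\lambda(T_n-T)}$, so that
\begin{equation*}
    \norm{u_n(T)-u_m(T)}_{L^2} \leq \exp\Bigl(2\lambda(T_n-T)-\tfrac{\sigma_- v_*^2}{4}(T_n-T)^2\Bigr),
\end{equation*}
and the right-hand side tends to $0$ as $n\to\infty$, uniformly in $m\geq n$. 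This removes both the extraction and the entire virial section for this case, but it is worth stressing why it is available here and not in the references the paper follows: the convergence rate of Proposition~\ref{prop:unif_est_br} is Gaussian while the $L^2$-stability constant of the flow is only exponential, so the former beats the latter; for polynomial nonlinearities both rates are exponential with no reason for a favorable comparison, which is why Martel--Merle and C\^ot\'e--Le~Coz --- and this paper --- resort to the more robust tightness/virial route. For the membership $u_\textnormal{in}\in W$, your argument coincides with the paper's (uniform $W$-bound plus reflexivity of $W$); note that your route still needs the substance of Lemma~\ref{lem:W_bound}, but only for this identification step, not for compactness. Two harmless imprecisions: the uniform bounds on $M(B(T_n))$ and $E(B(T_n))$ have nothing to do with the separation of the profiles --- they follow from the triangle inequality, the uniform-in-time $H^1$ and $L^1$ bounds on each $B_k$ (the spectrum of $\Re A_k(t)$ being bounded above and away from $0$ by Proposition~\ref{prop:expression_general_gaussian}), and the pointwise bound $s^2\abs{\ln s^2}\lesssim s+s^{2+1/d}$; and the ordering $T\leq T_n\leq T_m$ only holds for $n$ large, which is all the Cauchy property requires.
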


Theorem \ref{th:exist_multi_br} can then be easily proved thanks to Propositions \ref{prop:unif_est_br} and \ref{prop:compactness_br}.

\begin{proof}[Proof of Theorem \ref{th:exist_multi_br}]
    Let $u_\textnormal{in}$ given by Corollary \ref{prop:compactness_br}.
    Take the subsequence of $(u_n)$ (still denoted $(u_n)$) such that $(u_n (T))$ converges to $u_\textnormal{in}$ in $L^2$ as $n \rightarrow \infty$ and let $u \in \mathcal{C}_b (\mathbb{R}, W)$ be the solution to \eqref{foc_log_nls} with initial data $u(T) = u_\textnormal{in} \in W$ given by Theorem \ref{th:Cauchy_problem}.
    Then, thanks to Lemma \ref{lem:L2_energy_est}, $\mu_n \coloneqq u - u_n$ satisfies for all $t \geq 0$ and $n \in \mathbb{N}$,
    \begin{equation*}
        \norm{\mu_n (T + t)}_{L^2} \leq \norm{\mu_n (T)}_{L^2} \, e^{2 \lambda t}.
    \end{equation*}
    By definition of $\mu_n$ and $u$, $\mu_n (T) = u_n (T) - u_\textnormal{in} \rightarrow 0$ in $L^2$ as $n \rightarrow \infty$. Therefore, for all $t \geq 0$, the previous inequality shows that $u_n (T + t) \rightarrow u (T + t)$ in $L^2$ as $n \rightarrow \infty$.
    Then, using Proposition \ref{prop:unif_est_br} and taking the limit $n \rightarrow \infty$ for any $t \geq 0$, we get:
    \begin{equation*}
        \norm{u (T + t) - B (T + t)}_{L^2} \leq \lim_{n \rightarrow \infty} \norm{u_n (T + t) - B (T + t)}_{L^2} \leq e^{- \frac{\sigma_- (v_* t)^2}{4}}. \qedhere
    \end{equation*}
\end{proof}

The Uniform Estimates property \ref{prop:unif_est_br} will be proved in Subsection \ref{subsec:L2_est}, while Section \ref{sec:compactness_multi_br} will be devoted to the proof of the Compactness property \ref{prop:compactness_br}. However, before that, we also look at the multi-soliton case.

\subsubsection{Multi-soliton case}

For the multi-soliton case, the same kind of properties will be proved, but the uniform estimates are in $H^1 \cap \mathcal{F}(H^1)$ norm, not only $L^2$.

\begin{prop}[Uniform Estimates] \label{prop:unif_est}
There exists $T \in \mathbb{R}$ such that for all $n \in \mathbb{N}$ such that $T_n > T$ and for any $t \in [0, T_n - T]$,
\begin{equation}
    \norm{u_n (T + t) - G (T + t)}_{H^1 \cap \mathcal{F} (H^1)} \leq e^{- \frac{\lambda (v_* t)^2}{4}}. \label{eq:H1_FH1_unif_est}
\end{equation}
\end{prop}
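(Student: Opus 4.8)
The plan is to establish \eqref{eq:H1_FH1_unif_est} by treating the three pieces of the $H^1\cap\mathcal F(H^1)$ norm in a fixed order, with a bootstrap needed \emph{only} for the homogeneous $\dot H^1$ part. Write $w_n\coloneqq u_n-G$; since $u_n(T_n)=G(T_n)$ we have $w_n(T_n)=0$, so by continuity all three bounds hold on a subinterval ending at $T_n$. The logical order I would follow is: first the $L^2$ bound (unconditional, via a direct Grönwall estimate); then the $\dot H^1$ bound (by a backward bootstrap on $\norm{\nabla w_n}_{L^2}$ using a localized action); and finally the weighted bound $\norm{\abs{x}w_n}_{L^2}$ (unconditional, but using the $\dot H^1$ bound as input). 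Throughout, the gain comes from the fact that at time $\tau=T+t$ the Gaussons $G_k$ are centered at $x_k+v_k\tau$, hence pairwise separated by $\sim v_*\tau$, so all interaction terms produced by plugging $G=\sum_k G_k$ into \eqref{foc_log_nls} are Gaussian-small, of order $e^{-c(v_*\tau)^2}$, which will dominate the $e^{2\lambda t}$ growth coming from Lemma \ref{lem:L2_energy_est} once $T$ is large.

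For the $L^2$ part I would use exactly the direct computation behind Proposition \ref{prop:unif_est_br}, specialized to $A_k^\textnormal{in}=2\lambda I_d$: then $A_k(t)\equiv 2\lambda I_d$ (Remark \ref{rem:link_br_sol}), so $\Re A_k\equiv 2\lambda I_d$ and $\sigma_-=\lambda$, which is why the exponent in \eqref{eq:H1_FH1_unif_est} is $\lambda(v_*t)^2/4$ rather than $\sigma_-(v_*t)^2/4$. Concretely, $w_n$ solves \eqref{foc_log_nls} with a source equal to the interaction error $\mathcal E$, and the $L^2$-almost-Lipschitz bound on $u\mapsto u\ln\abs{u}^2$ (the mechanism underlying Lemma \ref{lem:L2_energy_est}) gives $\frac{d}{dt}\norm{w_n}_{L^2}^2\le 4\lambda\norm{w_n}_{L^2}^2+2\norm{\mathcal E}_{L^2}\norm{w_n}_{L^2}$ with $\norm{\mathcal E}_{L^2}$ Gaussian-small; integrating backward from $T_n$ yields the $L^2$ half of \eqref{eq:H1_FH1_unif_est} on all of $[T,T_n]$, with no bootstrap.

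For the $\dot H^1$ part I would run the energy technique of Martel--Merle and C\^ote--Le Coz. The idea is to introduce a localized action
\[
  \mathcal{S}(t) \coloneqq E(u_n(t)) + \sum_k \Bigl( 2\lambda\omega_k + \tfrac{\abs{v_k}^2}{2} \Bigr) M_k(t) + \sum_k v_k \cdot \mathcal{J}_k(t),
\]
where $M_k,\mathcal J_k$ are the mass and momentum cut off by smooth weights $\phi_k$ following the trajectory $x_k+v_k t$, the coefficients being chosen so that each $G_k$ is a critical point. Two facts must then be proved in Section \ref{sec:H1_est}. \emph{Slow variation}: since $u_n$ solves \eqref{foc_log_nls}, the unlocalized conservation laws are exact, so $\frac{d}{dt}\mathcal S$ only sees terms supported where $\partial_t\phi_k\neq 0$, i.e. in the inter-soliton regions where every $G_k$ is Gaussian-small; integrating backward from $T_n$ (where $w_n=0$) keeps $\mathcal S(u_n(t))-\mathcal S(G(t))$ Gaussian-small. \emph{Coercivity up to $L^2$}: $\mathcal{S}(u_n(t)) - \mathcal{S}(G(t)) \ge c \norm{\nabla w_n(t)}_{L^2}^2 - C \norm{w_n(t)}_{L^2}^2$. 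Combining the two with the $L^2$ bound of the previous step gives $\norm{\nabla w_n}_{L^2}^2\lesssim$ (Gaussian-small), and for $T$ large this strictly improves the bootstrap hypothesis, closing the $\dot H^1$ estimate.

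The coercivity is the step I expect to be the main obstacle, precisely because the energy $E$ is not $\mathcal C^2$: the potential part $\int\abs{v}^2(\ln\abs{v}^2-1)$ has a formal second variation involving $\ln\abs{v}^2$, which is unbounded near the vacuum, so the usual $\mathcal C^2$ Taylor expansion of the action is unavailable. I would replace it by a weaker expansion, exploiting that $s\mapsto s\ln s$ is $\mathcal C^1$ and that its singular second-order contribution has a definite sign, so that the quadratic form controlling $\mathcal S(u_n)-\mathcal S(G)$ still admits a lower bound of coercive type once the $L^2$ neutral directions (scaling, translation, phase, Galilean boost) attached to each Gausson are removed; the spatial separation encoded by the $\phi_k$ then assembles the $N$ single-Gausson coercivity estimates into one for $G$, at the cost of Gaussian-small cross terms. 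Finally, the $\mathcal F(H^1)$ part is obtained in Section \ref{sec:FH1_est} by differentiating $\int\abs{x}^2\abs{w_n}^2$ in time and using the equation, in the spirit of \cite{Ferriere__superposition_logNLS,carlesgallagher}: the commutator of the weight with $\tfrac12\Delta$ produces a term bounded by $\norm{\abs{x}w_n}_{L^2}\norm{\nabla w_n}_{L^2}$ (hence the need for the $\dot H^1$ bound just obtained), while the almost-Lipschitz character of the logarithmic nonlinearity makes all remaining sources Gaussian-small; a Grönwall argument then yields the weighted estimate. Adding the three contributions, each made $\le\tfrac13 e^{-\lambda(v_*t)^2/4}$ by taking $T$ large enough independently of $n$, recovers \eqref{eq:H1_FH1_unif_est} and completes the proof.
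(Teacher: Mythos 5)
Your three-step architecture is exactly the paper's: an unconditional $L^2$ bound by backward Gronwall with the interaction term controlled by Gaussian separation (Proposition \ref{prop:unif_est_L2_br}, specialized to $\sigma_-=\lambda$ for Gaussons); a bootstrap confined to the $\dot{H}^1$ component, resting on slow variation of a localized action plus coercivity up to a full $L^2$ deficit (Propositions \ref{prop:slow_var_loc_func} and \ref{prop:coercivity}); and an unconditional weighted estimate for the $\mathcal{F}(H^1)$ part by a virial-type Gronwall argument that consumes the freshly proved $\dot{H}^1$ bound (Section \ref{sec:FH1_est}). The two facts you isolate as the crux of the bootstrap, and the order in which the three norms are treated, are precisely those of the paper.

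The one genuine divergence is how you propose to justify the coercivity, and there your plan, taken literally, would stall. You want to remove the $L^2$ neutral directions (phase, translation, Galilean boost, scaling) attached to each Gausson and assemble single-Gausson spectral coercivity estimates. But nothing in this scheme lets you impose such orthogonality: the approximate solutions are rigidly determined by the final data $u_n(T_n)=G(T_n)$, so orthogonality could only be created by modulating the Gausson parameters, which you never set up; moreover it would require a spectral coercivity statement for the linearized operator around the Gausson that is established nowhere, and which the failure of $E$ to be $\mathcal{C}^2$ makes delicate even to relate to the action difference. The paper's point is that all of this is unnecessary, precisely because of the ordering you yourself chose: since the target inequality tolerates a full $-C\norm{w_n}_{L^2}^2$ term and $\norm{w_n}_{L^2}$ is already known to be Gaussian-small, no spectral information is needed at all. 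Concretely, the coercivity (Lemma \ref{lem:coerc_lin} and Lemma \ref{lem:coerc_loc_act0}) keeps only the term $\frac{1}{2}\int\abs{\nabla w_n}^2\,\psi_j\diff x$ and shows that every other term in the linearized functional is negligible, using the $L^2$ estimate, the embedding $H^1\subset L^{2+\frac{1}{d}}$, and the Gaussian decay of the $G_k$. The substitute for the unavailable Taylor expansion is the one-sided inequality of Lemma \ref{lem:exp_u_ln_u}, whose quadratic term carries only a $\ln(1+\abs{w_n})$ weight; this is the rigorous form of the ``weaker expansion with a favorable sign'' you gesture at, and it feeds into Lemma \ref{lem:linearize_action}. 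So your outline is correct and matches the paper, but the hard step closes by a cruder, modulation-free argument than the classical spectral route you describe, and replacing the paper's argument by yours would leave a real gap.
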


Like for the multi-gaussian case, we also need a compactness property. However, it obviously results from the uniform estimates:

\begin{cor}[Compactness] \label{cor:compactness}
There exists $u_\textnormal{in} \in H^1 \cap \mathcal{F}(H^1)$ such that (up to a subsequence)
\begin{equation*}
    \lim_{n \rightarrow \infty} \norm{u_n (T) - u_\textnormal{in}}_{L^2} = 0.
\end{equation*}
\end{cor}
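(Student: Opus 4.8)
The plan is to obtain Corollary \ref{cor:compactness} directly from the uniform bound at the single time $t = 0$ furnished by Proposition \ref{prop:unif_est}. First I would evaluate \eqref{eq:H1_FH1_unif_est} at $t = 0$, which gives $\norm{u_n (T) - G(T)}_{H^1 \cap \mathcal{F}(H^1)} \leq 1$ for every $n$ with $T_n > T$. Since $G(T) = \sum_k G_k (T)$ is a finite sum of translated, modulated Gaussians, it belongs to $H^1 \cap \mathcal{F}(H^1)$; consequently the sequence $(u_n(T))_n$ is bounded in the Hilbert space $H^1 \cap \mathcal{F}(H^1)$ by some constant $C_0$ independent of $n$. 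The whole point is now to upgrade this boundedness into strong $L^2$ relative compactness.

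The key ingredient is that $H^1 \cap \mathcal{F}(H^1)$ embeds compactly into $L^2$, which I would establish through the Riesz--Fréchet--Kolmogorov criterion in its frequency-symmetric form: a bounded subset of $L^2$ is relatively compact as soon as it is tight both in physical space and in frequency space. Both tightness properties follow at once from Chebyshev-type estimates,
\begin{equation*}
    \int_{\abs{x} > R} \abs{u_n(T)}^2 \diff x \leq \frac{1}{R^2} \norm{\abs{x} \, u_n(T)}_{L^2}^2 , \qquad \int_{\abs{\xi} > R} \abs{\mathcal{F} u_n(T)}^2 \diff \xi \leq \frac{1}{R^2} \norm{\abs{\xi} \, \mathcal{F} u_n(T)}_{L^2}^2 ,
\end{equation*}
where the first inequality uses the uniform $\mathcal{F}(H^1)$ bound and the second, after Plancherel identifies $\norm{\abs{\xi} \, \mathcal{F} u_n(T)}_{L^2}$ with a constant multiple of $\norm{\nabla u_n(T)}_{L^2}$, uses the uniform $H^1$ bound. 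Both right-hand sides are bounded by $C \, C_0^2 / R^2$ and hence tend to $0$ as $R \to \infty$, uniformly in $n$. Therefore $(u_n(T))_n$ is relatively compact in $L^2$, and I may extract a subsequence converging strongly in $L^2$ to some $u_\textnormal{in}$.

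It remains to verify that the limit $u_\textnormal{in}$ lies in $H^1 \cap \mathcal{F}(H^1)$ and not merely in $L^2$. Since $(u_n(T))_n$ is bounded in the Hilbert space $H^1 \cap \mathcal{F}(H^1)$, a further subsequence converges weakly there to some element; as weak-$L^2$ and strong-$L^2$ limits coincide, this weak limit must equal $u_\textnormal{in}$, so indeed $u_\textnormal{in} \in H^1 \cap \mathcal{F}(H^1)$, which completes the proof. The only conceptual subtlety — and the reason the weighted space appears at all — is that on the whole of $\mathbb{R}^d$ the embedding $H^1 \hookrightarrow L^2$ fails to be compact, because mass may escape to spatial infinity; it is precisely the uniform control of $\norm{\abs{x} \, u_n(T)}_{L^2}$ coming from $\mathcal{F}(H^1)$ that supplies the spatial tightness ruling this out. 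Beyond correctly invoking the two-sided tightness criterion, there is no genuine obstacle, which is why the statement is presented as an immediate consequence of the uniform estimates.
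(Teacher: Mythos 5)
Your proposal is correct and follows essentially the same route as the paper: evaluate \eqref{eq:H1_FH1_unif_est} at $t=0$ to get uniform boundedness of $(u_n(T))_n$ in $H^1 \cap \mathcal{F}(H^1)$, then invoke the compactness of the embedding $H^1 \cap \mathcal{F}(H^1) \subset L^2$. The only difference is one of detail: the paper cites this compact embedding as a known fact and leaves the membership $u_\textnormal{in} \in H^1 \cap \mathcal{F}(H^1)$ implicit, whereas you supply the proofs (two-sided tightness via Chebyshev and Plancherel, then identification of the weak limit), both of which are carried out correctly.
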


\begin{proof}[Proof of Corollary \ref{cor:compactness}]
    $u_n (T)$ is uniformly bounded in $H^1 \cap \mathcal{F}(H^1)$ by taking $t=0$ in \eqref{eq:H1_FH1_unif_est}, which yields the conclusion since the embedding $H^1 \cap \mathcal{F}(H^1) \subset L^2$ is compact.
\end{proof}

\begin{rem}
    Unlike this case, the compactness property is not as obvious in \cite{Martel_Merle__Multi_solitary_waves_NLS, Cote_LeCoz_Multisolitons_NLS} (or for the multi-gaussian case above). Indeed, the uniform estimates are only in $H^1$ there (or even in $L^2$ for the multi-gaussian), whereas we also have $\mathcal{F}(H^1)$ here. Thus, the authors had to prove in there a uniform equicontinuity of the sequence $(u_n)$ by using a virial argument and the uniform estimates in $L^2$. The same kind of proof will be used for Proposition \ref{prop:compactness_br}.
\end{rem}

Theorem \ref{th:exist_multi_sol} is then a corollary of Proposition \ref{prop:unif_est} and Corollary \ref{cor:compactness}. Its proof is totally similar to the proof of Theorem \ref{th:exist_multi_br}, using the weakly lower semi-continuity of the $H^1 \cap \mathcal{F} (H^1)$-norm.
%
%
%
For this case, the uniform estimates in $L^2$ will also be proved in Subsection \ref{subsec:L2_est}, in the same time as for the multi-gaussian case. As for the $H^1$ and the $\mathcal{F}(H^1)$ uniform estimates, they will be proved in Sections \ref{sec:H1_est} and \ref{sec:FH1_est} respectively.

\subsection{Uniform $L^2$-estimates} \label{subsec:L2_est}

In \cite{Martel_Merle__Multi_solitary_waves_NLS, Cote_Martel_Merle_Construction_multisoliton_gKdV_NLS, Cote_LeCoz_Multisolitons_NLS}, the uniform estimates in $H^1$ for multi-solitons are found thanks to a bootstrap argument. Here, the uniform estimates in $L^2$ can be found without it, directly with a rough stability estimate thanks to the strong decay of our solitons at infinity. This method can also be extended to the gaussian case, and we prove both cases in one property:

\begin{prop}[Uniform estimates in $L^2$] \label{prop:unif_est_L2_br}
There exists $T' \in \mathbb{R}$ such that for all $n \in \mathbb{N}$ such that $T_n > T'$ and for any $t \in [0, T_n - T']$,
\begin{equation*}
    \norm{u_n (T' + t) - B (T' + t)}_{L^2} \leq e^{- \frac{\sigma_- (v_* t)^2}{4}}. \label{eq:unif_est_L2_br}
\end{equation*}
\end{prop}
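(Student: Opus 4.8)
The plan is to compare $u_n$ directly with the superposition $B \coloneqq \sum_k B_k$ by an $L^2$ energy estimate carrying a source term, the source being exactly the failure of $B$ to solve \eqref{foc_log_nls}. Writing $f(z) \coloneqq \lambda z \ln \abs{z}^2$ for the nonlinearity and using that each $B_k$ is an exact solution, the sum $B$ satisfies $i \partial_t B + \tfrac12 \Delta B + \sum_k f(B_k) = 0$, so that $w_n \coloneqq u_n - B$ solves
\begin{equation*}
    i \partial_t w_n + \tfrac12 \Delta w_n + \bigl( f(u_n) - f(B) \bigr) = - \mathcal{E}, \qquad \mathcal{E} \coloneqq f(B) - \sum_k f(B_k),
\end{equation*}
with vanishing final data $w_n(T_n) = 0$. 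First I would multiply by $\overline{w_n}$, integrate and take imaginary parts: the Laplacian term drops, the difference $f(u_n) - f(B)$ is controlled by the same pointwise inequality $\abs{\Im[(f(a) - f(b))\overline{(a-b)}]} \leq 2 \lambda \abs{a-b}^2$ that underlies Lemma \ref{lem:L2_energy_est} (the computation being justified exactly as there, since $f$ fails to be $\mathcal{C}^1$ only at the origin), and the source is treated by Cauchy--Schwarz. This gives the differential inequality $\frac{\diff}{\diff t} \norm{w_n}_{L^2} \leq 2 \lambda \norm{w_n}_{L^2} + \norm{\mathcal{E}}_{L^2}$.

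Integrating backward from $T_n$, where $w_n$ vanishes, yields for every $\tau \in [T', T_n]$
\begin{equation*}
    \norm{w_n(\tau)}_{L^2} \leq \int_\tau^{T_n} e^{2 \lambda (s - \tau)} \norm{\mathcal{E}(s)}_{L^2} \diff s.
\end{equation*}
Everything then reduces to a good bound on $\norm{\mathcal{E}(s)}_{L^2}$, and this is where the real work lies. The delicate point is that $f$ is not flat — not even defined — at the origin; however $\mathcal{E}$ is genuinely non-singular, because $z \mapsto z \ln \abs{z}^2$ extends continuously by $0$ at $z=0$, provided one keeps the term $B \ln \abs{B}^2$ intact rather than splitting it.

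To estimate $\mathcal{E}$ I would exploit the explicit Gaussian structure from Proposition \ref{prop:expression_general_gaussian}. With $\xi_k(s) \coloneqq x_k + v_k s$ the centres, the bound $\Re A_k(s) \geq 2 \sigma_- I_d$ gives $\abs{B_k(s,x)} \leq C_0 \exp(- \sigma_- \abs{x - \xi_k(s)}^2)$, while $\sup_{s,k}\sigma(\Re A_k(s)) < \infty$ yields a matching lower bound and the logarithmic control $\abs{\ln \abs{B_k(s,x)}^2} \leq C_0 (1 + \abs{x - \xi_k(s)}^2)$. The centres separate linearly, $\abs{\xi_j(s) - \xi_k(s)} \geq v_* \abs{s} - C_0$ for $j \neq k$. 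On the territory of a given soliton $j$, the dominant contribution to $\mathcal{E}$ comes from the other Gaussians $B_k$ ($k \neq j$) evaluated far from their own centre, each weighted only by a polynomial from the logarithms; integrating $\abs{B_k}^2$ against such weights over a region at distance $\gtrsim \tfrac12 \abs{\xi_j - \xi_k}$ from $\xi_k$ produces the Gaussian gain. The outcome I expect is, for some $m \in \mathbb{N}$,
\begin{equation*}
    \norm{\mathcal{E}(s)}_{L^2} \leq C_0 (1 + \abs{s})^m \, e^{- \frac{\sigma_- (v_* s)^2}{4}}.
\end{equation*}
Obtaining the exponent exactly $\sigma_-(v_* s)^2/4$ — rather than something smaller — and controlling the overlap regions uniformly is the heart of the matter and the step I expect to be the main obstacle, being precisely where the logarithmic nonlinearity must be handled differently from the polynomial case.

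Finally I would combine the two estimates. Inserting the bound on $\norm{\mathcal{E}}_{L^2}$ into the backward integral and noting that, for $\tau$ large enough, the integrand is decreasing on $[\tau, T_n]$ and hence dominated near its lower endpoint, one gets $\norm{w_n(\tau)}_{L^2} \leq C_0 (1 + \abs{\tau})^{m'} e^{- \sigma_- (v_* \tau)^2 / 4}$. Writing $\tau = T' + t$ and expanding $(v_* \tau)^2 = (v_* t)^2 + 2 v_*^2 T' t + (v_* T')^2$, the cross factor $e^{- \sigma_- v_*^2 T' t / 2}$ and the constant factor $e^{- \sigma_-(v_* T')^2/4}$ both decay in $T'$ and absorb the polynomial prefactor and the constant $C_0$, uniformly in $t \geq 0$, as soon as $T'$ is chosen large enough. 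This is exactly what converts the rate of $\mathcal{E}$ into the clean bound $\norm{w_n(T' + t)}_{L^2} \leq e^{- \sigma_-(v_* t)^2/4}$, and it explains why the final decay rate coincides with that of the interaction term and is independent of the frequencies.
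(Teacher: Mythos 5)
Your proposal follows essentially the same route as the paper: the same $L^2$ differential inequality with source $\mathcal{E} = f(B) - \sum_k f(B_k)$ (the paper's \eqref{eq:energy_est_like}, derived from Lemma \ref{lem_log_inequality}), backward Gronwall from $w_n(T_n)=0$, a Gaussian-separation bound of the form $\norm{\mathcal{E}(s)}_{L^2} \leq C_0 (1+s)^m e^{-\sigma_-(v_*s)^2/4}$, and absorption of all prefactors by choosing $T'$ large. The step you single out as the main obstacle is precisely what the paper imports ready-made as Lemma \ref{lem:diff_L_log_L_sum_gaussian} (from the author's earlier work on superposition), and your sketch of its mechanism --- other Gaussians evaluated at distance $\gtrsim \tfrac12\abs{\xi_j-\xi_k}$ from their own centre, with the square root for the $L^2$ norm producing the exponent $\sigma_-(v_*s)^2/4$ --- is the correct one.
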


\begin{rem}
    We recall that the multi-soliton case is a particular case of multi-gaussian where we have taken $A_k^\textnormal{in} = 2 \lambda I_d$, so that $A_k (t) = 2 \lambda I_d$. Thus, by definition of $\sigma_-$, we get $\sigma_- = \lambda$, which gives exactly the expected convergence rate for multi-Gausson, at least for the $L^2$ norm.
\end{rem}{}

The computation used for this estimate is almost the same computation as that in \cite{Ferriere__superposition_logNLS}. Directly inspired from the computation for the energy estimate in $L^2$ found in \cite{cazenave-haraux}, it is the consequence of the following lemma:

\begin{lem}[{\cite[Lemma~1.1.1]{cazenave-haraux}}] \label{lem_log_inequality}
There holds
\begin{equation*}
    \abs{\Im \left( (z_2 \ln \abs{z_2}^2 - z_1 \ln \abs{z_1}^2) (\overline{z_2} - \overline{z_1}) \right)} \leq 2 \abs{z_2 - z_1}^2, \qquad \forall z_1, z_2 \in \mathbb{C}.
\end{equation*}
\end{lem}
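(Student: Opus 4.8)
The plan is to exploit that the scalar factor $\ln\abs{z}^2$ is real-valued, which makes the imaginary part on the left collapse to a product of two elementary quantities. Writing $h(z) \coloneqq \ln \abs{z}^2$ (which is real), I would first dispose of the degenerate cases $z_1 = 0$ or $z_2 = 0$ (with the convention $0 \ln 0 = 0$), where the left-hand side vanishes while the right-hand side is nonnegative, and then assume $z_1, z_2 \neq 0$. The key algebraic step is the telescoping identity
\[
z_2 h(z_2) - z_1 h(z_1) = z_2 \bigl( h(z_2) - h(z_1) \bigr) + (z_2 - z_1)\, h(z_1),
\]
so that, after multiplying by $\overline{z_2} - \overline{z_1}$, the last term contributes $\abs{z_2 - z_1}^2 h(z_1) \in \mathbb{R}$ and drops out of the imaginary part. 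Since $h(z_2) - h(z_1)$ is real as well, I would obtain the exact identity
\[
\Im\bigl( (z_2 \ln \abs{z_2}^2 - z_1 \ln \abs{z_1}^2)(\overline{z_2} - \overline{z_1}) \bigr) = \bigl( h(z_2) - h(z_1) \bigr)\, \Im\bigl( z_1 \overline{z_2} \bigr),
\]
using that $\Im\bigl(z_2(\overline{z_2} - \overline{z_1})\bigr) = -\Im(z_2 \overline{z_1}) = \Im(z_1 \overline{z_2})$ because $\abs{z_2}^2$ is real.

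With the problem reduced to estimating this product, I would bound each factor separately. For the geometric factor, subtracting the real quantities $\abs{z_2}^2$ or $\abs{z_1}^2$ gives $\Im(z_1 \overline{z_2}) = \Im\bigl((z_1 - z_2)\overline{z_2}\bigr) = \Im\bigl(z_1(\overline{z_2} - \overline{z_1})\bigr)$, whence $\abs{\Im(z_1 \overline{z_2})} \leq \min(\abs{z_1}, \abs{z_2})\,\abs{z_2 - z_1}$. For the logarithmic factor, I would use $h(z_2) - h(z_1) = 2(\ln\abs{z_2} - \ln\abs{z_1})$ together with the elementary one-variable bound $\abs{\ln b - \ln a} \leq \abs{b - a}/\min(a,b)$ for $a, b > 0$ (coming from $\ln b - \ln a = \int_a^b \frac{\diff t}{t}$) and the reverse triangle inequality $\bigl| \abs{z_2} - \abs{z_1} \bigr| \leq \abs{z_2 - z_1}$, giving $\abs{h(z_2) - h(z_1)} \leq 2\,\abs{z_2 - z_1}/\min(\abs{z_1}, \abs{z_2})$.

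Multiplying the two bounds, the factors $\min(\abs{z_1}, \abs{z_2})$ cancel and the constant $2$ (which is precisely the exponent in $\ln\abs{z}^2$) survives, yielding exactly $2\abs{z_2 - z_1}^2$. I do not expect a genuine obstacle here: the only subtle point is recognizing the right decomposition so that all purely real contributions cancel in the imaginary part, after which the estimate reduces to the two one-variable inequalities above; the degenerate cases and the accounting of the constant $2$ are then immediate.
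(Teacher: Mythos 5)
Your proof is correct. Note that the paper does not prove this lemma at all: it is quoted directly from Cazenave--Haraux (Lemme~1.1.1), and your argument is essentially the classical one from that reference --- reduce, using that $\ln\abs{z}^2$ is real, to the identity $\Im\bigl((z_2\ln\abs{z_2}^2 - z_1\ln\abs{z_1}^2)(\overline{z_2}-\overline{z_1})\bigr) = \bigl(\ln\abs{z_2}^2 - \ln\abs{z_1}^2\bigr)\,\Im(z_1\overline{z_2})$, then bound the logarithmic factor by $2\abs{z_2-z_1}/\min(\abs{z_1},\abs{z_2})$ and the geometric factor by $\min(\abs{z_1},\abs{z_2})\,\abs{z_2-z_1}$, so the minima cancel. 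All steps, including the degenerate cases, check out.
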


Indeed, for any integer $N \geq 1$ and any solutions $v, v_1, \dots, v_N$ to \eqref{foc_log_nls}, the function $w \coloneqq v - V$ with $V \coloneqq \sum v_j$ satisfies:
\begin{equation*}
    i \, \partial_t w + \frac{1}{2} \Delta w = - \lambda \Bigl( v \ln{\abs{v}^2} - \sum v_j \ln{\abs{v_j}^2} \Bigr).
\end{equation*}
Therefore, there holds
\begin{align*}
    \frac{1}{2} \frac{\diff}{\diff t} \norm{w (t)}_{L^2}^2 &= - \lambda \Im \int \Bigl( v \ln{\abs{v}^2} - \sum v_j \ln{\abs{v_j}^2} \Bigr) ( \overline{v} - \overline{V} ) \diff x \\
        &= - \lambda \Im \int \Bigl( v \ln{\abs{v}^2} - V \ln{\abs{V}^2} \Bigr) ( \overline{v} - \overline{V} ) \diff x - \lambda \Im \int \Bigl( V \ln{\abs{V}^2} - \sum v_j \ln{\abs{v_j}^2} \Bigr) ( \overline{v} - \overline{V} ) \diff x \\
    \frac{1}{2} \abs{\frac{\diff}{\diff t} \norm{w (t)}_{L^2}^2} &\leq 2 \abs{\lambda} \, \norm{w}_{L^2}^2 + \abs{\lambda} \int \abs{ V \ln{\abs{V}^2} - \sum v_j \ln{\abs{v_j}^2} } \abs{w} \diff x \\
        &\leq 2 \abs{\lambda} \, \norm{w}_{L^2}^2 + \abs{\lambda} \norm{ V \ln{\abs{V}^2} - \sum v_j \ln{\abs{v_j}^2} }_{L^2} \norm{w}_{L^2}.
\end{align*}
Dividing by $\norm{w}_{L^2}$, we obtain the inequality:
\begin{equation}
    \abs{\frac{\diff}{\diff t} \norm{w (t)}_{L^2}} \leq 2 \abs{\lambda} \, \norm{w}_{L^2} + \abs{\lambda} \norm{ V \ln{\abs{V}^2} - \sum_{j=1}^N v_j \ln{\abs{v_j}^2} }_{L^2}. \label{eq:energy_est_like}
\end{equation}

The core of the problem is to estimate the last term in order to be able to perform a backward Gronwall lemma. In the case where $v_j = B_j$, this is an explicit term which can be estimated thanks to \cite[Lemma~3.2]{Ferriere__superposition_logNLS} that we recall here.

\begin{lem} \label{lem:diff_L_log_L_sum_gaussian}
For any $d \in \mathbb{N}^*$, there exists $C_d > 0$ such that the following holds.
Let $N \in \mathbb{N}^*$ and take $x_k \in \mathbb{R}^d$, $\omega_k \in \mathbb{R}$, $\Lambda_k \in S_d (\mathbb{C})^{\Re +}$ and $\theta_k : \mathbb{R}^d \rightarrow \mathbb{R}$ a real measurable function for $k = 1, \dots, N$, and define for all $x \in \mathbb{R}^d$
\begin{equation*}
    g_k (x) = \exp \left[ i \theta_k (x) + \omega_k - (x - x_k)^\top \Lambda_k (x - x_k) \right],
\end{equation*}
as well as
\begin{equation*}
    g (x) = \sum_{k = 1,\dots,N} g_k (x).
\end{equation*}
If
\begin{equation*}
    \varepsilon \coloneqq \left( \min_{k \neq j} \, \abs{x_{j} - x_k} \right)^{-1} < \varepsilon_0 \coloneqq  \min \biggl( \frac{\sqrt{\lambda_+}}{\max (\sqrt{\delta \omega + 1}, \sqrt{\ln{N}})}, \sqrt{\frac{\lambda_-}{d+2}} \biggr)
\end{equation*}
where $\delta \omega \coloneqq \max\limits_{j,k} |\omega_k - \omega_j|$, $\lambda_+ = \max\limits_k \Re \sigma (\Lambda_k)$ and $\lambda_- = \min\limits_k \Re \sigma (\Lambda_k) > 0$, then
\begin{equation} \label{L2_norm_gauss_log}
    \norm{g \ln \abs{g} - \sum_{k = 1}^N g_k \ln \abs{g_k}}_{L^2} \leq C_d N^\frac{3}{2} \, \frac{\lambda_+}{\varepsilon^{\frac{d}{2}+1} \sqrt{\lambda_-}} \, \exp \left[ - \frac{\lambda_-}{4 \varepsilon^2} + \max_j \omega_j \right].
\end{equation}
\end{lem}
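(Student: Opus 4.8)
The starting point is an exact algebraic rewriting obtained by factoring out, on each region of space, the locally dominant Gaussian. Set $q_k(x)\coloneqq (x-x_k)^\top \Re\Lambda_k (x-x_k)$, so that $\abs{g_k(x)}=\exp[\omega_k-q_k(x)]$ with $\lambda_-\abs{x-x_k}^2\le q_k(x)\le\lambda_+\abs{x-x_k}^2$; write $D\coloneqq\varepsilon^{-1}=\min_{j\neq k}\abs{x_j-x_k}$ and $\omega_{\max}\coloneqq\max_j\omega_j$. I would partition $\mathbb{R}^d=\bigcup_m\Omega_m$ into the dominance cells $\Omega_m\coloneqq\{x:\abs{g_m(x)}\ge\abs{g_j(x)}\ \forall j\}$. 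On $\Omega_m$, putting $w\coloneqq g/g_m$ (so that $\abs{w}\le N$ there), one has the identity
\begin{equation*}
g\ln\abs{g}-\sum_k g_k\ln\abs{g_k}=\sum_{k\neq m}g_k\,\ln\frac{\abs{g_m}}{\abs{g_k}}+g_m\,\bigl(w\ln\abs{w}\bigr)\qquad\text{on }\Omega_m,
\end{equation*}
which follows from $g\ln\abs{g_m}=\bigl(\sum_k g_k\bigr)\ln\abs{g_m}$ and $g\,\ln(\abs{g}/\abs{g_m})=g_m\,w\ln\abs{w}$. The virtue of this rewriting is that both pieces are regular: $\ln(\abs{g_m}/\abs{g_k})=(\omega_m-\omega_k)-q_m(x)+q_k(x)$ is an explicit quadratic polynomial, and $s\mapsto s\ln\abs{s}$ is continuous and bounded on $\{\abs{s}\le N\}$ (it vanishes at the zeros of $g$), so the would-be singularity of $\ln\abs{g}$ never appears. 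This is the same taming mechanism that underlies Lemma~\ref{lem_log_inequality}.

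I would then estimate the two pieces separately in $L^2(\Omega_m)$, reducing everything to integrals of products of two well-separated Gaussians. For the first piece, on $\Omega_m$ one has $\abs{g_k}\le\abs{g_m}$, hence $\abs{g_k}^2\le\abs{g_k}\abs{g_m}$, and the quadratic factor is bounded by $\delta\omega+\lambda_+(\abs{x-x_m}^2+\abs{x-x_k}^2)$, so that
\begin{equation*}
\norm{g_k\ln\tfrac{\abs{g_m}}{\abs{g_k}}}_{L^2(\Omega_m)}^2\le \int \abs{g_k}\abs{g_m}\,\bigl(\delta\omega+\lambda_+(\abs{x-x_m}^2+\abs{x-x_k}^2)\bigr)^2\,\diff x.
\end{equation*}
For the second piece I would split $\Omega_m$ according to $\eta_m\coloneqq w-1=\sum_{j\neq m}g_j/g_m$: on $\{\abs{\eta_m}\le\tfrac12\}$ the Lipschitz character of $s\mapsto s\ln\abs{s}$ near $s=1$ gives $\abs{w\ln\abs{w}}\le C\abs{\eta_m}$, whence $\abs{g_m\,w\ln\abs{w}}\le C\,\abs{\sum_{j\neq m}g_j}$; on $\{\abs{\eta_m}>\tfrac12\}$ one has simultaneously $\abs{g_m}\le 2\sum_{j\neq m}\abs{g_j}$ and $\abs{w\ln\abs{w}}\le N\ln N$. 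In every case the contribution is again dominated by sums over $j\neq m$ of $\int\abs{g_j}\abs{g_m}$-type integrals (up to the harmless factor $N\ln N$).

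The crucial computation is therefore the Gaussian product estimate: for $j\neq k$ and a polynomial $P$ of degree $\le 4$,
\begin{equation*}
\int\abs{g_j}\abs{g_k}\,\abs{P}\,\diff x\le e^{2\omega_{\max}}\int e^{-\lambda_-(\abs{x-x_j}^2+\abs{x-x_k}^2)}\abs{P}\,\diff x\le C_d\,e^{2\omega_{\max}}\,e^{-\frac{\lambda_-}{2\varepsilon^2}}\,\lambda_-^{-d/2}\,\Bigl(\tfrac{\lambda_+}{\varepsilon^2}+\cdots\Bigr),
\end{equation*}
obtained by completing the square: $q_j+q_k\ge\lambda_-(\abs{x-x_j}^2+\abs{x-x_k}^2)=2\lambda_-\abs{x-m_{jk}}^2+\tfrac{\lambda_-}{2}\abs{x_j-x_k}^2$ with $m_{jk}$ the midpoint, so the minimum is $\ge\tfrac{\lambda_-}{2}\varepsilon^{-2}$, and the residual Gaussian integral produces the $\lambda_-^{-d/2}$ together with the polynomial moments. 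Taking square roots turns $e^{-\lambda_-/(2\varepsilon^2)}$ into the announced rate $e^{-\lambda_-/(4\varepsilon^2)}$ and $e^{2\omega_{\max}}$ into $e^{\omega_{\max}}$. Summing the $\le N$ terms over $k\neq m$ and then over the $N$ cells $\Omega_m$ accounts for the factor $N^{3/2}$, while the algebraic factors assemble into $\lambda_+\,\varepsilon^{-(d/2+1)}/\sqrt{\lambda_-}$; the hypothesis $\varepsilon<\varepsilon_0$ is used precisely to ensure that the polynomial prefactors (carrying $\lambda_+/\varepsilon^2$, $\delta\omega$ and $\ln N$) are consistently absorbed and that the above splittings are legitimate.

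The main obstacle is the interference region near the cell boundaries $\partial\Omega_m$, that is $\{\abs{\eta_m}>\tfrac12\}$, where $g$ may vanish and a direct treatment of $\ln\abs{g}$ would face a logarithmic singularity. My plan circumvents this entirely by never isolating $\ln\abs{g}$: the tamed product $w\ln\abs{w}$ stays bounded through the zeros of $g$ (by $N\ln N$, since $\abs{w}\le N$ on $\Omega_m$), and the domination $\abs{g_m}\le 2\sum_{j\neq m}\abs{g_j}$ valid there supplies the required exponential smallness through the Gaussian-product estimate. The only genuinely delicate remaining point is the bookkeeping of the polynomial moments and of the powers of $N$, $\lambda_\pm$ and $\varepsilon^{-1}$, so as to land exactly on the stated prefactor $C_d\,N^{3/2}\,\lambda_+\,\varepsilon^{-(d/2+1)}\lambda_-^{-1/2}$.
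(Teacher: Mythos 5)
This paper does not actually prove Lemma \ref{lem:diff_L_log_L_sum_gaussian}: it imports it from \cite[Lemma~3.2]{Ferriere__superposition_logNLS}, and the ingredients it recalls elsewhere (the Lipschitz-type estimate of Lemma \ref{lem:lip_z_log_z}, the pointwise bound of Lemma \ref{cor:est_diff_g_ln_g}, and the Gaussian \emph{tail} estimates of Lemmas \ref{lem:gauss_error_2}--\ref{lem:gauss_error_3}) show that the quoted proof runs differently from yours: a pointwise bound of the form $\abs{g\ln\abs{g}^2-\sum_k g_k\ln\abs{g_k}^2}\le 2\sum_{k\neq j}\abs{g_k}\bigl[\delta\omega+3+2\ln N+\lambda_+\abs{x-x_j}^2+\lambda_+\abs{x-x_k}^2\bigr]$, valid for \emph{any} reference index $j$, is integrated over nearest-center cells, where $\abs{x-x_k}\ge\frac12\abs{x_j-x_k}$, by one-Gaussian tail integrals. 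Your decomposition (dominance cells, exact identity, product-of-Gaussians after completing the square) is a legitimate alternative that correctly tames the logarithm and produces the right exponential rate $e^{-\lambda_-/(4\varepsilon^2)}$; but as written it misses the stated inequality at two concrete points.

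First, the powers of $N$. On $\{\abs{\eta_m}>\tfrac12\}$ you use \emph{both} $\abs{w\ln\abs{w}}\le N\ln N$ \emph{and} $\abs{g_m}\le2\sum_{j\neq m}\abs{g_j}$ multiplicatively, which double counts: the resulting per-cell contribution is $2N\ln N\sum_{j\neq m}\norm{g_j}_{L^2(\Omega_m)}$, i.e.\ (with your own ``at most $N$ terms'' counting) of order $N^2\ln N$ times the basic pair quantity, and summing the squares over the $N$ cells gives $N^{5/2}\ln N$, not $N^{3/2}$. The repair is to keep the smallness \emph{inside} the sup bound: on that region $1\le2\abs{\eta_m}$, so splitting $\abs{w}\le1$ (where $\abs{w\ln\abs{w}}\le e^{-1}\le\abs{\eta_m}$) from $1\le\abs{w}\le N$ (where $\abs{w\ln\abs{w}}\le\abs{w}\ln N\le3\abs{\eta_m}\ln N$) yields $\abs{g_m\,w\ln\abs{w}}\le3(1+\ln N)\,\abs*{\sum_{j\neq m}g_j}$, with no stray factor $N$; then your counting does give $N^{3/2}$ after $\ln N$ is absorbed via $\varepsilon<\varepsilon_0$.

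Second, the prefactor in $\varepsilon,\lambda_\pm$, which is where your ``crucial computation'' genuinely diverges from what the statement requires. Completing the square and integrating the product over all of $\mathbb{R}^d$ gives, for a pair at distance $L$, at best $C_d\,e^{\omega_{\max}}\lambda_+L^2\lambda_-^{-d/4}e^{-\lambda_-L^2/4}$ (moment $\lambda_+^2L^4$ times volume factor $\lambda_-^{-d/2}$, square-rooted), whereas the statement claims $\lambda_+L^{d/2+1}\lambda_-^{-1/2}$ at $L=1/\varepsilon$. The ratio of your bound to the claimed one is $(\lambda_-L^2)^{(2-d)/4}$: harmless for $d\ge2$ (it is $\le1$ by the hypothesis $\lambda_-L^2>d+2$), but for $d=1$ it equals $(\lambda_-L^2)^{1/4}$, which is unbounded under the hypotheses, so in dimension one your route proves a strictly weaker inequality than \eqref{L2_norm_gauss_log}. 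The claimed shape $\varepsilon^{-(d/2+1)}\lambda_-^{-1/2}$ is the signature of a tail estimate $\int_R^\infty r^ne^{-\gamma r^2}\diff r\le C_nR^{n-1}\gamma^{-1}e^{-\gamma R^2}$ applied to a \emph{single} Gaussian $\abs{g_k}^2$ on the far region $\abs{x-x_k}\ge L/2$; your product integral cannot exploit any such restriction, because the product Gaussian peaks at the midpoint of $x_k$ and $x_m$, which lies on the cell boundary (and dominance cells, unlike nearest-center cells, need not satisfy $\abs{x-x_k}\gtrsim L$ at all when the $\omega_k$'s and $\Lambda_k$'s differ). Note finally that for the use made of the lemma in this paper (fixed Gaussons, all constants swallowed into $C_0$), your weaker conclusion would suffice; but it does not prove the lemma as stated.
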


To be able to apply Lemma \ref{lem:diff_L_log_L_sum_gaussian}, we need the gaussians to be "well separated" (which is given by the condition $\varepsilon < \varepsilon_0$). But this happens, eventually after some time 
\begin{equation*}
    T_\textnormal{sep} \coloneqq \max_{k, j} \frac{\varepsilon_0^{-1} + \abs{x_k - x_j}}{v_*},
\end{equation*}
where $\varepsilon_0$ is defined in Lemma \ref{lem:diff_L_log_L_sum_gaussian}.
Indeed, for all times $t \geq 0$, there holds
\begin{equation}
    \abs{x_j + v_j (T_\textnormal{sep} + t) - (x_k + v_k (T_\textnormal{sep} + t))} \geq \varepsilon_0^{-1} + v_* t. \label{eq:distance_x_k}
\end{equation}
Then, we will also need to estimate the Gauss error function, which can be done in the usual way.

\begin{lem} \label{lem:int_error_gauss}
    For any $y > 0$ and $\gamma > 0$, there holds
    \begin{gather*}
        \int_y^\infty e^{-\gamma x^2} \diff x < \frac{1}{2 \gamma y} e^{-\gamma y^2}.
    \end{gather*}
\end{lem}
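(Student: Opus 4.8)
The plan is to exploit the elementary fact that on the domain of integration $[y, \infty)$ the variable $x$ never drops below $y > 0$, so that inserting the factor $x/y \geq 1$ only enlarges the integrand while simultaneously producing an integrand that is explicitly integrable. Concretely, I would first estimate
\begin{equation*}
    \int_y^\infty e^{-\gamma x^2} \diff x \leq \int_y^\infty \frac{x}{y} \, e^{-\gamma x^2} \diff x = \frac{1}{y} \int_y^\infty x \, e^{-\gamma x^2} \diff x,
\end{equation*}
using that $x \geq y$ throughout the interval, so that $x/y \geq 1$.

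The next step is to compute the remaining integral in closed form. Since $-\frac{1}{2\gamma} e^{-\gamma x^2}$ is a primitive of $x \, e^{-\gamma x^2}$, the fundamental theorem of calculus gives
\begin{equation*}
    \int_y^\infty x \, e^{-\gamma x^2} \diff x = \frac{1}{2\gamma} e^{-\gamma y^2},
\end{equation*}
the boundary contribution at $+\infty$ vanishing because $\gamma > 0$. Combining the two displays yields the bound $\frac{1}{2\gamma y} e^{-\gamma y^2}$.

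Finally, I would justify that the inequality is strict rather than merely weak. The majorization $1 \leq x/y$ is an equality only at the single point $x = y$, so on the open half-line $(y, \infty)$ one has the strict pointwise inequality $e^{-\gamma x^2} < \frac{x}{y} e^{-\gamma x^2}$; integrating a strict inequality between continuous integrands over a set of positive measure produces a strict inequality between the integrals. There is no genuine obstacle in this lemma, as it is a standard Gaussian tail (Mills-ratio type) estimate; the only point deserving a moment's attention is precisely this strictness, which is immediate from the continuity of the integrand.
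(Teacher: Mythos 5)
Your proof is correct and follows exactly the paper's own argument: insert the factor $x/y \geq 1$ on the domain of integration and evaluate the resulting integral via the antiderivative $-\frac{1}{2\gamma}e^{-\gamma x^2}$. The only difference is that you spell out the justification of strictness, which the paper leaves implicit.
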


\begin{proof}[Proof of Lemma \ref{lem:int_error_gauss}]
    We easily compute:
    \begin{equation*}
        \int_y^\infty e^{-\gamma x^2} \diff x < \int_y^\infty \frac{x}{y} e^{-\gamma x^2} \diff x = \frac{1}{y} \left[ - \frac{e^{-\gamma x^2}}{2 \gamma} \right]^\infty_y = \frac{1}{2 \gamma y} e^{-\gamma y^2}. \qedhere
    \end{equation*}
\end{proof}

\begin{proof}[Proof of Proposition \ref{prop:unif_est_L2_br}]
Thanks to \eqref{eq:distance_x_k} and the fact that $0 < \sigma_- = \inf_{t,k} \sigma ( \Re A_k (t) ) \leq \sup_{t,k} \sigma ( \Re A_k (t) ) < + \infty$ with Proposition \ref{prop:expression_general_gaussian}, we can apply Lemma \ref{lem:diff_L_log_L_sum_gaussian} : for all $t > 0$,
\begin{multline}
    \norm{B (T_\textnormal{sep} + t) \ln \abs{B (T_\textnormal{sep} + t)} - \sum_{k = 1}^N B_k (T_\textnormal{sep} + t) \ln \abs{B_k (T_\textnormal{sep} + t)}}_{L^2 (\mathbb{R})} \\ 
        \leq C_0 (\varepsilon_0^{-1} + v_* t)^\frac{d+2}{2} \exp \left[ - \frac{\sigma_- (\varepsilon_0^{-1} + v_* t)^2}{4} \right] 
        \leq C_0 \, \exp \left[ - \frac{\sigma_- (v_* t)^2}{4} \right]. 
    \label{est_g_ln_g_L2_multisoliton}
\end{multline}
Take $n$ large enough so that $T_n > T_\textnormal{sep}$ and set $w_n \coloneqq u_n - G$. \eqref{eq:energy_est_like} gives that for all $t > 0$,
\begin{equation*}
    \abs*[\Big]{\partial_t \norm{w_n (T_\textnormal{sep} + t)}_{L^2}} \leq 2 \lambda \norm{w_n (T_\textnormal{sep} + t)}_{L^2} + C_0 \, \exp \left[ - \frac{\sigma_- (v_* t)^2}{4} \right].
\end{equation*}
The Gronwall lemma (backward in time) between $t$ and $T_n - T_\textnormal{sep}$ for $0 \leq t \leq T_n - T_\textnormal{sep}$ and the fact that $w_n (T_n) = 0$ yields
\begin{align*}
    \norm{w_n (T_\textnormal{sep} + t)}_{L^2} &\leq C_0 \int_t^{T_n} \exp \left[ - \frac{\sigma_- (v_* s)^2}{4} + 2 \lambda (s-t) \right] \diff s \\
        &\leq C_0 \, e^{-2 \lambda t} \int_t^{\infty} \exp \left[ - \frac{\sigma_- (v_* s)^2}{4} + 2 \lambda s \right] \diff s.
\end{align*}
Then, we estimate the integral.
\begin{align*}
    \int_{t}^{\infty} \exp \left[ - \frac{\sigma_- (v_* s)^2}{4} + 2 \lambda s \right] \diff s &= \int_{t}^{\infty} \exp \left[ - \sigma_- \left( \frac{v_* s}{2} - \frac{2 \lambda}{\sigma_- v_*} \right)^2 + \frac{4 \lambda^2}{\sigma_- v_*^2} \right] \diff s \\
        &= \frac{2}{v_*} \int_{\tilde{t}}^{\infty} \exp \left[ - \sigma_- r^2 + \frac{4 \lambda^2}{\sigma_- v_*^2} \right] \diff r,
\end{align*}
where $\tilde{t} \coloneqq \frac{v_* t}{2} - \frac{2 \lambda}{\sigma_- v_*}$.
Using Lemma \ref{lem:int_error_gauss}, we get for all $t$ such that $\tilde{t} = \frac{v_* t}{2} - \frac{2 \lambda}{\sigma_- v_*} > 0$
\begin{equation*}
    \int_{t}^{\infty} \exp \left[ - \frac{\sigma_- (v_* s)^2}{4} + 2 \lambda s \right] \diff s \leq \frac{C_0}{\tilde{t}} \exp \left[ - \sigma_- \tilde{t}^2 + \frac{4 \lambda^2}{\sigma_- v_*^2} \right] = \frac{C_0}{\tilde{t}} \exp \left[ - \frac{\sigma_- (v_* t)^2}{4} + 2 \lambda t \right]
\end{equation*}
Hence, setting $t_1 \coloneqq \frac{4}{v_*^2}$, we have for all $n$ large enough and for all $t \in [t_1, T_n - T_\textnormal{sep}]$
\begin{equation*}
    \norm{w_n (T_\textnormal{sep} + t)}_{L^2} \leq \frac{C_0}{t - t_1} \exp \left[ - \frac{\sigma_- (v_* t)^2}{4} \right],
\end{equation*}
which leads to the result by taking $T' > T_\textnormal{sep} + t_1$ large enough.
\end{proof}

\section{Uniform $H^1$-estimates} \label{sec:H1_est}

This section (and so is Section \ref{sec:FH1_est}) is completely devoted to the multi-soliton case, \textit{i.e.} Proposition \ref{prop:unif_est}, so that here $B_j = G_j$ are all Gaussons. However, since the Gausson is a particular case of gaussian solutions, the previous section holds. Moreover, for Gaussons, we have $A_j (t) = 2 \lambda I_d$, thus $\sigma_- = \lambda$ here. Therefore, Proposition \ref{prop:unif_est_L2_br} gives here:

\begin{prop}[Uniform estimates in $L^2$, multi-Gausson case] \label{prop:unif_est_L2}
There exists $T' \in \mathbb{R}$ such that for all $n \in \mathbb{N}$ such that $T_n > T'$ and for any $t \in [0, T_n - T']$,
\begin{equation*}
    \norm{u_n (T' + t) - G (T' + t)}_{L^2} \leq e^{- \frac{\lambda (v_* t)^2}{4}}. \label{eq:unif_est_L2}
\end{equation*}
\end{prop}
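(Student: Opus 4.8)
The plan is to deduce this statement directly from the general multi-gaussian uniform $L^2$-estimate, Proposition \ref{prop:unif_est_L2_br}, by specializing the parameters, rather than reproving anything. First I would recall that in the multi-Gausson setting the profiles $B_k$ are the Gaussons $G_k$, which by Remark \ref{rem:link_br_sol} corresponds exactly to the choice $A_k^\textnormal{in} = 2\lambda I_d$ in $B_k = B^{A_k^\textnormal{in}}_{\omega_k,x_k,v_k,\theta_k}$. With this choice $B = \sum_k B_k = \sum_k G_k = G$, so the left-hand side in the conclusion of Proposition \ref{prop:unif_est_L2_br} is already $\norm{u_n(T'+t) - G(T'+t)}_{L^2}$, and it only remains to compute the constant $\sigma_-$ attached to this data.

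Next I would identify $A_k(t)$ explicitly. Since $2\lambda I_d$ is a constant solution of the matrix ODE \eqref{eq:matrix_ODE} — one checks $-i(2\lambda I_d)^2 + 2i\lambda\,\Re(2\lambda I_d) = -4i\lambda^2 I_d + 4i\lambda^2 I_d = 0$, as already noted in Remark \ref{rem:link_br_sol} — uniqueness of the solution to \eqref{eq:matrix_ODE} forces $A_k(t) \equiv 2\lambda I_d$ for every $k$ and all $t$. Hence $\Re A_k(t) = 2\lambda I_d$ has the single eigenvalue $2\lambda$, and by definition
\begin{equation*}
    \sigma_- = \frac{1}{2}\inf_{t,k}\sigma(\Re A_k(t)) = \frac{1}{2}\cdot 2\lambda = \lambda.
\end{equation*}

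Finally I would substitute $\sigma_- = \lambda$ into the bound furnished by Proposition \ref{prop:unif_est_L2_br}, which turns it into
\begin{equation*}
    \norm{u_n(T'+t) - G(T'+t)}_{L^2} \leq e^{-\frac{\lambda(v_* t)^2}{4}}
\end{equation*}
on the same range $t \in [0, T_n - T']$, for the same $T'$; this is precisely the claim. I do not expect any genuine obstacle: all of the analytic content — the pointwise logarithm bound of Lemma \ref{lem_log_inequality} feeding the $L^2$ energy estimate \eqref{eq:energy_est_like}, the separation estimate \eqref{eq:distance_x_k}, the decay bound of Lemma \ref{lem:diff_L_log_L_sum_gaussian}, the Gaussian tail estimate of Lemma \ref{lem:int_error_gauss}, and the backward Gronwall argument — was already carried out in full generality in the proof of Proposition \ref{prop:unif_est_L2_br}, and the multi-Gausson statement is merely its evaluation at the flat profile $A_k^\textnormal{in} = 2\lambda I_d$. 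Should a self-contained argument be preferred, one could instead rerun that computation verbatim with the explicit Gaussons $G_k$ (whose common Hessian is $2\lambda I_d$), but the specialization above avoids all repetition.
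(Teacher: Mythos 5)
Your proposal is correct and is essentially identical to the paper's own treatment: the paper also obtains Proposition \ref{prop:unif_est_L2} by specializing Proposition \ref{prop:unif_est_L2_br} to the choice $A_k^\textnormal{in} = 2\lambda I_d$ (Remark \ref{rem:link_br_sol}), noting that $A_k(t) \equiv 2\lambda I_d$ is the constant solution of \eqref{eq:matrix_ODE} and hence $\sigma_- = \lambda$. No gap to report.
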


The second step in order to prove Proposition \ref{prop:unif_est} is to get the uniform estimates in $H^1$. Since we already have it for $L^2$ (and we fix $T'$ provided by Proposition \ref{prop:unif_est_L2}), we need to prove it only for $\dot{H}^1$.

\begin{prop} \label{prop:unif_est_dot_H1}
    There exists $T > T'$ such that for all $n \in \mathbb{N}$ such that $T_n > T$ and for any $t \in [0, T_n - T]$,
    \begin{equation}
        \norm{u_n (T + t,.) - G (T + t,.)}_{\dot{H}^1} \leq e^{- \frac{\lambda (v_* t)^2}{4}}. \label{eq:dot_H1_unif_est}
    \end{equation}
\end{prop}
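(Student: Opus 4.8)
The plan is to run a bootstrap argument on $[T+t,T_n]$, going backward from the final time $T_n$ where $w_n(T_n):=u_n(T_n)-G(T_n)=0$. Since Proposition \ref{prop:unif_est_L2} already controls $\norm{w_n}_{L^2}$ by $e^{-\lambda(v_*t)^2/4}$, it suffices to control the homogeneous part $\norm{w_n}_{\dot H^1}$. First I would introduce a partition of unity $(\psi_k)_{1\le k\le N}$ adapted to the soliton trajectories $t\mapsto x_k+v_kt$; because $v_*>0$ these separate linearly, so $\psi_k$ can be chosen to vary slowly in time, with $\nabla\psi_k$ and $\partial_t\psi_k$ supported in the widening gaps between solitons. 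Using $\psi_k$ I define localized masses $M_k(t)=\int\psi_k\abs{u_n}^2\diff x$ and momenta $P_k(t)=\Im\int\psi_k\overline{u_n}\nabla u_n\diff x$, and assemble the Lyapunov functional
\[ \mathcal{S}(t):=E(u_n(t))+\sum_{k=1}^N\Bigl(\omega_k+\tfrac{\abs{v_k}^2}{2}\Bigr)M_k(t)-\sum_{k=1}^N v_k\cdot P_k(t), \]
whose coefficients are chosen so that $G=\sum_k G_k$ is a critical point, each $G_k$ being (after removing its gauge and Galilean factors) the bound state $G^d$.

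The next step is to show that $\mathcal{S}$ is almost conserved along $u_n$. Differentiating in time and using the equation \eqref{foc_log_nls} together with the time-independence of the global quantities of Theorem \ref{th:Cauchy_problem}, the derivative $\tfrac{\diff}{\diff t}\mathcal{S}(t)$ reduces to terms carrying $\partial_t\psi_k$ and $\nabla\psi_k$, all supported where two distinct Gaussons overlap. Thanks to the Gaussian decay of the Gaussons and the separation estimate \eqref{eq:distance_x_k}, each such term is bounded (exactly as in the proof of Proposition \ref{prop:unif_est_L2_br}, through Lemma \ref{lem:diff_L_log_L_sum_gaussian}) by $C_0e^{-c(v_*t)^2}$. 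Integrating backward from $T_n$, where $w_n$ vanishes and hence $\mathcal{S}(u_n(T_n))=\mathcal{S}(G(T_n))$ up to a negligible interaction, yields $\abs{\mathcal{S}(u_n(T+t))-\mathcal{S}(G(T+t))}\le C_0e^{-c(v_*t)^2}$ on the whole bootstrap interval.

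The crux, and the main obstacle, is the coercivity of $\mathcal{S}$ around $G$. I would expand $\mathcal{S}(u_n)-\mathcal{S}(G)$ to second order in $w_n$: the first-order term vanishes by the critical point property, leaving the Hessian quadratic form plus a remainder. The difficulty specific to \eqref{foc_log_nls} is that the potential part of $E$, namely $v\mapsto\int\abs{v}^2(\ln\abs{v}^2-1)\diff x$, is only $\mathcal{C}^1$ and not $\mathcal{C}^2$, so the usual second-order Taylor expansion is unavailable. Instead I would establish a \emph{weaker} expansion, controlling the logarithmic contribution by a genuine quadratic form plus an error estimated via the almost-Lipschitz behaviour of $s\mapsto s\ln s$ (in the spirit of Lemma \ref{lem_log_inequality}); the gradient part $\tfrac12\norm{\nabla w_n}_{L^2}^2$ is already quadratic and supplies the $\dot H^1$ coercivity. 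After localization the Hessian splits into $N$ blocks, each the second variation of the action around a single Gausson, which is coercive up to the finitely many symmetry directions (phase, translation, Galilean boost); this is precisely the coercivity underlying the orbital stability of the Gausson in \cite{Ardila__Orbital_stability_Gausson, Cazenave_log_nls}, and it is uniform in $k$ and $t$ since all profiles are translates/boosts of the fixed $G^d$. The symmetry directions are neutralized either by a standard modulation of the parameters or, more simply, by bounding the scalar products of $w_n$ against them by $\norm{w_n}_{L^2}$. This produces
\[ \norm{w_n(T+t)}_{\dot H^1}^2\le C_0\bigl[\mathcal{S}(u_n(T+t))-\mathcal{S}(G(T+t))\bigr]+C_0\norm{w_n(T+t)}_{L^2}^2+C_0e^{-c(v_*t)^2}. \]
Combining this with the almost-conservation estimate and the $L^2$ bound of Proposition \ref{prop:unif_est_L2}, all three right-hand terms are $\le C_0e^{-c(v_*t)^2}$, which (the exponent being at least $\lambda(v_*t)^2/2$) strictly improves the bootstrap assumption $\norm{w_n}_{\dot H^1}\le e^{-\lambda(v_*t)^2/4}$ once $T$ is large enough to absorb $C_0$. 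A continuity argument backward in $t$ from $T_n$ then closes the bootstrap and yields \eqref{eq:dot_H1_unif_est}.
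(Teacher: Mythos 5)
Your overall architecture is the same as the paper's: a backward bootstrap in $\dot{H}^1$ only (the $L^2$ part being already settled by Proposition \ref{prop:unif_est_L2}), a localized action equal to the full energy plus localized masses and momenta weighted by the soliton parameters, an almost-conservation estimate obtained by differentiating in time and exploiting the separation \eqref{eq:distance_x_k} and the Gaussian decay, and finally a coercivity-up-to-$L^2$ inequality of exactly the form you display, closed by a continuity argument from $t = T_n$. Two small inaccuracies in the almost-conservation step: the terms carrying $\nabla \psi_k$ and $\partial_t \psi_k$ involve $u_n$ itself, not only the Gaussons, so they are controlled by splitting $u_n = G + w_n$ and using the bootstrap hypothesis \eqref{eq:ass_H1_est} together with the decay estimates of Lemma \ref{lem:est_H1_gauss} (not Lemma \ref{lem:diff_L_log_L_sum_gaussian}, which serves the $L^2$ computation of Section \ref{sec:L2_est}).

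The genuine problem is in your coercivity step. You invoke the second variation of the action around each Gausson, claimed to be coercive in $H^1$ up to the symmetry directions and attributed to the orbital-stability results \cite{Ardila__Orbital_stability_Gausson, Cazenave_log_nls}, with those directions then removed by modulation or by $L^2$ bounds. This step would fail as stated, for two reasons. First, those references prove stability by variational and compactness arguments; they do not provide an $H^1$ spectral coercivity of the linearized action. Second, and more fundamentally, no second-order expansion linking $S^\textnormal{loc}(u_n) - S^\textnormal{loc}(G)$ to a Hessian is available: the paper's remark following Lemma \ref{lem:exp_u_ln_u} gives the pointwise counterexample $z_1 = 1$, $z_2 \rightarrow 0$, and in integrated form the Hessian's potential term behaves like $-2\lambda \int \ln{\abs{G_j}^2} \, \abs{\zeta}^2 \diff x \approx 4 \lambda^2 \int \abs{x - x_j^*}^2 \abs{\zeta}^2 \diff x$, which is infinite for $\zeta \in H^1 \setminus \mathcal{F}(H^1)$; at this stage of the proof you have no $\mathcal{F}(H^1)$ control on $w_n$ (it is established in Section \ref{sec:FH1_est} \emph{using} the $H^1$ estimates, so invoking it here would be circular). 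The correct mechanism — which your phrase "the gradient part supplies the $\dot{H}^1$ coercivity" gestures at, but which must carry the whole argument — is the one of Lemma \ref{lem:exp_u_ln_u}: bound $D^2 F_1$ along the entire segment between $G_j$ and $u_n$ by $4 \left( \ln{\max(\abs{G_j}, \abs{u_n})} + 1 \right)$, which replaces the Hessian by the substitute term $-2\lambda \int \abs{w_n^j}^2 \left( \ln{(1 + \abs{w_n^j})} + C_0 \right) \psi_j \diff x$. This term has the wrong sign but is absorbed wholesale into the negligible error using the already-proved $L^2$ smallness and the bootstrap $H^1$ bound (through the $L^{2+\frac{1}{d}}$ norm and Sobolev embedding), as in Lemma \ref{lem:coerc_lin}. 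With that substitution no spectral information, no bad directions and no modulation are needed, and your final inequality and bootstrap closure go through as written.
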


The proof relies on a bootstrap argument. Indeed, from the definition of the final data $u_n(T_n)$ and continuity of $u_n$ in time with values in $H^1$, it follows that \eqref{eq:dot_H1_unif_est} holds on an interval $[t^\dag,T_n]$ for $t^\dag$ close enough to $T_n$. Then the following Proposition \ref{prop:bootstrap} shows that we can actually improve it to a better estimate, hence leaving enough room to extend the interval on which the original estimate holds.

\begin{prop}[Bootstrap Property] \label{prop:bootstrap}
    There exists  $T'' > T'$ and $t_* > 0$ such that for all $n \in \mathbb{R}$ such that $T_n > T'' + t_*$ and for all $t^\dag \in [t_*, T_n - T'']$, the following holds. If for all $t \in [t^\dag, T_n - T'']$ we have
    \begin{equation}
        \norm{u_n (T'' + t) - G (T'' + t)}_{\dot{H}^1} \leq e^{- \frac{\lambda (v_* t)^2}{4}}, \label{eq:bootstrap_hyp}
    \end{equation}
    then for all $t \in [t^\dag, T_n - T'']$ there holds
    \begin{equation*}
        \norm{u_n (T'' + t) - G (T'' + t)}_{\dot{H}^1} \leq \frac{1}{2} e^{- \frac{\lambda (v_* t)^2}{4}}.
    \end{equation*}
\end{prop}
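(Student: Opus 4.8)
The plan is to run a Lyapunov/bootstrap argument of the type used in \cite{Martel_Merle__Multi_solitary_waves_NLS, Cote_Martel_Merle_Construction_multisoliton_gKdV_NLS, Cote_LeCoz_Multisolitons_NLS}, adapted to \eqref{foc_log_nls}, improving the a priori bound \eqref{eq:bootstrap_hyp} by the factor $1/2$. Write $w_n \coloneqq u_n - G$ with $G = \sum_k G_k$. Since $E$, the mass $M$ and the momentum $\mathcal{J}$ are conserved but we must isolate each Gausson, I would introduce smooth spatial cutoffs $\phi_k(t,\cdot)$ equal to $1$ near the $k$-th trajectory $x_k + v_k t$ and vanishing near the others; this is licit precisely because $v_* > 0$ forces the centers to separate linearly, cf. \eqref{eq:distance_x_k}. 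With these I build a localized action functional, schematically $\mathcal{S}(t) = E(u_n) + \sum_k \bigl( c_k M_k(t) + v_k \cdot P_k(t) \bigr)$, where $M_k = \int \phi_k \abs{u_n}^2$ and $P_k = \Im \int \phi_k \overline{u_n} \nabla u_n$ are the localized mass and momentum and $c_k$ is the scalar attached to the $k$-th Gausson's frequency/scaling, chosen so that each $G_k$ is a critical point of the corresponding (un-localized) action.

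\emph{First step, slow variation.} Differentiating $\mathcal{S}$ along the flow and using \eqref{foc_log_nls}, the conserved bulk contributions cancel and $\frac{\diff}{\diff t}\mathcal{S}$ reduces to flux terms carried by $\partial_t \phi_k$ and $\nabla \phi_k$, all supported in the transition regions lying at distance $\gtrsim v_* t / 2$ from every soliton center. There the Gaussons are pointwise of size $e^{-\frac{\lambda (v_* t)^2}{4}}$ by \eqref{expr_gausson}, which is the decisive gain afforded by the Gaussian (rather than merely exponential) decay. I would thus bound $\abs{\frac{\diff}{\diff t}\mathcal{S}(t)} \leq C_0\, e^{-\frac{\lambda (v_* t)^2}{2}}\bigl( 1 + \norm{w_n(t)}_{\dot{H}^1}^2 \bigr)$, then integrate backward from $T_n$, where $w_n(T_n) = 0$ and hence $\mathcal{S}(T_n) = \mathcal{S}(G(T_n))$. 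Using Lemma \ref{lem:int_error_gauss} together with the a priori bound \eqref{eq:bootstrap_hyp}, the integral $\int_t^{T_n}$ is again Gaussian-small with an extra $1/t$ factor, yielding $\mathcal{S}(t) - \mathcal{S}(G(t)) \leq \frac{C_0}{t}\, e^{-\frac{\lambda (v_* t)^2}{2}}$.

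\emph{Second step, coercivity, which is the main obstacle.} Expanding the action around $G$, the first variation vanishes (each $G_k$ is critical, matching the constants $c_k, v_k$), so the leading term is $\langle \mathcal{S}''(G) w_n, w_n \rangle$. For the static Gausson the linearized operators are of harmonic-oscillator type, $L_- = -\Delta + 4\lambda^2 \abs{x}^2 - 2\lambda d$ on the imaginary part and $L_+ = -\Delta + 4\lambda^2 \abs{x}^2 - 2\lambda(d+2)$ on the real part; since the confining potential $4\lambda^2\abs{x}^2$ is non-negative, one gets $\langle L_\pm a, a \rangle \geq \norm{\nabla a}_{L^2}^2 - C_0 \norm{a}_{L^2}^2$, and after a Galilean/translation change of variables the same holds for each $G_k$ with cross terms exponentially small in the separation. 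The negative and kernel directions (scaling, phase, translation) are smooth Gaussian profiles, so their contribution is absorbed into $\norm{w_n}_{L^2}$, giving $\langle \mathcal{S}''(G) w_n, w_n \rangle \geq \frac{1}{C_0}\norm{w_n}_{\dot{H}^1}^2 - C_0 \norm{w_n}_{L^2}^2$. The genuine difficulty is that $E$ is \emph{not} $\mathcal{C}^2$: the potential $\int \abs{u}^2 \ln\abs{u}^2$ has no second Fréchet derivative since $(s \ln s)'' = 1/s$ blows up at the vacuum. I would therefore replace the classical Taylor expansion by a \emph{weaker} one, controlling the second-order remainder of the logarithmic potential through elementary convexity/Lipschitz estimates for $s \mapsto s \ln s$ (in the spirit of Lemma \ref{lem_log_inequality}) and the a priori smallness of $w_n$; this suffices to keep the lower bound valid up to a remainder $o(\norm{w_n}_{\dot{H}^1}^2) + C_0 \norm{w_n}_{L^2}^2$.

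\emph{Conclusion.} Combining the two steps,
\begin{equation*}
    \frac{1}{C_0}\norm{w_n(T''+t)}_{\dot{H}^1}^2 \leq C_0\bigl( \mathcal{S}(t) - \mathcal{S}(G(t)) \bigr) + C_0 \norm{w_n(T''+t)}_{L^2}^2 \leq \frac{C_0}{t}\, e^{-\frac{\lambda (v_* t)^2}{2}} + C_0 \norm{w_n(T''+t)}_{L^2}^2 .
\end{equation*}
The $L^2$ input is supplied by Proposition \ref{prop:unif_est_L2}, but read at $T''+t = T'+(t + (T''-T'))$: since $T'' > T'$, this gives $\norm{w_n(T''+t)}_{L^2}^2 \leq e^{-\frac{\lambda (v_*(t+T''-T'))^2}{2}}$, strictly smaller than $e^{-\frac{\lambda (v_* t)^2}{2}}$ by a Gaussian factor made arbitrarily small by enlarging $T''$. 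Hence, choosing first $T''$ large (so that both the separation and the time shift produce small prefactors) and then $t_*$ large enough that $C_0 / t_*$ leaves room, every term on the right is $\leq \frac14 e^{-\frac{\lambda (v_* t)^2}{2}}$ for $t \geq t_*$, i.e. $\norm{w_n(T''+t)}_{\dot{H}^1} \leq \frac12 e^{-\frac{\lambda (v_* t)^2}{4}}$. The hard part is the coercivity without $\mathcal{C}^2$ regularity; the remainder is bookkeeping of Gaussian exponents via Lemma \ref{lem:int_error_gauss}.
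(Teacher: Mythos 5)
Your overall architecture matches the paper's: a localized action functional built from localized mass and momentum around each Gausson, a slow-variation estimate obtained by differentiating along the flow and integrating backward from $T_n$ (where $w_n(T_n)=0$, with Lemma \ref{lem:int_error_gauss} supplying the extra $t^{-1}$ factor), an $H^1$-coercivity up to an $L^2$ norm, and the closing step that feeds in Proposition \ref{prop:unif_est_L2} and chooses first $T''$ then $t_*$ large. Those parts are sound and are exactly Propositions \ref{prop:slow_var_loc_func} and \ref{prop:coercivity} combined as in the paper's proof of Proposition \ref{prop:bootstrap}.

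The gap is in your coercivity step. You take as leading term the second variation $\langle \mathcal{S}''(G) w_n, w_n\rangle$, identified with harmonic-oscillator operators $L_\pm = -\Delta + 4\lambda^2\abs{x}^2 + O(1)$, and claim the failure of $\mathcal{C}^2$ regularity only costs a remainder $o(\norm{w_n}_{\dot{H}^1}^2) + C_0\norm{w_n}_{L^2}^2$. No such expansion exists. The Hessian's potential contribution is $-\lambda \int \abs{\zeta}^2 \ln{\abs{G}^2} \diff x \sim 2\lambda^2 \int \abs{x}^2 \abs{\zeta}^2 \diff x$ up to bounded terms, which can be $+\infty$ for $\zeta \in H^1 \setminus \mathcal{F}(H^1)$ while $S(G+\zeta) - S(G) - \langle S'(G), \zeta\rangle$ stays finite; the "remainder" would then have to be $-\infty$, so it cannot be $o(\norm{\zeta}_{\dot{H}^1}^2) + C_0 \norm{\zeta}_{L^2}^2$. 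Even pointwise, the second-order expansion of $F_1(z) = \abs{z}^2(\ln{\abs{z}^2}-1)$ around $z_2 = G(x)$ fails near the vacuum (take $z_1 = 1$, $z_2 \rightarrow 0$) — this is precisely the obstruction isolated in the remark following Lemma \ref{lem:exp_u_ln_u}. Note also that at this stage you have no quantitative control of $\int \abs{x}^2 \abs{w_n}^2 \diff x$: the $\mathcal{F}(H^1)$ estimates of Section \ref{sec:FH1_est} are proved only \emph{after} the $H^1$ bootstrap, so you cannot even argue the harmonic term is finite, let alone keep it as the main term. The workable route — the paper's — abandons the Hessian entirely: the one-sided inequality of Lemma \ref{lem:exp_u_ln_u} replaces the quadratic term $\abs{\zeta}^2 \ln{\abs{G}^2}$ by $2\abs{\zeta}^2\bigl(\ln{(1+\abs{\zeta})} + C_0\bigr)$, the logarithm of the \emph{perturbation} rather than of the background, which is then controlled by $\norm{\zeta}_{L^2}^2 + \norm{\zeta}_{L^{2+\frac{1}{d}}}^{2+\frac{1}{d}}$ via Sobolev embedding and the a priori bounds (Lemmas \ref{lem:linearize_action} and \ref{lem:coerc_lin}); the only coercive piece retained is the gradient term $\frac{1}{2}\int \abs{\nabla w_n}^2 \psi_j \diff x$. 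Your phrase "in the spirit of Lemma \ref{lem_log_inequality}" gestures in this direction, but the bound you actually wrote — $L_\pm$ coercivity plus a small remainder — is the one that fails: the harmonic confinement is never used in the paper, and cannot be.
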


\begin{rem}
    To be more precise, what we prove is the following: for all $t^\dag \in [0, T_n - T'']$, if for all $t \in [t^\dag, T_n - T'']$ we have \eqref{eq:bootstrap_hyp}, then for all $t \in [t^\dag, T_n - T'']$ there holds
    \begin{equation*}
        \norm{u_n (T'' + t) - G (T'' + t)}_{\dot{H}^1}^2 \leq C_0 \, t^{-1} e^{- \frac{\lambda (v_* t)^2}{2}}.
    \end{equation*}
    Therefore, we need some $t_* > 0$ so that $\frac{C_0}{t} \leq \frac{1}{4}$ for all $t \geq t_*$ in order to have a true bootstrap argument.
\end{rem}

In \cite{Martel_Merle__Multi_solitary_waves_NLS, Cote_Martel_Merle_Construction_multisoliton_gKdV_NLS, Cote_LeCoz_Multisolitons_NLS}, the bootstrap argument to get the uniform estimates in $H^1$ uses a modified action defined with localized quantities (localization of the conserved quantities around each member/soliton). It is known that each soliton gives only at most few negative $L^2$ "bad directions" in the Hessian of the action and has an exponential decay at infinity. Therefore the Hessian of this modified action is "almost coercive", up to an exponentially decreasing function of time, and to some bad directions in $L^2$ which are controlled either by modulation (\cite{Martel_Merle__Multi_solitary_waves_NLS} for instance) or by controlling the $L^2$-growth without the help of the Hessian (\cite{Cote_LeCoz_Multisolitons_NLS} for instance). Therefore one gets a slightly better estimate in $H^1$ than provided by the assumption.

Here, such an argument cannot be used directly: the energy is not $\mathcal{C}^2$ because of the appearance of a $\ln \abs{v}$ term from the potential energy, which is ill-posed in $\mathcal{L} (L^2)$, and therefore we cannot have a Taylor expansion relation between the modified action and its linearized.
However, a weaker expansion of the potential energy (coming from Lemma \ref{lem:exp_u_ln_u}) can still be used, giving enough room to get an $H^1$-coercivity up to an (almost) $L^2$ norm, as shown in Lemma \ref{lem:linearize_action}.

From this proposition, the proof of Proposition \ref{prop:unif_est_dot_H1} is then completely similar to those in \cite{Martel_Merle__Multi_solitary_waves_NLS, Cote_LeCoz_Multisolitons_NLS}, and we refer to them for more precision.

\subsection{The Bootstrap Property}

The idea of the proof of the Bootstrap Property \ref{prop:bootstrap} is similar to that of \cite{Cote_LeCoz_Multisolitons_NLS}, which is reminiscent of the technique used to prove stability for a single soliton in the subcritical case (see \textit{e.g.} \cite{Weinstein:Stability_ground_state_NLS, Weinstein:Stability_ground_states_disp_eq, Grillakis_Shatah_Strauss:solit_stability, Grillakis_Shatah_Strauss:solit_stability_2, LeCoz:standing_waves_NLS}). Indeed, it is known that the solitons $G_j$ are critical points and even ground states respectively for the action functionals
\begin{equation*}
    S_j (v) \coloneqq E(v) + \Bigl( 2 \lambda \omega_j + \frac{\abs{v_j}^2}{2} \Bigr) M (v) - v_j \cdot \mathcal{J} (v).
\end{equation*}
In the previous articles, the Hessian of these functionals is coercive on a subspace of $H^1$ of finite co-dimension in $L^2$. At large time, the components of the multi-soliton are well-separated and thus the analysis can be localized around each soliton to gain an $H^1$-local control, up to a space of finite dimension in $L^2$, for the linearized. Thus the sum of the localized functionals is (locally around $G$) $H^1$-coercive up to some negligible terms and few directions in $L^2$, which can be controlled either by modulation of the invariants (for ground states) or by a better control of the $L^2$-norm.

However, the fact of having few bad $L^2$ directions is not necessary, in particular since we already have an $L^2$-estimate thanks to Proposition \ref{prop:unif_est_L2}: the constructed functional $S^\textnormal{loc}$, which is time-dependent and "slowly varies" for all $u_n$ (in a sense explained later), only need to satisfy the fact that there exists some $K_1 > 0$ and $K_2 \in \mathbb{R}$ such that for all $t$ large enough and for all $v \in W$ (possibly near $G(t)$)
\begin{equation*}
    S (t,v) - \sum_j S_j (G_j) \geq K_1 \norm{w}_{H^1}^2 - K_2 \norm{w}_{L^2}^2,
\end{equation*}
with $w \coloneqq v - G(t)$.

This functional $S^\textnormal{loc}$ is constructed in a similar way as in the above articles.
It requires the use of an action-like functional, defined with quantities localized around each Gausson. For this, we need a localization procedure. However, unlike in \cite{Martel_Merle__Multi_solitary_waves_NLS, Cote_LeCoz_Multisolitons_NLS} for example, we will not localize only in one dimension by taking a particular direction in which all the speed components are well ordered. Indeed, such a tactics would lead to a slower convergence rate, since the minimal relative speed in this direction would be smaller.

Take some $T'' > T'$ and $t_* > 0$ to be fixed later and take any $n \in \mathbb{N}$ such that $T_n > T'' + t_*$. Set again $w_n \coloneqq u_n - G$. Let $t^\dag \in [t_*, T_n - T'']$ and assume that for all $t \in [t^\dag, T_n - T'']$ there holds
\begin{equation}
    \norm{\nabla w_n (t')}_{L^2} \leq e^{- \frac{\lambda (v_* t)^2}{4}}, \label{eq:ass_H1_est}
\end{equation}
with the notation $t' \coloneqq T'' + t$.
By Proposition \ref{prop:unif_est_L2}, we know that there also holds
\begin{equation}
    \norm{w_n (t')}_{L^2} \leq e^{- \frac{\lambda (v_* (t + \tau))^2}{4}}, \label{eq:L2_est}
\end{equation}
where $\tau \coloneqq T'' - T'$.

Take $\phi : \mathbb{R} \rightarrow \mathbb{R}$ a $\mathcal{C}^\infty$ function such that $\phi(s) = 1$ for all $s \leq -1$, $\phi(s) = 0$ for all $s \geq 1$ and $\phi (s) \in [0,1]$ and $-1 \leq \phi' (s) \leq 0$ for all $s \in \mathbb{R}$. Therefore we define:
\begin{itemize}
    \item the center of each Gausson
    \begin{equation*}
        x_j^* (t') \coloneqq x_j + t' \, v_j,
    \end{equation*}
    
    \item functions $\psi_j$ ($1 \leq j \leq N$)  with the $j$-th member weighted around the $j$-th Gausson:
    \begin{equation*}
        \psi_j (t',x) \coloneqq \phi \Bigl( \abs{x - x_j^* (t')} - \frac{v_* t}{2} - 2 \Bigr) \qquad \text{ for } j = 1, \dots, N.
    \end{equation*}
    
    \item A last function $\psi_0$  completing the previous family so that $(\psi_j)_{0 \leq j \leq N}$ is a partition of unity:
    \begin{equation*}
        \psi_0 \coloneqq 1 - \sum_{j=1}^N \psi_j.
    \end{equation*}
\end{itemize}

\begin{figure}[htb!]
  \centering
  \begin{tikzpicture}[scale=0.8]
    \tkzInit[xmin=-8,xmax=8,ymin=-8,ymax=8]
    \clip (-8,8) rectangle (8,-8);
    \fill[pattern=horizontal lines,opacity=0.2, thick] (-8,8) rectangle (8,-8);
    
    \node (x1) at (-4,0) [] {$\times$};
    \node (x2) at (3,3.5) [] {$\times$};
    \node (x3) at (2.5,-4) [] {$\times$};
    
    \draw[dashed, fill=white] (x1) circle (1.5) node[below=0.5] {$\psi_1 \equiv 1$};
    \draw[dashed, fill=white] (x2) circle (1.5) node[below=0.5] {$\psi_2 \equiv 1$};
    \draw[dashed, fill=white] (x3) circle (1.5) node[above=0.5] {$\psi_3 \equiv 1$};
    
    \node (x1) at (-4,0) [] {$\times$};
    \node (x2) at (3,3.5) [] {$\times$};
    \node (x3) at (2.5,-4) [] {$\times$};
    
    \draw (x1) node[above right] {$x_1 (t)$};
    \draw (x2) node[above left] {$x_2 (t)$};
    \draw (x3) node[left] {$x_3 (t)$};
    
    \draw [->, very thick] (-4,0) -- (-6,0.5) node[above] {$v_1$};
    \draw [->, very thick] (3,3.5) -- (3.5,4) node[above] {$v_2$};
    \draw [->, very thick] (2.5,-4) -- (2.7,-5) node[left] {$v_3$};
    
    \draw[thick] (x1) circle (2.5) node[below=1.3] {$\supp \psi_1$};
    \draw[thick] (x2) circle (2.5) node[below=1.3] {$\supp \psi_2$};
    \draw[thick] (x3) circle (2.5) node[below=1.3] {$\supp \psi_3$};
    
    \draw [->] (-6.5,0) -- (-7,0);
    \draw [->] (-4,2.5) -- (-4,3);
    \draw [->] (-1.5,0) -- (-1,0);
    \draw [->] (-4,-2.5) -- (-4,-3);
    
    \draw [->] (0.5,3.5) -- (0,3.5);
    \draw [->] (3,6) -- (3,6.5);
    \draw [->] (5.5,3.5) -- (6,3.5);
    \draw [->] (3,1) -- (3,0.5);
    
    \draw [->] (0,-4) -- (-0.5,-4);
    \draw [->] (2.5,-1.5) -- (2.5,-1);
    \draw [->] (5,-4) -- (5.5,-4);
    \draw [->] (2.5,-6.5) -- (2.5,-7);

  \end{tikzpicture}
  \caption{Schematic representation for the partition of unity
    $(\psi_j)$ (in dimension 2). The support of $\psi_0$ is given by the horizontal lines.}
  \label{fig:partition}
\end{figure}
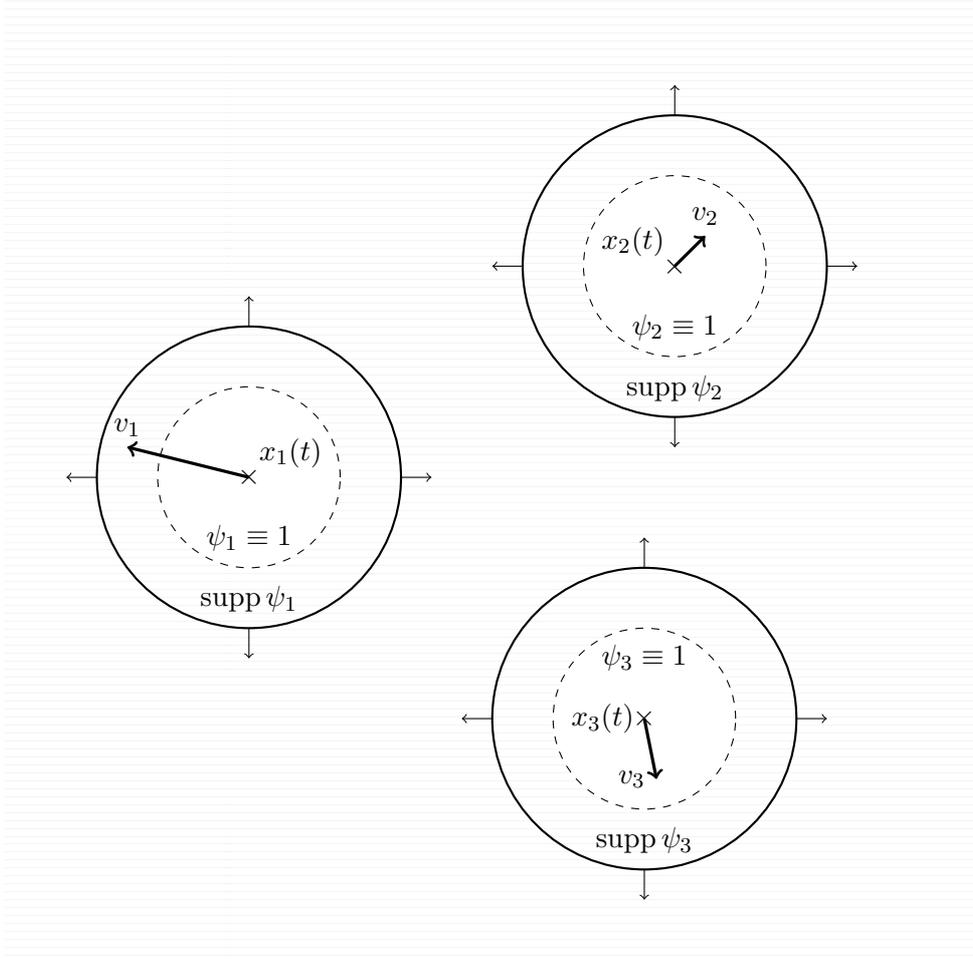

With this definition, $(\psi_j (t'))_j$ is a (time-dependent) smooth partition of unity in space for $t \geq 0$ as soon as $T''$ is large enough (see Figure \ref{fig:partition} for a schematic representation).
We can now define \emph{localized quantities} which will turn out to be almost conserved: for $j = 0, \dots, N$ and $t \geq 0$, set
\begin{gather*}
    M_j (t', v) \coloneqq \int |v|^2 \, \psi_j (t') \diff x, \qquad \mathcal{J}_j (t',v) \coloneqq \Im \int \nabla v \, \overline{v} \, \psi_j (t') \diff x, \\
    E_j (t', v) \coloneqq \frac{1}{2} \int \abs{\nabla v}^2 \, \psi_j (t') \diff x - \lambda \int \abs{v}^2 (\ln \abs{v}^2 - 1) \, \psi_j (t') \diff x.
\end{gather*}
We know that each $G_j$ is well fitted for the action (for $j=0$, take $G_0 = G^d$, $\omega_0 = 0$ and $v_0 = 0$)
\begin{equation*}
    S_j (v) \coloneqq E (v) + \Bigl( 2 \lambda \omega_j + \frac{\abs{v_j}^2}{2} \Bigr) M (v) - v_j \cdot \mathcal{J} (v).
\end{equation*}
Then, we also define as well localized actions (for $j = 0, \dots, N$) by:
\begin{equation*}
    S_j^\textnormal{loc} (t',v) \coloneqq E_j (t',v) + \Bigl( 2 \lambda \omega_j + \frac{\abs{v_j}^2}{2} \Bigr) M_j (t',v) - v_j \cdot \mathcal{J}_j (t',v).
\end{equation*}
Finally, we define a localized action-like functional for multi-solitons:
\begin{equation*}
    S^\textnormal{loc} (t',v) \coloneqq \sum_j S_j^\textnormal{loc} (t',v) = E (v) + \sum_{j \geq 1} \Bigl( 2 \lambda \omega_j + \frac{\abs{v_j}^2}{2} \Bigr) M_j (t',v) - v_j \cdot \mathcal{J}_j (t',v).
\end{equation*}

Our aim is to prove that $S^\textnormal{loc} (t', v)$ is almost coercive around $G(t')$, but also that $S^\textnormal{loc} (t', u_n (t'))$ slowly varies. To be more precise, we will prove the two following propositions.
\begin{prop}[Almost-coercivity] \label{prop:coercivity}
    If $t_*$ is large enough, then for all $t \in [t^\dag, T_n - T'']$,
    \begin{equation*}
        S^\textnormal{loc} (t',u_n(t')) - \sum_{j \geq 1} S_j (G_j) \geq \frac{1}{2} \norm{w_n (t')}_{\dot{H}^1}^2 + C_0 \, t^{-1} e^{- \frac{\lambda (v_* t)^2}{2}}.
    \end{equation*}
\end{prop}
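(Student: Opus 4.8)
The plan is to establish the almost-coercivity by expanding the localized action $S^\textnormal{loc}(t',\cdot)$ around the multi-Gausson $G(t')$. Writing $u_n(t') = G(t') + w_n(t')$, I would split $S^\textnormal{loc}(t',u_n(t')) - \sum_{j\geq 1}S_j(G_j)$ into its zeroth, first and second order contributions in $w_n$ and control each separately. The organizing principle is that, after fixing $T'' \geq T_\textnormal{sep}$, the centres $x_j^*(t') = x_j + t'v_j$ are separated by at least $\varepsilon_0^{-1} + v_*t$ thanks to \eqref{eq:distance_x_k}; consequently every genuinely interacting quantity — cross terms between two distinct Gaussons, and every term in which a cutoff derivative $\nabla\psi_j$ meets the bulk of $G_j$ — is Gaussian-small, of size $e^{-c(v_*t)^2}$ for some $c>0$. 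These will all be swept into the final error term.

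For the zeroth order I would use that $(\psi_j)_{0\leq j\leq N}$ is a partition of unity and that $\sum_j E_j = E$: on $\supp\psi_j$ ($j\geq 1$) one has $G = G_j + (\text{exponentially small tails})$, while on $\supp\psi_0$ all the $G_k$ are exponentially small, so $S^\textnormal{loc}(t',G) = \sum_{j\geq 1}S_j(G_j) + O(e^{-c(v_*t)^2})$. For the first order, each $G_j$ solves the Euler–Lagrange equation $S_j'(G_j)=0$ characterizing it as a boosted ground state; localizing this identity and again using $G\approx G_j$ on $\supp\psi_j$ shows that the linear form $DS^\textnormal{loc}(t',G)[w_n]$ is $O(e^{-c(v_*t)^2})\,\norm{w_n}_{H^1}$, which by Young's inequality I bound by $\varepsilon\norm{w_n}_{\dot H^1}^2 + C_\varepsilon e^{-2c(v_*t)^2} + C\norm{w_n}_{L^2}^2$ for an arbitrarily small $\varepsilon$.

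The heart of the argument — and the main obstacle — is the second order, where the lack of $\mathcal C^2$ regularity of $E$ bites: differentiating the potential twice produces a factor $\ln\abs{v}^2$, unbounded near the vacuum, so no classical Hessian exists in $\mathcal L(L^2)$. Here I would invoke the weak expansion of the potential energy (Lemma \ref{lem:exp_u_ln_u}) to replace the missing Taylor remainder by a genuinely controllable quantity. The decisive structural fact is that the $\dot H^1$-dependence of the quadratic part enters only through the kinetic energy $\tfrac12\int\abs{\nabla w_n}^2$, which, by the partition of unity, equals exactly $\tfrac12\norm{w_n}_{\dot H^1}^2$; all remaining contributions are lower order. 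Indeed, since the Gausson is exactly Gaussian, $\ln\abs{G_j}^2 = d - 2\lambda\abs{x-x_j^*}^2$, so the focusing potential contributes a nonnegative confining term $+2\lambda^2\int\abs{x-x_j^*}^2\abs{w_n}^2\psi_j$ together with $L^2$-bounded pieces, while the mass, momentum and phase (Galilean and scaling) directions are absorbed after completing the square in the boost. This is precisely the coercivity packaged in Lemma \ref{lem:linearize_action}, which I would apply to obtain $\tfrac12 D^2 S^\textnormal{loc}(t',G)[w_n,w_n] \geq \tfrac12\norm{w_n}_{\dot H^1}^2 - K_2\norm{w_n}_{L^2}^2 - C_0 e^{-c(v_*t)^2}$ for some $K_2\in\mathbb R$, the bad $L^2$ directions being kept inside the $L^2$ term rather than removed by modulation — legitimate here because an independent $L^2$ control is already available from Proposition \ref{prop:unif_est_L2}.

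It then remains to collect the three orders and to dominate every sub-leading term by $C_0 t^{-1}e^{-\lambda(v_*t)^2/2}$. The Gaussian-small interaction and remainder terms have steeper decay and are trivially of this form. For the $L^2$ contributions I use \eqref{eq:L2_est}, namely $\norm{w_n(t')}_{L^2}^2 \leq e^{-\lambda(v_*(t+\tau))^2/2}$ with $\tau = T'' - T' > 0$; expanding $(t+\tau)^2 = t^2 + 2\tau t + \tau^2$ extracts a factor $e^{-\lambda v_*^2\tau t}$, and the elementary inequality $e^{-at}\leq (e\,a\,t)^{-1}$ converts this into a factor $C_0/t$, so that $\norm{w_n(t')}_{L^2}^2 \leq C_0 t^{-1}e^{-\lambda(v_*t)^2/2}$ — this gain from the strictly positive shift $\tau$ is what produces the extra $t^{-1}$. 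Choosing $\varepsilon$ small and then $t_*$ large so that all these remainders are controlled, the kinetic term survives as $\tfrac12\norm{w_n(t')}_{\dot H^1}^2$ and every other contribution is collected into a single term of size $C_0 t^{-1}e^{-\lambda(v_*t)^2/2}$, which yields the stated lower bound. The precise sign carried by this collected remainder is immaterial for the use of Proposition \ref{prop:coercivity}: combined with the slow-variation estimate for $S^\textnormal{loc}(t',u_n(t'))$, which is of the very same order $C_0 t^{-1}e^{-\lambda(v_*t)^2/2}$, it delivers the bootstrap improvement $\norm{w_n(t')}_{\dot H^1}^2 \leq C_0 t^{-1}e^{-\lambda(v_*t)^2/2}$.
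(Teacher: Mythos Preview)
Your proposal follows the paper's strategy and would succeed: localize, invoke the one-sided expansion of Lemma~\ref{lem:exp_u_ln_u}, extract $\tfrac12\norm{w_n}_{\dot H^1}^2$ from the kinetic piece, and absorb the rest via the $L^2$ estimate \eqref{eq:L2_est} --- your identification of the shift $\tau = T'' - T'$ as the source of the extra $t^{-1}$ factor is exactly right. Two points of comparison are worth recording. First, the paper does not expand $S^\textnormal{loc}$ around $G$ but expands each $S_j^\textnormal{loc}$ around $G_j$, working with $w_n^j \coloneqq u_n - G_j$ rather than $w_n$ (Lemma~\ref{lem:linearize_action}); this sidesteps ever writing $\ln\abs{G}^2$ for a \emph{sum} of Gaussians and makes the linear term vanish exactly (Euler--Lagrange for $G_j$) up to $\nabla\psi_j$-boundary terms, handled in Corollary~\ref{cor:neg_terms}. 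Second, your description of the second-order potential contribution is not what Lemma~\ref{lem:exp_u_ln_u} actually delivers: there is no confining term $+2\lambda^2\int\abs{x-x_j^*}^2\abs{w_n}^2\psi_j$ --- that would be the formal Hessian, which does not exist here. The one-sided inequality yields instead the term $-2\lambda\int\abs{w_n^j}^2\bigl(\ln(1+\abs{w_n^j})+C_0\bigr)\psi_j$ appearing in $H_j$ (see \eqref{eq:linearized}); this is not kept as a good term but shown to be \emph{small}, via $\norm{w_n^j}_{L^{2+1/d}(\psi_j\,\mathrm{d}x)}\lesssim\norm{w_n}_{H^1}$ and the bootstrap hypothesis \eqref{eq:ass_H1_est} (Lemma~\ref{lem:coerc_lin}). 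The coercivity therefore comes from the kinetic term alone, with the potential, mass and momentum pieces all absorbed into the $C_0\,t^{-1}e^{-\lambda(v_*t)^2/2}$ remainder.
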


\begin{prop}[Slow variations] \label{prop:slow_var_loc_func}
    For all $t \in [t^\dag, T_n - T'']$, there holds
    \begin{equation*}
        S^\textnormal{loc} (t',u_n(t')) - \sum_{j \geq 1} S_j (G_j) \leq C_0 \, t^{-1} e^{- \frac{\lambda (v_* t)^2}{2}}.
    \end{equation*}
\end{prop}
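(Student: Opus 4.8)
The idea is to show that the localized action $S^\textnormal{loc}(t', u_n(t'))$ is almost conserved along the flow, i.e. its time derivative is small. Since at the final time $t = T_n - T''$ we have $u_n = G$, evaluating $S^\textnormal{loc}$ there gives essentially $\sum_j S_j(G_j)$ (up to negligible terms coming from the localization error, since the Gaussons are well-separated and Gaussian-decaying). Integrating backward from $T_n - T''$ down to the current time $t$ then yields the desired upper bound, provided the derivative $\frac{\diff}{\diff t} S^\textnormal{loc}(t', u_n(t'))$ is controlled by $C_0\, t^{-1} e^{-\lambda(v_* t)^2/2}$ or something integrable with the right Gaussian tail.

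The plan is to differentiate $S^\textnormal{loc}(t', u_n(t'))$ in time, splitting the derivative into two contributions: the \emph{explicit} time-dependence of the weights $\psi_j(t')$ (since they move with the soliton centers $x_j^*(t')$ and their radius grows like $v_* t/2$), and the \emph{implicit} dependence through $u_n(t')$ solving \eqref{foc_log_nls}. For the implicit part, I would use that the non-localized quantities $M$, $\mathcal{J}$, $E$ are exactly conserved (Theorem \ref{th:Cauchy_problem}); hence when computing $\frac{\diff}{\diff t}M_j$, $\frac{\diff}{\diff t}\mathcal{J}_j$, $\frac{\diff}{\diff t}E_j$, all the ``bulk'' terms that would survive without the cutoff cancel, and one is left only with commutator-type terms supported where $\nabla\psi_j$ and $\partial_t\psi_j$ are nonzero. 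Because $\phi'$ is supported in $[-1,1]$, these error terms are localized in an annular region at distance $\sim v_* t/2$ from each soliton center, a region where both $G$ and (by the $L^2$-estimate \eqref{eq:L2_est} and the $\dot H^1$ bootstrap hypothesis \eqref{eq:ass_H1_est}) $u_n$ are exponentially small of Gaussian type. The factor $\partial_t\psi_j$ contributes the relative speed $v_*/2$, while $\nabla\psi_j$ is $O(1)$; combining with the pointwise Gaussian bound $e^{-\lambda(v_* t/2)^2}$ on the annulus, each error integrates to something bounded by $C_0\, e^{-\lambda(v_* t)^2/2}$ after accounting for the surface measure of the annulus (which only adds polynomial factors in $t$, absorbed by the $t^{-1}$ or by shrinking the exponent slightly).

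Concretely, the key steps in order are: (i) compute $\frac{\diff}{\diff t}M_j(t',u_n)$, $\frac{\diff}{\diff t}\mathcal{J}_j(t',u_n)$, $\frac{\diff}{\diff t}E_j(t',u_n)$ using the equation, isolating the terms containing $\partial_t\psi_j$, $\nabla\psi_j$, and the nonlinear contribution $\Im\!\int u_n\ln|u_n|^2\,\overline{u_n}\,\psi_j$ (which needs Lemma \ref{lem_log_inequality} to control the logarithmic nonlinearity where $u_n$ is near the vacuum); (ii) sum over $j$ and observe that the partition-of-unity property $\sum_j\psi_j=1$ removes the globally-conserved part; (iii) bound each remaining term on $\supp\nabla\psi_j$ using the pointwise decay of $G$ and the hypotheses \eqref{eq:ass_H1_est}–\eqref{eq:L2_est}, obtaining $|\frac{\diff}{\diff t}S^\textnormal{loc}(t',u_n)| \leq C_0\, e^{-\frac{\lambda(v_* t)^2}{2}}$; (iv) integrate backward from $T_n - T''$ using $u_n(T_n)=G(T_n)$ and Lemma \ref{lem:int_error_gauss} to produce the $t^{-1}$ prefactor and the clean Gaussian rate.

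The main obstacle is controlling the logarithmic nonlinearity in the localized energy derivative near the vacuum: unlike a smooth nonlinearity, $\ln|u_n|^2$ blows up as $u_n\to 0$, which is exactly where the cutoff annulus lives. The resolution is that the \emph{imaginary} part appearing in the computation is tamed by Lemma \ref{lem_log_inequality}, which gives the almost-Lipschitz bound $|\Im((z_2\ln|z_2|^2 - z_1\ln|z_1|^2)(\overline{z_2}-\overline{z_1}))|\leq 2|z_2-z_1|^2$, so that the problematic term is controlled by $\|w_n\|_{L^2}^2$ on the annulus rather than by an ill-behaved pointwise logarithm; the remaining real contribution $|u_n|^2(\ln|u_n|^2-1)$ is handled directly by the Gaussian pointwise smallness, since $s\mapsto s(\ln s - 1)$ stays bounded as $s\to 0^+$.
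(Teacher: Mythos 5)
Your overall strategy is the same as the paper's: bound $\abs{\frac{\diff}{\diff t} S^\textnormal{loc} (t',u_n(t'))}$ by a Gaussian-in-$t$ quantity using that all surviving terms live on $\supp \nabla \psi_j$, integrate backward from $T_n$ (where $u_n = G$) via Lemma \ref{lem:int_error_gauss} to get the $t^{-1}$ prefactor, and then compare the final-time value with $\sum_j S_j (G_j)$. However, your ``main obstacle'' paragraph is aimed at a term that does not exist. The nonlinearity contributes \emph{nothing} to the localized mass derivative, since $u_n \ln{\abs{u_n}^2}\, \overline{u_n} = \abs{u_n}^2 \ln{\abs{u_n}^2}$ is real, so its imaginary part vanishes pointwise --- no appeal to Lemma \ref{lem_log_inequality} is needed or relevant. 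In the localized momentum derivative the logarithms cancel algebraically (this is special to the logarithmic nonlinearity), leaving only $- \lambda \int \abs{u_n}^2 \, \nabla \psi_j \diff x$, exactly as in the paper's computation. And for the energy there is nothing to compute at all: as your own step (ii) observes, the $E_j$'s enter $S^\textnormal{loc}$ with coefficient $1$ and sum to the exactly conserved $E(u_n)$, so one never differentiates a localized energy and never meets $\ln{\abs{u_n}^2}$ near the vacuum in this part of the argument.

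The genuine difficulty sits instead in the step you dismiss as ``negligible localization error'': showing $\abs*[\big]{S^\textnormal{loc} (T_n, G(T_n)) - \sum_{j \geq 1} S_j (G_j)}$ is small. This is not only a tail estimate, because each $S_j^\textnormal{loc}$ is evaluated at the \emph{sum} $G = \sum_k G_k$, and the potential-energy difference $\int \bigl( \abs{G}^2 \ln{\abs{G}^2} - \abs{G_j}^2 \ln{\abs{G_j}^2} \bigr) \psi_j \diff x$ does not split into Gaussian cross terms by mere bilinearity: $z \mapsto z \ln{\abs{z}}$ is not Lipschitz at the origin, which is precisely where the tails of the Gaussons live. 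The paper handles this with the weighted Lipschitz estimate of Lemma \ref{lem:lip_z_log_z}, producing a factor $3 - \ln{\abs{\tilde{G}_j}^2} \lesssim 1 + \abs{x - x_j^* (t')}^2$ that must then be absorbed by the Gaussian decay through the almost-orthogonality Lemma \ref{lem:orth_gauss} and the tail estimates of Lemma \ref{lem:est_H1_gauss} and Corollary \ref{cor:est_L_log_L_gauss}; Corollary \ref{cor:diff_actions_for_gauss} then converts $S_j^\textnormal{loc} (T_n, G_j (T_n))$ into $S_j (G_j)$. Without an argument of this type your plan's final comparison is incomplete --- ironically, this is the one place in the proof where the logarithm genuinely requires the kind of care you invested in the (vanishing) derivative terms.
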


Before proving these two propositions, we show how they lead to the Bootstrap Property \ref{prop:bootstrap}.

\begin{proof}[Proof of Proposition \ref{prop:bootstrap}]
    By \eqref{eq:L2_est} and Proposition \ref{prop:coercivity}, for all $t \in [t^\dag, T_n - T'']$
    \begin{equation*}
        \norm{w_n (t')}_{\dot{H}^1}^2 \leq 2 \Bigl( S^\textnormal{loc} (t',u_n(t')) - \sum_j S_j (G_j) \Bigr) + C_0 \, t^{-1} e^{- \frac{\lambda (v_* t)^2}{2}} .
    \end{equation*}
    Moreover, Proposition \ref{prop:slow_var_loc_func} yields
    \begin{equation*}
        S^\textnormal{loc} (t',u_n(t')) - \sum_j S_j (G_j) \leq C_0 \, t^{-1} e^{- \frac{\lambda (v_* t)^2}{2}}.
    \end{equation*}
    Thus, combining this with the previous inequality, we get as soon as $t_*$ is large enough:
    \begin{equation*}
        \norm{w_n (t')}_{H^1}^2 \leq \frac{1}{2} e^{- \frac{\lambda (v_* t)^2}{2}}, \qquad \forall t \in [t^\dag, T_n - T''],
    \end{equation*}
    which gives the conclusion.
\end{proof}

\subsection{Almost-coercivity of the functional}

We now prove Proposition \ref{prop:coercivity}, which requires to linearize the functional $S^\textnormal{loc}$ in terms of $w_n$ and $G$. However, $E$ is not $\mathcal{C}^2$ because of its potential energy, and we cannot linearize exactly like in \cite{Martel_Merle__Multi_solitary_waves_NLS,Cote_LeCoz_Multisolitons_NLS} for example. However, a weaker expansion still holds in this sense:
\begin{lem} \label{lem:linearize_action}
    For all $t \geq 0$, $n \in \mathbb{N}$ and $j \in \{ 1, \dots, N \}$, there holds
    \begin{multline}
        S_j^\textnormal{loc} (t',u_n(t')) \geq S_j^\textnormal{loc} (t', G_j(t')) + H_j (t', w_n^j (t')) - \int \Re \Bigl( \nabla G_j (t') \, \overline{w_n^j (t')} \Bigr) \cdot \nabla \psi_j (t') \diff x \\ + v_j \cdot \Im \int \overline{G_j (t')} \, w_n^j (t') \, \nabla \psi_j (t') \diff x, \label{eq:linearize_action}
    \end{multline}
    where $w_n^j \coloneqq u_n - G_j$ and
    \begin{multline}
        H_j (t', w) \coloneqq \frac{1}{2} \int \abs{\nabla w}^2 \, \psi_j (t') \diff x - 2 \lambda \int \abs{w}^2 \, \biggl( \ln{\Bigl( 1 + \abs{w} \Bigr)} + C_0 \biggr) \, \psi_j (t') \diff x \\ + \Bigl( 2 \lambda \omega_j + \frac{\abs{v_j}^2}{2} \Bigr) M_j (t',w) - v_j \cdot \mathcal{J}_j (t',w). \label{eq:linearized}
    \end{multline}
\end{lem}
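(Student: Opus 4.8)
The plan is to expand $S_j^\textnormal{loc}(t',u_n(t'))$ around $G_j(t')$ by setting $u_n = G_j + w$ with $w = w_n^j$ (I drop the argument $t'$ and abbreviate $\psi = \psi_j$), treating the kinetic, mass and angular-momentum contributions by an exact algebraic expansion and isolating the potential energy, which is the only non-smooth piece. For the mass one has $M_j(u_n) = M_j(G_j) + 2\Re\int\overline{G_j}\,w\,\psi\diff x + M_j(w)$, with no derivative and hence no boundary contribution. For the kinetic energy, $\frac12\int\abs{\nabla u_n}^2\psi\diff x = \frac12\int\abs{\nabla G_j}^2\psi\diff x + \Re\int\nabla G_j\cdot\overline{\nabla w}\,\psi\diff x + \frac12\int\abs{\nabla w}^2\psi\diff x$, and I integrate by parts in the cross term to move the derivative off $w$:
\[
    \Re\int\nabla G_j\cdot\overline{\nabla w}\,\psi\diff x = -\Re\int\Delta G_j\,\overline{w}\,\psi\diff x - \int\Re\bigl(\nabla G_j\,\overline{w}\bigr)\cdot\nabla\psi\diff x.
\]
Likewise, integrating by parts the term $\Im\int\nabla w\,\overline{G_j}\,\psi\diff x$ inside $\mathcal{J}_j(u_n)$ yields the cross contribution $2\Im\int\nabla G_j\,\overline{w}\,\psi\diff x - \Im\int\overline{G_j}\,w\,\nabla\psi\diff x$; multiplied by $-v_j$ this produces a bulk term and the boundary term $+v_j\cdot\Im\int\overline{G_j}\,w\,\nabla\psi\diff x$. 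In every case the $\nabla\psi$-integrals are precisely the two boundary terms of \eqref{eq:linearize_action}, while the pieces $\frac12\int\abs{\nabla w}^2\psi\diff x$, $\bigl(2\lambda\omega_j + \frac{\abs{v_j}^2}2\bigr)M_j(w)$ and $-v_j\cdot\mathcal{J}_j(w)$ are exactly the non-potential part of $H_j(t',w)$.

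Next I would collect every bulk term linear in $w$ (carrying $\psi$ but no $\nabla\psi$): the kinetic $-\Re\int\Delta G_j\,\overline{w}\,\psi\diff x$, the mass $(4\lambda\omega_j + \abs{v_j}^2)\Re\int\overline{G_j}\,w\,\psi\diff x$, the angular-momentum $-2v_j\cdot\Im\int\nabla G_j\,\overline{w}\,\psi\diff x$, and the linear part of the potential, $-2\lambda\int\ln\abs{G_j}^2\,\Re(\overline{G_j}w)\,\psi\diff x$. Rewriting each as $\Re\int(\,\cdot\,)\,\overline{w}\,\psi\diff x$ and factoring out $2$, the common bracket is the Euler--Lagrange expression
\[
    -\tfrac12\Delta G_j - \lambda\ln\abs{G_j}^2\,G_j + \Bigl(2\lambda\omega_j + \tfrac{\abs{v_j}^2}2\Bigr)G_j + i\,v_j\cdot\nabla G_j
\]
associated with the action $S_j$. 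Since $G_j(t')$ is, at each time, a critical point of $S_j$ --- equivalently it solves this stationary/traveling-wave equation underlying \eqref{foc_log_nls} --- this expression vanishes identically, so the whole linear-in-$w$ bulk contribution is zero and only the two boundary integrals survive.

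The one genuinely delicate point, and the main obstacle, is the potential energy $-\lambda\int\abs{u_n}^2(\ln\abs{u_n}^2 - 1)\psi\diff x$. Writing $P(z) = \abs{z}^2(\ln\abs{z}^2 - 1)$, its linear part $DP(G_j)[w] = 2\ln\abs{G_j}^2\Re(\overline{G_j}w)$ is the term cancelled above, but the second-order remainder cannot be treated by a $\mathcal{C}^2$ Taylor formula, since $P$ has no bounded Hessian near the vacuum $z=0$ --- exactly the obstruction pointed out before the lemma. This is resolved by the weaker one-sided expansion of Lemma \ref{lem:exp_u_ln_u}, which provides a pointwise upper bound of the form $P(G_j + w) - P(G_j) - DP(G_j)[w] \leq 2\abs{w}^2\bigl(\ln(1 + \abs{w}) + C_0\bigr)$. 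Multiplying by $-\lambda\psi \leq 0$ (recall $\lambda > 0$ and $\psi \geq 0$) and integrating turns the otherwise exact expansion into the inequality $\geq$ of the statement, and produces exactly the logarithmic term $-2\lambda\int\abs{w}^2(\ln(1+\abs{w}) + C_0)\,\psi\diff x$ of $H_j$. Gathering the value $S_j^\textnormal{loc}(G_j)$, the quadratic kinetic/mass/angular-momentum terms, this potential bound, the vanishing linear bulk part and the two surviving boundary integrals then yields \eqref{eq:linearize_action}; the only routine point to check along the way is that all integrations by parts are justified with no contribution from infinity, which holds because $G_j$ is Gaussian (hence Schwartz-class) and $w \in H^1$.
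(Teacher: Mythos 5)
Your proposal is correct and takes essentially the same route as the paper, whose proof of this lemma is the single sentence ``expand $S_j^\textnormal{loc} (t', u_n (t'))$ in terms of $w_n^j (t')$, using Corollary \ref{cor:2nd_exp_en_pot} for the expansion of the localized potential energy'': your write-up just makes that expansion explicit (the integrations by parts yielding the two $\nabla \psi_j$ boundary terms, the cancellation of the linear bulk terms through the Euler--Lagrange equation satisfied by $G_j$ as a critical point of $S_j$, and the one-sided bound for the potential part, which turns the identity into the inequality). The only cosmetic remark is that the pointwise bound you invoke, $F_1(G_j+w)-F_1(G_j)-2\Re\bigl(G_j\overline{w}\bigr)\ln\abs{G_j}^2 \leq 2\abs{w}^2\bigl(\ln(1+\abs{w})+C_0\bigr)$, is not Lemma \ref{lem:exp_u_ln_u} itself but its consequence Corollary \ref{cor:2nd_exp_en_pot}, obtained from the lemma together with the uniform bound $\norm{G_j (t')}_{L^\infty} \leq C_0$.
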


This Lemma is a corollary of a kind of weak expansion of the function $F_1 (z) = \abs{z} \ln{\abs{z}^2}$ (see Lemma \ref{lem:exp_u_ln_u}).
The second term in \eqref{eq:linearized} is not what one would expect in order to be able to reproduce the proof of \cite{Martel_Merle__Multi_solitary_waves_NLS}, since it is not the linearized potential energy. However, since we have already obtained the $L^2$ uniform estimates for $w_n$, it is still enough in order to prove Proposition \ref{prop:coercivity}. Before proving this lemma, we show how it is used to prove Proposition \ref{prop:coercivity}.

\subsubsection{Proof of Proposition \ref{prop:coercivity}}

First of all, we will need some results about the computation of some quantities about $G_j$ outside its "physical support".

\begin{lem} \label{lem:est_H1_gauss}
    For all $j \in \{ 1, \dots, N \}$ and $k \in \{ 0, \dots, N \}$, there holds when $t \rightarrow \infty$
    \begin{gather*}
        \norm{G_j (t')}_{L^2 ( (1-\psi_j (t')) \diff x )} + \norm{G_j (t')}_{L^2 (\norm{D_{xxx}^3 \psi_k (t')} \diff x)} + \norm{ G_j (t')}_{L^2 ( \abs{\nabla \psi_k (t')} \diff x )} = o \biggl( t^{-3} e^{- \frac{\lambda (v_* t)^2}{4}} \biggr), \\
        \norm{\nabla G_j (t')}_{L^2 ( (1-\psi_j (t')) \diff x )} + \norm{\abs{\nabla \psi_k (t')} \, \nabla G_j (t')}_{L^2} = o \biggl( t^{-1} e^{- \frac{\lambda (v_* t)^2}{4}} \biggr), \\
        \norm{G_j (t')}_{L^{2+\frac{1}{d}} ( (1-\psi_j (t')) \diff x )} = o \biggl( t^{-3} e^{- \frac{\lambda (v_* t)^2}{4}} \biggr), \\
        \norm{G_j (t') \abs{x - x_j^* (t')}^2}_{L^2 ( (1-\psi_j (t')) \diff x )} + \norm{G_j (t') \abs{x - x_j^* (t')}^3}_{L^2 ( (1-\psi_j (t')) \diff x )} = o \biggl( t^{-1} e^{- \frac{\lambda (v_* t)^2}{4}} \biggr).
    \end{gather*}
    %
\end{lem}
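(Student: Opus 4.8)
The plan is to reduce every term in the statement to a one-dimensional Gaussian tail integral, and then to exploit the fact that all the cut-offs live at radius $\tfrac{v_* t}{2}+1$ from the center $x_j^*(t')$ of $G_j$. This radius produces exactly the factor $e^{-\frac{\lambda (v_* t)^2}{4}}$ together with an \emph{extra} factor decaying like $e^{-\lambda v_* t}$ coming from the cross term in $(\tfrac{v_* t}{2}+1)^2$; since the latter beats any polynomial in $t$, the $o(t^{-p}\,\cdot\,)$ bounds come for free.

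First I would record the explicit shape of $G_j$. Since $G_j = G^d_{\omega_j, x_j, v_j, \theta_j}$, its modulus is
\begin{equation*}
    \abs{G_j (t',x)} = \exp \Bigl[ \tfrac{d}{2} + \omega_j - \lambda \abs{x - x_j^* (t')}^2 \Bigr],
\end{equation*}
and differentiating the phase together with the Gaussian gives $\nabla G_j = \bigl( - i v_j - 2 \lambda (x - x_j^*(t')) \bigr) G_j$, whence
\begin{equation*}
    \abs{\nabla G_j (t',x)} \leq \bigl( \abs{v_j} + 2 \lambda \abs{x - x_j^*(t')} \bigr) \, \abs{G_j (t',x)}.
\end{equation*}
Consequently, after squaring (or raising to the power $2 + \tfrac1d$), every quantity in the statement becomes an integral of the form $\int_\Omega P \bigl( \abs{x - x_j^*(t')} \bigr) \, e^{-\gamma \abs{x - x_j^*(t')}^2} \diff x$, where $P$ is a fixed polynomial, $\gamma \in \{ 2\lambda, (2+\tfrac1d)\lambda \}$, and $\Omega$ is either $\supp(1-\psi_j(t'))$, $\supp \nabla \psi_k(t')$, or $\supp D^3_{xxx}\psi_k(t')$ (the tensor $D^3_{xxx}\psi_k$ being bounded and supported in the same annulus as $\nabla \psi_k$, since $\phi$ is smooth).

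Second, I would pin down the geometry of these supports. By construction $\psi_j(t') = 1$ on $\{ \abs{x - x_j^*(t')} \leq \tfrac{v_* t}{2} + 1 \}$ and $\psi_j(t') = 0$ outside $\{ \abs{x - x_j^*(t')} \leq \tfrac{v_* t}{2} + 3 \}$, so that $\supp(1-\psi_j(t'))$, $\supp\nabla\psi_j(t')$ and $\supp D^3_{xxx}\psi_j(t')$ all lie in $\{ \abs{x - x_j^*(t')} \geq \tfrac{v_* t}{2} + 1 \}$. For $k \neq j$, the cut-off is centered at $x_k^*(t')$ and, using $\abs{x_j^*(t') - x_k^*(t')} \geq v_* t' - \abs{x_j - x_k}$ with $t' = T'' + t$ and the triangle inequality, every point of $\supp\nabla\psi_k(t') \cup \supp D^3_{xxx}\psi_k(t')$ satisfies
\begin{equation*}
    \abs{x - x_j^*(t')} \geq \abs{x_j^*(t') - x_k^*(t')} - \Bigl( \tfrac{v_* t}{2} + 3 \Bigr) \geq \tfrac{v_* t}{2} + \bigl( v_* T'' - \abs{x_j - x_k} - 3 \bigr) \geq \tfrac{v_* t}{2} + 1,
\end{equation*}
provided $T''$ is chosen large enough. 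Hence in every case $\Omega \subset \{ \abs{x - x_j^*(t')} \geq R \}$ with $R \coloneqq \tfrac{v_* t}{2} + 1$. Passing to polar coordinates centered at $x_j^*(t')$ then reduces matters to $\int_R^\infty r^m e^{-\gamma r^2} \diff r$ for finitely many $m$; integrating by parts (the case $m=0$ being exactly Lemma \ref{lem:int_error_gauss} and $m=1$ explicit) yields by induction
\begin{equation*}
    \int_R^\infty r^m e^{-\gamma r^2} \diff r \leq C_{m,\gamma} \, (1 + R)^{m-1} \, e^{-\gamma R^2}, \qquad R \geq 1.
\end{equation*}
With $R = \tfrac{v_* t}{2}+1$ one has $\gamma R^2 = \gamma \tfrac{(v_* t)^2}{4} + \gamma v_* t + \gamma$, so after taking the square root (resp. the $(2+\tfrac1d)$-th root for the $L^{2+1/d}$ term) every quantity is bounded by $C_0 \, (1+t)^{M} \, e^{-\frac{\lambda (v_* t)^2}{4}} \, e^{-\lambda v_* t}$ for some fixed $M$, and $(1+t)^M e^{-\lambda v_* t} = o(t^{-p})$ for all $p$ gives all the stated $o(t^{-3}\,\cdot\,)$ and $o(t^{-1}\,\cdot\,)$ bounds.

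The only genuinely delicate point is the second step in the case $k \neq j$: one must ensure that the annular supports of $\nabla\psi_k$ and $D^3_{xxx}\psi_k$, which surround a \emph{different} Gausson, still sit at distance $\geq \tfrac{v_* t}{2}+1$ from $x_j^*(t')$. This is precisely where $v_* > 0$ and the choice of $T''$ large enter. Everything else is a routine Gaussian tail computation, and the generous spare factor $e^{-\lambda v_* t}$ makes the exact powers of $t$ appearing in the prefactors (and hence the distinction between the $t^{-3}$ and $t^{-1}$ rates) completely harmless.
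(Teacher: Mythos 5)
Your proposal is correct and follows essentially the same route as the paper's own proof: every term is reduced to a Gaussian tail integral over the complement of the ball of radius $\tfrac{v_* t}{2}+1$ centered at $x_j^* (t')$, that tail is bounded by polar coordinates plus an integration-by-parts induction (the paper's Lemmas \ref{lem:gauss_error_2} and \ref{lem:gauss_error_3}), and the cross term $e^{-\lambda v_* t}$ arising from expanding $\bigl(\tfrac{v_* t}{2}+1\bigr)^2$ absorbs every polynomial prefactor, yielding all the stated $o(t^{-p}\, e^{-\lambda (v_* t)^2 /4})$ rates. If anything, you are more explicit than the paper on the one delicate point, namely that for $k \neq j$ the supports of $\nabla \psi_k (t')$ and $D^3_{xxx} \psi_k (t')$ also lie at distance at least $\tfrac{v_* t}{2}+1$ from $x_j^* (t')$ thanks to the separation \eqref{eq:distance_x_k} and the choice of $T''$ large, a fact the paper's appendix uses implicitly.
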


The lemma follows from the support properties of $\psi_k$ and the Gaussian decay of $G_j$.
We postpone its proof to Appendix \ref{sec_app:proof_lem_gauss}.
In particular, since $0 \leq \psi_k \leq 1 - \psi_j$ for all $k \neq j$, this gives a simple corollary:

\begin{cor} \label{cor:est_H1_gauss}
    For all $k \geq 0$, $j \geq 1$ such that $k \neq j$, there holds when $t \rightarrow \infty$
    \begin{gather*}
        \norm{G_j (t')}_{L^2 (\psi_k (t') \diff x)} + \norm{G_j (t')}_{L^{2+\frac{1}{d}} ( \psi_k (t') \diff x )} = o \biggl( t^{-3} e^{- \frac{\lambda (v_* t)^2}{4}} \biggr), \\
        \norm{\psi_k (t') \, \nabla G_j (t')}_{L^2} + \norm{\nabla G_j (t')}_{L^2 ( \psi_k (t') \diff x )} = o \biggl( t^{-1} e^{- \frac{\lambda (v_* t)^2}{4}} \biggr), \\
        \norm{G_j (t') \abs{x - x_j^* (t')}^3}_{L^2 ( \psi_k (t') \diff x )} = o \biggl( t^{-1} e^{- \frac{\lambda (v_* t)^2}{4}} \biggr).
    \end{gather*}
\end{cor}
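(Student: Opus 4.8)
The plan is to deduce the corollary directly from Lemma~\ref{lem:est_H1_gauss}, exploiting the single pointwise inequality announced just before the statement, namely $0 \le \psi_k(t') \le 1 - \psi_j(t')$ for $k \neq j$. This bound is immediate from the partition-of-unity property: since $(\psi_i(t'))_{0 \le i \le N}$ sums to $1$ with nonnegative members, one has $\psi_k \le \sum_{i \neq j} \psi_i = 1 - \psi_j$ whenever $k \neq j$, because $\psi_k$ is one of the nonnegative summands on the right-hand side. No further information about the supports is needed at this stage, that geometric work having already been done inside Lemma~\ref{lem:est_H1_gauss}.

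With this in hand I would treat each of the five quantities by plain monotonicity of the integral, matching it to the corresponding $(1-\psi_j)$-weighted quantity from the Lemma. For the $L^2$ and $L^{2+1/d}$ norms of $G_j$, and for $\norm{\nabla G_j}_{L^2(\psi_k \diff x)}$ and $\norm{G_j \abs{x - x_j^*}^3}_{L^2(\psi_k \diff x)}$, the weight $\psi_k$ enters linearly under the integral sign, so $\int f \, \psi_k \diff x \le \int f \, (1-\psi_j) \diff x$ for the relevant nonnegative integrand $f$, and the estimate transfers verbatim with the same $o(t^{-3} e^{-\lambda(v_*t)^2/4})$ or $o(t^{-1} e^{-\lambda(v_*t)^2/4})$ rate furnished by the first, third and fourth lines of Lemma~\ref{lem:est_H1_gauss}. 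The only term demanding a word of care is $\norm{\psi_k \nabla G_j}_{L^2}$, where the weight appears as $\psi_k^2$; but since $\psi_k \in [0,1]$ one has $\psi_k^2 \le \psi_k \le 1 - \psi_j$, so $\norm{\psi_k \nabla G_j}_{L^2}^2 \le \norm{\nabla G_j}_{L^2((1-\psi_j)\diff x)}^2$ and this again inherits the $o(t^{-1} e^{-\lambda(v_*t)^2/4})$ decay.

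I do not anticipate any genuine obstacle: the corollary is purely a bookkeeping consequence of the Lemma. The only point requiring attention is to pair each $\psi_k$-weighted norm on the left with the correct $(1-\psi_j)$-weighted norm on the right so that the decay rates line up ($t^{-3}$ for the zeroth-order and $L^{2+1/d}$ terms, $t^{-1}$ for the gradient and the cubic-moment term), and to remember that for $\norm{\psi_k \nabla G_j}_{L^2}$ one invokes $\psi_k^2 \le \psi_k$ rather than $\psi_k$ itself. Passing to the limit $t \to \infty$ in the already established estimates of Lemma~\ref{lem:est_H1_gauss} then yields the stated little-$o$ bounds.
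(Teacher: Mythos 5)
Your proof is correct and is essentially the paper's own argument: the paper derives this corollary in one line from Lemma \ref{lem:est_H1_gauss} via exactly the inequality $0 \leq \psi_k \leq 1 - \psi_j$ for $k \neq j$. Your additional care with the squared weight in $\norm{\psi_k \nabla G_j}_{L^2}$ (using $\psi_k^2 \leq \psi_k$) and the pairing of decay rates just makes explicit what the paper leaves implicit.
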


Moreover, we also know that $\ln{\abs{G_j (t')}^2} = 2 \omega_j + d - 2 \lambda \abs*[\normal]{x - x_j^* (t')}^2$, so we can also derive the same decay estimate for $G_j(t') \ln{\abs{G_j (t')}^2}$:

\begin{cor} \label{cor:est_L_log_L_gauss}
    For all $k \geq 0$, $j \geq 1$ such that $k \neq j$, there holds when $t \rightarrow \infty$
    \begin{gather*}
        \norm{G_j (t') \ln{\abs{G_j (t')}^2}}_{L^2 ((1 - \psi_j (t')) \diff x)} = o \biggl( t^{-1} e^{- \frac{\lambda (v_* t)^2}{4}} \biggr), \\
        \norm{G_j (t') \ln{\abs{G_j (t')}^2}}_{L^2 (\psi_k (t') \diff x)} = o \biggl( t^{-1} e^{- \frac{\lambda (v_* t)^2}{4}} \biggr).
    \end{gather*}
\end{cor}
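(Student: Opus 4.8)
The plan is to treat this as a direct algebraic consequence of the explicit Gaussian form of $G_j$ combined with the weighted estimates already collected in Lemma~\ref{lem:est_H1_gauss}. The starting point is the identity quoted just above the statement, namely
\begin{equation*}
    G_j (t') \ln \abs{G_j (t')}^2 = (2 \omega_j + d) \, G_j (t') - 2 \lambda \, G_j (t') \, \abs{x - x_j^* (t')}^2,
\end{equation*}
which expresses $G_j \ln \abs{G_j}^2$ as a fixed linear combination, with coefficients depending only on $\omega_j$, $d$ and $\lambda$, of $G_j$ and of $G_j \abs{x - x_j^*}^2$. Both of these building blocks are precisely the quantities whose weighted $L^2$ decay is quantified in Lemma~\ref{lem:est_H1_gauss}.

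First I would apply the triangle inequality in the weighted space $L^2((1-\psi_j(t'))\diff x)$:
\begin{equation*}
    \norm{G_j (t') \ln \abs{G_j (t')}^2}_{L^2((1-\psi_j(t'))\diff x)} \leq \abs{2 \omega_j + d} \, \norm{G_j (t')}_{L^2((1-\psi_j(t'))\diff x)} + 2 \lambda \, \norm{G_j (t') \abs{x - x_j^* (t')}^2}_{L^2((1-\psi_j(t'))\diff x)}.
\end{equation*}
By Lemma~\ref{lem:est_H1_gauss}, the first term on the right is $o(t^{-3} e^{-\lambda (v_* t)^2/4})$ and the second is $o(t^{-1} e^{-\lambda (v_* t)^2/4})$. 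Since the prefactors $\abs{2\omega_j+d}$ and $2\lambda$ are fixed constants, the slower of the two rates governs the sum, which yields the first claimed bound $o(t^{-1} e^{-\lambda (v_* t)^2/4})$.

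For the second estimate I would exploit the monotonicity of the weights rather than redo any computation. Because $(\psi_k)_{0\le k\le N}$ is a partition of unity with each $\psi_k$ valued in $[0,1]$, for every $k\neq j$ one has the pointwise bound $0 \le \psi_k(t') \le 1 - \psi_j(t')$; hence the weighted norm against $\psi_k(t')\diff x$ is dominated by the weighted norm against $(1-\psi_j(t'))\diff x$, and the second bound follows immediately from the first. This is exactly the mechanism already used to deduce Corollary~\ref{cor:est_H1_gauss} from Lemma~\ref{lem:est_H1_gauss}.

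I do not expect a genuine obstacle here, as the statement is an exact corollary of Lemma~\ref{lem:est_H1_gauss}; the real analytic work has been deferred to that lemma (and its proof in the appendix). The one point worth emphasising in the write-up is the bookkeeping of the polynomial prefactor: the quadratic weight $\abs{x-x_j^*}^2$ degrades the $t^{-3}$ decay of the clean Gaussian term down to $t^{-1}$, so it is this quadratic contribution, not the $G_j$ term itself, that fixes the final rate $t^{-1} e^{-\lambda (v_* t)^2/4}$. Claiming the faster $t^{-3}$ rate would therefore be incorrect.
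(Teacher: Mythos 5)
Your proof is correct and is essentially the paper's own argument: the paper derives this corollary precisely from the identity $\ln\abs{G_j(t')}^2 = 2\omega_j + d - 2\lambda\abs{x - x_j^*(t')}^2$ stated just before it, combined via the triangle inequality with the first and fourth estimates of Lemma~\ref{lem:est_H1_gauss}, and the pointwise domination $0 \le \psi_k(t') \le 1 - \psi_j(t')$ for the second bound. Your bookkeeping remark is also accurate: it is the quadratically weighted term, at rate $o(t^{-1} e^{-\lambda (v_* t)^2/4})$, and not the bare Gaussian term at $o(t^{-3} e^{-\lambda (v_* t)^2/4})$, that fixes the final rate.
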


Since the Gaussons get away from each other and with their Gaussian decay, we also show that they are almost orthogonal:
\begin{lem} \label{lem:orth_gauss}
    For all $j \neq k \in \{ 1, \dots, N \}$ and $\ell \geq 1$, there holds
    \begin{multline*}
        \int \abs{G_j (t')} \, \abs{G_k (t')} (1 + \abs{x-x_\ell^* (t')}^2) \diff x +
        \int \abs{\nabla G_j (t')} \, \abs{G_k (t')} \diff x \\ +
        \int \abs{\nabla G_j (t')} \, \abs{\nabla G_k (t')} \diff x = o \biggl( t^{-1} e^{- \frac{\lambda (v_* t)^2}{2}} \biggr).
    \end{multline*}
\end{lem}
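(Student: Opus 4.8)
The plan is to reduce all three integrals to the single model quantity $\int \abs{G_j(t')}\,\abs{G_k(t')}\,P(x)\diff x$, where $P$ is a polynomial in $x$ of degree at most $2$ whose coefficients grow at most polynomially in $t'$, and then to combine the Gaussian decay of the product $\abs{G_j}\,\abs{G_k}$ with the separation of the two centers. First I would record the explicit moduli: with $x_j^*(t') = x_j + t' v_j$,
\begin{equation*}
    \abs{G_j(t',x)} = e^{\frac{d}{2} + \omega_j}\, e^{-\lambda \abs{x - x_j^*(t')}^2},
\end{equation*}
and, since the phase of $G_j$ is affine in $x$ with gradient $-v_j$,
\begin{equation*}
    \nabla G_j(t',x) = G_j(t',x)\bigl(-i v_j - 2\lambda(x - x_j^*(t'))\bigr), \qquad \abs{\nabla G_j(t',x)} \leq \abs{G_j(t',x)}\bigl(\abs{v_j} + 2\lambda\abs{x - x_j^*(t')}\bigr).
\end{equation*}
Substituting these bounds into the three terms shows each is controlled by $\int \abs{G_j}\,\abs{G_k}\,P$ with $P$ of the announced form (degree $2$ for the term carrying $(1 + \abs{x - x_\ell^*}^2)$ and for $\abs{\nabla G_j}\,\abs{\nabla G_k}$, degree $1$ for $\abs{\nabla G_j}\,\abs{G_k}$).

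Next I would complete the square in the product of the two Gaussians. Writing $a = x_j^*(t')$, $b = x_k^*(t')$ and using $\abs{x-a}^2 + \abs{x-b}^2 = 2\bigl\lvert x - \tfrac{a+b}{2}\bigr\rvert^2 + \tfrac12\abs{a-b}^2$, one gets
\begin{equation*}
    \abs{G_j(t',x)}\,\abs{G_k(t',x)} = e^{d + \omega_j + \omega_k}\, e^{-\frac{\lambda}{2} D_{jk}(t')^2}\, e^{-2\lambda\bigl\lvert x - m_{jk}(t')\bigr\rvert^2},
\end{equation*}
with $D_{jk}(t') \coloneqq \abs{x_j^*(t') - x_k^*(t')}$ and $m_{jk}(t') \coloneqq \tfrac12(x_j^*(t') + x_k^*(t'))$. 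Integrating a polynomial of degree at most $2$ against the Gaussian $e^{-2\lambda\abs{x - m_{jk}}^2}$ produces, by the standard vanishing of odd moments and finiteness of even moments, a polynomial of degree at most $2$ in the centers $m_{jk}(t')$, $x_j^*(t')$, $x_k^*(t')$, $x_\ell^*(t')$; since all of these grow at most linearly in $t'$, this prefactor is $O(1 + t'^2)$. Hence each of the three terms is bounded by $C_0\,(1 + t'^2)\,e^{-\frac{\lambda}{2} D_{jk}(t')^2}$.

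It then remains to compare this bound with $t^{-1} e^{-\frac{\lambda}{2}(v_* t)^2}$, recalling $t' = T'' + t$. By the reverse triangle inequality and $\abs{v_j - v_k} \geq v_*$,
\begin{equation*}
    D_{jk}(t') = \abs{x_j - x_k + t'(v_j - v_k)} \geq v_* t' - \max_{j,k}\abs{x_j - x_k} = v_* t + A, \qquad A \coloneqq v_* T'' - \max_{j,k}\abs{x_j - x_k}.
\end{equation*}
For $T''$ large enough $A > 0$, so that $D_{jk}(t')^2 \geq (v_* t)^2 + 2A v_* t$ and therefore
\begin{equation*}
    (1 + t'^2)\, e^{-\frac{\lambda}{2} D_{jk}(t')^2} \leq (1 + t'^2)\, e^{-\lambda A v_* t}\, e^{-\frac{\lambda}{2}(v_* t)^2} = o\Bigl(t^{-1} e^{-\frac{\lambda}{2}(v_* t)^2}\Bigr),
\end{equation*}
since the strictly decaying exponential $e^{-\lambda A v_* t}$ dominates the polynomial $t\,(1 + t'^2) = O(t^3)$. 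This is precisely the claimed estimate. There is no deep obstacle here; the only point requiring care — and the reason the $o(\cdot)$ holds at all — is this last step: the separation $D_{jk}(t')$ must exceed $v_* t$ by an amount producing a growing gain in the exponent (the linear term $A v_* t$), which is guaranteed once $T''$ is chosen large enough so that $v_* T'' > \max_{j,k}\abs{x_j - x_k}$. Everything else is routine bookkeeping of Gaussian moments and of the polynomial prefactors.
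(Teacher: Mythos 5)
Your proof is correct and follows essentially the same route as the paper: both reduce the integrals to a Gaussian-product computation, complete the square about the midpoint of the two centers to extract the factor $e^{-\frac{\lambda}{2}\abs{x_j^*(t')-x_k^*(t')}^2}$, bound the polynomial prefactors by Gaussian moments, and exploit the fact that the separation exceeds $v_* t$ by a positive margin so that the resulting cross term $e^{-\lambda A v_* t}$ absorbs all polynomial factors into $o\bigl(t^{-1}e^{-\frac{\lambda(v_*t)^2}{2}}\bigr)$. The only cosmetic differences are that you re-derive the separation bound via the reverse triangle inequality rather than citing \eqref{eq:distance_x_k}, and that you treat all three terms uniformly through one model integral where the paper details only the gradient term.
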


\begin{proof}
    For the third term, we have
    \begin{align*}
        \int \abs{\nabla G_j (t')} \, \abs{\nabla G_k (t')} \diff x &= \int \abs{ i v_j - 2 \lambda (x - x_j^* (t')) } \abs{ i v_k - 2 \lambda (x - x_k^* (t')) } \abs{G_j (t',x)} \abs{G_k (t',x)} \diff x \\
        &\begin{multlined}[t][10cm] = e^{\omega_j + \omega_k} \int \abs{ i v_j - 2 \lambda \biggl(y + \frac{x_k^* (t') - x_j^* (t')}{2} \biggr) } \abs{ i v_k - 2 \lambda \biggl(y - \frac{x_k^* (t') - x_j^* (t')}{2} \biggr) } \\ \exp \Bigl[ - 2 \lambda \abs{y}^2 - \lambda \frac{(x_k^* (t') - x_j^* (t'))^2}{2} \Bigr] \diff y \end{multlined} \\
        &\begin{multlined}[t][11cm] \leq C_0 \, \exp \Bigl[ - \lambda \frac{(x_k^* (t') - x_j^* (t'))^2}{2} \Bigr] \int \Bigl( 1 + \abs{y}^2 + \abs{x_k^* (t') - x_j^* (t')}^2 \Bigr) \exp \Bigl[ - 2 \lambda \abs{y}^2 \Bigr] \diff y \end{multlined} \\
        &\leq C_0 \, \exp \Bigl[ - \lambda \frac{\abs*[\big]{x_k^* (t') - x_j^* (t')}^2}{2} \Bigr] \Bigr( \abs{x_k^* (t') - x_j^* (t')}^2 + 1 \Bigl) \\
        &= o \biggl( t^{-1} e^{- \frac{\lambda (v_* t)^2}{2}} \biggr),
    \end{align*}
    since $\abs{x_k^* (t') - x_j^* (t')} \geq \varepsilon_0^{-1} + v_* (t + \tau)$ for all $j < k$ thanks to \eqref{eq:distance_x_k}.
    The same kind of computation can be performed for the other terms.
\end{proof}

Now, we have all the tools that we need in order to prove Proposition \ref{prop:coercivity} from Lemma \ref{lem:linearize_action}. First, we show that the last two terms in \eqref{eq:linearize_action} are negligible.

\begin{cor} \label{cor:neg_terms}
    For all $n \in \mathbb{N}$, $t \in [t^\dag, T_n - T'']$, $j \in \{ 1, \dots, N \}$,
    \begin{equation*}
        \abs{\int \Re \Bigl( \nabla G_j (t') \, \overline{w_n^j (t')} \Bigr) \, \nabla \psi_j (t) \diff x} + \abs{\Im \int \overline{G_j (t')} \, w_n^j (t') \, \nabla \psi_j (t') \diff x} \leq C_0 \, t^{-1} e^{- \frac{\lambda (v_* t)^2}{2}}.
    \end{equation*}
\end{cor}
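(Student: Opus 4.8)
The plan is to reduce both integrals to the difference estimate \eqref{eq:L2_est} on $w_n$ together with the Gaussian decay bounds of Lemma \ref{lem:est_H1_gauss} and the almost-orthogonality of Lemma \ref{lem:orth_gauss}. The key first move is to split off the far-away Gaussons from the error: since $w_n^j = u_n - G_j = (u_n - G) + \sum_{k \neq j} G_k = w_n + \sum_{k \neq j} G_k$, each of the two integrals decomposes into a ``$w_n$ part'' and a finite sum of ``cross parts'' involving $G_j$ and $G_k$ with $k \neq j$. Throughout I would bound the integrands pointwise by $\abs{\Re(\nabla G_j \, \overline{w_n^j})} \leq \abs{\nabla G_j}\,\abs{w_n^j}$ and $\abs{\Im(\overline{G_j}\,w_n^j)} \leq \abs{G_j}\,\abs{w_n^j}$, and treat the two contributions separately.

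For the first integral, after bounding it by $\int \abs{\nabla G_j}\,\abs{w_n^j}\,\abs{\nabla \psi_j}\diff x$, the $w_n$ part is handled by Cauchy--Schwarz as $\norm{\abs{\nabla\psi_j}\,\nabla G_j}_{L^2}\,\norm{w_n}_{L^2}$: the first factor is $o\!\left(t^{-1}e^{-\lambda(v_* t)^2/4}\right)$ by the second line of Lemma \ref{lem:est_H1_gauss}, while \eqref{eq:L2_est} gives $\norm{w_n(t')}_{L^2} \leq e^{-\lambda(v_*(t+\tau))^2/4} \leq e^{-\lambda(v_* t)^2/4}$ (using $\tau = T'' - T' > 0$), so the product is $o\!\left(t^{-1}e^{-\lambda(v_* t)^2/2}\right)$. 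For each cross part I would simply drop the cutoff ($\abs{\nabla\psi_j} \leq 1$) and invoke the middle term $\int \abs{\nabla G_j}\,\abs{G_k}\diff x = o\!\left(t^{-1}e^{-\lambda(v_* t)^2/2}\right)$ of Lemma \ref{lem:orth_gauss}. The second integral is symmetric: its $w_n$ part is controlled by $\norm{G_j}_{L^2(\abs{\nabla\psi_j}\diff x)}\,\norm{w_n}_{L^2}$, where the weighted factor is $o\!\left(t^{-3}e^{-\lambda(v_* t)^2/4}\right)$ by the first line of Lemma \ref{lem:est_H1_gauss}, again giving $o\!\left(t^{-3}e^{-\lambda(v_* t)^2/2}\right)$ after \eqref{eq:L2_est}; each cross part is dominated by $\int \abs{G_j}\,\abs{G_k}\diff x$, which is $o\!\left(t^{-1}e^{-\lambda(v_* t)^2/2}\right)$ by the first term of Lemma \ref{lem:orth_gauss}. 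Summing over the finitely many $k \neq j$ and adding the two integrals yields the stated bound $C_0\,t^{-1}e^{-\lambda(v_* t)^2/2}$.

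The only delicate point, and the place where a naive organization would fail, is producing the \emph{doubled} exponent $\lambda(v_* t)^2/2$ rather than the single $\lambda(v_* t)^2/4$. The gain comes from the geometry of the cutoff: $\nabla\psi_j$ is supported in the annulus $\abs{x - x_j^*(t')} \approx v_* t/2$, so $G_j$ already contributes one Gaussian factor $e^{-\lambda(v_* t)^2/4}$ there. A second, independent Gaussian factor of comparable size is then supplied either by the smallness of $\norm{w_n}_{L^2}$ coming from \eqref{eq:L2_est}, or by the decay of the distant Gausson $G_k$ through Lemma \ref{lem:orth_gauss}. The crucial bookkeeping is to keep these two sources of decay multiplicatively separate for the cross terms, rather than absorbing the order-one factor $\norm{G_k}_{L^2} = O(1)$ into a single Cauchy--Schwarz against $\abs{\nabla\psi_j}\,\nabla G_j$; doing the latter would only recover the rate $e^{-\lambda(v_* t)^2/4}$ and lose the estimate. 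This is exactly why the almost-orthogonality lemma, which already bakes in both Gaussian factors via the separation $\abs{x_k^*(t') - x_j^*(t')} \geq \varepsilon_0^{-1} + v_*(t+\tau)$, is used for the cross parts.
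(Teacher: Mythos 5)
Your proposal is correct and follows essentially the same route as the paper's proof: the decomposition $w_n^j = w_n + \sum_{k \neq j} G_k$, Cauchy--Schwarz pairing $\norm{w_n}_{L^2}$ (from \eqref{eq:L2_est}) with the weighted Gaussian norms of Lemma \ref{lem:est_H1_gauss} on the support of $\nabla \psi_j$, and Lemma \ref{lem:orth_gauss} for the cross terms $G_j G_k$. Your closing remark on why the two Gaussian factors must be kept multiplicatively separate to reach the exponent $\lambda (v_* t)^2 / 2$ is exactly the mechanism implicit in the paper's argument.
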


\begin{proof}
    We recall that
    \begin{equation*}
        w_n^j = w_n + \sum_{k\neq j} G_k,
    \end{equation*}
    so that
    \begin{equation*}
        \abs{\int \Re \Bigl( G_j (t') \, \overline{w_n^j (t')} \Bigr) \, \nabla \psi_j (t) \diff x} \leq \abs{\int \Re \Bigl( G_j (t') \, \overline{w_n (t')} \Bigr) \, \nabla \psi_j (t) \diff x} + \sum_{k \neq j} \abs{\int \Re \Bigl( G_j (t') \, \overline{G_k (t')} \Bigr) \, \nabla \psi_j (t) \diff x}.
    \end{equation*}
    By \eqref{eq:L2_est} and Lemma \ref{lem:est_H1_gauss}, there holds for all $t \in [t^\dag, T_n - T'']$
    \begin{equation*}
        \abs{\int \Re \Bigl( G_j (t') \, \overline{w_n (t')} \Bigr) \, \nabla \psi_j (t) \diff x} \leq \norm{w_n (t')}_{L^2} \norm{\nabla \psi_k (t') \, G_j (t')}_{L^2} \leq C_0 \, t^{-1} e^{- \frac{\lambda (v_* t)^2}{2}}.
    \end{equation*}
    Moreover, Lemma \ref{lem:orth_gauss} gives
    \begin{equation*}
        \abs{\int \Re \Bigl( G_j (t') \, \overline{G_k (t')} \Bigr) \, \nabla \psi_j (t) \diff x} \leq \int \abs{G_j (t')} \, \abs{G_k (t')} \diff x \leq C_0 \, t^{-1} e^{- \frac{\lambda (v_* t)^2}{2}},
    \end{equation*}
    which gives the conclusion for the first term. A similar computation holds for the second term.
\end{proof}

Then, we can also substitute $S_j^\textnormal{loc} (t', G_j(t'))$ by $S_j (G_j)$ up to a negligible term.

\begin{cor} \label{cor:diff_actions_for_gauss}
    For all $j \in \{ 1, \dots, N \}$, for all $t \geq 0$
    \begin{equation*}
        \abs{S_j^\textnormal{loc} (t', G_j(t')) - S_j (G_j)} \leq C_0 \, t^{-1} e^{- \frac{\lambda (v_* t)^2}{2}}.
    \end{equation*}
\end{cor}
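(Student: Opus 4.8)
The plan is to exploit the fact that $S_j$ is assembled entirely from the conserved quantities $E$, $M$ and $\mathcal{J}$. Since $G_j(t')$ solves \eqref{foc_log_nls}, Theorem \ref{th:Cauchy_problem} guarantees that $E(G_j(t'))$, $M(G_j(t'))$ and $\mathcal{J}(G_j(t'))$ are all independent of $t'$, hence so is $S_j(G_j(t'))$; I therefore identify the constant $S_j(G_j)$ with $S_j(G_j(t'))$ for every $t'$. The claim then reduces to estimating the defect $S_j^\textnormal{loc}(t',G_j(t')) - S_j(G_j(t'))$ between the localized and the global functionals evaluated at the \emph{same} function $G_j(t')$. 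Using $\sum_k \psi_k = 1$, each localized density differs from its global counterpart only through the weight $\psi_j(t') - 1 = -(1-\psi_j(t'))$, so the defect splits as
\begin{multline*}
    S_j^\textnormal{loc}(t',G_j(t')) - S_j(G_j(t')) = -\frac{1}{2}\int \abs{\nabla G_j(t')}^2 (1-\psi_j(t')) \diff x + \lambda \int \abs{G_j(t')}^2 (\ln\abs{G_j(t')}^2 - 1)(1-\psi_j(t')) \diff x \\
    - \Bigl(2\lambda\omega_j + \frac{\abs{v_j}^2}{2}\Bigr)\int \abs{G_j(t')}^2 (1-\psi_j(t')) \diff x + v_j \cdot \Im \int \nabla G_j(t')\, \overline{G_j(t')}\,(1-\psi_j(t')) \diff x.
\end{multline*}

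Every integral is now supported in the tail region $\{\abs{x - x_j^*(t')} \geq \frac{v_* t}{2} + 1\}$, precisely where the Gaussian decay of $G_j$ makes all the quantities exponentially small. The mass term equals $\norm{G_j(t')}_{L^2((1-\psi_j(t'))\diff x)}^2$ and the kinetic term is controlled by $\norm{\nabla G_j(t')}_{L^2((1-\psi_j(t'))\diff x)}^2$; by Lemma \ref{lem:est_H1_gauss} these are $o(t^{-6}e^{-\frac{\lambda(v_* t)^2}{2}})$ and $o(t^{-2}e^{-\frac{\lambda(v_* t)^2}{2}})$ respectively, so both are well within the claimed bound. The angular momentum term is dominated by $\int \abs{\nabla G_j(t')}\,\abs{G_j(t')}(1-\psi_j(t'))\diff x$, which by Cauchy--Schwarz is at most $\norm{\nabla G_j(t')}_{L^2((1-\psi_j(t'))\diff x)}\,\norm{G_j(t')}_{L^2((1-\psi_j(t'))\diff x)} = o(t^{-4}e^{-\frac{\lambda(v_* t)^2}{2}})$.

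The only mildly delicate piece is the potential energy term, because of the logarithm. I would write $\abs{G_j}^2\abs{\ln\abs{G_j}^2} = \abs{G_j}\cdot\abs{G_j\ln\abs{G_j}^2}$ and apply Cauchy--Schwarz against the weight $(1-\psi_j(t'))$, combining the decay of $\norm{G_j(t')}_{L^2((1-\psi_j(t'))\diff x)}$ from Lemma \ref{lem:est_H1_gauss} with the decay of $\norm{G_j(t')\ln\abs{G_j(t')}^2}_{L^2((1-\psi_j(t'))\diff x)}$ from Corollary \ref{cor:est_L_log_L_gauss}; the $-1$ contribution is again just the mass term. This yields $o(t^{-4}e^{-\frac{\lambda(v_* t)^2}{2}})$, and summing the four contributions gives the bound $C_0\, t^{-1} e^{-\frac{\lambda(v_* t)^2}{2}}$ (with large margin). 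The main obstacle is thus purely the handling of $\abs{G_j}^2\ln\abs{G_j}^2$ in the tail, but since $\ln\abs{G_j(t')}^2 = 2\omega_j + d - 2\lambda\abs{x - x_j^*(t')}^2$ is explicit and grows only quadratically, the Cauchy--Schwarz splitting above dispatches it cleanly; alternatively one could bound $\abs{\ln\abs{G_j}^2 - 1} \leq C_0(1 + \abs{x - x_j^*(t')}^2)$ and invoke directly the moment estimates $\norm{G_j(t')\abs{x-x_j^*(t')}^2}_{L^2((1-\psi_j(t'))\diff x)}$ of Lemma \ref{lem:est_H1_gauss}.
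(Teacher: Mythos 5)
Your proof is correct and takes essentially the same route as the paper: the identical decomposition of $S_j^\textnormal{loc} (t', G_j(t')) - S_j (G_j)$ into the four tail integrals weighted by $1-\psi_j (t')$, each then dispatched by Lemma \ref{lem:est_H1_gauss} and Corollary \ref{cor:est_L_log_L_gauss}. The only difference is that you spell out the Cauchy--Schwarz steps and the time-independence of $S_j (G_j(t'))$ (via the conservation of $E$, $M$ and $\mathcal{J}$) that the paper leaves implicit in the phrase ``the conclusion readily follows.''
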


\begin{proof}
    With a simple computation,
    \begin{multline*}
        S_j (G_j) - S_j^\textnormal{loc} (t', G_j(t')) = \frac{1}{2} \int \abs{\nabla G_j (t')}^2 \, (1-\psi_j (t')) \diff x - \lambda \int \abs{G_j (t')}^2 (\ln \abs{G_j (t')}^2 - 1) \, (1-\psi_j (t')) \diff x \\
            + \biggl( 2 \lambda \omega_j + \frac{\abs{v_j}^2}{2} \biggr) \int |G_j|^2 (1-\psi_j (t')) \diff x - v_j \cdot \Im \int \nabla G_j \, \overline{G_j} \, (1-\psi_j (t')) \diff x.
    \end{multline*}
    The conclusion readily follows from Lemma \ref{lem:est_H1_gauss} and Corollary \ref{cor:est_L_log_L_gauss}.
\end{proof}

An important feature of $H = \sum_j H_j$ is that it is coercive in $H^1$, up to an $L^2$ norm. Since we already know that the $L^2$ norm of $w_n$ is negligible, its $H^1$ norm is therefore controlled by $H$. In order to prove this, we have this first result about some coercivity of $H_j$:

\begin{lem} \label{lem:coerc_lin}
    For all $n \in \mathbb{N}$, $t \in [t^\dag, T_n - T'']$, $j \in \{ 1, \dots, N \}$,
    \begin{equation*}
        H_j (t', w_n^j (t')) \geq \frac{1}{2} \int \abs{\nabla w_n}^2 \, \psi_j (t') \diff x - C_0 \, t^{-1} e^{- \frac{\lambda (v_* t)^2}{2}}.
    \end{equation*}
\end{lem}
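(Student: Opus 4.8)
The plan is to expand $w_n^j = w_n + \sum_{k \neq j} G_k$ inside each of the four terms defining $H_j(t', w_n^j)$ in \eqref{eq:linearized}, isolate the quantity $\frac{1}{2} \int \abs{\nabla w_n}^2 \, \psi_j(t') \diff x$ that we want to keep, and show that every remaining contribution is $o\bigl(t^{-1} e^{- \frac{\lambda (v_* t)^2}{2}}\bigr)$, hence absorbable into the allowed remainder $C_0 \, t^{-1} e^{- \frac{\lambda (v_* t)^2}{2}}$. The two driving inputs are the bootstrap hypothesis \eqref{eq:ass_H1_est}, giving $\norm{\nabla w_n (t')}_{L^2} \leq e^{- \frac{\lambda (v_* t)^2}{4}}$, and the $L^2$ bound \eqref{eq:L2_est} with its positive gap $\tau = T'' - T'$. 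The latter is crucial: since $\norm{w_n(t')}_{L^2} \leq e^{- \frac{\lambda (v_* (t + \tau))^2}{4}}$ carries the extra factor $e^{- \lambda v_*^2 \tau t / 2}$, it is in fact $o\bigl(t^{-p} e^{- \frac{\lambda (v_* t)^2}{4}}\bigr)$ for every $p$, the exponential beating any negative power of $t$. Together with Corollary \ref{cor:est_H1_gauss}, which makes each $G_k$ (resp. $\nabla G_k$) with $k \neq j$ equal to $o\bigl(t^{-3} e^{- \frac{\lambda (v_* t)^2}{4}}\bigr)$ (resp. $o\bigl(t^{-1} e^{- \frac{\lambda (v_* t)^2}{4}}\bigr)$) in $L^2(\psi_j \diff x)$, these are what render all cross terms negligible.

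For the gradient term I write $\frac{1}{2}\int \abs{\nabla w_n^j}^2 \psi_j = \frac{1}{2}\int \abs{\nabla w_n}^2 \psi_j + \int \Re\bigl(\nabla w_n \cdot \overline{\sum_{k\neq j}\nabla G_k}\bigr)\psi_j + \frac{1}{2}\int \bigl|\sum_{k\neq j}\nabla G_k\bigr|^2 \psi_j$; dropping the last (nonnegative) term and bounding the cross term by Cauchy--Schwarz gives $\norm{\nabla w_n}_{L^2}\sum_{k\neq j}\norm{\nabla G_k}_{L^2(\psi_j \diff x)} \leq e^{- \frac{\lambda (v_* t)^2}{4}} \cdot o\bigl(t^{-1}e^{- \frac{\lambda (v_* t)^2}{4}}\bigr)$, exactly the target remainder. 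The mass term $(2\lambda \omega_j + \frac{\abs{v_j}^2}{2})M_j(t', w_n^j)$ and the momentum term $v_j \cdot \mathcal{J}_j(t', w_n^j)$ are treated in the same spirit: by the triangle inequality $M_j(t', w_n^j) \leq \bigl(\norm{w_n}_{L^2} + \sum_{k\neq j}\norm{G_k}_{L^2(\psi_j \diff x)}\bigr)^2$ and by Cauchy--Schwarz $\abs{\mathcal{J}_j(t', w_n^j)} \leq \norm{\nabla w_n^j}_{L^2(\psi_j \diff x)}\norm{w_n^j}_{L^2(\psi_j \diff x)}$, and inserting the gap-improved $L^2$ bound together with Corollary \ref{cor:est_H1_gauss} makes both $o\bigl(t^{-1}e^{- \frac{\lambda (v_* t)^2}{2}}\bigr)$ (the fixed constants $2\lambda\omega_j + \frac{\abs{v_j}^2}{2}$ and $v_j$ are harmless).

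The only genuinely delicate term is the logarithmic one, $-2\lambda \int \abs{w_n^j}^2 \bigl(\ln(1 + \abs{w_n^j}) + C_0\bigr)\psi_j \diff x$; because of its sign I only need an upper bound on $\int \abs{w_n^j}^2 (\ln(1 + \abs{w_n^j}) + C_0)\psi_j \diff x$. The constant part reduces to a multiple of $M_j(t', w_n^j)$, already controlled. For the logarithmic part I split the domain into $\{\abs{w_n^j} \leq 1\}$, where $\ln(1+\abs{w_n^j}) \leq \ln 2$ again reproduces a multiple of $M_j$, and $\{\abs{w_n^j} > 1\}$, where I use the elementary bound $\ln(1+s) \leq C_d \, s^{1/d}$ for $s \geq 1$; this controls the contribution by $C_d \int \abs{w_n^j}^{2 + \frac{1}{d}}\psi_j \diff x$. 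Expanding $w_n^j$ once more, the Gausson part is negligible by the $L^{2 + \frac{1}{d}}(\psi_j \diff x)$ estimate of Corollary \ref{cor:est_H1_gauss}, while for $\norm{w_n}_{L^{2+\frac1d}}^{2+\frac1d}$ I invoke Gagliardo--Nirenberg, using $2 < 2 + \frac{1}{d} < \frac{2d}{d-2}$ so that $H^1 \hookrightarrow L^{2+\frac1d}$ in every dimension, together with \eqref{eq:ass_H1_est} and \eqref{eq:L2_est}: the exponent $2 + \frac{1}{d} > 2$ produces a factor $e^{-(2+\frac1d)\frac{\lambda(v_*t)^2}{4}} = e^{- \frac{\lambda (v_* t)^2}{2}} e^{- \frac{\lambda (v_* t)^2}{4d}}$, which is again $o\bigl(t^{-1}e^{- \frac{\lambda (v_* t)^2}{2}}\bigr)$. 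This is the step I expect to be the main obstacle, precisely because the logarithm forces the case distinction and the slightly supercritical-looking exponent $2 + \frac{1}{d}$; choosing this exponent (rather than, say, $3$) is exactly what keeps the Sobolev embedding valid for all $d$. Summing the four contributions then yields $H_j(t', w_n^j) \geq \frac{1}{2}\int \abs{\nabla w_n}^2 \psi_j(t') \diff x - C_0 \, t^{-1} e^{- \frac{\lambda (v_* t)^2}{2}}$, which is the claim.
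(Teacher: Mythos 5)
Your proposal is correct and follows essentially the same route as the paper: expand $w_n^j = w_n + \sum_{k\neq j} G_k$ in each of the four terms of $H_j$, control cross terms by Cauchy--Schwarz with the bootstrap hypothesis \eqref{eq:ass_H1_est}, the gap-improved $L^2$ bound \eqref{eq:L2_est} and Corollary \ref{cor:est_H1_gauss}, and tame the logarithmic term via the pointwise bound $z^2(\ln(1+z)+C_0)\leq C_d(z^2+z^{2+\frac1d})$ combined with the embedding $H^1\hookrightarrow L^{2+\frac1d}$, the extra exponent $\frac1d$ supplying the factor $e^{-\frac{\lambda(v_*t)^2}{4d}}=o(t^{-1})$. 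The only cosmetic deviations are that you drop the nonnegative term $\frac12\int\abs{\sum_{k\neq j}\nabla G_k}^2\psi_j\diff x$ where the paper bounds it, and you rederive the elementary logarithmic inequality by splitting $\{\abs{w_n^j}\leq 1\}$ and $\{\abs{w_n^j}>1\}$ rather than quoting it directly.
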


\begin{proof}
    First, we have
    \begin{multline*}
        H_j (t', w_n^j (t')) \coloneqq \frac{1}{2} \int \abs{\nabla w_n^j (t')}^2 \, \psi_j (t') \diff x - 2 \lambda \int \abs{w_n^j (t')}^2 \, \biggl( \ln{\Bigl( 1 + \abs{w_n^j (t')} \Bigr)} + C_0 \biggr) \, \psi_j (t') \diff x \\ + \Bigl( 2 \lambda \omega_j + \frac{\abs{v_j}^2}{2} \Bigr) M_j (t',w_n^j (t')) - v_j \cdot \mathcal{J}_j (t',w_n^j (t')).
    \end{multline*}
    We also recall that $w_n^j = w_n + \sum_{k \neq j} G_k$. 
    
    \begin{itemize}
        \item For the first term, we have
        \begin{multline*}
            \int \abs{\nabla w_n^j (t')}^2 \, \psi_j (t') \diff x = \int \abs{\nabla w_n (t')}^2 \, \psi_j (t') \diff x + 2 \sum_{k \neq j} \Re \int \overline{\nabla w_n (t')} \, \nabla G_k (t') \, \psi_j (t') \diff x \\ + \int \abs{\sum_{k \neq j} \nabla G_k (t')}^2 \, \psi_j (t') \diff x.
        \end{multline*}
        For the second term in the right-hand side, we compute for any $k \neq j$ thanks to \eqref{eq:ass_H1_est} and Corollary \ref{cor:est_H1_gauss}:
        \begin{equation*}
            \abs{\Re \int \overline{\nabla w_n (t')} \, \nabla G_k (t') \, \psi_j (t') \diff x} \leq \norm{\nabla w_n (t')}_{L^2} \norm{\psi_j (t') \, \nabla G_k (t')}_{L^2} \leq C_0 \, t^{-1} e^{- \frac{\lambda (v_* t)^2}{2}}.
        \end{equation*}
        As for the third term, using Corollary \ref{cor:est_H1_gauss}, we compute in the same way:
        \begin{equation*}
            \norm{\sum_{k \neq j} \nabla G_k (t')}_{L^2 (\psi_j (t') \diff x)} \leq \sum_{k \neq j} \norm{\nabla G_k (t')}_{L^2 (\psi_j (t') \diff x)} \leq C_0 \, t^{-1} e^{- \frac{\lambda (v_* t)^2}{4}}.
        \end{equation*}

        \item For the second term, we use the fact that we have $C_d > 0$ such that for all $z \in \mathbb{R}_+$,
        \begin{equation*}
            z^2 \, \biggl( \ln{\Bigl( 1 + z \Bigr)} + C_0 \biggr) \leq C_d ( z^2 + z^{2+\frac{1}{d}} ),
        \end{equation*}
        so that
        \begin{equation*}
            \abs{\int \abs{w_n^j (t')}^2 \, \biggl( \ln{\Bigl( 1 + \abs{w_n^j (t')} \Bigr)} + C_0 \biggr) \, \psi_j (t') \diff x} \leq C_d \Bigl( \norm{w_n^j (t')}_{L^2 (\psi_j (t') \diff x)}^2 + \norm{w_n^j (t')}_{L^{2+\frac{1}{d}} (\psi_j (t') \diff x)}^{2+\frac{1}{d}} \Bigr).
        \end{equation*}
        Then, using Lemma \ref{lem:est_H1_gauss} and Corollary \ref{cor:est_H1_gauss}, we have
        \begin{equation*}
            \norm{w_n^j (t')}_{L^2 (\psi_j (t') \diff x)} \leq \norm{w_n (t')}_{L^2} + \sum_{k \neq j} \norm{G_k (t')}_{L^2 (\psi_j (t') \diff x)} \leq C_0 \, t^{-1} e^{- \frac{\lambda (v_* t)^2}{4}},
        \end{equation*}
        and
        \begin{align*}
            \norm{w_n^j (t')}_{L^{2+\frac{1}{d}} (\psi_j (t') \diff x)} &\leq \norm{w_n (t')}_{L^{2+\frac{1}{d}}} + \sum_{k \neq j} \norm{G_k (t')}_{L^{2+\frac{1}{d}} (\psi_j (t') \diff x)} \\
                &\leq C_d \, \norm{w_n (t')}_{H^1} + C_0 \, e^{- \frac{\lambda (v_* t)^2}{4}} \leq C_0 \, e^{- \frac{\lambda (v_* t)^2}{4}}.
        \end{align*}
        Thus,
        \begin{equation*}
            \abs{\int \abs{w_n^j (t')}^2 \, \biggl( \ln{\Bigl( 1 + \abs{w_n^j (t')} \Bigr)} + C_0 \biggr) \, \psi_j (t') \diff x} \leq C_0 \, t^{-1} e^{- \frac{\lambda (v_* t)^2}{2}}.
        \end{equation*}

        \item The last two terms can be easily estimated in the same way since $M_j (t',w_n^j (t')) = \norm{w_n^j}_{L^2 (\psi_j (t') \diff x)}^2$ and
        \begin{equation*}
            \abs{\mathcal{J}_j (t',w_n^j (t'))} \leq \norm{w_n^j}_{L^2 (\psi_j (t') \diff x)} \norm{\nabla w_n^j}_{L^2 (\psi_j (t') \diff x)}. \qedhere
        \end{equation*}
    \end{itemize}
\end{proof}

By putting Corollaries \ref{cor:neg_terms} and \ref{cor:diff_actions_for_gauss} and Lemma \ref{lem:coerc_lin} in Lemma \ref{lem:linearize_action}, we easily deduce a nice "coercivity" property for the localized functionals $S_j^\textnormal{loc}$ for $j \geq 1$:

\begin{cor} \label{cor:coerc_loc_act}
    For all $n \in \mathbb{N}$, $t \in [t^\dag, T_n - T'']$, $j \in \{ 1, \dots, N \}$,
    \begin{equation*}
        S_j^\textnormal{loc} (t',u_n(t')) - S_j (G_j) \geq \frac{1}{2} \int \abs{\nabla w_n}^2 \, \psi_j (t') \diff x - C_0 \, t^{-1} e^{- \frac{\lambda (v_* t)^2}{2}}.
    \end{equation*}
\end{cor}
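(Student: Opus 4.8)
The plan is to obtain the inequality by directly assembling the four preceding results, since all of the analytic content has already been isolated there. The starting point is Lemma~\ref{lem:linearize_action}, which gives the lower bound
\begin{multline*}
    S_j^\textnormal{loc} (t',u_n(t')) \geq S_j^\textnormal{loc} (t', G_j(t')) + H_j (t', w_n^j (t')) - \int \Re \Bigl( \nabla G_j (t') \, \overline{w_n^j (t')} \Bigr) \cdot \nabla \psi_j (t') \diff x \\ + v_j \cdot \Im \int \overline{G_j (t')} \, w_n^j (t') \, \nabla \psi_j (t') \diff x,
\end{multline*}
and the strategy is simply to estimate each term on the right-hand side and to collect the errors, all of which will turn out to be of order $t^{-1} e^{- \lambda (v_* t)^2/2}$.

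First I would discard the two terms carrying $\nabla \psi_j (t')$: by Corollary~\ref{cor:neg_terms} their absolute values are together bounded by $C_0 \, t^{-1} e^{- \lambda (v_* t)^2 /2}$, so they contribute at worst $-C_0 \, t^{-1} e^{- \lambda (v_* t)^2 /2}$ to the lower bound. Next I would replace $S_j^\textnormal{loc} (t', G_j(t'))$ by $S_j (G_j)$: Corollary~\ref{cor:diff_actions_for_gauss} shows that this substitution costs only another term of the same order. Finally, I would invoke the coercivity of the quadratic form in Lemma~\ref{lem:coerc_lin}, which bounds $H_j (t', w_n^j (t'))$ from below by $\frac{1}{2} \int \abs{\nabla w_n}^2 \, \psi_j (t') \diff x$ minus, again, a term of order $t^{-1} e^{- \lambda (v_* t)^2 /2}$. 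Summing these three contributions and absorbing the finitely many constants into a single $C_0$ yields exactly
\begin{equation*}
    S_j^\textnormal{loc} (t',u_n(t')) - S_j (G_j) \geq \frac{1}{2} \int \abs{\nabla w_n}^2 \, \psi_j (t') \diff x - C_0 \, t^{-1} e^{- \frac{\lambda (v_* t)^2}{2}},
\end{equation*}
which is the claim.

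There is essentially no obstacle remaining at this stage: the genuine difficulties---the weak Taylor-type expansion of the logarithmic nonlinearity underlying Lemma~\ref{lem:linearize_action}, the Gaussian decay estimates of Lemma~\ref{lem:est_H1_gauss} and its corollaries, and the passage from $\int \abs{\nabla w_n^j}^2 \psi_j$ to $\int \abs{\nabla w_n}^2 \psi_j$ carried out inside Lemma~\ref{lem:coerc_lin}---have all been dealt with beforehand. The only point requiring a little care is the bookkeeping on the decay rate: several of the building-block estimates (\textit{e.g.}\ in Lemma~\ref{lem:est_H1_gauss}) come with the slower weight $e^{- \lambda (v_* t)^2 /4}$, and one must check that in each of the three results used here the relevant Gaussian quantity is paired against either $w_n$ (controlled by \eqref{eq:L2_est} at rate $e^{- \lambda (v_* t)^2 /4}$) or another Gausson, so that the resulting product is promoted to the faster rate $e^{- \lambda (v_* t)^2 /2}$ appearing in the statement.
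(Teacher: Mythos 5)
Your proposal is correct and coincides with the paper's own proof: the paper deduces Corollary \ref{cor:coerc_loc_act} in exactly this way, by inserting Corollaries \ref{cor:neg_terms} and \ref{cor:diff_actions_for_gauss} and Lemma \ref{lem:coerc_lin} into the expansion of Lemma \ref{lem:linearize_action} and absorbing the three error terms, each of order $t^{-1} e^{- \lambda (v_* t)^2/2}$, into a single constant $C_0$. Your closing remark on the rate bookkeeping (pairing the $e^{-\lambda (v_* t)^2/4}$ Gaussian decay against $w_n$ or another Gausson to reach the rate $e^{-\lambda (v_* t)^2/2}$) matches what the cited building-block results already encode, so nothing further is needed.
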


As for the case $j=0$, a similar property holds:

\begin{lem} \label{lem:coerc_loc_act0}
    For all $n \in \mathbb{N}$, $t \in [t^\dag, T_n - T'']$
    \begin{equation*}
        S_0^\textnormal{loc} (t',u_n(t')) \geq \frac{1}{2} \int \abs{\nabla w_n}^2 \, \psi_0 (t') \diff x - C_0 \, t^{-1} e^{- \frac{\lambda (v_* t)^2}{2}}.
    \end{equation*}
\end{lem}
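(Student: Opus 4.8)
The plan is to exploit the fact that on $\supp \psi_0 (t')$ the full approximate solution $u_n = w_n + G$ is, up to the negligible Gaussian tails of the $G_k$, nothing but the small remainder $w_n$, so that $S_0^\textnormal{loc}$ reduces to its kinetic part plus controllable errors. Since $\omega_0 = 0$ and $v_0 = 0$, one has simply
\begin{equation*}
    S_0^\textnormal{loc}(t', u_n(t')) = \frac{1}{2}\int \abs{\nabla u_n(t')}^2 \, \psi_0(t') \diff x - \lambda \int \abs{u_n(t')}^2 \bigl(\ln\abs{u_n(t')}^2 - 1\bigr) \, \psi_0(t')\diff x,
\end{equation*}
and I would estimate the two terms separately, from below.

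For the kinetic term, write $\nabla u_n = \nabla w_n + \sum_k \nabla G_k$ and expand the square. The cross terms are bounded by $\norm{\nabla w_n(t')}_{L^2}\,\norm{\nabla G_k(t')}_{L^2(\psi_0(t')\diff x)}$ and the pure Gausson terms by $\bigl(\sum_k \norm{\nabla G_k(t')}_{L^2(\psi_0(t')\diff x)}\bigr)^2$; both are $o\bigl(t^{-1}e^{-\lambda(v_* t)^2/2}\bigr)$ by the bootstrap assumption \eqref{eq:ass_H1_est} and Corollary \ref{cor:est_H1_gauss} applied with $k=0$, $j\geq 1$ (all indices are distinct). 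Dropping the nonnegative term $\int \abs{\nabla G}^2\psi_0$, this yields $\tfrac12\int\abs{\nabla u_n}^2\psi_0 \geq \tfrac12\int\abs{\nabla w_n}^2\psi_0 - C_0\, t^{-1}e^{-\lambda(v_* t)^2/2}$.

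The delicate term is the potential energy, because the logarithm is singular at the vacuum: a naive estimate of $\int\abs{u_n}^2\abs{\ln\abs{u_n}^2}\psi_0$ would require controlling a sub-$L^2$ norm of $u_n$ on the unbounded set $\supp\psi_0$, which is unavailable. The observation that bypasses this is that \emph{for a lower bound only the positive part of the potential energy matters}. Indeed the real function $g(s) = s^2(\ln s^2 - 1)$ attains its global minimum $-1$ at $s=1$, so $g \geq -1$ and thus $\int g(\abs{u_n})\psi_0 \leq \int g(\abs{u_n})^+\psi_0$. The positive part is supported on $\{\abs{u_n} > e^{1/2}\}\subset\{\abs{u_n}>1\}$, where $g(s)\leq s^2\ln s^2 \leq C_d\, s^{2+\frac1d}$ (since $\ln s^2 \leq C_d s^{1/d}$ for $s\geq 1$). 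Hence, since $\lambda>0$,
\begin{equation*}
    -\lambda\int \abs{u_n}^2 \bigl(\ln\abs{u_n}^2-1\bigr)\psi_0 \diff x \;\geq\; -\lambda\, C_d \, \norm{u_n}_{L^{2+\frac1d}(\psi_0\diff x)}^{2+\frac1d}.
\end{equation*}

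Finally I would bound $\norm{u_n}_{L^{2+1/d}(\psi_0\diff x)} \leq \norm{w_n}_{L^{2+1/d}} + \sum_k\norm{G_k}_{L^{2+1/d}(\psi_0\diff x)}$; the Gausson contributions are $o\bigl(t^{-3}e^{-\lambda(v_*t)^2/4}\bigr)$ by Corollary \ref{cor:est_H1_gauss}, and the remainder is handled by Gagliardo--Nirenberg together with \eqref{eq:ass_H1_est} and \eqref{eq:L2_est}, giving $\norm{w_n}_{L^{2+1/d}} \lesssim \norm{w_n}_{L^2}^{1-\theta}\norm{\nabla w_n}_{L^2}^{\theta} \lesssim e^{-\lambda(v_*t)^2/4}$. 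Since $2+\frac1d > 2$, raising to the power $2+\frac1d$ produces $e^{-(2+1/d)\lambda(v_*t)^2/4}$, whose surplus Gaussian factor $e^{-\frac{1}{d}\lambda(v_*t)^2/4}$ absorbs a $t^{-1}$, leaving $C_0\, t^{-1}e^{-\lambda(v_*t)^2/2}$. Combining with the kinetic estimate gives the claim (for $t_*$ large enough that the asymptotics of Corollary \ref{cor:est_H1_gauss} hold). The main obstacle is exactly this potential term, and the crucial point is the asymmetry between upper and lower bounds: because $g$ is bounded below, only its positive part—living where $\abs{u_n}>1$ and hence controlled by the super-$L^2$ norm $L^{2+1/d}\subset H^1$—enters the lower bound, which is what makes the argument close despite the logarithmic singularity.
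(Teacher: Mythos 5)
Your proof is correct and follows essentially the same route as the paper's: the same decomposition of $S_0^\textnormal{loc}$ into kinetic and potential parts, the same Cauchy--Schwarz treatment of the cross term via Corollary \ref{cor:est_H1_gauss} and the bootstrap assumption, and the same key observation that only the positive part of the potential integrand (supported where $\abs{u_n}^2 > e$) matters for the lower bound, controlled by $\norm{u_n}_{L^{2+\frac{1}{d}}(\psi_0 \diff x)}^{2+\frac{1}{d}}$ and the embedding $H^1 \subset L^{2+\frac{1}{d}}$, with the surplus Gaussian decay absorbing the $t^{-1}$ factor. The only cosmetic differences (invoking $g \geq -1$, and using Gagliardo--Nirenberg explicitly instead of citing the Sobolev embedding) do not change the argument.
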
{}

\begin{proof}
    \begin{equation*}
        S_0^\textnormal{loc} (t',u_n(t')) = \frac{1}{2} \int \abs{\nabla u_n (t')}^2 \, \psi_0 (t') \diff x - \lambda \int \abs{u_n (t')}^2 (\ln \abs{u_n (t')}^2 - 1) \, \psi_0 (t') \diff x.
    \end{equation*}{}

    \begin{itemize}
        \item For the first term:
        \begin{equation*}
            \int \abs{\nabla u_n}^2 \, \psi_0 (t') \diff x = \int \abs{\nabla w_n}^2 \, \psi_0 (t') \diff x + 2 \int \Re{\nabla w_n \cdot \overline{\nabla G}} \, \psi_0 (t') \diff x + \int \abs{\nabla G }^2 \, \psi_0 (t') \diff x.
        \end{equation*}{}
        The last two terms of the right-hand side can be easily estimated. First:
        \begin{align*}
            \abs{\int \Re{\nabla w_n \cdot \overline{\nabla G}} \, \psi_0 (t') \diff x} &\leq \norm{\nabla w_n}_{L^2} \norm{\psi_0 (t') \, \nabla G (t')}_{L^2} \\
            &\leq \norm{\nabla w_n}_{L^2} \sum_{j \geq 1} \norm{\psi_0 (t') \, \nabla G_j (t')}_{L^2} \leq C_0 \, t^{-1} e^{- \frac{\lambda (v_* t)^2}{2}}.
        \end{align*}{}
        Then we use Corollary \ref{cor:est_H1_gauss} for the last term.
        
        \item For the second term, we show it is negligible by using the fact that for all $y > e$:
        \begin{equation*}
            y \, (\ln y - 1) \leq C_d \, y^{1+\frac{1}{2 d}}
        \end{equation*}{}
        for some $C_d > 0$.
        Thus,
        \begin{align*}
            \int \abs{u_n}^2 (\ln \abs{u_n}^2 - 1) \, \psi_0 (t) \diff x &\leq \int_{\abs{u_n}^2 > e} \abs{u_n}^2 (\ln \abs{u_n}^2 - 1) \, \psi_0 (t) \diff x \\
                &\leq C_d \, \norm{u_n (t')}_{L^{2+\frac{1}{d}} (\psi_0 (t') \diff x)}^{2+\frac{1}{d}}.
        \end{align*}
        Then, the conclusion comes from:
        \begin{align*}
            \norm{u_n (t')}_{L^{2+\frac{1}{d}} (\psi_0 (t') \diff x)} &\leq \norm{w_n (t')}_{L^{2+\frac{1}{d}}} + \sum_{j \geq 1} \norm{G_j (t')}_{L^{2+\frac{1}{d}} (\psi_0 (t') \diff x)} \\
                &\leq C_d \, \norm{w_n (t')}_{H^1} + C_0 \, e^{- \frac{\lambda (v_* t)^2}{4}} \leq C_0 \, e^{- \frac{\lambda (v_* t)^2}{4}}. \qedhere
        \end{align*}
    \end{itemize}{}
\end{proof}

Proposition \ref{prop:coercivity} is then a simple corollary from these results, by summing over $j$ Corollary \ref{cor:coerc_loc_act} and Lemma \ref{lem:coerc_loc_act0}.

\subsubsection{Proof of Lemma \ref{lem:linearize_action}}

This lemma mostly relies on an inequality for the potential energy with an expression near the expected expansion. To prove this, set
\begin{align*}
    F_1 : \ &\mathbb{C} \rightarrow \mathbb{R}
     \\ &z \mapsto \abs{z}^2 \, (\ln \abs{z}^2 - 1),
\end{align*}
so that the potential energy is $- \lambda \int F_1 (v)$.
Then, the following inequality holds:

\begin{lem} \label{lem:exp_u_ln_u}
    For all $z_1, z_2 \in \mathbb{C}$, set $\zeta \coloneqq z_1 - z_2$. Then
    \begin{equation*}
        F_1 (z_1) \leq F_1 (z_2) + 2 \Re \left( z_2 \overline{\zeta} \right) \, \ln \abs{z_2}^2 + 2 \, \abs{\zeta}^2 \, \biggl( \ln{\Bigl( \max( \abs{z_2}, \abs{z_1} ) \Bigr)} + 1 \biggr).
    \end{equation*}
\end{lem}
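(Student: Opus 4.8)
The plan is to exploit that $F_1$ is radial, $F_1(z) = g(\abs{z}^2)$ with $g(\rho) = \rho(\ln\rho - 1)$, and that the first-order term on the right-hand side is exactly the differential of $F_1$ at $z_2$ in the direction $\zeta$. Indeed, expanding $\abs{z_1}^2 = \abs{z_2 + \zeta}^2 = \abs{z_2}^2 + 2\Re(z_2\overline{\zeta}) + \abs{\zeta}^2$ and using $g'(\rho) = \ln\rho$, the linear term $2\Re(z_2\overline\zeta)\ln\abs{z_2}^2 = g'(\abs{z_2}^2)\,2\Re(z_2\overline\zeta)$ is precisely the value of $DF_1(z_2)$ at $\zeta$. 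So the statement is a one-sided, quantitative second-order Taylor bound for $F_1$, and I would prove it as such.

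First I would reduce to the case where $z_1$ and $z_2$ have the same phase. Fix the moduli $r \coloneqq \abs{z_1}$ and $s \coloneqq \abs{z_2}$ and set $p \coloneqq \Re(z_1\overline{z_2})$, which ranges over $[-rs, rs]$ as the relative phase varies, while $\Re(z_2\overline\zeta) = p - s^2$ and $\abs{\zeta}^2 = r^2 + s^2 - 2p$. The left-hand side equals the constant $F_1(r)$, and the right-hand side is affine in $p$ with coefficient $2\ln s^2 - 4(\ln\max(r,s)+1) = 4\ln s - 4\ln\max(r,s) - 4 \leq -4 < 0$, since $s \leq \max(r,s)$. Hence the right-hand side is smallest at the endpoint $p = rs$, i.e. when $z_1, z_2 \geq 0$ are aligned; proving the inequality in that configuration proves it for every relative phase.

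It then remains to establish the scalar inequality for $r, s \geq 0$, which after the reduction reads
\begin{equation*}
    F_1(r) \leq F_1(s) + F_1'(s)(r - s) + 2(r - s)^2\bigl(\ln\max(r,s) + 1\bigr),
\end{equation*}
where a direct computation gives $F_1'(t) = 4t\ln t$ and $F_1''(t) = 4(\ln t + 1)$. I would write the left-hand side minus the first two right-hand terms as the integral Taylor remainder $\int_s^r (r-u)\,F_1''(u)\,\diff u = 4\int_s^r (r-u)(\ln u + 1)\,\diff u$ and bound $\ln u + 1 \leq \ln\max(r,s) + 1$ on the interval of integration. Because the weight $r - u$ (resp. $u - r$) keeps a constant sign between $s$ and $r$, this pointwise bound survives integration regardless of the sign of $\ln\max(r,s)+1$, and in both orderings $r \geq s$ and $r < s$ it yields exactly $2(r-s)^2(\ln\max(r,s)+1)$.

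The only genuinely delicate points are bookkeeping ones: the degenerate case $z_2 = 0$, where the linear term is to be read as $0$ (so the claim reduces to $F_1(z_1) \leq 2\abs{z_1}^2(\ln\abs{z_1}+1)$, immediate from $F_1(z_1) = \abs{z_1}^2(2\ln\abs{z_1} - 1)$), and keeping the remainder estimate robust to the sign of $\ln\max(r,s)+1$, which may be negative for small moduli. No convexity or coercivity of $F_1$ is invoked — only that its second derivative is controlled by $\ln\max(r,s)+1$ on the relevant range — so the main conceptual step, and the only place where anything could go wrong, is the phase-reduction argument; once it is in place the calculus is routine.
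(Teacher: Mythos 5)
Your proof is correct, but it takes a genuinely different route from the paper's. The paper works directly in $\mathbb{C} \approx \mathbb{R}^2$: it computes $\nabla F_1 (z) = 2 z \ln \abs{z}^2$ and the full Hessian $D^2 F_1 (z)$, diagonalizes it to get the eigenvalue bound $\langle h, D^2 F_1 (z) \, h \rangle \leq 4 (\ln \abs{z} + 1) \abs{h}^2$, and then applies Taylor's formula with integral remainder along the segment from $z_2$ to $z_1$, using $\abs{z_2 + t \zeta} = \abs{(1-t) z_2 + t z_1} \leq \max (\abs{z_1}, \abs{z_2})$ to control the remainder. Your phase-reduction step --- observing that the right-hand side is affine in $p = \Re (z_1 \overline{z_2})$ with slope $4 \ln s - 4 \ln \max (r,s) - 4 \leq -4 < 0$, hence minimized at the aligned configuration $p = rs$ --- replaces the Hessian computation entirely and reduces everything to the one-variable Taylor bound with $F_1'' (u) = 4 (\ln u + 1)$. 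The analytic core is the same in both arguments (a second-order remainder controlled by $\ln \max (\abs{z_1}, \abs{z_2}) + 1$, via the fact that intermediate points have modulus at most the max), but your reduction buys a small technical advantage: in the paper's argument the segment $[z_2, z_1]$ may pass through the origin, where $F_1$ fails to be twice differentiable (the Taylor formula still holds because the logarithmic singularity is integrable along the segment, but the paper does not comment on this), whereas after your reduction the origin can occur only as an endpoint of the interval of integration, where the integrability of the singularity of $F_1''$ is manifest. What the paper's approach buys in exchange is directness: there is no case distinction on the relative phase, and the case $z_2 = 0$, $z_1 \neq 0$ is absorbed into the same two-dimensional Taylor formula (since $\nabla F_1 (0) = 0$) rather than requiring the separate verification $F_1 (z_1) \leq 2 \abs{z_1}^2 (\ln \abs{z_1} + 1)$ that you carry out at the end.
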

\begin{rem} \label{rem:last_term}
    In the case $z_2 = 0$, the second term of the right-hand side is to be understood as being $0$.
    If furthermore $z_1 = 0$, then so is the last term of the right-hand side.
\end{rem}

\begin{rem}
    Like already pointed out, the third term of the right-hand side is not what one would expect in order to be able to reproduce the proof of \cite{Martel_Merle__Multi_solitary_waves_NLS,Cote_LeCoz_Multisolitons_NLS} for instance. Indeed, the expected formula would be something of the form:
    \begin{equation*}
        F_1 (z_1) = F_1 (z_2) + 2 \Re \left( z_2 \overline{\zeta} \right) \, \ln \abs{z_2}^2 + \abs{\zeta}^2 \ln{\abs{z_2}^2} + 2 \frac{1}{\abs{z_2}^2} \Bigl( \Re \left( z_2 \overline{\zeta} \right) \Bigr)^2 + h(\zeta, z_2),
    \end{equation*}
    where $h$ is (at least) bounded when $\zeta$ and $z_2$ are bounded (and presumably of order more than 2 in $\zeta$).
    However, taking $z_1 = 1$ and $z_2 \rightarrow 0$ gives a simple counter-example for such an expansion.

    Moreover, if one takes $z_1 = u (x)$ for some $u \in W$ and $z_2 = e^{-\abs{x}^2}$ for instance and integrate, it gives:
    \begin{multline*}
        \int F_1 (u (x)) = \int F_1 (e^{-\abs{x}^2}) - 4 \int \Re \left( e^{-\abs{x}^2} \overline{\zeta (x)} \right) \, \abs{x}^2 - 2 \int \abs{\zeta (x)}^2 \abs{x}^2 + 2 \int \Bigl( \Re \left( \zeta (x) \right) \Bigr)^2 \\ + \int h(\zeta (x), e^{-\abs{x}^2}),
    \end{multline*}
    where one would want the last term to be controlled by the $W$-norm of $\zeta (x)$. With so, every term is bounded expect the third term of the right-hand side, which could be $- \infty$ if we take $u \notin \mathcal{F} (H^1)$ (such a $u$ exists).
\end{rem}

\begin{proof}
    For this proof only, we use the identification $\mathbb{C} \approx \mathbb{R}^2$, and we see $F_1$ as a function from $\mathbb{R}^2$ into $\mathbb{R}$ :
    \begin{align*}
        F_1 : \ &\mathbb{C} \approx \mathbb{R}^2 \rightarrow \mathbb{R}
         \\ &z = \begin{bmatrix}
            z_r \\
            z_i
            \end{bmatrix} \in \mathbb{R}^2 \mapsto \abs{z}^2 \, (\ln \abs{z}^2 - 1).
    \end{align*}
    For $z=0$, $F_1 (0) = 0$.
    Then, $F$ is differentiable on $\mathbb{C}$ and twice differentiable on $\mathbb{C} \setminus \{ 0 \}$ and we can compute for $z \neq 0$:
    \begin{equation*}
        \nabla F_1 (z) = \begin{bmatrix}
            2 z_r \ln \abs{z}^2 \\
            2 z_i \ln \abs{z}^2
            \end{bmatrix} = 2 z \ln \abs{z}^2.
    \end{equation*}
    We also compute $\nabla F(0) = 0$. Then, for all $z \neq 0$, we can differentiate again:
    \begin{equation*}
        D^2 F_1 (z) = 2 \begin{bmatrix}
            \ln \abs{z}^2 + 2 \frac{z_r^2}{\abs{z}^2} & 2 \frac{z_r z_i}{\abs{z}^2} \\
             2\frac{z_r z_i}{\abs{z}^2} & \ln \abs{z}^2 + 2 \frac{z_i^2}{\abs{z}^2}
            \end{bmatrix} = 2 \, R_z^{-1} L(z) R_z,
    \end{equation*}
    where $R_z$ is the rotation which maps $z$ onto the real positive half-line of $\mathbb{C}$ and $L (z)$ is defined by:
    \begin{equation*}
        L (z) = \begin{bmatrix}
            \ln \abs{z}^2 + 2 & 0 \\
            0 & \ln \abs{z}^2
            \end{bmatrix}.
    \end{equation*}
    In particular, we see that for all $z\neq0$ and all $h \in \mathbb{R}^2$, there holds
    \begin{equation*}
        \langle h, D^2 F_1 (z) \, h \rangle = 2 \langle R_z h, L (z) R_z h \rangle \leq 2 (\ln \abs{z}^2 + 2) \, \abs{R_z h}^2 = 4 (\ln \abs{z} + 1) \, \abs{h}^2.
    \end{equation*}
    Take $z_1, z_2 \in \mathbb{C}$ and set $\zeta \coloneqq z_1 - z_2$.
    In the case $z_1 = z_2 = 0$, Remark \ref{rem:last_term} makes the inequality trivial to prove.
    Otherwise, Taylor's formula with integral form gives:
    \begin{equation} \label{eq:taylor_exp_F1}
        F_1 (z_1) = F_1 (z_2) + \langle \zeta, \nabla F_1 (z_2) \rangle + \int_0^1 \langle \zeta, D^2 F_1 (z_2 + t \zeta) \, \zeta \rangle \, (1 - t) \diff t.
    \end{equation}
    The second term of the right-hand side is exactly what we expect since:
    \begin{equation*}
        \langle z, \nabla F_1 (z_2) \rangle = \langle z, 2 z_2 \ln \abs{z_2}^2 \rangle = 2 \, \langle z, z_2 \rangle \, \ln \abs{z_2}^2 = 2 \Re \left( z_2 \overline{z} \right) \, \ln \abs{z_2}^2,
    \end{equation*}
    and is $0$ if $z_2=0$.
    As for the last term of the right-hand side, it can be estimated as previously:
    \begin{equation*}
        \int_0^1 \langle z, D^2 F_1 (z_2 + t z) \, z \rangle \, (1 - t) \diff t \leq 4 \int_0^1 (\ln{\abs{z_2 + t z}} + 1) \, \abs{z}^2 \, (1 - t) \diff t.
    \end{equation*}
    Moreover, there holds for all $t \in [0,1]$
    \begin{equation*}
        \abs{z_2 + t z} = \abs{(1-t) z_2 + t z_1} \leq (1-t) \abs{z_2} + t \abs{z_1} \leq \max( \abs{z_2}, \abs{z_1} )
    \end{equation*}
    Therefore, since $\ln$ is increasing,
    \begin{align*}
        \int_0^1 \langle z, D^2 F_1 (z_2 + t z) \, z \rangle \, (1 - t) \diff t &\leq 4 \, \abs{z}^2 \int_0^1 \biggl( \ln{\Bigl( \max( \abs{z_2}, \abs{z_1} ) \Bigr)} + 1 \biggr) \, (1 - t) \diff t \\
            &\leq 2 \, \abs{z}^2 \biggl( \ln{\Bigl( \max( \abs{z_2}, \abs{z_1} ) \Bigr)} + 1 \biggr)
    \end{align*}
    The conclusion readily follows from putting this inequality into \eqref{eq:taylor_exp_F1}.
\end{proof}

\begin{cor} \label{cor:2nd_exp_en_pot}
    For all $x \in \mathbb{R}$, $t\geq0$, $n \in \mathbb{N}$ and $j \in \{ 1, \dots, N \}$, there holds
    \begin{multline*}
        F_1 (u_n (t',x)) \leq F_1 (G_j (t',x)) + 2 \Re \left( G_j (t',x) \, \overline{w_n^j (t',x)} \right) \, \ln \abs{G_j (t',x)}^2 \\ + 2 \, \abs{w_n^j (t',x)}^2 \, \biggl( \ln{\Bigl( 1 + \abs{w_n^j (t',x)} \Bigr)} + C_0 \biggr),
    \end{multline*}
    where $w_n^j \coloneqq u_n - G_j$.
\end{cor}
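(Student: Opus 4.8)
The plan is to read this corollary off directly from Lemma \ref{lem:exp_u_ln_u}, applied pointwise at each $x$ with $z_1 = u_n(t',x)$ and $z_2 = G_j(t',x)$, so that $\zeta = z_1 - z_2 = w_n^j(t',x)$. With this choice the first two terms on the right-hand side of Lemma \ref{lem:exp_u_ln_u} are exactly the first two terms claimed in the corollary, and the only work left is to massage the remainder term $2\abs{\zeta}^2 \bigl( \ln(\max(\abs{z_2},\abs{z_1})) + 1 \bigr)$ into the stated shape $2\abs{w_n^j}^2 \bigl( \ln(1 + \abs{w_n^j}) + C_0 \bigr)$.

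For this I would exploit the uniform boundedness of the Gausson modulus. Since $\abs{G_j(t',x)} = \exp\bigl[ \tfrac{d}{2} + \omega_j - \lambda \abs{x - x_j^*(t')}^2 \bigr] \leq e^{d/2 + \omega_j}$ for all $t$, $x$ and $n$, the quantity $\sup_{t',x} \abs{G_j(t',x)}$ is an admissible $C_0$-type constant (depending only on $d$ and $\omega_j$). Using the triangle inequality $\abs{u_n} = \abs{G_j + w_n^j} \leq \abs{G_j} + \abs{w_n^j}$, I would bound
\[
    \max\bigl( \abs{G_j(t',x)}, \abs{u_n(t',x)} \bigr) \leq \abs{G_j(t',x)} + \abs{w_n^j(t',x)} \leq C_0 + \abs{w_n^j(t',x)}.
\]

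An elementary logarithm estimate then finishes the argument: for any constant $C_0 \geq 0$ one has $\tfrac{C_0 + s}{1 + s} \leq \max(C_0, 1)$ for all $s \geq 0$, hence $\ln(C_0 + s) \leq \ln(1 + s) + \ln\max(C_0, 1)$. Applying this with $s = \abs{w_n^j(t',x)}$ and enlarging the constant to absorb the additive $+1$ yields $\ln(\max(\abs{G_j}, \abs{u_n})) + 1 \leq \ln(1 + \abs{w_n^j}) + C_0$, so that
\[
    2\abs{\zeta}^2 \bigl( \ln(\max(\abs{z_2},\abs{z_1})) + 1 \bigr) \leq 2\abs{w_n^j}^2 \bigl( \ln(1 + \abs{w_n^j}) + C_0 \bigr),
\]
which is precisely the last term of the corollary.

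There is essentially no obstacle here: the corollary is a routine specialization of Lemma \ref{lem:exp_u_ln_u}, and the only point requiring (minimal) care is the harmless case distinction in the logarithm inequality according to whether $\sup\abs{G_j}$ exceeds $1$. The genuine content — the one-sided expansion of $F_1$ with a remainder controlled by $\ln(1 + \abs{\zeta})$ rather than by the ill-behaved $\ln\abs{z_2}$ — is already carried by Lemma \ref{lem:exp_u_ln_u}; the corollary merely records that, along the approximate solution, the factor $\max(\abs{z_1},\abs{z_2})$ in that remainder can be replaced by $1 + \abs{w_n^j}$ at the cost of an additive constant, which is exactly the form needed for the coercivity estimates of Lemma \ref{lem:coerc_lin}.
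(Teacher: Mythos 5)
Your proposal is correct and follows essentially the same route as the paper: apply Lemma \ref{lem:exp_u_ln_u} pointwise with $z_1 = u_n(t',x)$, $z_2 = G_j(t',x)$, bound $\max(\abs{G_j},\abs{u_n}) \leq C_0 + \abs{w_n^j}$ via the triangle inequality and the uniform $L^\infty$ bound on the Gausson, and absorb the constant into the logarithm with the elementary estimate $\ln(C_0 + s) \leq \ln(1+s) + C_0$. No gaps; this matches the paper's argument step for step.
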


\begin{proof}
    Applying Lemma \ref{lem:exp_u_ln_u} with $z_1 = u_n (t',x)$ and $z_2 = G_j (t',x)$ gives
    \begin{multline}
        F_1 (u_n (t',x)) \leq F_1 (G_j (t',x)) + 2 \Re \left( G_j (t',x) \, \overline{w_n^j (t',x)} \right) \, \ln \abs{G_j (t',x)}^2 \\ + \abs{w_n^j (t',x)}^2 \, \biggl( \ln{\Bigl( \max( \abs{G_j (t',x)}, \abs{u_n (t',x)} ) \Bigr)} + 1 \biggr), \label{eq:1st_exp_en_pot}
    \end{multline}
    We know that
    \begin{equation*}
        \abs{u_n (t',x)} \leq \abs{G_j (t',x)} + \abs{w_n^j (t',x)}
    \end{equation*}
    and
    \begin{equation*}
        \norm{G_j (t')}_{L^\infty} \leq C_0.
    \end{equation*}
    Thus,
    \begin{equation*}
        \max( \abs{G_j (t',x)}, \abs{u_n (t',x)} ) \leq C_0 + \abs{w_n^j (t',x)},
    \end{equation*}
    which yields
    \begin{equation*}
        \ln{\Bigl( \max( \abs{G_j (t',x)}, \abs{u_n (t',x)} ) \Bigr)} \leq \ln{( C_0 + \abs{w_n^j (t',x)} )} \leq C_0 + \ln{( 1 + \abs{w_n^j (t',x)} )}.
    \end{equation*}
    The result follows by putting this estimate into \eqref{eq:1st_exp_en_pot}.
\end{proof}

The proof of Lemma \ref{lem:linearize_action} readily follows from expanding $S_j^\textnormal{loc} (t', u_n (t'))$ in terms of $w_n^j (t')$, using Corollary \ref{cor:2nd_exp_en_pot} for the "expansion" of the localized potential energy.

\subsection{Slow variations of the functional}

In this subsection, we prove Proposition \ref{prop:slow_var_loc_func}. Again, this proposition is similar to that in \cite{Martel_Merle__Multi_solitary_waves_NLS,Cote_LeCoz_Multisolitons_NLS} for example, and the proof is almost the same. However, some minor changes occur. The first one comes from the fact that we took a "true" $d$-dimensional partition of unity, and thus the link between the time and space derivatives of this partition is less obvious, yet relatively similar:
\begin{lem}
    For all $j \geq 1$, $t \geq 0$ and $x \in \mathbb{R}$, there holds
    \begin{equation*}
        \abs{\partial_t \psi_j (t', x)} \leq C_0 \, \abs{\nabla \psi_j (t',x)}.
    \end{equation*}
\end{lem}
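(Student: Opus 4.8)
The plan is to reduce everything to the chain rule, since $\psi_j$ is a smooth function $\phi$ of a single scalar quantity $\rho_j(t',x) \coloneqq \abs{x - x_j^* (t')} - \frac{v_* t}{2} - 2$, and then to observe that $\phi'$ vanishes precisely where the only possible singularity of $\rho_j$ (namely the center $x = x_j^*(t')$) could cause trouble. First I would record that $\psi_j = \phi(\rho_j)$, so that
\begin{equation*}
    \partial_t \psi_j = \phi'(\rho_j)\,\partial_t \rho_j, \qquad \nabla \psi_j = \phi'(\rho_j)\,\nabla_x \rho_j
\end{equation*}
both carry the common factor $\phi'(\rho_j)$. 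On the set $\{\phi'(\rho_j) = 0\}$ both sides of the claimed inequality vanish, so the estimate is trivial there; it therefore suffices to argue on $\{\phi'(\rho_j) \neq 0\}$.

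Next I would use that $\phi'(s) = 0$ for all $s \leq -1$, so that wherever $\phi'(\rho_j) \neq 0$ one has $\rho_j > -1$, that is $\abs{x - x_j^*(t')} > \frac{v_* t}{2} + 1 \geq 1$ (using $t \geq 0$ and $v_* > 0$). In particular $x \neq x_j^*(t')$ at such points, so $x \mapsto \abs{x - x_j^*(t')}$ is smooth there and the chain rule above is legitimate. At these points I compute $\nabla_x \rho_j = \frac{x - x_j^*(t')}{\abs{x - x_j^*(t')}}$, a unit vector, whence $\abs{\nabla \psi_j} = \abs{\phi'(\rho_j)}$.

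For the time derivative, since $x_j^*(t') = x_j + t' v_j$ and $t' = T'' + t$ give $\partial_t x_j^*(t') = v_j$, while $\partial_t \bigl( -\frac{v_* t}{2} \bigr) = -\frac{v_*}{2}$, I obtain
\begin{equation*}
    \partial_t \rho_j = -\frac{(x - x_j^*(t')) \cdot v_j}{\abs{x - x_j^*(t')}} - \frac{v_*}{2}.
\end{equation*}
Applying Cauchy--Schwarz to the unit vector $\frac{x - x_j^*(t')}{\abs{x - x_j^*(t')}}$ bounds the first term by $\abs{v_j}$, so that $\abs{\partial_t \rho_j} \leq \abs{v_j} + \frac{v_*}{2}$. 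Multiplying by $\abs{\phi'(\rho_j)} = \abs{\nabla \psi_j}$ then yields
\begin{equation*}
    \abs{\partial_t \psi_j} = \abs{\phi'(\rho_j)}\,\abs{\partial_t \rho_j} \leq \Bigl( \abs{v_j} + \tfrac{v_*}{2} \Bigr) \abs{\nabla \psi_j} \leq C_0 \, \abs{\nabla \psi_j},
\end{equation*}
with $C_0 \coloneqq \max_k \abs{v_k} + \frac{v_*}{2}$, which is independent of $t$ and $n$.

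There is no serious obstacle here; the only point requiring care is the non-differentiability of $x \mapsto \abs{x - x_j^*(t')}$ at its center, and this is harmless precisely because $\phi' \equiv 0$ on $(-\infty, -1]$ forces $\abs{\nabla \psi_j} = \abs{\partial_t \psi_j} = 0$ in a neighborhood of that center (where $\rho_j \leq -2$ for $t \geq 0$). Thus the inequality holds pointwise for every $(t,x)$, and the constant depends only on the speeds $v_k$ and on $v_*$, as required.
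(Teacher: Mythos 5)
Your proof is correct, and it is precisely the direct chain-rule computation that the paper leaves implicit (the lemma is stated there without proof, as an easy consequence of the definition of $\psi_j$): away from the center $x_j^*(t')$ one gets $\abs{\nabla \psi_j} = \abs{\phi'(\rho_j)}$ and $\abs{\partial_t \psi_j} \leq \bigl(\abs{v_j} + \tfrac{v_*}{2}\bigr)\abs{\phi'(\rho_j)}$, which is the desired bound with $C_0 = \max_k \abs{v_k} + \tfrac{v_*}{2}$. Your additional care at the point $x = x_j^*(t')$, where $\rho_j \leq -2$ forces $\psi_j \equiv 1$ locally so both derivatives vanish, correctly disposes of the only potential singularity.
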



Thanks to this link, it is easy to prove that $S^\textnormal{loc}$ slowly varies in the same way as in \cite{Martel_Merle__Multi_solitary_waves_NLS,Cote_LeCoz_Multisolitons_NLS}:

\begin{lem}
    For all $t \in [t^\dag, T_n - T'']$, there holds
    \begin{equation*}
        \abs{\frac{\diff}{\diff t} S^\textnormal{loc} (t',u_n(t'))} \leq C_0 \, e^{- \frac{\lambda (v_* t)^2}{4}}.
    \end{equation*}
\end{lem}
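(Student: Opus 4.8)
The plan is to use that the energy term in $S^\textnormal{loc}$ is the \emph{full} (unlocalized) energy $E$, which by Theorem \ref{th:Cauchy_problem} is conserved along $u_n$. Hence $\frac{\diff}{\diff t} E(u_n(t'))=0$, and only the localized mass and momentum terms contribute:
\[
    \frac{\diff}{\diff t} S^\textnormal{loc}(t',u_n(t')) = \sum_{j\geq1}\Bigl[\Bigl(2\lambda\omega_j+\tfrac{\abs{v_j}^2}{2}\Bigr)\frac{\diff}{\diff t}M_j(t',u_n(t')) - v_j\cdot\frac{\diff}{\diff t}\mathcal{J}_j(t',u_n(t'))\Bigr].
\]
Since the coefficients $2\lambda\omega_j+\abs{v_j}^2/2$ and $v_j$ are fixed, it suffices to bound each $\frac{\diff}{\diff t}M_j$ and $\frac{\diff}{\diff t}\mathcal{J}_j$ by $C_0\,e^{-\lambda(v_*t)^2/4}$.

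Next I would differentiate the localized quantities, separating the explicit dependence through $\psi_j(t')$ from the dependence through the flow. For the mass, the equation gives the local conservation law $\partial_t\abs{u_n}^2=-\nabla\cdot\Im(\overline{u_n}\nabla u_n)$ — the logarithmic nonlinearity drops out, being purely imaginary when tested against $\abs{u_n}^2$ — so that, after one integration by parts,
\[
    \frac{\diff}{\diff t}M_j(t',u_n(t')) = \int\Im(\overline{u_n}\nabla u_n)\cdot\nabla\psi_j(t')\diff x + \int\abs{u_n}^2\,\partial_t\psi_j(t')\diff x.
\]
For the momentum one checks that $\partial_t\Im(\overline{u_n}\partial_k u_n)$ is again a total divergence: the kinetic part yields a stress tensor quadratic in $(u_n,\nabla u_n)$ together with a $\Delta\abs{u_n}^2$ flux, while the nonlinear part collapses, after a short computation, to the exact gradient $\lambda\,\partial_k\abs{u_n}^2$. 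Testing against $\psi_j$ and integrating by parts, every term of $\frac{\diff}{\diff t}\mathcal{J}_j$ is an integral of a quadratic expression in $(u_n,\nabla u_n)$, or of $\abs{u_n}^2$, against $\nabla\psi_j$, a second derivative of $\psi_j$, a third derivative $D_{xxx}^3\psi_j$, or $\partial_t\psi_j$. The appearance of the third-order weight (from the $\Delta\abs{u_n}^2$ flux) is precisely why Lemma \ref{lem:est_H1_gauss} controls $G_j$ in the weighted norm $\norm{G_j(t')}_{L^2(\norm{D_{xxx}^3\psi_k(t')}\diff x)}$.

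It then remains to estimate these weighted integrals. Every weight $\nabla\psi_j$, $D^2\psi_j$, $D_{xxx}^3\psi_j$, $\partial_t\psi_j$ is supported in the transition annulus $\{\abs{x-x_j^*(t')}\approx\frac{v_*t}{2}+2\}$, which by \eqref{eq:distance_x_k} lies at distance $\gtrsim\frac{v_*t}{2}$ from every center $x_k^*(t')$; moreover the link $\abs{\partial_t\psi_j}\leq C_0\abs{\nabla\psi_j}$ proved just above reduces the $\partial_t\psi_j$ contributions to $\nabla\psi_j$ ones. Substituting $u_n=w_n+G$ and expanding each quadratic integrand, I would estimate: the pure-$G$ contributions directly by the weighted Gaussian decay of Lemma \ref{lem:est_H1_gauss} and Corollary \ref{cor:est_H1_gauss} (and by Lemma \ref{lem:orth_gauss} for the genuine cross-Gausson products); the cross $w_n$–$G$ contributions by Cauchy–Schwarz, pairing the bound \eqref{eq:ass_H1_est} on $\nabla w_n$ and \eqref{eq:L2_est} on $w_n$ with those same weighted decay estimates; and the pure-$w_n$ contributions by $\norm{w_n(t')}_{L^2}^2$ and $\norm{\nabla w_n(t')}_{L^2}^2$ via \eqref{eq:ass_H1_est}–\eqref{eq:L2_est}. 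Each term then carries at least one factor $e^{-\lambda(v_*t)^2/4}$; in fact the estimates yield $O(e^{-\lambda(v_*t)^2/2})$, more than the stated bound (and exactly what the downstream Proposition \ref{prop:slow_var_loc_func} requires). Summing over $j$ concludes.

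The only genuinely delicate point is the momentum computation: verifying that the logarithmic nonlinearity contributes exactly the divergence $\lambda\nabla\abs{u_n}^2$, so that no unlocalized, non-decaying remainder survives, and then tracking that after integration by parts every error is weighted by a derivative of $\psi_j$ of order at most three — so that the Gaussian decay estimates of Lemma \ref{lem:est_H1_gauss} apply verbatim on the transition annulus. The remaining bookkeeping of quadratic terms is routine and entirely parallel to \cite{Martel_Merle__Multi_solitary_waves_NLS,Cote_LeCoz_Multisolitons_NLS}.
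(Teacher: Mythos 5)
Your proposal is correct and follows essentially the same route as the paper: conservation of the full energy reduces everything to the localized masses and momenta, the logarithmic nonlinearity contributes exactly the gradient $\lambda \nabla \abs{u_n}^2$ to the momentum flux, and the remaining integrals weighted by $\nabla \psi_j$, higher derivatives of $\psi_j$, and $\partial_t \psi_j$ (controlled via $\abs{\partial_t \psi_j} \leq C_0 \abs{\nabla \psi_j}$) are estimated by splitting $u_n = w_n + G$, using the bootstrap hypothesis \eqref{eq:ass_H1_est}--\eqref{eq:L2_est} for $w_n$ and Lemma \ref{lem:est_H1_gauss} for $G$. You even recover, as the paper does, the improved bound $C_0 \, e^{- \frac{\lambda (v_* t)^2}{2}}$, which is what the integration in the subsequent corollary actually uses.
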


\begin{proof}
    We already know that the energy $E (u_n (t))$ is conserved. To estimate the variations of $S(t',u_n(t'))$, we only have to study the variations of the localized masses $M_j (t', u_n(t'))$ and momenta $\mathcal{J}_j (t', u_n(t'))$. Thanks to the expression of the partition of unity, we only need to compute (for $j \geq 1$ only since $\omega_0 = 0$ and $v_0 = 0$)
    \begin{align*}
        \frac{\diff}{\diff t} \int \abs{u_n}^2 \, \psi_j \diff x &= \int \Im \Bigl( \Delta u_n \overline{u_n} \Bigr) \, \psi_j \diff x + \int \abs{u_n }^2 \, \partial_t \psi_j \diff x \\
            &= \int \Im \Bigl( \nabla u_n  \overline{u_n } \Bigr) \cdot \nabla \psi_j \diff x + \int \abs{u_n }^2 \, \partial_t \psi_j \diff x \\
        \abs{\frac{\diff}{\diff t} \int \abs{u_n }^2 \, \psi_j \diff x} &\leq C_0 \int ( \abs{\nabla u_n }^2 + \abs{u_n }^2 ) \, \abs{\nabla \psi_j} \diff x.
    \end{align*}
    Similarly, we have for $\mathcal{J}_j (t',u_n(t'))$
    \begin{multline*}
        \frac{\diff}{\diff t} \int \Im \Bigl( \nabla u_n (t') \, \overline{u_n (t')} \Bigr) \psi_j (t') \diff x = \int \Re \Bigl( \Bigl( \overline{\nabla u_n} \cdot \nabla \psi_j (t') \Bigr) \, \nabla u_n \Bigr) \diff x
            - \int \Im \Bigl( \nabla u_n (t') \, \overline{u_n (t')} \Bigr) \, \partial_t \psi_j (t') \diff x \\
            - \lambda \int \abs{u_n (t')}^2 \, \nabla \psi_j (t') \diff x
            - \frac{1}{4} \int \abs{u_n (t')}^2 \, \nabla \cdot D_{xx}^2 \psi_j (t') \diff x.
    \end{multline*}
    Therefore there holds again
    \begin{equation*}
        \abs{\frac{\diff}{\diff t} \mathcal{J}_j (t', u_n (t'))} \leq C_0 \int \Bigl( \abs{\nabla u_n (t')}^2 + \abs{u_n (t')}^2 \Bigr) \, \abs{\nabla \psi_j(t')} \diff x + C_0 \int \abs{u_n (t')}^2 \, \norm{D_{xxx}^3 \psi_j(t')} \diff x.
    \end{equation*}
    Now, remark that
    \begin{equation*}
        \int \Bigl( \abs{\nabla u_n (t')}^2 + \abs{u_n (t')}^2 \Bigr) \, \abs{\nabla \psi_j(t')} \diff x
        \leq 2 \biggl( \int \Bigl( \abs{\nabla G (t')}^2 + \abs{G (t',x)}^2 \Bigr) \, \abs{\nabla \psi_j(t')} \diff x + C_0 \, \norm{w_n (t')}_{H^1 (\mathbb{R})}^2 \biggr)
    \end{equation*}
    and
    \begin{equation*}
        \int \abs{u_n (t',x)}^2 \, \norm{D_{xxx}^3 \psi_j(t')} \diff x \\
        \leq 2 \biggl( \int \abs{G (t',x)}^2 \, \norm{D_{xxx}^3 \psi_j(t')} \diff x + C_0 \, \norm{w_n (t')}_{L^2 (\mathbb{R})}^2 \biggr).
    \end{equation*}
    By assumption, we know that for all $t \in [t^\dag, T_n - T'']$
    \begin{equation*}
        \norm{w_n (t')}_{H^1 (\mathbb{R})}^2 \leq 2 e^{- \frac{\lambda (v_* t)^2}{2}}.
    \end{equation*}
    Moreover, by Lemma \ref{lem:est_H1_gauss}, we have
    \begin{gather*}
        \int \Bigl( \abs{\nabla G (t',x)}^2 + \abs{G (t',x)}^2 \Bigr) \, \abs{\nabla \psi_j(t')} \diff x = o \biggl( e^{- \frac{\lambda (v_* t)^2}{2}} \biggr), \\
        \int \abs{G (t',x)}^2 \, \norm{D_{xxx}^3 \psi_j(t')} \diff x = o \biggl( e^{- \frac{\lambda (v_* t)^2}{2}} \biggr).
    \end{gather*}
    Consequently,
    \begin{equation*}
        \abs{\frac{\diff}{\diff t} \int \abs{u_n (t')}^2 \, \psi_j (t') \diff x} + \abs{\frac{\diff}{\diff t} \int \Im \Bigl( \nabla u_n (t') \, \overline{u_n (t')} \Bigr) \psi_j (t') \diff x} \leq C_0 \, e^{- \frac{\lambda (v_* t)^2}{2}}
    \end{equation*}
    Plugging this estimate into the expression of $M_j$ and $\mathcal{J}_j$ gives for all $t \geq t^\dag$
    \begin{equation*}
        \abs{\frac{\diff}{\diff t} M_j (t', u(t'))} + \abs{\frac{\diff}{\diff t} \mathcal{J}_j (t', u(t'))} \leq C_0 \, e^{- \frac{\lambda (v_* t)^2}{2}},
    \end{equation*}
    and the conclusion readily follows.
\end{proof}

The fact that the convergence is Gaussian instead of exponential gives a free $t^{-1}$ factor when integrating, which is enough to be negligible.

\begin{cor} \label{cor:variation_energy}
    There holds for $n$ large enough and $t \in [t^\dag, T_n - T'']$ :
    \begin{equation*}
        \abs{S^\textnormal{loc} (t',u_n(t')) - S^\textnormal{loc} (T_n,G(T_n))} \leq C_0 \, t^{-1} e^{- \frac{\lambda (v_* t)^2}{2}}.
    \end{equation*}
\end{cor}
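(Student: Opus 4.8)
The plan is to deduce this corollary from the slow-variation lemma just proved by integrating in time, exploiting the normalization built into the final data. The key observation is that, by construction, the approximate solution satisfies $u_n(T_n) = B(T_n) = G(T_n)$, so the reference value $S^\textnormal{loc}(T_n, G(T_n))$ appearing in the statement is nothing but $S^\textnormal{loc}(T_n, u_n(T_n))$. Writing $s' \coloneqq T'' + s$, the map $s \mapsto S^\textnormal{loc}(s', u_n(s'))$ is of class $\mathcal{C}^1$ on $[t^\dag, T_n - T'']$ (the energy is conserved and the localized masses and momenta depend smoothly on time), and at the right endpoint $s = T_n - T''$ one has $s' = T_n$. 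Hence the fundamental theorem of calculus gives
\begin{equation*}
    S^\textnormal{loc}(t', u_n(t')) - S^\textnormal{loc}(T_n, G(T_n)) = - \int_t^{T_n - T''} \frac{\diff}{\diff s} S^\textnormal{loc}(s', u_n(s')) \diff s.
\end{equation*}

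Next I would bound the integrand by the slow-variation lemma, whose proof controls $\abs{\frac{\diff}{\diff s} S^\textnormal{loc}(s', u_n(s'))}$ by a Gaussian $C_0 \, e^{-\lambda (v_* s)^2/2}$ uniformly on the interval. Taking absolute values and enlarging the domain of integration to $[t, \infty)$, this yields
\begin{equation*}
    \abs{S^\textnormal{loc}(t', u_n(t')) - S^\textnormal{loc}(T_n, G(T_n))} \leq C_0 \int_t^\infty e^{- \frac{\lambda (v_* s)^2}{2}} \diff s.
\end{equation*}

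The final step is to apply the Gaussian-tail estimate of Lemma \ref{lem:int_error_gauss} with $\gamma = \lambda v_*^2/2$ and $y = t$, which gives
\begin{equation*}
    \int_t^\infty e^{- \frac{\lambda (v_* s)^2}{2}} \diff s < \frac{1}{\lambda v_*^2 \, t} \, e^{- \frac{\lambda (v_* t)^2}{2}},
\end{equation*}
and this is exactly the claimed bound $C_0 \, t^{-1} e^{-\lambda (v_* t)^2/2}$. This is precisely the mechanism highlighted in the remark preceding the statement: integrating a Gaussian rather than a mere exponential produces the free $t^{-1}$ factor that renders the boundary contribution negligible.

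I do not expect a genuine obstacle here, since all the analytic content has already been spent in establishing the pointwise-in-time derivative bound; this corollary is essentially a bookkeeping step. The only two points requiring care are (i) invoking the final-data identity $u_n(T_n) = G(T_n)$ to identify the boundary term at $s' = T_n$, and (ii) keeping the time shift $s' = T'' + s$ consistent throughout, so that the argument of the Gaussian stays $v_* s$ and not $v_* s'$, which is what makes the constant in the final estimate independent of $T''$ (and hence of $n$).
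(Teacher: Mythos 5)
Your proposal is correct and follows essentially the same route as the paper: both write the difference as the time integral of $\frac{\diff}{\diff s} S^\textnormal{loc}(s', u_n(s'))$ (using the final-data identity $u_n(T_n) = G(T_n)$, which the paper leaves implicit), bound the integrand by the Gaussian $C_0\, e^{-\lambda (v_* s)^2/2}$ coming from the slow-variation lemma's proof, and conclude with the Gaussian-tail estimate of Lemma \ref{lem:int_error_gauss} to harvest the $t^{-1}$ factor. Your care in using the sharper $e^{-\lambda (v_* s)^2/2}$ bound established inside that lemma's proof (rather than the $e^{-\lambda (v_* t)^2/4}$ in its statement) matches exactly what the paper does.
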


\begin{proof}
    Defining $\tilde{S}_n (t) = S(t,u_n(t))$, we can estimate this difference thanks to the previous estimate:
    \begin{align*}
        S(t',u_n(t')) - S(T_n,G(T_n)) &= \int_{t'}^{T_n} \frac{\diff \tilde{S}_n}{\diff t} (s) \diff s = \int_{t}^{T_n - T''} \frac{\diff \tilde{S}_n}{\diff t} (T'' + s) \diff s \\
        \abs{S(t',u_n(t')) - S(T_n,G(T_n))} &\leq \int_{t}^{T_n - T''} C_0 \, e^{- \frac{\lambda (v_* s)^2}{2}} \diff s \\
            &\leq C_0 \, t^{-1} e^{- \frac{\lambda (v_* t)^2}{2}},
    \end{align*}
    thanks to Lemma \ref{lem:int_error_gauss}.
\end{proof}

In the previous result, we have $S^\textnormal{loc} (T_n,G(T_n))$: we would like to have $S_j (G_j)$ instead. Thanks to Corollary \ref{cor:diff_actions_for_gauss}, we only need $S^\textnormal{loc} (t', G_j (t'))$.

\begin{lem} \label{lem:energy_sum_soliton}
    There holds for all $t \geq 0$ and $j \in \{ 1, \dots, N \}$,
    \begin{gather*}
        \abs{S_j^\textnormal{loc} (t',G(t')) - S_j^\textnormal{loc} (t', G_j (t'))} \leq C_0 \, t^{-1} e^{- \frac{\lambda (v_* t)^2}{2}}, \\
        \abs{S_0^\textnormal{loc} (t',G(t'))} \leq C_0 \, t^{-1} e^{- \frac{\lambda (v_* t)^2}{2}}.
    \end{gather*}
\end{lem}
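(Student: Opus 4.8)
The plan is to expand the difference $S_j^\textnormal{loc}(t',G(t')) - S_j^\textnormal{loc}(t',G_j(t'))$ piece by piece, writing $G = G_j + \rho_j$ with $\rho_j \coloneqq \sum_{k\neq j} G_k$, and to show that every contribution containing $\rho_j$ decays like $t^{-1}e^{-\lambda(v_*t)^2/2}$. The guiding principle is that on $\supp\psi_j(t')$ the foreign Gaussons $G_k$ ($k\neq j$) are \emph{uniformly} exponentially small: since $\psi_j\le 1-\psi_k$ forces $\abs{x-x_k^*(t')}\ge\tfrac12 v_*t$ there, one has $\abs{\rho_j}\le\Sigma\coloneqq\sum_{k\neq j}\abs{G_k}\le C_0 e^{-\lambda(v_*t)^2/4}$ pointwise on $\supp\psi_j$. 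Moreover, because the centres separate with a head start $\tau=T''-T'>0$ (estimate \eqref{eq:distance_x_k}), every product of two distinct Gaussons carries a factor $e^{-\lambda(v_*(t+\tau))^2/2}$, whose linear-in-$t$ excess $e^{-\lambda v_*^2\tau t}$ absorbs all polynomial prefactors; it therefore suffices to reach the Gaussian exponent $-\lambda(v_*t)^2/2$ up to polynomial losses, and the target $C_0 t^{-1}e^{-\lambda(v_*t)^2/2}$ follows.

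For the three quadratic pieces of $S_j^\textnormal{loc}$ (the localized kinetic energy inside $E_j$, the mass $M_j$ and the momentum $\mathcal J_j$) I would expand $\abs{G}^2$, $\abs{\nabla G}^2$ and $\nabla G\,\overline{G}$ into the $G_j$-only part, which reconstitutes $S_j^\textnormal{loc}(t',G_j)$, plus terms linear and quadratic in $\rho_j$. The linear cross terms are sums of products of two distinct Gaussons (possibly carrying a gradient) and are controlled by the almost-orthogonality Lemma \ref{lem:orth_gauss}; the purely quadratic terms split into off-diagonal products (again Lemma \ref{lem:orth_gauss}) and diagonal squares $\int\abs{G_k}^2\psi_j$, $\int\abs{\nabla G_k}^2\psi_j$, which are the squares of the quantities estimated in Corollary \ref{cor:est_H1_gauss}. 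All of these are $o(t^{-1}e^{-\lambda(v_*t)^2/2})$.

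The only genuinely new difficulty is the localized potential energy $-\lambda\int F_1(\cdot)\,\psi_j$, since $F_1$ is not $\mathcal C^2$. Here I would isolate the expected linear term by writing $F_1(G)-F_1(G_j) = 2\Re(G_j\,\overline{\rho_j})\ln\abs{G_j}^2 + R$. The main term integrates to $\sum_{k\neq j}\int\Re(G_j\,\overline{G_k})(2\omega_j+d-2\lambda\abs{x-x_j^*}^2)\,\psi_j$, a product of distinct Gaussons against the \emph{polynomial} $\ln\abs{G_j}^2$, so Lemma \ref{lem:orth_gauss} with its weight $\abs{x-x_j^*}^2$ applies. For the remainder, Lemma \ref{lem:exp_u_ln_u} gives at once the one-sided bound $R\le 2\abs{\rho_j}^2(\ln\max(\abs{G_j},\abs{G})+1)\le C_0\abs{\rho_j}^2$, since both moduli are $\le C_0$, which yields the upper estimate. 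The delicate point is the matching lower bound, where the Taylor remainder of $F_1$ involves $\ln\abs{G_j+s\rho_j}$, unbounded precisely where the integrand nearly vanishes; I expect this to be the main obstacle. I would resolve it \emph{without ever lowering the power of $\rho_j$}: using the Hessian bound $\abs{\langle h,D^2F_1(z)h\rangle}\le C_0(1+\abs{\ln\abs{z}})\abs{h}^2$ from the proof of Lemma \ref{lem:exp_u_ln_u}, one is reduced to controlling $\int\abs{\rho_j}^2\,\abs{\ln\abs{G_j+s\rho_j}}\,\psi_j$. Where the modulus exceeds the threshold $e^{-\lambda(v_*t)^2}$ the logarithm costs only a factor $O(t^2)$, harmless against the quadratic $\rho_j$-terms; on the complementary (tiny, far-from-centre) region I would use the pointwise majorant $\abs{\rho_j}\le\Sigma$ and the monotonicity of $s\mapsto s^2\ln(1/s)$ on $(0,e^{-1/2})$ to obtain $\abs{\rho_j}^2\ln(1/\abs{\rho_j})\le\Sigma^2\ln(1/\Sigma)\le C_0(\sum_{k\neq j}\abs{G_k})^2(1+\min_\ell\abs{x-x_\ell^*}^2)$, once more a sum of distinct-Gausson products with polynomial weight, hence $o(t^{-1}e^{-\lambda(v_*t)^2/2})$ by Lemma \ref{lem:orth_gauss} and Corollary \ref{cor:est_H1_gauss}.

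Finally, the second estimate (the case $j=0$) is simpler and I would treat it separately: on $\supp\psi_0$ every Gausson is far from its centre, so $\abs{G}\le\Sigma_0\coloneqq\sum_k\abs{G_k}\le C_0 e^{-\lambda(v_*t)^2/4}<e^{-1/2}$ uniformly. The kinetic part $\tfrac12\int\abs{\nabla G}^2\psi_0$ is handled exactly as the diagonal and off-diagonal quadratic terms above, and the whole support then lies in the small-modulus regime $\abs{G}^2\le e^{-1}$, so that $\abs{F_1(G)}\le\abs{G}^2(1+\abs{\ln\abs{G}^2})$ is controlled by the same majorant-plus-monotonicity device applied to $\Sigma_0$. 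Summing the contributions gives $\abs{S_0^\textnormal{loc}(t',G(t'))}\le C_0\,t^{-1}e^{-\lambda(v_*t)^2/2}$.
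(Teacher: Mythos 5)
Your handling of the three quadratic pieces of $S_j^\textnormal{loc}$ (localized kinetic energy, mass, momentum), of the head-start mechanism that absorbs polynomial prefactors, and of the $j=0$ estimate is sound and essentially coincides with the paper's proof: decompose $G=G_j+\rho_j$, control cross terms by Lemma \ref{lem:orth_gauss} and diagonal terms by Lemma \ref{lem:est_H1_gauss} and Corollary \ref{cor:est_H1_gauss}. The gap is in the localized potential energy for $j\geq 1$. You Taylor-expand $F_1$ around $G_j$ and must control the remainder $\int_0^1\langle \rho_j, D^2F_1(G_j+s\rho_j)\,\rho_j\rangle (1-s)\diff s$, whose Hessian carries $\ln|G_j+s\rho_j|$. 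On the region where the interpolant is below your threshold you replace $\ln(1/|G_j+s\rho_j|)$ by $\ln(1/|\rho_j|)$ before invoking the monotonicity of $s\mapsto s^2\ln(1/s)$. That substitution is false pointwise: the interpolant $G_j+s\rho_j$ can be arbitrarily small compared to $|\rho_j|$ — it actually vanishes wherever $G_j$ and $s\rho_j$ have equal modulus and opposite phase, which does happen in the annulus $|x-x_j^*(t')|\sim v_*t/2$ of $\supp\psi_j$, since the Gaussons carry distinct oscillating phases $e^{-iv_k\cdot x}$ — so $|\rho_j|^2\ln(1/|G_j+s\rho_j|)$ is not majorized by $|\rho_j|^2\ln(1/|\rho_j|)$, and the chain of inequalities ending in $\Sigma^2\ln(1/\Sigma)$ breaks down exactly at the point you yourself flagged as "the main obstacle".

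The step is repairable, but it needs an idea not present in your write-up: integrate in $s$ \emph{before} estimating. If $s_0$ denotes the parameter of closest approach of the segment $s\mapsto G_j+s\rho_j$ to the origin, then $|G_j+s\rho_j|\geq |s-s_0|\,|\rho_j|$, so $\int_0^1 \bigl|\ln|G_j+s\rho_j|\bigr|(1-s)\diff s \leq C_0\bigl(1+\bigl|\ln|\rho_j|\bigr|\bigr)$; only after this averaging does the pointwise quantity take the form $|\rho_j|^2(1+\ln(1/|\rho_j|))$ to which your majorant-plus-monotonicity device legitimately applies. The paper sidesteps the issue entirely by never Taylor-expanding: it applies the Lipschitz-type estimate for $y\mapsto y\ln|y|$ (Lemma \ref{lem:lip_z_log_z}) to the real, strictly positive quantities $|\tilde G|^2$ and $|\tilde G_j|^2$ after the normalization $\tilde G_k = N^{-1}e^{-\omega}G_k$, so the logarithm lands only on $|\tilde G_j|^2>0$, whose logarithm is the explicit polynomial weight $C_0(1+|x-x_j^*(t')|^2)$; everything then reduces directly to products of distinct Gaussons with polynomial weights, i.e.\ to Lemmas \ref{lem:est_H1_gauss} and \ref{lem:orth_gauss}, with no interpolant and no cancellation issue.
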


\begin{proof}
    Decomposing $S_j^\textnormal{loc} (t',G(t'))$, we get
    \begin{multline*}
        S_j^\textnormal{loc} (t',G(t')) - S_j^\textnormal{loc} (G_j (t')) = \frac{1}{2} \Bigl( \int \abs{\nabla G (t')}^2 \psi_j (t') \diff x - \int \abs{\nabla G_j (t')}^2 \psi_j (t') \diff x \Bigr) \\
            \begin{aligned}
            &- \lambda \biggl( \int \abs{G (t')}^2 \ln \abs{G (t')}^2 \psi_j (t') \diff x - \int \abs{G_j (t')}^2 \ln \abs{G_j (t')}^2 \psi_j (t') \diff x \biggr) \\
            &+ \Bigl( 2 \lambda \omega_j + \lambda + \frac{\abs{v_j}^2}{2} \Bigr) (M_j (t', G(t')) - M_j (t',G_j (t'))) \\ &- v_j \cdot ( \mathcal{J}_j (t', G(t')) - \mathcal{J}_j (t',G_j (t')) ).
            \end{aligned}
    \end{multline*}

    \begin{itemize}[leftmargin=*]
    \item For the first term, decomposing $G = \sum_j G_j$,
    \begin{multline*}
        \int \abs{\nabla G (t')}^2 \psi_j (t') \diff x - \int \abs{\nabla G_j (t')}^2 \psi_j (t') \diff x \\ \begin{aligned} &= \sum_{(k,\ell) \neq (j,j)} \Re \int \nabla G_\ell (t') \cdot \overline{\nabla G_k (t')} \, \psi_j (t') \diff x \\
            &= \sum_{k \neq \ell} \Re \int \nabla G_\ell (t') \cdot \overline{\nabla G_k (t')} \, \psi_j (t') \diff x + \sum_{k \neq j} \int \abs{\nabla G_k (t')}^2 \, \psi_j (t') \diff x, \end{aligned}
    \end{multline*}
    and the conclusion with Lemma \ref{lem:est_H1_gauss} and Lemma \ref{lem:orth_gauss}.

    \item For the third and fourth terms, the same kind of decomposition can be used:
    \begin{gather*}
        \mathcal{J}_j (t', G(t')) - \mathcal{J}_j (t',G_j) = \sum_{(k,\ell) \neq (j,j)} \Im \int \nabla G_k (t',x) \, \overline{G_l (t',x)} \, \psi_j(t',x) \diff x, \\
        M_j (t', G(t')) - M_j (t',G_j) = \sum_{(k,\ell) \neq (j,j)} \Re \int G_k (t',x) \, \overline{G_l (t',x)} \, \psi_j(t',x) \diff x.
    \end{gather*}
    The conclusion comes in the same way.
    
    \item For the second term, we use a similar computation as in \cite[Lemma~3.2]{Ferriere__superposition_logNLS}. Precisely, we will use the following lemma:
    
    \begin{lem}[{\cite[Lemma~3.3]{Ferriere__superposition_logNLS}}] \label{lem:lip_z_log_z}
        Set $F(z) \coloneqq z \ln{\abs{z}}$.
        For all $z, \tilde{z} \in \mathbb{C}$ such that $\abs{z} \leq 1$, $\abs{\tilde{z}} \leq 1$ and $z \neq 0$, there holds
        \begin{equation*}
            \abs{F(\tilde{z}) - F(z)} \leq \abs{z - \tilde{z}} \Bigl[ 3 - \ln \abs{z} \Bigr].
        \end{equation*}
    \end{lem}

    Then, by changing $G_j$ and $G$ into $\tilde{G}_j \coloneqq N^{-1} e^{- \omega} \, G_j$ and $\tilde{G} \coloneqq N^{-1} e^{- \omega} \, G$ respectively (with $\omega \coloneqq \max_k \omega_k$) and for all $j$, we get
    \begin{equation*}
        \tilde{G} = \sum_k \tilde{G}_k, \qquad
        \sum_k \abs{\tilde{G}_k} \leq 1.
    \end{equation*}
    
    Thus, for any $j \in \{ 1, \dots, N \}$, all $t \geq T$ and all $x \in \mathbb{R}$, there holds
    \begin{multline*}
        \abs{\abs{G (t',x)}^2 \ln \abs{G (t',x)}^2 - \abs{G_j (t',x)}^2 \ln \abs{G_j (t',x)}^2} \\
        \begin{aligned}
            &= N^2 e^{2 \omega} \abs{\abs{\tilde{G} (t',x)}^2 \Bigl( 2 \omega + \ln{N^2} + \ln \abs{\tilde{G} (t',x)}^2 \Bigr) - \abs{\tilde{G}_j (t',x)}^2 \Bigl( 2 \omega + \ln{N^2} + \ln \abs{\tilde{G}_j (t',x)}^2 \Bigr)} \\
            &\leq N^2 e^{2 \omega} \abs{\abs{\tilde{G} (t',x)}^2 - \abs{\tilde{G}_j (t',x)}^2} \Bigl[ 2 \abs{\omega} + \ln{N^2} + 3 - \ln \abs{\tilde{G}_j (t',x)}^2 \Bigr] \\
            &\leq \abs{\abs{G (t',x)}^2 - \abs{G_j (t',x)}^2} \Bigl[ 2 \abs{\omega} + \ln{N^2} + 3 - \ln \abs{\tilde{G}_j (t',x)}^2 \Bigr] \\
            &\leq C_0 \, \abs{G (t',x) - G_j (t',x)} \Bigl( \abs{G_j (t',x)} + \sum_k \abs{G_k (t',x)} \Bigr) \Bigl[ 1 + \abs{x - x_j^* (t')}^2 \Bigr] \\
            &\leq C_0 \sum_{\ell \neq j} \sum_k \abs{G_\ell (t', x)} \abs{G_k (t',x)} (1 + \abs{x - x_j^* (t')}^2). 
        \end{aligned}
    \end{multline*}
    Multiplying by $\psi_j (t')$ and integrating over $\mathbb{R}^d$, we get
    \begin{multline*}
        \abs{\int \abs{G (t')}^2 \ln \abs{G (t')}^2 \, \psi_j (t') \diff x - \sum_j \int \abs{G_j (t')}^2 \ln \abs{G_j (t')}^2 \, \psi_j (t') \diff x} \\
        \leq C_0 \sum_{\ell \neq j} \sum_k \int \abs{G_\ell (t')} \abs{G_k (t')} (1 + \abs{x - x_j^* (t')}^2) \, \psi_j (t',x) \diff x.
    \end{multline*}
    Thus, Lemma \ref{lem:est_H1_gauss} and Lemma \ref{lem:orth_gauss} yield the conclusion for this term.
    \end{itemize}

    The conclusion readily follows for the first inequality.
    As for the second one, the same computations can be done for $S_0^\textnormal{loc} (t',G(t')) - S_0^\textnormal{loc} (t', G_k (t'))$ for some $k$, so that
    \begin{equation*}
        \abs{S_0^\textnormal{loc} (t',G(t')) - S_0^\textnormal{loc} (t', G_k (t'))} \leq C_0 \, t^{-1} e^{- \frac{\lambda (v_* t)^2}{2}}
    \end{equation*}
    Moreover, it can easily be proved (thanks to Lemma \ref{lem:est_H1_gauss}) that
    \begin{equation*}
        \abs{S_0^\textnormal{loc} (t', G_k (t'))} \leq C_0 \, t^{-1} e^{- \frac{\lambda (v_* t)^2}{2}},
    \end{equation*}
    and therefore the conclusion.
\end{proof}

We now have all the results we need to prove Proposition \ref{prop:slow_var_loc_func}.

\begin{proof}[Proof of Proposition \ref{prop:slow_var_loc_func}]
    We decompose the left-hand side in order to be able to apply the previous results:
    \begin{multline*}
        S^\textnormal{loc} (t',u_n(t')) - \sum_j S(G_j) = \Bigl( S^\textnormal{loc} (t',u_n(t')) - S^\textnormal{loc} (T_n,G(T_n)) \Bigr) \\ \begin{aligned}
            &+ S^\textnormal{loc}_0 (T_n,G(T_n))
            + \sum_{j \geq 1} \Bigl( S^\textnormal{loc}_j (T_n,G(T_n)) - S^\textnormal{loc}_j (T_n,G_j(T_n)) \Bigr) \\ &+ \sum_{j \geq 1} \Bigl( S^\textnormal{loc}_j (T_n,G_j(T_n)) - S_j (G_j) \Bigr).
            \end{aligned}
    \end{multline*}
    Thanks to Corollaries \ref{cor:variation_energy} (for the first line of the right-hand side) and \ref{cor:diff_actions_for_gauss} (for the last one) and to Lemma \ref{lem:energy_sum_soliton} (for the second one) along with the fact that
    \begin{equation*}
        (T_n - T'')^{-1} e^{- \frac{\lambda (v_* (T_n - T''))^2}{2}} \leq t^{-1} e^{- \frac{\lambda (v_* t)^2}{2}}, \qquad \text{for all } 0 < t \leq T_n - T''
    \end{equation*}
    as soon as $T_n \geq T''$, we get
    \begin{equation*}
        S^\textnormal{loc} (t',u_n(t')) - \sum_j S(G_j) \leq C_0 \, t^{-1} e^{- \frac{\lambda (v_* t)^2}{2}}. \qedhere
    \end{equation*}
\end{proof}

\section{Uniform $\mathcal{F} (H^1)$-estimates} \label{sec:FH1_est}

The final step of our proof is the uniform estimates in $\mathcal{F} (H^1)$. The proof relies on an improvement of the computation of Section \ref{sec:L2_est}, using the uniform estimates in $H^1$ which are now proved.

\begin{prop}
    For all $n \in \mathbb{N}$ such that $T_n \geq T$ and for all $t \in [0, T_n - T]$ (with $\check{t} \coloneqq T + t$), there holds
    \begin{equation*}
        \norm{w_n (\check{t})}_{\mathcal{F} (\dot{H}^1)} \leq C_0 \, e^{- \frac{\lambda (v_* t)^2}{4}}.
    \end{equation*}
\end{prop}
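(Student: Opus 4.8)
The plan is to differentiate the squared weighted norm $\norm{\,\abs{x}\,w_n(\check{t})}_{L^2}^2 = \int \abs{x}^2\abs{w_n}^2\diff x$ (recall $\norm{v}_{\mathcal{F}(\dot{H}^1)} = \norm{\,\abs{x}\,v}_{L^2}$), carrying the weight $\abs{x}^2$ through the same computation as in Section~\ref{sec:L2_est}; the genuinely new feature is a term produced by the Laplacian, which is exactly where the freshly proved $H^1$-estimates enter. Since $w_n = u_n - G$ solves $i\partial_t w_n + \frac12\Delta w_n = -\lambda R$ with $R \coloneqq u_n\ln\abs{u_n}^2 - \sum_k G_k\ln\abs{G_k}^2$, multiplying by $\abs{x}^2\overline{w_n}$, taking imaginary parts, integrating and performing one integration by parts on the kinetic term (whose purely kinetic contribution $\Im\int\abs{x}^2\abs{\nabla w_n}^2\diff x$ vanishes) gives
\begin{equation*}
    \frac{\diff}{\diff t}\int \abs{x}^2\abs{w_n}^2\diff x = 2\Im\int x\cdot\overline{w_n}\,\nabla w_n\diff x - 2\lambda\Im\int\abs{x}^2\,\overline{w_n}\,R\diff x.
\end{equation*}

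The first term on the right is the one absent from the $L^2$ computation: by Cauchy--Schwarz it is bounded by $2\norm{\,\abs{x}\,w_n}_{L^2}\norm{\nabla w_n}_{L^2}$, and $\norm{\nabla w_n}_{L^2}$ is already controlled by Proposition~\ref{prop:unif_est_dot_H1}. For the nonlinear term I would split $R = R_1 + R_2$ with $R_1 \coloneqq u_n\ln\abs{u_n}^2 - G\ln\abs{G}^2$ and $R_2 \coloneqq G\ln\abs{G}^2 - \sum_k G_k\ln\abs{G_k}^2$, exactly as before. For $R_1$, Lemma~\ref{lem_log_inequality} with $z_2 = u_n$, $z_1 = G$ yields the pointwise bound $\abs{\Im(\overline{w_n}\,R_1)}\le 2\abs{w_n}^2$, hence $\abs{2\lambda\Im\int\abs{x}^2\overline{w_n}R_1\diff x}\le 4\lambda\norm{\,\abs{x}\,w_n}_{L^2}^2$. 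The $R_2$ contribution is $\le 2\lambda\norm{\,\abs{x}\,w_n}_{L^2}\norm{\,\abs{x}\,R_2}_{L^2}$, and $\norm{\,\abs{x}\,R_2}_{L^2}$ is estimated by a weighted version of Lemma~\ref{lem:diff_L_log_L_sum_gaussian}: on the effective overlap support of $R_2$ the weight $\abs{x}^2$ is at most polynomial in $t$ (the centers $x_k + \check{t}\,v_k$ grow linearly), so it only contributes a polynomial prefactor absorbed into the Gaussian rate, giving $\norm{\,\abs{x}\,R_2(\check{t})}_{L^2}\le C_0\,e^{-\lambda(v_* t)^2/4}$ (recall $\sigma_- = \lambda$ in the Gausson case).

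Collecting these bounds and dividing by $\norm{\,\abs{x}\,w_n}_{L^2}$ yields a differential inequality of the same shape as \eqref{eq:energy_est_like},
\begin{equation*}
    \abs*[\Big]{\frac{\diff}{\diff t}\norm{\,\abs{x}\,w_n(\check{t})}_{L^2}} \le 2\lambda\,\norm{\,\abs{x}\,w_n(\check{t})}_{L^2} + C_0\,e^{-\frac{\lambda(v_* t)^2}{4}},
\end{equation*}
now with a source that is itself Gaussian. Since $w_n(T_n) = G(T_n) - G(T_n) = 0$, a backward Gronwall argument between $\check{t}$ and $T_n$, followed by the Gaussian integral estimate of Lemma~\ref{lem:int_error_gauss} exactly as in the proof of Proposition~\ref{prop:unif_est_L2_br}, produces $\norm{\,\abs{x}\,w_n(\check{t})}_{L^2}\le \frac{C_0}{t-t_1}\,e^{-\lambda(v_* t)^2/4}$, hence the claimed bound once $T$ is taken large enough.

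The main obstacle is twofold, and both difficulties are new relative to the $L^2$ estimate. First, the weight $\abs{x}^2$ no longer annihilates the kinetic term after integration by parts, creating the cross term $2\Im\int x\cdot\overline{w_n}\,\nabla w_n\diff x$; this is precisely why Proposition~\ref{prop:unif_est_dot_H1} must be in hand \emph{before} this step. Second, one must verify that inserting the weight into the separation Lemma~\ref{lem:diff_L_log_L_sum_gaussian} costs only polynomial factors in $t$ rather than a deterioration of the Gaussian exponent — and here the faster-than-exponential decay of the Gaussons is essential, since a merely exponentially decaying soliton would not tolerate the linearly growing weight without loss of rate.
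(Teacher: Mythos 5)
Your proposal is correct and follows essentially the same route as the paper's proof: differentiate $\int \abs{x}^2 \abs{w_n}^2 \diff x$, integrate the kinetic term by parts and control the resulting cross term $2\Im\int x\cdot\overline{w_n}\,\nabla w_n \diff x$ via Cauchy--Schwarz and the $\dot{H}^1$ estimate of Proposition~\ref{prop:unif_est_dot_H1}, split the nonlinearity into $R_1$ (handled pointwise by Lemma~\ref{lem_log_inequality}) and $R_2$ (handled by a weighted Gaussian separation bound), then close with a backward Gronwall argument and Lemma~\ref{lem:int_error_gauss}. The weighted bound you invoke as a ``weighted version of Lemma~\ref{lem:diff_L_log_L_sum_gaussian}'' is precisely the paper's Proposition~\ref{prop:x_u_log_u}, proved in Appendix~\ref{app:x_u_log_u} by the same mechanism you sketch (the weight $\abs{x}$ costs only polynomial factors in $t$ on the overlap region, which the Gaussian decay absorbs), so your argument matches the paper's step for step.
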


\begin{proof}
    We recall that $w_n = u_n - G$ satisfies
    \begin{equation*}
        i \partial_t w_n + \frac{1}{2} \Delta w_n = - \lambda \Bigl[ u_n \ln \abs{u_n}^2 - \sum_k G_k \ln \abs{G_k}^2 \Bigr], \qquad w_n (T_n) = 0,
    \end{equation*}
    and there now holds for all $t \in [0, T_n - T]$
    \begin{equation*}
        \norm{w_n (\check{t})}_{L^2} \leq e^{- \frac{\lambda (v_* t)^2}{4}}, \qquad
        \norm{w_n (\check{t})}_{\dot{H}^1} \leq e^{- \frac{\lambda (v_* t)^2}{4}}.
    \end{equation*}
    We also recall that we took $T$ large enough so that for all $j \in \{ 1, \dots, N-1 \}$ and $t \geq 1$, we have
    \begin{equation*}
        \abs{x_{j+1} - x_j + (v_{j+1} - v_j) \check{t}} \geq \varepsilon_0^{-1} + v_* (t + \tau),
    \end{equation*}
    as a consequence of \eqref{eq:distance_x_k}.
    We compute the variations of this quantity:
    \begin{multline}
        \frac{\diff}{\diff t} \int \abs{x}^2 \abs{w_n (\check{t})}^2 \diff x = - \int \abs{x}^2 \Im \Bigl[ \Delta w_n (\check{t}) \overline{w_n (\check{t})} \Bigr] \diff x \\
        - 2 \lambda \int \abs{x}^2 \Im \biggl[ \Bigl[ u_n (\check{t}) \ln \abs{u_n (\check{t})}^2 - \sum_k G_k (\check{t}) \ln \abs{G_k (\check{t})}^2 \biggr] \overline{w_n (\check{t})} \Bigr] \diff x. \label{comput_var_poids_quadr}
    \end{multline}
    \begin{itemize}
    \item For the first term, performing an integration by parts, there holds
    \begin{equation*}
        - \int \abs{x}^2 \Im \Bigl[ \Delta w_n (\check{t}) \overline{w_n (\check{t})} \Bigr] \diff x = 2 \int x \cdot \Im \Bigl[ \nabla w_n (\check{t}) \overline{w_n (\check{t})} \Bigr] \diff x.
    \end{equation*}
    This is easy to estimate with a Cauchy-Schwarz inequality:
    \begin{align*}
        \abs{\int x \cdot \Im \Bigl[ \nabla w_n (\check{t}) \overline{w_n (\check{t})} \Bigr] \diff x} &\leq \biggl( \int \abs{\nabla w_n (\check{t})}^2 \diff y \biggr)^\frac{1}{2} \biggl( \int \abs{x}^2 \abs{w_n (\check{t})}^2 \diff x \biggr)^\frac{1}{2} \\
            &\leq e^{- \frac{\lambda (v_* t)^2}{4}} \biggl( \int \abs{x}^2 \abs{w_n (\check{t})}^2 \diff x \biggr)^\frac{1}{2}.
    \end{align*}
    Thus, we have
    \begin{equation}
        \abs{\int \abs{x}^2 \Im \Bigl[ \Delta w_n (\check{t}) \overline{w_n (\check{t})} \Bigr] \diff x} \leq C_0 \, e^{- \frac{\lambda (v_* t)^2}{4}} \biggl( \int \abs{x}^2 \abs{w_n (\check{t})}^2 \diff x \biggr)^\frac{1}{2}. \label{1st_term_var_poids_quadr}
    \end{equation}

    \item For the last term, we use again Lemma \ref{lem_log_inequality} and the fact that $w_n = u_n - G$, so that
    \begin{multline*}
        \abs{\Im \biggl[ \Bigl[ u_n \ln \abs{u_n}^2 - \sum_k G_k \ln \abs{G_k}^2 \biggr] \overline{w_n} \Bigr]} \\
            \begin{aligned}
                &\leq \abs{\Im \biggl[ \Bigl[ u_n \ln \abs{u_n}^2 - G \ln \abs{G}^2 \biggr] \overline{w_n} \Bigr]} + \abs{\Im \biggl[ \Bigl[ G \ln \abs{G}^2 - \sum_k G_k \ln \abs{G_k}^2 \biggr] \overline{w_n} \Bigr]} \\
                &\leq 2 \abs{w_n}^2 + \abs{ G \ln \abs{G}^2 - \sum_k G_k \ln \abs{G_k}^2 } \abs{w_n}.
            \end{aligned}
    \end{multline*}
    Thus, we get
    \begin{multline*}
        \abs{\int \abs{x}^2 \Im \biggl[ \Bigl[ u_n \ln \abs{u_n}^2 - \sum_k G_k \ln \abs{G_k}^2 \biggr] \overline{w_n} \Bigr] \diff x} \leq 2 \int \abs{x}^2 \abs{w_n (\check{t})}^2 \diff x \\ + \int \abs{x}^2 \abs{ G \ln \abs{G}^2 - \sum_k G_k \ln \abs{G_k}^2 } \abs{w_n (\check{t})} \diff x
    \end{multline*}
    For the second term of the right-hand side, performing a Cauchy-Schwarz inequality leads to
    \begin{multline*}
        \int \abs{x}^2 \abs{ G \ln \abs{G}^2 - \sum_k G_k \ln \abs{G_k}^2 } \abs{w_n (\check{t})} \diff x \\
            \begin{aligned}
                &\leq \biggl( \int \abs{x}^2 \abs{ G \ln \abs{G}^2 - \sum_k G_k \ln \abs{G_k}^2 }^2 \diff x \biggr)^\frac{1}{2} \biggl( \int \abs{x}^2 \abs{w_n (\check{t})}^2 \diff x \biggr)^\frac{1}{2}
            \end{aligned}
    \end{multline*}
    For the first factor, we use the following result whose proof is postponed to Appendix \ref{app:x_u_log_u}.
    
    \begin{prop} \label{prop:x_u_log_u}
        For all $t \geq 0$, there holds
        \begin{equation*}
            \norm{\abs{x} \abs{ G \ln \abs{G}^2 - \sum_k G_k \ln \abs{G_k}^2 }}_{L^2} \leq C_0 \, e^{- \frac{\lambda (v_* t)^2}{4}}
        \end{equation*}
    \end{prop}

    Thus we have
    \begin{multline}
        \abs{\int \abs{x}^2 \Im \biggl[ \Bigl[ u_n \ln \abs{u_n}^2 - \sum_k G_k \ln \abs{G_k}^2 \biggr] \overline{w_n} \Bigr] \diff x} \leq 2 \int \abs{x}^2 \abs{w_n (\check{t})}^2 \diff x \\ + C_0 \, e^{- \frac{\lambda (v_* t)^2}{4}} \biggl( \int \abs{x}^2 \abs{w_n (\check{t})}^2 \diff x \biggr)^\frac{1}{2}. \label{2nd_term_var_poids_quadr}
    \end{multline}
    \end{itemize}

    Plugging \eqref{1st_term_var_poids_quadr} and \eqref{2nd_term_var_poids_quadr} into \eqref{comput_var_poids_quadr}, and dividing the whole inequality by $\norm{\abs{x} \abs{w_n (\check{t})} }_{L^2}$, we obtain
    \begin{equation*}
        \abs{\frac{\diff}{\diff t} \norm{\abs{x} \abs{w_n (\check{t})} }_{L^2}} \leq
        C_0 \, e^{- \frac{\lambda (v_* t)^2}{4}}
        + 2 \lambda \norm{\abs{x} \abs{w_n (\check{t})} }_{L^2}.
    \end{equation*}
    Hence, in the same way as in the proof of Proposition \ref{prop:unif_est_L2}, the Gronwall lemma backward in time between $T_n$ and $\check{t}$ and the fact that $w_n (T_n) = 0$ yields for all $t \in [0, T_n - T]$ (and still with $\check{t} = T + t$):
    \begin{equation*}
        \norm{w_n (\check{t}) }_{\mathcal{F} (H^1)} \leq C_0 \, e^{- \frac{\lambda (v_* t)^2}{4}}. \qedhere
    \end{equation*}
\end{proof}

\section{Compactness for the multi-gaussian} \label{sec:compactness_multi_br}

This section is devoted to the proof of the Compactness property \ref{prop:compactness_br} for the multi-gaussian case. Since we do not have any bound for the $\mathcal{F}(H^1)$ norm, the proof is here completely similar to that in \cite{Martel_Merle__Multi_solitary_waves_NLS, Cote_LeCoz_Multisolitons_NLS} for example. However, not only in order to be able to perform the same proof but also in order to have a limit in $W$, we need $u_n (T)$ to be uniformly bounded in $W$.

\begin{lem} \label{lem:W_bound}
    $E(u_n)$ is uniformly bounded in $n$. In particular, $u_n$ is uniformly bounded in $\mathcal{C}_b (\mathbb{R}, W)$.
\end{lem}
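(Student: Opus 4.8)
The plan is to reduce everything to the \emph{final datum}. By conservation of energy (Theorem \ref{th:Cauchy_problem}), $E(u_n(t)) = E(u_n(T_n)) = E(B(T_n))$ for every $t$ and $n$, and likewise $M(u_n(t)) = M(B(T_n))$ by mass conservation. Hence it suffices to bound $E(B(T_n))$ and $M(B(T_n))$ uniformly in $n$, where $B(T_n) = \sum_k B_k(T_n)$ is a superposition of gaussian solutions whose centers $x_k + T_n v_k$ pairwise separate at speed at least $v_*$ as $T_n \to \infty$, by \eqref{eq:distance_x_k}.

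First I would split the energy and mass of the sum into ``self'' and ``cross'' contributions. Since each $B_k$ is itself a solution to \eqref{foc_log_nls}, its energy $E(B_k(T_n)) = E(B_k)$ and mass $M(B_k(T_n)) = M(B_k)$ are constants independent of $n$ (and of time), and each is finite because $B_k \in \mathcal{C}_b(\mathbb{R},W)$. Writing
\begin{align*}
    \norm{\nabla B(T_n)}_{L^2}^2 &= \sum_k \norm{\nabla B_k(T_n)}_{L^2}^2 + \sum_{k \neq \ell} \Re \int \nabla B_k(T_n) \cdot \overline{\nabla B_\ell(T_n)} \diff x, \\
    M(B(T_n)) &= \sum_k M(B_k) + \sum_{k \neq \ell} \Re \int B_k(T_n) \, \overline{B_\ell(T_n)} \diff x,
\end{align*}
the cross terms are integrals of products of two gaussians centered far apart and tend to $0$ as $n \to \infty$, exactly as in Lemma \ref{lem:orth_gauss} (whose computation uses only the uniform lower bound $\sigma_- > 0$ on $\Re A_k(t)$ from Proposition \ref{prop:expression_general_gaussian} and the separation \eqref{eq:distance_x_k}). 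It then remains to handle the logarithmic potential, i.e. the quantity
\begin{equation*}
    \int \abs{B(T_n)}^2 \ln \abs{B(T_n)}^2 \diff x - \sum_k \int \abs{B_k(T_n)}^2 \ln \abs{B_k(T_n)}^2 \diff x.
\end{equation*}
This I would control by the same pointwise estimate as in the second bullet of the proof of Lemma \ref{lem:energy_sum_soliton} (based on the quasi-Lipschitz bound of Lemma \ref{lem:lip_z_log_z}), applied globally instead of against $\psi_j$: it bounds the integrand by a sum of cross-products $\abs{B_\ell}\,\abs{B_k}\,(1+\abs{x-x_j^*}^2)$ with $\ell \neq k$, all of which vanish by Gaussian separation (this is also the content of Lemma \ref{lem:diff_L_log_L_sum_gaussian}). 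Consequently $E(B(T_n)) = \sum_k E(B_k) + o(1)$ and $M(B(T_n)) = \sum_k M(B_k) + o(1)$ as $n \to \infty$; a convergent sequence being bounded, both are uniformly bounded in $n$, which proves the first assertion.

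For the ``in particular'', I would deduce the uniform $W$-bound from the uniform control of mass and energy, by the standard coercivity of the logNLS energy space. The logarithmic Sobolev inequality bounds $\int \abs{u_n}^2 \ln \abs{u_n}^2$ from above by a term of the form $C(M(u_n))\,\bigl(1 + \abs{\ln \norm{\nabla u_n}_{L^2}^2}\bigr)$; inserting this into $E(u_n) = \tfrac12 \norm{\nabla u_n}_{L^2}^2 - \lambda \int \abs{u_n}^2(\ln\abs{u_n}^2 - 1)$ gives an inequality of the form $\tfrac12 X \leq C_1 + C_2 \ln X$ for $X = \norm{\nabla u_n}_{L^2}^2$, hence an a priori bound $\norm{\nabla u_n}_{L^2}^2 \leq C(M(u_n), E(u_n))$ since $\ln X$ grows slower than $X$. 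One then bounds the positive part of $\int \abs{u_n}^2 \ln\abs{u_n}^2$ by $\norm{u_n}_{L^{2+\delta}}^{2+\delta} \lesssim \norm{u_n}_{H^1}$ (Sobolev) and recovers its negative part from the energy identity, so that $\norm{u_n}_{H^1}$ and $\int \abs{u_n}^2 \abs{\ln\abs{u_n}^2}$ are bounded uniformly in $n$ and $t$, i.e. $u_n$ is bounded in $\mathcal{C}_b(\mathbb{R}, W)$.

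I expect the only genuinely delicate step to be the logarithmic potential cross term: unlike the kinetic and mass cross terms it is not bilinear, so it cannot be handled by a plain Cauchy--Schwarz orthogonality argument. However, this is precisely what Lemma \ref{lem:diff_L_log_L_sum_gaussian} (and the technique of Lemma \ref{lem:energy_sum_soliton}) is designed to handle, so the proof is ultimately a bookkeeping assembly of already-established separation estimates together with the conservation laws.
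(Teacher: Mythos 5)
Your overall reduction coincides with the paper's: by conservation of $E$ and $M$, everything collapses to bounding $E(B(T_n))$ and $M(B(T_n))$. Your second half is also a correct variant of the paper's argument: the paper derives the self-bounding inequality $\tfrac12\norm{\nabla u_n}_{L^2}^2 \le C_0(1+\norm{\nabla u_n}_{L^2})$ by applying Gagliardo--Nirenberg to the super-level set $\{\abs{u_n}>1\}$, whereas you derive $\tfrac12 X \le C_1 + C_2\ln X$ (with $X=\norm{\nabla u_n}_{L^2}^2$) from the logarithmic Sobolev inequality; both exploit $\lambda>0$ in the same way, and the remaining bookkeeping (positive part of the potential by Sobolev, negative part recovered from the energy identity) is the same as the paper's $E_n^+$ computation. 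So that part stands.

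The genuine issue is in your treatment of $E(B(T_n))$. The paper never needs the gaussians to be separated: it bounds $\norm{B(T_n)}_{H^1}$ and $\norm{B(T_n)}_{L^1}$ by the triangle inequality and then uses the pointwise bound $y^2\abs{\ln y^2} \le C(y+y^{2+1/d})$ to get $\int \abs{B(T_n)}^2\abs{\ln\abs{B(T_n)}^2}\diff x \le C_0\bigl(\norm{B(T_n)}_{L^1}+\norm{B(T_n)}_{H^1}^{2+1/d}\bigr)$; boundedness, not convergence, is all that is needed, and it is cheap. Your route aims at the sharper statement $E(B(T_n))=\sum_k E(B_k)+o(1)$, and its key step fails as stated: the pointwise estimate in the second bullet of the proof of Lemma \ref{lem:energy_sum_soliton} compares $\abs{B}^2\ln\abs{B}^2$ with the \emph{single} term $\abs{B_j}^2\ln\abs{B_j}^2$ (not with $\sum_k\abs{B_k}^2\ln\abs{B_k}^2$) and its right-hand side
\begin{equation*}
    C_0 \sum_{\ell\neq j}\sum_k \abs{B_\ell}\,\abs{B_k}\,\bigl(1+\abs{x-x_j^*}^2\bigr)
\end{equation*}
contains the diagonal contributions $\abs{B_\ell}^2\bigl(1+\abs{x-x_j^*}^2\bigr)$ with $\ell\neq j$. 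These are small only after multiplication by the cutoff $\psi_j$ (which is exactly why the paper keeps it); globally, $\int \abs{B_\ell}^2\bigl(1+\abs{x-x_j^*}^2\bigr)\diff x \sim \abs{x_\ell^*-x_j^*}^2 \sim (v_*T_n)^2 \to \infty$, so "applied globally instead of against $\psi_j$" does not produce an $o(1)$ bound. Your plan can be repaired inside the paper's toolbox: either keep the partition of unity $\sum_{j\ge 0}\psi_j\equiv 1$ and treat each localized piece as in Lemmas \ref{lem:energy_sum_soliton} and \ref{lem:coerc_loc_act0}, or pair in $L^2$ by writing
\begin{equation*}
    \int \abs{B}^2\ln\abs{B}^2\diff x - \sum_k\int \abs{B_k}^2\ln\abs{B_k}^2\diff x
    = \int \overline{B}\Bigl(B\ln\abs{B}^2-\sum_k B_k\ln\abs{B_k}^2\Bigr)\diff x
    + \sum_k\sum_{\ell\neq k}\int \overline{B_\ell}\,B_k\ln\abs{B_k}^2\diff x,
\end{equation*}
and controlling the first term by Lemma \ref{lem:diff_L_log_L_sum_gaussian} (as you anticipated) and the second by Gaussian separation, using $\abs{\ln\abs{B_k}^2}\lesssim 1+\abs{x-x_k^*}^2$. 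Two further small points in this route: the cross-term estimates for $\nabla B_k\cdot\overline{\nabla B_\ell}$ need $\Im A_k(t)$ bounded uniformly in time, which Proposition \ref{prop:expression_general_gaussian} does not state (it follows from $B_k\in\mathcal{C}_b(\mathbb{R},W)$ together with the real-part bounds); and the separation \eqref{eq:distance_x_k} only holds for $T_n$ past the separation time, so the $o(1)$ statements concern large $n$ (the finitely many remaining $n$ are trivially fine since each $E(B(T_n))$ is finite). With these repairs your proof closes, but it is considerably heavier than the paper's direct bound.
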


\begin{proof}
Since we know that the energy is independent in time, we only have to prove for the first part that $E(B(T_n))$ is bounded.
First of all, the $H^1$ norm of $B(T_n)$ is obviously bounded since:
\begin{equation*}
    \norm{B (T_n)}_{H^1} \leq \sum_k \norm{B_k (T_n)}_{H^1} \leq \sum_k \left( \norm{B_k (T_n)}_{L^2} + \norm{\nabla B_k (T_n)}_{L^2} \right).
\end{equation*}
Moreover, $B(T_n)$ is also bounded in $L^1$:
\begin{align*}
    \norm{B (T_n)}_{L^1} \leq \sum_k \norm{B_k (T_n)}_{L^1}
    &\leq C_0 \sum_k \norm{\exp \left[ - \frac{1}{2} (x - x_k - v_k T_n)^\top \Re A(t) (x - x_k - v_k T_n) \right]}_{L^1} \\
    &\leq C_0 \norm{e^{-\frac{\sigma_- \abs{x}^2}{2}}}_{L^1} \leq C_0.
\end{align*}
Therefore, we claim that $\int \abs{B(T_n)}^2 \ln \abs{B(T_n)}^2$ is uniformly bounded and so is $E(B(T_n))$.
Indeed, there holds
\begin{equation*}
    \int \abs{B(T_n)}^2 \abs{\ln \abs{B(T_n)}^2} \leq C_0 \left( \int \abs{B(T_n)} + \int \abs{B(T_n)}^{2+\frac{1}{d}} \right) \leq C_0 \left( \norm{B(T_n)}_{L^1} + \norm{B(T_n)}_{H^1}^{2+\frac{1}{d}} \right),
\end{equation*}
thanks to Sobolev embedding. Therefore, $E(u_n) = E(B(T_n))$ is uniformly bounded.
Thus, we can derive that $\nabla u_n (t)$ is uniformly bounded both in $n$ and $t$ from this and the fact that
\begin{equation*}
    E_n^+ (t) \coloneqq \frac{1}{2} \norm{\nabla u_n (t)}_{L^2}^2 + \lambda \norm{u_n (t)}_{L^2}^2 + \lambda \int_{\abs{u_n (t)} \leq 1} \abs{u_n (t)}^2 \abs{\ln{\abs{u_n(t)}^2}} \diff x
\end{equation*}
satisfies
\begin{equation}
    0 \leq \frac{1}{2} \norm{\nabla u_n (t)}_{L^2}^2 \leq E_n^+ (t) = E(u_n) + \lambda \int_{\abs{u_n (t)} > 1} \abs{u_n (t)}^2 \abs{\ln{\abs{u_n(t)}^2}} \diff x. \label{eq:pos_en_ineq}
\end{equation}
Indeed, by Gagliardo-Nirenberg inequality, we have
\begin{align*}
    L_n^+ (t) = \int_{\abs{u_n (t)} > 1} \abs{u_n (t)}^2 \abs{\ln{\abs{u_n(t)}^2}} \diff x &\leq C_0 \int \abs{u_n (t)}^{2 + \frac{1}{2d}} \diff x \\
    &\leq C_0 \norm{u_n (t)}_{L^2}^{1+\frac{1}{2d}} \, \norm{\nabla u_n (t)}_{L^2} \\
    &\leq C_0 \, \norm{\nabla u_n (t)}_{L^2},
\end{align*}
since the $L^2$ norm of $u_n$ is uniformly bounded.
Thus, putting this into \eqref{eq:pos_en_ineq} leads to:
\begin{equation*}
    \frac{1}{2} \norm{\nabla u_n (t)}_{L^2}^2 \leq C_0 ( 1 + \norm{\nabla u_n (t)}_{L^2} ).
\end{equation*}
Hence, $\norm{\nabla u_n (t)}_{L^2}$ is bounded uniformly in $n$ and $t$, and so is $L_n^+ (t)$, therefore so is $E_n^+ (t)$. This yields that
\begin{equation*}
    \int \abs{u_n (t)}^2 \abs{\ln{\abs{u_n(t)}^2}} \diff x \textnormal{ is bounded uniformly in $t$ and $n$.}
\end{equation*}
Thus, $u_n (t)$ is bounded in $W ( \mathbb{R}^d )$ uniformly in $t$ and $n$.
\end{proof}

\begin{proof}[Proof of Proposition \ref{prop:compactness_br}]
    We already have a uniform boundedness of $u_n (T)$ in $H^1$. In order to get compactness in $L^2$, we shall prove that $u_n (T)$ is compact at infinity. Choose $\delta > 0$. We want to show that there exists $r_\delta$ such that for any $n$ we have
    \begin{equation*}
        \int_{\abs{x} > r_\delta} \abs{u_n (T,x)}^2 \diff x < \delta.
    \end{equation*}
    Let $T_\delta \geq 1$ such that 
    \begin{equation*}
        e^{- \frac{\sigma_- (v_* T_\delta)^2}{4}} \leq \sqrt{\frac{\delta}{8}},
    \end{equation*}
    so that, thanks to Proposition \ref{prop:unif_est_br}, there holds for any $n \in \mathbb{N}$,
    \begin{equation*}
        \norm{u_n (T + T_\delta,.) - B (T + T_\delta,.)}_{L^2}^2 \leq \frac{\delta}{8}.
    \end{equation*}
    The members of $B$ are Gaussians, so we can find $\bar{r}_\delta$ such that
    \begin{equation*}
        \int_{\abs{x} > \bar{r}_\delta} \abs{B (T + T_\delta, x)}^2 \diff x < \frac{\delta}{8}.
    \end{equation*}
    Therefore we can infer from the two previous estimates that for all $n \in \mathbb{N}$
    \begin{equation*}
        \int_{\abs{x} > \bar{r}_\delta} \abs{u_n (T + T_\delta, x)}^2 \diff x < \frac{\delta}{2}.
    \end{equation*}
    To transfer this property up to $T$, we use a virial argument. Take $\hat{r}_\delta > 0$ to be fixed later and a $\mathcal{C}^1$ cut-off function $\chi : \mathbb{R} \rightarrow \mathbb{R}$ such that
    \begin{equation*}
        \chi (s) = 0 \text{ for } s < 0, \qquad \chi (s) = 1 \text{ for } s > 1, \qquad \chi (s) \in [0, 1] \text{ for } s \in \mathbb{R}.
    \end{equation*}
    Now set
    \begin{equation*}
        V(t) \coloneqq \int \abs{u_n (t)}^2 \, \chi \left( \frac{\abs{x} - \bar{r}_\delta}{\hat{r}_\delta} \right) \diff x.
    \end{equation*}
    Thanks to the previous estimate, we already know that $V(T + T_\delta) < \frac{\delta}{2}$.
    Moreover, since $u_n$ satisfies \eqref{foc_log_nls}, it is easy to compute
    \begin{equation*}
        V'(t) = \frac{2}{\hat{r}_\delta} \int \Im \left( \overline{u_n} (t,x) \frac{x}{\abs{x}} \cdot \nabla u_n (t,x) \right) \chi' \left( \frac{\abs{x} - \bar{r}_\delta}{\hat{r}_\delta} \right) \diff x.
    \end{equation*}
    Thanks to Lemma \ref{lem:W_bound}, we know that $u_n(t)$ is uniformly bounded in $H^1$. Hence, the previous integral is uniformly bounded and thus we have
    \begin{equation*}
        \abs{V'(t)} \leq \frac{C_0}{\hat{r}_\delta}.
    \end{equation*}
    We choose now $\hat{r}_\delta$ such that $\frac{C_0}{\hat{r}_\delta} T_\delta < \frac{\delta}{2}$. Hence,
    \begin{equation*}
        \abs{V (T) - V (T + T_\delta)} < \frac{\delta}{2},
    \end{equation*}
    and therefore
    \begin{equation*}
        V (T) < \delta.
    \end{equation*}
    We infer from the definition of $\chi$ and with $r_\delta = \bar{r}_\delta + \hat{r}_\delta$ that for all $n \in \mathbb{N}$,
    \begin{equation*}
        \int_{\abs{x} > r_\delta} \abs{u_n (T_\textnormal{in}, x)}^2 \diff x < \delta,
    \end{equation*}
    which is the desired conclusion.

    Therefore, we get compactness in $L^2$: there exists a $u_\textnormal{in} \in H^1$ such that $u_n (T) \rightarrow u_\textnormal{in}$ (up to a subsequence) in $L^2$ as $n \rightarrow \infty$. Moreover, $u_n (T)$ is uniformly bounded in $W$ which is a reflexive Banach space when endowed with a Luxembourg type norm (see \cite{Cazenave_log_nls}), so $u_\textnormal{in} \in W$.
\end{proof}

\section{Rigidity property} \label{sec:rigidity}

In this section, we prove the claims made in Remarks \ref{rem:rigidity} and \ref{rem:rigidity_br}, which can be summarized as follows:

\begin{lem} \label{lem:rigidity_multi}
    For any solution $v$ to \eqref{foc_log_nls}, either $v = u$ the multi-gaussian constructed above, either there exists $T_1 > 0$ and $C_1 > 0$ such that there holds for all $t \geq T_1$
    \begin{equation*}
        \norm{v(t) - \sum B_k (t)}_{L^2} \geq C_1 e^{- 2 \lambda t}.
    \end{equation*}
\end{lem}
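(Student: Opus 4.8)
The plan is to exploit the two-sided nature of the $L^2$ stability estimate of Lemma~\ref{lem:L2_energy_est}, which controls the separation of any two solutions from \emph{both} above and below. First I would dispose of the trivial alternative: if $v(t_0) = u(t_0)$ at some time $t_0$, then by the uniqueness part of Theorem~\ref{th:Cauchy_problem} one has $v = u$ identically, and there is nothing to prove. So I may assume $v \neq u$, whence $\norm{u(t) - v(t)}_{L^2} > 0$ for every $t \in \mathbb{R}$; I then fix any $t_0$ and set $c_0 \coloneqq \norm{u(t_0) - v(t_0)}_{L^2} > 0$.

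The key step is to run Lemma~\ref{lem:L2_energy_est} backward in time. Since \eqref{foc_log_nls} is autonomous, the estimate applies between any pair of times, so for $t \geq t_0$ there holds
\begin{equation*}
    \norm{u(t_0) - v(t_0)}_{L^2} \leq e^{2\lambda(t - t_0)} \, \norm{u(t) - v(t)}_{L^2}.
\end{equation*}
Rearranging yields the exponential lower bound
\begin{equation*}
    \norm{u(t) - v(t)}_{L^2} \geq c_0 \, e^{2\lambda t_0} \, e^{-2\lambda t}, \qquad \forall t \geq t_0.
\end{equation*}

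It then remains to transfer this bound from $u - v$ to $v - \sum B_k$ by the triangle inequality,
\begin{equation*}
    \norm{v(t) - \sum B_k(t)}_{L^2} \geq \norm{u(t) - v(t)}_{L^2} - \norm{u(t) - \sum B_k(t)}_{L^2},
\end{equation*}
and to observe that the subtracted term is negligible. Indeed, $u$ is precisely the multi-gaussian furnished by Theorem~\ref{th:exist_multi_br}, so $\norm{u(t) - \sum B_k(t)}_{L^2} \leq C_0 \, e^{-\frac{\sigma_-(v_* t)^2}{8}}$ for all $t \geq 0$, a super-exponential (Gaussian-in-time) decay that is eventually far smaller than $e^{-2\lambda t}$. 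Hence for $t$ large enough the subtracted term is bounded by $\tfrac12 c_0 e^{2\lambda t_0} e^{-2\lambda t}$, and one concludes
\begin{equation*}
    \norm{v(t) - \sum B_k(t)}_{L^2} \geq \tfrac{1}{2} c_0 \, e^{2\lambda t_0} \, e^{-2\lambda t} \eqqcolon C_1 \, e^{-2\lambda t}
\end{equation*}
for all $t \geq T_1$, with $T_1$ chosen suitably large.

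The only genuinely delicate point is the reverse use of Lemma~\ref{lem:L2_energy_est}: one must notice that the Lipschitz bound on the flow, being valid for $\abs{t}$ in either time direction, forces solutions that start apart to \emph{stay} apart at a controlled exponential rate, thereby converting an upper bound on divergence into a lower bound on persistence. Everything else is a triangle inequality together with the fact—special to \eqref{foc_log_nls}—that the constructed multi-gaussian approaches its profile at a Gaussian rate, so this approximation error cannot interfere with the $e^{-2\lambda t}$ lower bound.
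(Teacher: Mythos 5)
Your proof is correct and follows essentially the same route as the paper: the paper also deduces the lower bound $\norm{u(t)-v(t)}_{L^2} \geq C_2\, e^{-2\lambda t}$ from the two-sided $L^2$ estimate of Lemma~\ref{lem:L2_energy_est} (stated there as the intermediate Lemma~\ref{lem:rigidity_all}, which you reprove inline via the backward-in-time application), and then concludes by the same triangle inequality against the Gaussian-rate convergence \eqref{eq:dec_est_th_br}. The only cosmetic difference is that you invoke uniqueness of the Cauchy problem to set up the dichotomy, while the paper packages that dichotomy into Lemma~\ref{lem:rigidity_all}; the substance is identical.
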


This lemma is actually a corollary of a relatively more general result: a rigidity property for any solution to \eqref{foc_log_nls}.

\begin{lem} \label{lem:rigidity_all}
    For any solutions $v_1$ and $v_2$ to \eqref{foc_log_nls}, either $v_1 = v_2$ or there exists $C_2 > 0$ such that for all $t \geq 0$
    \begin{equation*}
        \norm{v_1 (t) - v_2 (t)}_{L^2} \geq C_2 \, e^{-2 \lambda t}.
    \end{equation*}
\end{lem}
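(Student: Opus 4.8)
The plan is to observe that Lemma \ref{lem:rigidity_all} is the time-reversed companion of the $L^2$ energy estimate of Lemma \ref{lem:L2_energy_est}: the very differential inequality that produces the upper bound $e^{2\lambda|t|}$ also produces a matching lower bound $e^{-2\lambda t}$, and the dichotomy then comes for free from uniqueness for the Cauchy problem. The whole point is that the log-Lipschitz inequality of Lemma \ref{lem_log_inequality} is symmetric, so it controls the \emph{decay} rate of $\norm{v_1-v_2}_{L^2}$ from below exactly as it controls its growth from above.

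Concretely, I would set $w \coloneqq v_1 - v_2$, which solves $i\partial_t w + \frac{1}{2}\Delta w = -\lambda\bigl(v_1\ln\abs{v_1}^2 - v_2\ln\abs{v_2}^2\bigr)$. This is precisely the situation of \eqref{eq:energy_est_like} in the special case $N=1$ with $V = v_2$: the remainder term $V\ln\abs{V}^2 - \sum_j v_j\ln\abs{v_j}^2$ then vanishes identically, so that \eqref{eq:energy_est_like} reduces (using $\lambda>0$) to the two-sided control
\begin{equation*}
    \abs*{\frac{\diff}{\diff t}\norm{w(t)}_{L^2}} \leq 2\lambda\,\norm{w(t)}_{L^2}, \qquad t \in \mathbb{R}.
\end{equation*}
The upper half of this estimate is exactly what yields Lemma \ref{lem:L2_energy_est}; the lower half is what we need here.

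It then remains to run a backward Gronwall argument together with the dichotomy. If $v_1(0) = v_2(0)$, the uniqueness part of Theorem \ref{th:Cauchy_problem} forces $v_1 \equiv v_2$, and we are in the first alternative. Otherwise, uniqueness guarantees $w(t) \neq 0$ for \emph{every} $t$, so $g(t) \coloneqq \norm{w(t)}_{L^2}$ is positive and differentiable; the inequality $g'(t) \geq -2\lambda\, g(t)$ shows that $t \mapsto e^{2\lambda t}\,g(t)$ is nondecreasing on $[0,\infty)$, whence $g(t) \geq e^{-2\lambda t}\,g(0)$ for all $t \geq 0$. Taking $C_2 \coloneqq \norm{v_1(0) - v_2(0)}_{L^2} > 0$ gives precisely $\norm{v_1(t) - v_2(t)}_{L^2} \geq C_2\, e^{-2\lambda t}$, which is the desired conclusion.

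I do not expect any serious obstacle: the only delicate point is that $t \mapsto \norm{w(t)}_{L^2}$ need not be differentiable where it vanishes, but this is precisely ruled out in the nontrivial branch of the dichotomy by uniqueness (alternatively, one works with the $\mathcal{C}^1$ quantity $\norm{w}_{L^2}^2$, for which $\abs{\frac{\diff}{\diff t}\norm{w}_{L^2}^2} \leq 4\lambda\norm{w}_{L^2}^2$ gives $\norm{w(t)}_{L^2}^2 \geq e^{-4\lambda t}\norm{w(0)}_{L^2}^2$ directly). Finally, Lemma \ref{lem:rigidity_multi} follows by applying this result to $v_1 = v$ and $v_2 = u$ and invoking the triangle inequality against the Gaussian-rate bound $\norm{u(t) - \sum_k B_k(t)}_{L^2} \leq C\,e^{-\sigma_-(v_* t)^2/8}$, which is negligible compared with $e^{-2\lambda t}$ for $t$ large, so that the lower bound $C_2 e^{-2\lambda t}$ survives (with $C_1 = C_2/2$) for $t \geq T_1$.
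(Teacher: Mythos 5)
Your proof is correct and is essentially the paper's: the paper disposes of Lemma \ref{lem:rigidity_all} in one line as an ``obvious corollary'' of Lemma \ref{lem:L2_energy_est} (apply that estimate with the roles of initial and final time exchanged, giving $\norm{v_1(0)-v_2(0)}_{L^2} \leq e^{2\lambda t}\,\norm{v_1(t)-v_2(t)}_{L^2}$ for $t\geq 0$), with uniqueness from Theorem \ref{th:Cauchy_problem} supplying the dichotomy exactly as you use it. Your argument simply unpacks the differential inequality underlying that energy estimate and runs the Gronwall bound in the reverse direction, which is the same mechanism, so there is nothing to object to.
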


This lemma is itself an obvious corollary of Lemma \ref{lem:L2_energy_est}, and we show how it leads to Lemma \ref{lem:rigidity_multi}.

\begin{proof}
    Take $v$ solution to \eqref{foc_log_nls} and suppose $v \neq u$ where $u$ is the multi-gaussian constructed above. Then, Lemma \ref{lem:rigidity_all} gives some $C_2 > 0$ such that
    \begin{equation*}
        \norm{v (t) - u (t)}_{L^2} \geq C_2 \, e^{-2 \lambda t}.
    \end{equation*}
    Thanks to \eqref{eq:dec_est_th_br}, we get for all $t \geq 0$:
    \begin{align*}
        \norm{v(T + t) - \sum B_k (T + t)}_{L^2} &\geq \norm{v (T + t) - u (T + t)}_{L^2} - \norm{u(T + t) - \sum B_k (T + t)}_{L^2} \\
            &\geq C_2\, e^{-2 \lambda T} \, e^{-2 \lambda t} - e^{- \frac{\sigma_- (v_* t)^2}{4}},
    \end{align*}
    and the conclusion readily follows.
\end{proof}

\newpage

\appendix

\section{Proof of Lemma \ref{lem:est_H1_gauss}} \label{sec_app:proof_lem_gauss}

Before proving this lemma, we prove the following results which will be useful for the last estimate:

\begin{lem} \label{lem:gauss_error_2}
    For all $n \in \mathbb{N}$, there exists $C_n > 0$ such that for all $\gamma > 0$ and $R \geq \gamma^{-\frac{1}{2}}$, there holds
    \begin{gather*}
        I_n \coloneqq \int_R^{\infty} x^{n} e^{-\gamma x^2} \diff x \leq C_n \frac{R^{n-1}}{\gamma} e^{-\gamma R^2}.
    \end{gather*}
\end{lem}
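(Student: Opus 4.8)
The plan is to prove the estimate by a two-step induction on the parity of $n$, reducing $I_n$ to $I_{n-2}$ via integration by parts and using the hypothesis $R \geq \gamma^{-1/2}$ (equivalently $\gamma^{-1} \leq R^2$) to absorb the surplus power of $\gamma^{-1}$ that each integration by parts produces.

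First I would settle the two base cases. For $n=0$, Lemma \ref{lem:int_error_gauss} gives directly $I_0 < \frac{1}{2\gamma R} e^{-\gamma R^2}$, which is of the claimed form with $C_0 = \frac12$. For $n=1$ the integral is explicit: since $x e^{-\gamma x^2} = -\frac{1}{2\gamma} \frac{\diff}{\diff x} e^{-\gamma x^2}$, one finds $I_1 = \frac{1}{2\gamma} e^{-\gamma R^2}$, which is the claim with $C_1 = \frac12$ (here $R^{n-1} = R^0 = 1$). These two cases seed the two parity classes.

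Next, for $n \geq 2$ I would integrate by parts, writing $x^n e^{-\gamma x^2} = x^{n-1} \cdot x e^{-\gamma x^2}$ and using the same primitive. The boundary term at infinity vanishes and the one at $R$ contributes $\frac{R^{n-1}}{2\gamma} e^{-\gamma R^2}$, leaving the recursion
\[
    I_n = \frac{R^{n-1}}{2\gamma} e^{-\gamma R^2} + \frac{n-1}{2\gamma} I_{n-2}.
\]
Now comes the key step, and the only place where the hypothesis $R \geq \gamma^{-1/2}$ is genuinely used. By the inductive hypothesis $I_{n-2} \leq C_{n-2} \frac{R^{n-3}}{\gamma} e^{-\gamma R^2}$, so the second term is bounded by $\frac{(n-1) C_{n-2}}{2} \frac{R^{n-3}}{\gamma^2} e^{-\gamma R^2}$. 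Since $\gamma^{-1} \leq R^2$, I can trade the extra factor $\gamma^{-1}$ for $R^2$, obtaining $\frac{R^{n-3}}{\gamma^2} \leq \frac{R^{n-1}}{\gamma}$, and hence
\[
    I_n \leq \Bigl( \tfrac12 + \tfrac{(n-1) C_{n-2}}{2} \Bigr) \frac{R^{n-1}}{\gamma} e^{-\gamma R^2}.
\]
This closes the induction with $C_n \coloneqq \frac12 + \frac{n-1}{2} C_{n-2}$.

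There is no real obstacle here: the computation is elementary, and the only point requiring care is to notice that each integration by parts costs one power of $\gamma^{-1}$ while lowering the power of $R$ by two, so that the constraint $\gamma^{-1} \leq R^2$ exactly compensates and preserves the advertised homogeneity $R^{n-1} \gamma^{-1}$ of the bound.
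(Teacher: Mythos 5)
Your proof is correct and follows essentially the same route as the paper: the cases $n=0$ and $n=1$ seed the induction (the former via Lemma \ref{lem:int_error_gauss}), integration by parts with the primitive of $x e^{-\gamma x^2}$ gives the recursion $I_n = \frac{R^{n-1}}{2\gamma} e^{-\gamma R^2} + \frac{n-1}{2\gamma} I_{n-2}$, and the hypothesis $\gamma^{-1} \leq R^2$ absorbs the extra power of $\gamma^{-1}$. The only difference is cosmetic (you apply the inductive bound before trading $\gamma^{-1}$ for $R^2$, the paper does it in the opposite order), and your constant $\frac{n-1}{2}$ in the recursion is in fact slightly sharper than the paper's $\frac{2n-1}{2}$.
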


\begin{proof}[Proof of Lemma \ref{lem:gauss_error_2}]
    The case $n=0$ is exactly Lemma \ref{lem:int_error_gauss}. The case $n=1$ easily follows from
    \begin{equation*}
        I_1 = \frac{1}{2 \gamma} e^{-\gamma R^2}.
    \end{equation*}
    For the case $n \geq 2$, we get
    \begin{align*}
        I_n &\leq \int_R^{\infty} x^{n-1} \cdot x \, e^{-\gamma \abs{x}^2} \diff x \\
            &\leq \Bigl[ - \frac{x^{n-1}}{2 \gamma} e^{-\gamma \abs{x}^2} \Bigr]_R^\infty + (n - 1) \int_R^{\infty} \frac{x^{n-2}}{2 \gamma} e^{-\gamma \abs{x}^2} \diff x \\
            &\leq \frac{R^{n-1}}{2 \gamma} e^{-\gamma R^2} + \frac{2n - 1}{2 \gamma} I_{n-2} \\
            &\leq \frac{R^{n - 1}}{2 \gamma} e^{-\gamma R^2} + \frac{2n - 1}{2} R^2 \, I_{n-2},
    \end{align*}
    since $\frac{1}{\gamma} \leq R^2$.
    The conclusion readily follows from a simple induction.
\end{proof}

\begin{lem} \label{lem:gauss_error_3}
    For all $n \in \mathbb{N}$ and $d \in \mathbb{N}^*$, there exists $C_{d,n} > 0$ such that for all $\gamma > 0$ and $R \geq \gamma^{-\frac{1}{2}}$, there holds
    \begin{gather*}
        M_n \coloneqq \int_{\mathcal{B}_R^\complement} \abs{x}^{n} e^{-\gamma \abs{x}^2} \diff x \leq C_n \frac{R^{d+n-2}}{\gamma} e^{-\gamma R^2},
    \end{gather*}
    where $\mathcal{B}_R = \mathcal{B}_{\mathbb{R}^d} (0, R)$.
\end{lem}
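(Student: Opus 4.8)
The plan is to reduce everything to the one-dimensional estimate already established in Lemma \ref{lem:gauss_error_2}. Since the integrand $\abs{x}^n e^{-\gamma \abs{x}^2}$ is radial, the first step is to pass to spherical coordinates on $\mathcal{B}_R^\complement = \{ x \in \mathbb{R}^d : \abs{x} > R \}$. Denoting by $\sigma_{d-1}$ the surface measure of the unit sphere $S^{d-1} \subset \mathbb{R}^d$, this gives
\begin{equation*}
    M_n = \sigma_{d-1} \int_R^\infty r^n \, e^{-\gamma r^2} \, r^{d-1} \diff r = \sigma_{d-1} \int_R^\infty r^{n+d-1} \, e^{-\gamma r^2} \diff r.
\end{equation*}
The remaining integral is now exactly of the scalar form treated in Lemma \ref{lem:gauss_error_2}, with the integer exponent $n+d-1$ in place of $n$.

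The second step is simply to apply Lemma \ref{lem:gauss_error_2} with index $n+d-1$, which is a nonnegative integer since $d \geq 1$ and $n \geq 0$, under the very same hypothesis $R \geq \gamma^{-\frac{1}{2}}$. This yields
\begin{equation*}
    \int_R^\infty r^{n+d-1} e^{-\gamma r^2} \diff r \leq C_{n+d-1} \frac{R^{(n+d-1)-1}}{\gamma} e^{-\gamma R^2} = C_{n+d-1} \frac{R^{d+n-2}}{\gamma} e^{-\gamma R^2}.
\end{equation*}
Combining the two displays, I obtain $M_n \leq \sigma_{d-1} C_{n+d-1} \frac{R^{d+n-2}}{\gamma} e^{-\gamma R^2}$, so the claim holds with the constant $C_{d,n} \coloneqq \sigma_{d-1} C_{n+d-1}$, which depends only on $d$ and $n$ as required.

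There is essentially no genuine obstacle here; the statement is a routine $d$-dimensional repackaging of the one-dimensional lemma. The only two points worth verifying are that the shifted exponent $n+d-1$ remains a nonnegative integer (guaranteed by $d \geq 1$), so that Lemma \ref{lem:gauss_error_2} is directly applicable, and that the threshold condition $R \geq \gamma^{-\frac{1}{2}}$ is inherited unchanged through the spherical reduction, which it is since the radial variable ranges over the same half-line $[R, \infty)$.
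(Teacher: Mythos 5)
Your proof is correct and is essentially identical to the paper's own argument: both pass to spherical coordinates to reduce $M_n$ to the one-dimensional integral $\int_R^\infty r^{n+d-1} e^{-\gamma r^2} \diff r$ and then apply Lemma \ref{lem:gauss_error_2} with the shifted exponent $n+d-1$. Your write-up just makes explicit the bookkeeping (the surface-measure constant and the verification that the hypothesis $R \geq \gamma^{-\frac{1}{2}}$ transfers) that the paper leaves implicit.
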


\begin{proof}
    With a radial change of variables, we get
    \begin{equation*}
        M_n = C_d \int_R^{\infty} r^{n+d-1} e^{-\gamma r^2} \diff r.
    \end{equation*}
    The conclusion readily follows from Lemma \ref{lem:gauss_error_2}.
\end{proof}

\begin{proof}[Proof of Lemma \ref{lem:est_H1_gauss}]
    We recall that $\psi_j (t',x) \equiv 1$ for $x \in \mathcal{B}_j (t') \coloneqq \mathcal{B} (x_j^* (t'), \frac{v_* t}{2} + 1)$, $0 \leq \psi_j (t') \leq 1$ and $\norm{\partial_x \psi_j (t')}_{L^\infty} \leq 1$ so that the quantities of the first two estimates are all bounded by the $H^1$ norm on $\mathbb{R} \setminus \mathcal{B}_j (t')$ of $G_j (t')$ up to a multiplicative constant $C_0$ ($\norm{D^3_{xxx} \psi_k (t',x)}$ is uniformly bounded in $x \in \mathbb{R}^d$ and $t \geq 0$). Then, setting $\mathcal{B}_0 (t') \coloneqq \mathcal{B} (0, \frac{v_* t}{2} + 1)$, we easily compute
    \begin{align*}
        \int_{\mathbb{R}^d \setminus \mathcal{B}_j (t')} (\abs{G_j (t')}^2 + \abs{\nabla G_j (t')}^2 ) \diff x
            &= C_0 \int_{\mathbb{R}^d \setminus \mathcal{B}_j (t')} \Bigl( 1 + \abs{i v_j - 2 \lambda (x - x^*_j (t'))}^2 \Bigr) \exp \Bigl[ - 2 \lambda \, \abs{x - x^*_j (t')}^2 \Bigr] \diff x \\
            &\leq C_0 \int_{\mathcal{B}_0 (t')^\complement} \Bigl( 1 + \abs{y}^2 \Bigr) \exp \Bigl[ - 2 \lambda \, \abs{y}^2 \Bigr] \diff y.
    \end{align*}
    Using Lemma \ref{lem:gauss_error_3}, as soon as $\xi (t) = \frac{v_* t}{2} + 1 \geq (2 \lambda)^{-\frac{1}{2}}$, we get
    \begin{equation*}
        \int_{\mathbb{R}^d \setminus \mathcal{B}_j (t')} (\abs{G_j (t')}^2 + \abs{\nabla G_j (t')}^2 ) \diff x\leq C_0 \Bigl( \xi (t)^{d-2} + \xi (t)^{d} \Bigr) \exp \Bigl[ - 2 \lambda \, \xi (t)^2 \Bigr] = o \biggl( t^{-6} e^{- \frac{\lambda (v_* t)^2}{2}} \biggr),
    \end{equation*}
    which leads to the first two estimates of the lemma.
    The third estimate can also be deduced from a similar computation.

    As for the fourth estimate, there also holds in the same way:
    \begin{multline*}
        \int_{\mathbb{R}^d \setminus \mathcal{B}_j (t')} (\abs{x - x_j^* (t')}^4 + \abs{x - x_j^* (t')}^6) \, \abs{G_j (t')}^2 \diff x \\
        \begin{aligned}
            &\leq C_0 \int_{\mathbb{R}^d \setminus \mathcal{B}_j (t')} (\abs{x - x_j^* (t')}^4 + \abs{x - x_j^* (t')}^6) \, \exp \Bigl[ - 2 \lambda \, \abs{x - x^*_j (t')}^2 \Bigr] \diff x \\
            &\leq C_0 \int_{\mathcal{B}_0 (t')^\complement} (\abs{y}^4 + \abs{y}^6) \exp \Bigl[ - 2 \lambda \, \abs{y}^2 \Bigr] \diff y \\
            &\leq C_0 \, (\xi (t)^{d+2} + \xi (t)^{d+4}) \, \exp \Bigl[ - 2 \lambda \, \xi (t)^2 \Bigr] = o \biggl( t^{-2} e^{- \frac{\lambda (v_* t)^2}{2}} \biggr),
            \end{aligned}
    \end{multline*}
    by using again Lemma \ref{lem:gauss_error_3}.
\end{proof}

\section{Proof of Proposition \ref{prop:x_u_log_u}} \label{app:x_u_log_u}

To prove this Proposition, we use a result of \cite{Ferriere__superposition_logNLS} giving a pointwise estimate for $\abs{ G \ln \abs{G}^2 - \sum_k G_k \ln \abs{G_k}^2 }$. We recall it here in a simplified way which fits our case:

\begin{lem}[{\cite[Corollary~3.7]{Ferriere__superposition_logNLS}}] \label{cor:est_diff_g_ln_g}
    For $N \in \mathbb{N}^*$, $\lambda > 0$, $x_k \in \mathbb{R}^d$, $\omega_k \in \mathbb{R}$ and $\theta_k : \mathbb{R} \rightarrow \mathbb{R}$ a real measurable function for $k = 1, \dots, N$, and $g_k$ such that for all $x \in \mathbb{R}^d$
    \begin{equation*}
        g_k (x) = \exp \left[ i \theta_k (x) + \omega_k - \lambda \abs{x - x_j}^2 \right],
    \end{equation*}
    set
    \begin{equation*}
        g (x) = \sum_{k = 1}^N g_k (x).
    \end{equation*}
    If
    \begin{equation*}
        \varepsilon \coloneqq \left( \min_{k \neq j} \, \abs{x_{j} - x_k} \right)^{-1} < \varepsilon_0,
    \end{equation*}
    then for any $j \in \{ 1, \dots, N \}$ and for all $x \in \mathbb{R}$
    \begin{equation}
        \abs{g (x) \ln \abs{g (x)}^2 - \sum_{k = 1}^N g_k (x) \ln \abs{g_k (x)}^2} \leq 2 \sum_{k \neq j} \abs{g_k (x)} \Bigl[ \delta \omega_j + \delta \omega_k + 3 + 2 \ln{N} + \lambda \abs{x - x_k}^2 + \lambda \abs{x - x_j}^2 \Bigr], \label{est_diff_g_ln_g}
    \end{equation}
    where $\delta \omega_j \coloneqq \max_\ell \omega_\ell - \omega_j$
\end{lem}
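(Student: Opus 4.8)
The plan is to prove the pointwise bound \eqref{est_diff_g_ln_g} directly from the logarithmic-Lipschitz estimate of Lemma~\ref{lem:lip_z_log_z}, after a renormalisation that brings all the moduli into the closed unit disk. Write $F(z) := z \ln \abs{z}$, so that $z \ln \abs{z}^2 = 2 F(z)$, and set $\omega := \max_\ell \omega_\ell$ together with the normalised functions $\tilde g_k := N^{-1} e^{-\omega} g_k$ and $\tilde g := \sum_k \tilde g_k = N^{-1} e^{-\omega} g$, exactly as in the proof of Lemma~\ref{lem:energy_sum_soliton}. Since $\abs{g_k (x)} = e^{\omega_k - \lambda \abs{x - x_k}^2} \le e^\omega$, one has $\abs{\tilde g_k} \le N^{-1}$ pointwise, hence $\abs{\tilde g} \le \sum_k \abs{\tilde g_k} \le 1$ and $\abs{\tilde g_k} \le 1$; moreover $\tilde g_j$ never vanishes. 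This is precisely the configuration in which Lemma~\ref{lem:lip_z_log_z} applies, with $\tilde g_j$ playing the role of the non-zero base point.

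The first key step is a cancellation. Expanding $\ln \abs{g}^2 = 2 \ln N + 2\omega + \ln \abs{\tilde g}^2$ and likewise for each $g_k$, and using $\tilde g = \sum_k \tilde g_k$, the large constant $(2 \ln N + 2\omega)$ drops out of the difference, leaving
\[
g \ln \abs{g}^2 - \sum_k g_k \ln \abs{g_k}^2 = N e^\omega \Bigl( \tilde g \ln \abs{\tilde g}^2 - \sum_k \tilde g_k \ln \abs{\tilde g_k}^2 \Bigr) = 2 N e^\omega \Bigl( F(\tilde g) - \sum_k F(\tilde g_k) \Bigr).
\]
This reduces the whole estimate to controlling $F(\tilde g) - \sum_k F(\tilde g_k)$ for functions valued in the unit disk.

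Next I would fix $j$ and split $F(\tilde g) - \sum_k F(\tilde g_k) = \bigl( F(\tilde g) - F(\tilde g_j) \bigr) - \sum_{k \neq j} F(\tilde g_k)$. For the first bracket, Lemma~\ref{lem:lip_z_log_z} with $z = \tilde g_j$ and $\tilde z = \tilde g$ gives $\abs{F(\tilde g) - F(\tilde g_j)} \le \abs{\tilde g - \tilde g_j}\,(3 - \ln \abs{\tilde g_j})$, and I would then insert $\abs{\tilde g - \tilde g_j} \le \sum_{k \neq j} \abs{\tilde g_k}$ together with the explicit identity $-\ln \abs{\tilde g_j} = \ln N + \delta \omega_j + \lambda \abs{x - x_j}^2$. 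For the remaining sum, since $\abs{\tilde g_k} \le 1$ one has $\abs{F(\tilde g_k)} = \abs{\tilde g_k}\,(-\ln \abs{\tilde g_k}) = \abs{\tilde g_k}\,(\ln N + \delta \omega_k + \lambda \abs{x - x_k}^2)$. Adding the two contributions bounds $\abs{F(\tilde g) - \sum_k F(\tilde g_k)}$ by $\sum_{k \neq j} \abs{\tilde g_k}\bigl[ 3 + 2 \ln N + \delta \omega_j + \delta \omega_k + \lambda \abs{x - x_j}^2 + \lambda \abs{x - x_k}^2 \bigr]$; multiplying by $2 N e^\omega$ and using $N e^\omega \abs{\tilde g_k} = \abs{g_k}$ recovers exactly \eqref{est_diff_g_ln_g}.

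The only genuinely delicate point is that $z \mapsto z \ln \abs{z}^2$ is not Lipschitz near the origin, so a naive triangle-inequality bound on $F(\tilde g) - F(\tilde g_j)$ would fail; the renormalisation into the unit disk is what makes the one-sided logarithmic-Lipschitz estimate of Lemma~\ref{lem:lip_z_log_z} usable, while the cancellation of the $(2\ln N + 2\omega)$ term is what makes the final estimate genuinely small, controlled only by the off-centre tails $\sum_{k \neq j} \abs{g_k}$. I would also note that the separation hypothesis $\varepsilon < \varepsilon_0$ plays no role in this pointwise argument — it is inherited from the more general Corollary~3.7 of \cite{Ferriere__superposition_logNLS} — so that the estimate in fact holds for any configuration of the centres $x_k$.
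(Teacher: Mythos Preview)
Your argument is correct and complete. Note, however, that the paper does not actually supply a proof of this lemma: it is quoted verbatim from \cite[Corollary~3.7]{Ferriere__superposition_logNLS} and used as a black box in Appendix~B. There is therefore no ``paper's own proof'' to compare against.

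That said, your approach is almost certainly the one behind the cited result. The two ingredients you use --- the renormalisation $\tilde g_k = N^{-1} e^{-\omega} g_k$ to force everything into the closed unit disk, and the one-sided logarithmic-Lipschitz bound of Lemma~\ref{lem:lip_z_log_z} (itself quoted from the same reference) --- are exactly the tools the present paper invokes in the proof of Lemma~\ref{lem:energy_sum_soliton} for a closely related estimate. Your splitting $F(\tilde g) - \sum_k F(\tilde g_k) = \bigl(F(\tilde g) - F(\tilde g_j)\bigr) - \sum_{k\neq j} F(\tilde g_k)$ and the bookkeeping of $-\ln\abs{\tilde g_k} = \ln N + \delta\omega_k + \lambda\abs{x-x_k}^2$ are clean and give precisely the stated constants.

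Your final remark is also apt: the separation condition $\varepsilon < \varepsilon_0$ is indeed irrelevant for this purely pointwise inequality; it is a remnant of the more general statement in \cite{Ferriere__superposition_logNLS} where it guarantees smallness of the $L^2$ norm after integration (as in Lemma~\ref{lem:diff_L_log_L_sum_gaussian}).
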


\begin{proof}[Proof of Proposition \ref{prop:x_u_log_u}]
    Our $G_k$ satisfy the assumptions of Lemma \ref{cor:est_diff_g_ln_g}, so that \eqref{est_diff_g_ln_g} gives here for all $t\geq 0$, $j \in \{0, \dots, N \}$ and $x \in \mathbb{R}$:
    \begin{align*}
        \abs{ G (\check{t}, x) \ln \abs{G (\check{t}, x)}^2 - \sum_k G_k (\check{t}, x) \ln \abs{G_k (\check{t}, x)}^2}
            &\leq C_0 \sum_{k \neq j} \abs{G_k (\check{t}, x)} \Bigl[ 1 + \abs{x - x_k^* (\check{t})}^2 + \abs{x - x_j^* (\check{t})}^2 \Bigr] \\
            &\leq C_0 \sum_{k \neq j} \abs{G_k (\check{t}, x)} \Bigl[ 1 + t^2 + \abs{x - x_k^* (\check{t})}^2 \Bigr].
    \end{align*}
    Thus, multiplying by $\abs{x}$ and $\psi_j$ and taking the $L^2$ norm leads to:
    \begin{multline*}
        \norm{\psi_j (\check{t}) \abs{x} \abs{ G \ln \abs{G}^2 - \sum_k G_k \ln \abs{G_k}^2 }}_{L^2} \\
        \begin{aligned}
            &\leq C_0 \norm{\psi_j (\check{t}) \abs{x} \sum_{k \neq j} \abs{G_k (\check{t})} \Bigl[ 1 + t^2 + \abs{x - x_k^* (\check{t})}^2 \Bigr] }_{L^2} \\
            &\leq C_0 \norm{\psi_j (\check{t}) \Bigl(\abs{x - x_k^* (\check{t})} + \abs{x_k^* (\check{t})} \Bigr) \sum_{k \neq j} \abs{G_k (\check{t})} \Bigl[ 1 + t^2 + \abs{x - x_k^* (\check{t})}^2 \Bigr] }_{L^2} \\
            &\leq C_0 \norm{\psi_j (\check{t}) \Bigl(\abs{x - x_k^* (\check{t})} + C_0 \, t \Bigr) \sum_{k \neq j} \abs{G_k (\check{t})} \Bigl[ 1 + t^2 + \abs{x - x_k^* (\check{t})}^2 \Bigr] }_{L^2} \\
            &\leq C_0 (1 + t^3) \sum_{k \neq j} \biggl( \norm{\psi_j (\check{t}) \abs{G_k (\check{t})}}_{L^2} + \norm{\psi_j (\check{t}) \abs{x - x_k^* (\check{t})}^3 \abs{G_k (\check{t})}}_{L^2} \biggr), \\
            &\leq C_0 (1 + t^3) \sum_{k \neq j} \biggl( \norm{G_k (\check{t})}_{L^2 (\psi_j (\check{t}) \diff x)} + \norm{\abs{x - x_k^* (\check{t})}^3 \abs{G_k (\check{t})}}_{L^2 (\psi_j (\check{t}) \diff x)} \biggr).
        \end{aligned}
    \end{multline*}
    Then, using Corollary \ref{cor:est_H1_gauss} and the last estimate of Lemma \ref{lem:est_H1_gauss}, we get
    \begin{equation*}
        \norm{\psi_j (\check{t}) \abs{x} \abs{ G \ln \abs{G}^2 - \sum_k G_k \ln \abs{G_k}^2 }}_{L^2} \leq C_0 \, e^{- \frac{\lambda (v_* t)^2}{4}}.
    \end{equation*}
    Thus, we get the result by using the fact that $\sum_j \psi_j = 1$:
    \begin{align*}
        \norm{\abs{x} \abs{ G \ln \abs{G}^2 - \sum_k G_k \ln \abs{G_k}^2 }}_{L^2} &= \norm{\sum_j \psi_j (\check{t}) \abs{x} \abs{ G \ln \abs{G}^2 - \sum_k G_k \ln \abs{G_k}^2 }}_{L^2} \\
            &\leq \sum_j \norm{\psi_j (\check{t}) \abs{x} \abs{ G \ln \abs{G}^2 - \sum_k G_k \ln \abs{G_k}^2 }}_{L^2} \\
            &\leq C_0 \, e^{- \frac{\lambda (v_* t)^2}{4}}. \qedhere
    \end{align*}
\end{proof}

\bibliographystyle{abbrv}
\bibliography{sample.bib}

\end{document}